\documentclass[10pt]{amsart}
\usepackage{amsmath, amsfonts, amsthm, amssymb}
\usepackage{comment}
\usepackage{verbatim} 
\usepackage[hidelinks]{hyperref}
\usepackage{caption}

\usepackage{mathtools}
\usepackage{tikz-cd}
\usepackage{color}
\usepackage{xcolor}

\usepackage{cite}
\usepackage{enumerate}

\usepackage{cite}
\usepackage[margin=1in]{geometry}
\usepackage{mathtools}    
\DeclarePairedDelimiterX\setc[2]{\{}{\}}{\,#1 \;\delimsize\vert\; #2\,}
\usepackage{float}
\usepackage{url}

\makeatletter
\newtheorem*{rep@theorem}{\rep@title}
\newcommand{\newreptheorem}[2]{%
\newenvironment{rep#1}[1]{%
 \def\rep@title{#2 \ref{##1}}%
 \begin{rep@theorem}}%
 {\end{rep@theorem}}}
\makeatother

\usepackage{MnSymbol,wasysym}
\usepackage{multicol}
\usepackage{caption}
\usepackage{subcaption}
\usepackage{array}
\renewcommand{\epsilon}{\varepsilon}
\setcounter{tocdepth}{1}

\DeclareGraphicsRule{.tif}{png}{.png}{`convert #1 `dirname #1`/`basename #1 .tif`.png}
\graphicspath{ {./Images/} }

\theoremstyle{definition}\newtheorem{defn}{Definition}[subsection]
\theoremstyle{theorem}\newtheorem{thm}{Theorem}[subsection]
\theoremstyle{definition}\newtheorem{eg}{Example}[subsection]
\theoremstyle{definition}
\theoremstyle{definition}\newtheorem*{thmu}{Theorem}
\theoremstyle{theorem}\newtheorem{prop}[thm]{Proposition}
\theoremstyle{theorem}\newtheorem{conj}[thm]{Conjecture}
\theoremstyle{definition}
\theoremstyle{definition}
\theoremstyle{definition}
\theoremstyle{definition}
\theoremstyle{theorem}\newtheorem{lemma}[thm]{Lemma}
\theoremstyle{definition}\newtheorem{remark}{Remark}[subsection]
\theoremstyle{definition}
\theoremstyle{definition}
\theoremstyle{definition}
\theoremstyle{theorem}\newtheorem{cor}[thm]{Corollary}

\newcommand{\A}{\mathbf{A}}
\newcommand{\J}{\mathcal{J}}

\newcommand{\M}{\mathcal M}

\newcommand{\F}{\mathcal F}
\renewcommand{\L}{\mathcal{L}}
\renewcommand{\O}{\mathcal{O}}
\newcommand{\Gabcd}{\mathcal{G}\left(\mathbf{A}\right)}

\newcommand{\Facbd}{\mathcal{F}\left(\mathbf{A}\right)}

\newcommand{\G}{\mathcal{G}}

\newcommand{\N}{\mathbb N}
\newcommand{\Z}{\mathbb Z}
\newcommand{\C}{\mathbb C}

\renewcommand{\P}{\mathbb P}
\newcommand{\Q}{\mathbb Q}

\newcommand{\R}{\mathbb R}

\newcommand{\D}{\mathbb D}

\newcommand{\mC}{\mathcal{C}}

\newcommand{\Ob}{\mathrm{Ob}}

\newcommand{\val}{\mathfrak{v}}
\newcommand{\e}{\hat{e}}

\newcommand{\Hom}{\mathrm{Hom}}
\newcommand{\hol}{\mathrm{hol}}
\newcommand{\Ext}{\mathrm{Ext}}

\newcommand{\out}{\mathrm{out}}

\renewcommand{\phi}{\varphi}
\newcommand{\hn}{{\hat{n}}}

\newcommand{\End}{\mathrm{End}}

\newcommand{\into} {\hookrightarrow}

\newcommand{\p}{\partial}

\newcommand{\im}{\operatorname{Im}}

\newcommand{\GL}{\operatorname{GL}}

\newcommand{\ind}{\operatorname{ind}}
\newcommand{\lin}{\operatorname{lin}}

\newcommand{\Pic}{\operatorname{Pic}}

\newcommand{\W}{{\mathcal{W}r}}
\newcommand{\Wr}{{\mathcal{W}r}}
\newcommand{\coker}{\text{coker}}

\newcommand{\supp}{\operatorname{Supp}}

\newcommand{\Id}{\operatorname{Id}}

\newcommand{\FS}{\mathcal{FS}}
\newcommand{\Coh}{\textrm{Coh}}

\newcommand{\ev}{\operatorname{ev}}
\newcommand{\an}{\operatorname{an}}
\renewcommand{\mod}{\mathrm{mod}}

\newcommand{\inv}{^{-1}}

\newcommand{\wt}{\widetilde}
\newcommand{\wh}{\widehat}
\newcommand{\whCF}{\widehat{CF}}
\newcommand{\whHF}{\widehat{HF}}

\begin{document}

\begin{abstract} We show homological mirror symmetry results relating coherent analytic sheaves on some complex elliptic surfaces and objects of certain Fukaya categories. We first define the notion of a non-algebraic Landau-Ginzburg model on $\R \times \left(S^1\right)^3$ and its associated Fukaya category, and show that non-K\"{a}hler surfaces obtained by performing two logarithmic transformations to the product of the projective plane and an elliptic curve have non-algebraic Landau-Ginzburg models as their mirror spaces; this class of surface includes the classical Hopf surface $S^1 \times S^3$ and other elliptic primary and secondary Hopf surfaces. We also define localization maps from the Fukaya categories associated to the Landau-Ginzburg models to partially wrapped and fully wrapped categories. We show mirror symmetry results that relate the partially wrapped and fully wrapped categories to spaces of coherent analytic sheaves on open submanifolds of the compact complex surfaces in question, and we use these results to sketch a proof of a full HMS result.

 \end{abstract}

\title{Homological mirror symmetry for elliptic Hopf Surfaces}
\author{Abigail Ward
}

\maketitle

\tableofcontents

\section{Introduction}
For $q \in \C^*$ with $|q| < 1$, there is a free action of $\Z$ on $\C^n \setminus \{0\}$ given by scaling by $q$. When $n = 1$, the quotient space $\C^*/ \left(z \sim q z\right)$ is the elliptic curve $E_\tau$ with modular parameter $\tau$ satisfying $q=e^{2 \pi i \tau}.$ When $n=2$, the quotient is the  \emph{Hopf surface}: 
 \[ S_{\tau} = \C^2 \setminus \{0\} / \left(\mathbf{z} \sim q \mathbf{z}\right). \]
The map 
 \[ \mathbf{z}  \mapsto \left( \log |\mathbf{z}|, \frac{\mathbf{z}}{|\mathbf{z}|}\right) \in \left(\R/ |2 \pi i \tau| \Z, \C^2\right) \]
is a diffeomorphism $S_{\tau} \to S^1 \times S^3$. Since $H^2\left(S_{\tau}, \Z\right) = 0$, the Hopf surface is non-K\"{a}hler and non-algebraic. The purpose of this work is to demonstrate that despite its lack of algebraic or symplectic structure, the Hopf surface still exhibits homological mirror symmetry behavior; that is, there exists a mirror space $Y$ to $S_{\tau}$ and a Fukaya category $\F_{\tau}$ associated to $Y$ such that there is a correspondence between coherent analytic sheaves on $S_{\tau}$ and objects of $\F_{\tau}$.

 The starting point of our mirror symmetry correspondence is the observation that there exists an anti-canonical divisor $D$ on $S_{\tau}$ such that $S_{\tau}\setminus D$ is a more familiar complex surface to which we can associate a predicted mirror space. Indeed,  let $D \subset S_{\tau}$ denote the image of the two coordinate axes $\{z_1 = 0 \} \sqcup \{z_2 = 0 \}$ in $\C^2 \setminus \{0\}$ under the quotient map $\C^2 \setminus \{0\} \to S_{\tau}$; $D$ consists of the disjoint union of two copies of the elliptic curve $E_\tau$. There is a meromorphic volume form on $S_{\tau}$ given by 
\[ \frac{d z_1 \wedge d z_2}{z_1 z_2} \]
that has simple poles on the components of $D$. Moreover, $S_{\tau} \setminus D \cong \C^* \times E_{\tau}$ is an open Calabi-Yau manifold with a symplectic form which is compatible with its complex structure, and a fibration by special Lagrangian tori. Following the Strominger-Yau-Zaslow philosophy (\hspace{1sp}\cite{SYZ}), one predicts that the mirror to $\C^* \times E_{\tau}$ is the moduli space of these tori equipped with a unitary flat connection, which is parametrized by $\C^* \times E^{\tau}$, where $E^{\tau}$ is the elliptic curve mirror to $E_{\tau}$.

We then recall the following heuristic for performing mirror symmetry relative to an anti-canonical divisor (see \cite{AuMSTD}). Let $\left(X,D_X\right)$ be a pair consisting of a K\"{a}hler manifold $X$ and an anti-canonical divisor $D_X$ with the property that $X \setminus D_X$ is open Calabi-Yau and admits a fibration by Lagrangian tori. For such a pair $\left(X, D_X\right)$, the mirror operation to compactifying $X \setminus D_X$ by $D_X$ is introducing a Landau-Ginzburg superpotential $W$ on the open SYZ mirror $\left(X\setminus D\right)^\vee$ which encodes the data of Maslov index 2 discs that pass through $D$. To this Landau-Ginzburg model one associates a \emph{Fukaya-Seidel category} denoted $\FS\left(\left(X \setminus D\right)^\vee, W\right)$; the objects of this $A_\infty$ category are Lagrangians in $\left(X \setminus D\right)^\vee$ that obey boundary conditions specified by $W$. This method gives a mirror space to various non-Calabi-Yau $X$, e.g., for toric varieties (in \cite{Ab1} and later work), and one can then verify the mirror correspondence by checking the predictions of homological mirror symmetry.

Inspired by this philosophy, we construct a \emph{non-algebraic Landau-Ginzburg model} associated to the pair $\left(S,D\right)$ and other pairs consisting of a complex surface $S_{\tau}\left(\mathbf{A}\right)$ and an anti-canonical divisor $D_{\mathbf{A}}$ such that $S_{\tau} \left(\mathbf{A}\right) \setminus D_{\mathbf{A}} = \C^* \times E_{\tau}$. Each such compactification of $\C^* \times E_{\tau}$ that we consider is obtained by applying two logarithmic transformations to $\P^1 \times E$ over two points in $\P^1$, and the index $\mathbf{A}$ records which compactification we obtain in this way. Recall that the logarithmic transformation is an operation on elliptic complex surfaces which takes an elliptic surface $X$ with a fibration $f:X \to B$ to a new elliptic surface $f': X' \to B$, with the property that there exists a point $b \in B$ such that there is a biholomorphism
\[ X\setminus f\inv(b) \to X' \setminus \left(f'\right)\inv(b). \]
Topologically, this transformation amounts to performing a surgery in which one removes a neighborhood $U=D^2 \times f\inv(b)$ of the fiber $f\inv(b)$ and glues the neighborhood back in via a diffeomorphism $\varphi: \p U = T^3 \to T^3$. One imposes conditions on $\phi$ so that the resulting space has a complex structure. The logarithmic transformation has two parameters $\left(m,k\right)$ which are a pair of integers such that $\left(m,k\right)$ is a primitive element in $\Z^2$ and $m >0$; the parameters $\left(1,0\right)$ give the trivial case where the diffeomorphism $\varphi$ is the identity. We let $D\left(m,k\right)$ denote the result of performing a logarithmic transformation with parameters $\left(m,k\right)$ at the origin at $\C \times E$. By performing two logarithmic transformations to $\P^1 \times E$, one over each pole, with parameters $\left(m_0, k_0\right)$ and $\left(m_\infty,k_{\infty}\right)$, we obtain an elliptic surface $S_{\tau}\left(\left(m_0,k_0\right),\left(m_\infty,k_\infty\right)\right)$; we let $\mathbf{A}$ denote the pair $\left(\left(m_0,k_0\right),\left(m_\infty,k_\infty\right)\right)$.  In this notation, we may write the Hopf surface $S_{\tau}$ as
\[ S_{\tau} = S_{\tau}\left(\left(1,0\right), \left(1,1\right)\right). \]

 Fix the anti-canonical divisor $D_{\P^1 \times E} =\left( \{0\} \times E_\tau \right) \cup\left( \{\infty\} \times E_{\tau}\right) \subset \P^1 \times E_{\tau}$; let $D_{\mathbf{A}}$ denote the union of the fibers over $0$ and $\infty$ in $S\left(\mathbf{A}\right)$. By design we have
\[ S_{\tau} \left(\mathbf{A}\right) \setminus D_{\mathbf{A}}= \left(\P^1 \times E_{\tau}\right) \setminus D_{\P^1 \times E_{\tau}} = \C^* \times E_\tau. \]
When $\left(m_0, k_0\right)=\left(m_\infty,-k_\infty\right)$, the resulting surface is a quotient of $\P^1 \times E$ by the action of a finite group; otherwise, the resulting surface is non-algebraic. 

We then associate to each surface $S_{\tau}\left(\mathbf{A}\right)$ a non-algebraic Landau-Ginzburg model denoted by $\left(Y, \mathbf{A}\right)$ and an associated Fukaya category $\F\left(\mathbf{A}\right)$. Here $Y=\left(\R \times \left(S^1\right)^3, d t \wedge d \phi_t + d s \wedge d \phi_s\right)$ should be thought of as the mirror of $\C^* \times E_{\tau}$. These categories are defined over the Novikov ring with formal variable $T$; we show that there are ``evaluation maps" (determined by $T \mapsto e^{2 \pi i \tau}$) from  $\F\left(\mathbf{A}\right)$ to $\C$-linear categories denoted by $H^0 \F_\tau$. The construction bypasses the process of defining of a Landau-Ginzburg superpotential by counting holomorphic discs in $S_\tau\left(\mathbf{A}\right)$ as we would be unable to define the energy (symplectic area) of any such disc. Instead, we specify boundary conditions that Lagrangians will need to satisfy at each end of $Y$.
 
 \begin{defn} A type-$\mathbf{A}$ Lagrangian in $Y$ satisfies the following condition: there exists a $T > 0$ such that the function 
\begin{equation}\label{ac} m_0 \phi_t + k_0 \phi_s  \end{equation}
vanishes identically when restricted to $L \cap \left(-\infty, -T\right] \times \left(S^1\right)^3$, and
\begin{equation}\label{bd} -m_\infty \phi_t + k_\infty \phi_s \end{equation}  
vanishes identically when restricted to $L \cap \left[T, \infty\right) \times \left(S^1\right)^3$.
\end{defn}

The objects of $\F\left(\mathbf{A}\right)$ are (roughly speaking)  type-$\mathbf{A}$ Lagrangians which are linear at infinity and have well-defined Floer theory. The construction of the category is similar to that in \cite{GPS2} which associates to a Landau-Ginzburg model a stopped Fukaya category. 

We conjecture the following:
\begin{conj}\label{expected} There is a full and faithful functor
\[ D^b \Coh_{\an} S_{\tau} \left(\mathbf{A}\right) \to D^\pi \F_{\tau} \left(\mathbf{A}\right). \] \end{conj}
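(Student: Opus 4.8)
The plan is to build the functor of Conjecture~\ref{expected} by a descent (gluing) argument that reduces it to the partially-wrapped and fully-wrapped mirror symmetry statements of this paper. First I would fix the open cover of $S_{\tau}\left(\mathbf{A}\right)$ by the two surfaces $U_0 := S_{\tau}\left(\mathbf{A}\right) \setminus D_\infty$ and $U_\infty := S_{\tau}\left(\mathbf{A}\right) \setminus D_0$, where $D_0, D_\infty$ are the two components of $D_{\mathbf{A}}$ (the fibers over $0$ and $\infty$), so that $U_0 \cap U_\infty = \C^* \times E_\tau$. On the symplectic side this cover is mirrored by the localization functors $\F_\tau\left(\mathbf{A}\right) \to \F_\tau\left(\mathbf{A}\right)^{\mathrm{pw}_0}$, $\F_\tau\left(\mathbf{A}\right) \to \F_\tau\left(\mathbf{A}\right)^{\mathrm{pw}_\infty}$ to the partially wrapped categories (where one keeps the stop at only one end of $Y$) and $\F_\tau\left(\mathbf{A}\right) \to \F_\tau\left(\mathbf{A}\right)^{\mathrm{w}}$ to the fully wrapped category: deleting $D_\infty$ from $S_\tau\left(\mathbf{A}\right)$ is mirror to removing the stop at the corresponding end of $Y$, and deleting both components is mirror to the fully wrapped (stop-free) localization, whose target is mirror to $\C^*\times E_\tau$.

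Next I would assemble the functor $\Phi$. The mirror symmetry results established earlier give compatible equivalences (or at least full-and-faithful functors)
\[ D^b\Coh_{\an}\!\left(\C^* \times E_\tau\right) \hookrightarrow D^\pi\F_\tau\left(\mathbf{A}\right)^{\mathrm{w}}, \quad D^b\Coh_{\an}\!\left(U_0\right) \hookrightarrow D^\pi\F_\tau\left(\mathbf{A}\right)^{\mathrm{pw}_0}, \quad D^b\Coh_{\an}\!\left(U_\infty\right) \hookrightarrow D^\pi\F_\tau\left(\mathbf{A}\right)^{\mathrm{pw}_\infty}, \]
intertwining the restriction functors on the coherent side with the localization functors on the Fukaya side. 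The index $\mathbf{A}$ enters precisely through the identification of the overlap data: the local model near $D_0$ is the logarithmic transform $D\left(m_0,k_0\right)$, and the SYZ picture identifies its mirror with the partially wrapped category whose stop is the Legendrian cut out by the boundary condition \eqref{ac}, so that the two sides glue over $\C^* \times E_\tau$ by the same primitive $\SL\left(2,\Z\right)$-type data $\left(m_0,k_0\right)$, $\left(m_\infty,k_\infty\right)$. Granting that $\Coh_{\an}$ is a sheaf of $A_\infty$-categories for the analytic topology, so that $D^b\Coh_{\an}\!\left(S_\tau\left(\mathbf{A}\right)\right)$ is the homotopy limit of the diagram of the three pieces above (using smoothness of $S_\tau\left(\mathbf{A}\right)$ and a finite Stein cover to control $D^b$ against the derived category of quasicoherent sheaves), and that the localization functors present $\F_\tau\left(\mathbf{A}\right)$ as the matching homotopy limit of the wrapped and partially wrapped categories via the \cite{GPS2}-style stop-removal exact sequences, the equivalences on the pieces glue to the sought functor $\Phi$.

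Full-and-faithfulness of $\Phi$ then follows formally. Morphism complexes in a homotopy limit of categories are computed by the corresponding Mayer--Vietoris homotopy limit of morphism complexes: for $E, F \in D^b\Coh_{\an}\!\left(S_\tau\left(\mathbf{A}\right)\right)$ there is an exact triangle relating $\Hom\left(E,F\right)$, the sum $\Hom\left(E|_{U_0}, F|_{U_0}\right) \oplus \Hom\left(E|_{U_\infty}, F|_{U_\infty}\right)$, and $\Hom\left(E|_{\C^*\times E_\tau}, F|_{\C^*\times E_\tau}\right)$, and likewise on the Fukaya side; applying $\Phi$ gives a map of triangles which is an isomorphism on the two outer terms by the piecewise results, hence on $\Hom\left(E,F\right)$.

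The hard part, and the reason this is posed as a conjecture with only a sketched proof, is twofold. First, one must genuinely establish the two descent statements: that $\Coh_{\an}$ on the compact non-K\"ahler surface $S_\tau\left(\mathbf{A}\right)$ is recovered from the cover $\{U_0, U_\infty\}$ — an analytic-descent/GAGA issue that is more delicate than its algebraic counterpart, since there is no ambient projective geometry and one works directly with the analytic structure sheaf — and that the localization functors of this paper exhibit $\F_\tau\left(\mathbf{A}\right)$ as a homotopy limit rather than merely mapping to one, which requires exactness of the stop-removal sequences and adequate generation of the wrapped categories in this non-exact, Novikov-coefficient setting. Second, even granting descent, one must construct enough objects of $\F_\tau\left(\mathbf{A}\right)$ — type-$\mathbf{A}$ Lagrangians, linear at infinity and with well-defined Floer theory, realizing the boundary conditions \eqref{ac} and \eqref{bd} — mirror to a generating collection of $D^b\Coh_{\an}\!\left(S_\tau\left(\mathbf{A}\right)\right)$, namely line bundles together with the structure sheaves of the exceptional elliptic curves $D_0$ and $D_\infty$, and verify their compatibility with the gluing. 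Making this explicit, and in particular pinning down generation of $D^b\Coh_{\an}$ for these surfaces (where the Picard group and the category of coherent sheaves are far more constrained than in the projective case), is where I expect the main work to lie; it is also why the stronger statement of essential surjectivity, i.e.\ a genuine equivalence, is left as the conjectural full HMS result.
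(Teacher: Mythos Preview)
Your descent/gluing strategy is essentially the same as the paper's own sketched approach in Subsection~\ref{pproof}: recover $D^b\Coh_{\an} S_\tau(\mathbf{A})$ as a homotopy pullback over the cover $\{U_0, U_\infty\}$, recover $\F_\tau(\mathbf{A})$ as a pullback of the partially wrapped categories over the fully wrapped one, and match the two diagrams piecewise using Theorem~\ref{diagram}. Note that the statement is a \emph{conjecture}; the paper does not prove it, only outlines this route.

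Where your write-up goes beyond what is actually available, it introduces gaps. First, you invoke full and faithful functors from $D^b\Coh_{\an}(\C^*\times E_\tau)$ and $D^b\Coh_{\an}(U_0)$, $D^b\Coh_{\an}(U_\infty)$ into the wrapped and partially wrapped categories. The paper's Theorems~\ref{openfunctors} only establish functors from the subcategories $\mathcal{C}_\tau \subset \Pic_{\an}(\C^*\times E_\tau)$ and $\mathcal{C}_\tau(m,k) \subset \Pic_{\an} D_\tau(m,k)$ of certain line bundles, and only at the level of the cohomology category $H^0$ (so only $\mu^1$ and $\mu^2$ are computed). The paper's sketch explicitly flags the remaining work: upgrade these to $A_\infty$ functors from the triangulated envelopes $\mathcal{D}_\tau(m,k)$, which requires computing the higher $\mu^n$, and then prove that each $\mathcal{D}_\tau(m,k)$ is large enough to contain the image of the restriction $D^b\Coh_{\an} S_\tau(\mathbf{A}) \to D^b\Coh_{\an} D_\tau(m,k)$.

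Second, your proposed generating collection --- line bundles together with the structure sheaves of $D_0$ and $D_\infty$ --- is explicitly ruled out by the paper. The paragraph following Theorem~\ref{picequiv} states that $D^b\Coh_{\an} S_\tau(\mathbf{A})$ is not split-generated by line bundles and torsion sheaves, and Appendix~\ref{appendix} points to rank-two bundles on the Hopf surface from \cite{Mo} as explicit witnesses. This failure of generation is precisely why the paper moves to the gluing strategy rather than attempting to build $\Phi$ directly from an explicit generating set, and why ``$\mathcal{D}_\tau(m,k)$ large enough'' is listed as one of the principal open steps.
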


 Our first theorem provides evidence of this conjecture:

\begin{thm}\label{picequiv}For each $\mathbf{A}$ such that $S_{\tau}(\A)$ is not an algebraic surface, there is a full and faithful functor
\begin{equation}\Phi_{\mathbf{A}}: \Pic_{\an} S_\tau \left(\mathbf{A}\right) \to H^0 \mathcal{F}_{\tau}\left(\mathbf{A}\right) \end{equation} 
\end{thm}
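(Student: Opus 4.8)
The functor $\Phi_{\mathbf{A}}$ will be built by combining the Strominger--Yau--Zaslow correspondence for the open Calabi--Yau $\C^* \times E_\tau$ with the observation that a line bundle on $S_\tau(\mathbf{A})$ is determined by its restriction to $S_\tau(\mathbf{A}) \setminus D_{\mathbf{A}} = \C^* \times E_\tau$ together with a compatibility condition at $D_{\mathbf{A}}$ which turns out to be exactly the type-$\mathbf{A}$ boundary condition. First I would pin down $\Pic_{\an} S_\tau(\mathbf{A})$: using the exponential sequence and the presentation of $S_\tau(\mathbf{A})$ as two logarithmic transformations of $\P^1 \times E_\tau$, one identifies it with an explicit complex Lie group (for the Hopf surface this is just $\C^*$; the other surfaces differ by torsion and extension data contributed by the multiple fibers). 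Restriction embeds this group into $\Pic_{\an}(\C^* \times E_\tau)$, and I would present each such bundle by a factor of automorphy, i.e. by its logarithmic monodromy $(f,g)$ along the base directions $t$ and $s$ of $Y$.

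Next, to each line bundle $\mathcal{L}$ I would attach a Lagrangian $L_{\mathcal{L}} \subset Y$, namely the graph of the closed $1$-form on $\R \times S^1$ encoding the monodromy of $\mathcal{L}\vert_{\C^* \times E_\tau}$ under the SYZ dictionary (after a Hamiltonian isotopy making it linear at infinity). The essential point is that $\mathcal{L}$ extends across $D_{\mathbf{A}}$ to $S_\tau(\mathbf{A})$ precisely when the asymptotic slope of $L_{\mathcal{L}}$ at the $t \to -\infty$ (resp. $t \to +\infty$) end lies in the $\ker(m_0 \phi_t + k_0 \phi_s)$ (resp. $\ker(-m_\infty \phi_t + k_\infty \phi_s)$) direction of the torus fibre; this is where the transformation parameters $\mathbf{A}$ enter, and it is checked by matching the local model of the logarithmic transformation near a multiple fibre against the local model of a type-$\mathbf{A}$ Lagrangian near the corresponding end of $Y$. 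This realizes $L_{\mathcal{L}}$ as an object of $\F(\mathbf{A})$, and $\Phi_{\mathbf{A}}$ on objects is the composite with the evaluation functor $\F(\mathbf{A}) \to H^0 \F_\tau(\mathbf{A})$ at $T = e^{2\pi i \tau}$.

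For morphisms, after applying the autoequivalence $\mathcal{L} \mapsto \mathcal{L} \otimes \mathcal{L}_1^\vee$ (which has a counterpart as a translation of the $L_{\mathcal{L}_i}$ in $Y$ compatible with Floer products) it suffices to match $\Hom(\mathcal{O}, \mathcal{L}) = H^0(S_\tau(\mathbf{A}), \mathcal{L})$ with the morphism space $HF^0(L_{\mathcal{O}}, L_{\mathcal{L}})$ in $H^0 \F_\tau(\mathbf{A})$. On the complex side, $H^0(S_\tau(\mathbf{A}), \mathcal{L})$ is computed by pushing forward along the elliptic fibration to $\P^1$ (equivalently, by a Hartogs-type extension argument as for the Hopf surface): it is nonzero only for a discrete family of $\mathcal{L}$, where it has an explicit basis of theta-like sections indexed by a degree bound coming from the $\C^*$ direction. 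On the symplectic side, one chooses an almost complex structure on $Y$ so that holomorphic strips with boundary on $L_{\mathcal{O}}$ and $L_{\mathcal{L}}$ split as a product of a strip in the $(t, \phi_t)$-cylinder, whose count is truncated by the type-$\mathbf{A}$ boundary conditions and reproduces the degree bound, and a strip in the $(s, \phi_s)$-torus, whose weighted count is the classical theta-function computation underlying homological mirror symmetry for the elliptic curve. Matching generators, together with the vanishing of the differential (an energy/Novikov-valuation estimate, using $\vert e^{2\pi i \tau}\vert = \vert q\vert < 1$), gives $H^0(S_\tau(\mathbf{A}), \mathcal{L}) \cong HF^0(L_{\mathcal{O}}, L_{\mathcal{L}})$; the expected product formula for the strip counts (again as in the elliptic-curve case) upgrades this to compatibility with composition, yielding fullness and faithfulness at once.

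The hard part will be the Floer computation: the Lagrangians are non-compact and the Floer theory is set up in the GPS-style stopped framework with the unfamiliar boundary conditions of the Definition, so one must (i) justify that each $L_{\mathcal{L}}$ has well-defined Floer theory and is a genuine object of $\F(\mathbf{A})$; (ii) control which strips escape to the ends, and show the relevant moduli spaces are transversally cut out and compact after the product splitting; and (iii) handle convergence of the Novikov sums under $T \mapsto e^{2\pi i \tau}$ together with the vanishing $HF^0(L_{\mathcal{O}}, L_{\mathcal{L}}) = 0$ whenever $H^0(\mathcal{L}) = 0$, which in this non-exact setting is an energy estimate rather than an action argument. A secondary difficulty is making $\mathcal{L} \mapsto L_{\mathcal{L}}$ strictly functorial rather than merely functorial up to homotopy, which may force passing to a quasi-isomorphic model or restricting the claim to the cohomology level $H^0 \F_\tau(\mathbf{A})$ — which is all that is asserted.
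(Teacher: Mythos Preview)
Your outline matches the paper's architecture --- parametrize $\Pic_{\an} S_\tau(\mathbf{A})$, assign to each bundle an explicit Lagrangian section, and match bases and products --- but you misjudge the nature of the core computation, and this would send you down the wrong path in practice.

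The key point you miss is that the non-algebraic hypothesis $n = m_0 k_\infty + m_\infty k_0 \neq 0$ forces every line bundle on $S_\tau(\mathbf{A})$ to have degree \emph{zero} on the elliptic fibres (the fibre class is torsion in $H_2$). Consequently there are no theta functions anywhere in the compact story: the sections are not ``theta-like'' and the $(s,\phi_s)$-torus contributes nothing beyond a single intersection point or none. The paper exploits this by working on the universal cover $\C^2 \setminus \{0\}$: it identifies $\Pic_{\an} S_\tau(\mathbf{A}) \cong \Hom(\pi_1, \C^*) \cong \C^* \times (\Lambda^*/\Z^2)$, so that a bundle is the triple $(f,g,e^{2\pi i(\tau\lambda+\theta)})$, and $H^0$ is spanned by the finitely many monomials $z_1^{n_1} z_2^{n_2}$ with $(n_1,n_2) \in \N^2$ satisfying three exponential constraints. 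The mirror Lagrangian is the section $\phi_s = -\lambda + g$, $\phi_t = -\lambda\,\rho_{-k_0/m_0}^{k_\infty/m_\infty}(t) + f$, and the heart of the proof is a direct bijection (Lemma~\ref{bigclemma}) between those monomials and the degree-zero Floer generators, implemented by the matrix $A = \begin{psmallmatrix} m_0 & -m_\infty \\ k_0 & k_\infty \end{psmallmatrix}$ acting on the indexing data. The product check is then almost trivial: each $\mu^2$ counts a single planar triangle whose action-corrected area and monodromy vanish, so $z_1^{n_1}z_2^{n_2} \cdot z_1^{\ell_1}z_2^{\ell_2} \mapsto z_1^{n_1+\ell_1}z_2^{n_2+\ell_2}$ goes to the corresponding additive relation on generators.

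Your product-splitting picture (cylinder count truncated by boundary conditions, times an elliptic-curve theta computation) is morally what happens in the Hopf-surface example $\mathbf{A} = ((1,0),(1,1))$, where the paper does reduce to $T^*S^1 \times T^2$, but for general $\mathbf{A}$ the interpolation $\rho_{-k_0/m_0}^{k_\infty/m_\infty}$ means the $(t,\phi_t)$-projection of $L$ is not a straight line, and the bijection with monomials really does go through the linear-algebra argument with $A$ rather than a na\"ive count. Your worries about convergence and compactness are also overblown here: in the $\mathbf{A}$-setting the Lagrangians are disjoint outside a compact set (Lemma~\ref{RIE1}), the Floer complexes are finitely generated, and no Novikov completion or energy estimate is needed. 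The genuinely analytic content (theta series, completions, convergence) lives in the open-surface functors $\Phi_{(m,k)}$ and $\Phi_{\W}$, not in $\Phi_{\mathbf{A}}$.
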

The image of each line bundle is a Lagrangian section. This mirror correspondence confirms the expectation from previous instances of SYZ mirror symmetry (e.g., in the family Floer program of Abouzaid, see \cite{AbFFC}) that holomorphic vector bundles of rank $n$ are mirror to Lagrangian multi-sections that have intersection number $n$ with a fiber. %We further use expectations from SYZ mirror symmetry to expand the mirror symmetry correspondence by exhibiting Lagrangians in $Y$ which are the conjectured mirror of sheaves supported on an elliptic fiber. When such a sheaf $\G$ has $\mathrm{Supp}\, \G \subset S\left(\mathbf{A}\right) \setminus D_{\mathbf{A}}$, the Lagrangian mirror to $\G$ is compact. When $\mathrm{Supp}\, \G \subset D_{\mathbf{A}}$, $\G$ is a U-shaped Lagrangian of the type that appears in \cite{AbSe} and subsequent work such as \cite{Ca}.

However, the derived category of $S_{\tau}(\A)$ is not (split-)generated by line bundles and torsion sheaves, so one cannot prove Conjecture \ref{expected} using this collection of objects. This motivates us to work towards a more structural understanding of the objects of the categories of the conjecture. To this end, we first describe sheaves on $S_{\tau}\left(\mathbf{A}\right)$ using a cover of $S_{\tau}\left(\mathbf{A}\right)$ that reflects its construction in terms of the logarithmic transformations. Let $D_0$ to be the reduced fiber lying over $0$ and $D_\infty$ the reduced fiber lying over $0$.  There are restriction functors \[ \Coh_{\an} S_{\tau}(\A) \to \Coh_{\an} \left(S_\tau \setminus D_0\right) =  \Coh_{\an} D_{\tau}\left(m_\infty, k_{\infty}\right)\] and \[\Coh_{\an} S_\tau(\A)  \to \Coh_{\an} \left(S_{\tau} \setminus D_\infty\right) = \Coh _{\an} D_{\tau} \left(m_0, k_0\right) .\] 
We  define subcategories $\mathcal{C}_{\tau}\left(m, k\right) \subset \Pic_{\an} D_{\tau} \left(m, k\right)$ and $\mathcal{C}_{\tau} \subset \Pic_{\an} \C^* \times E$; the first consists of line bundles which can extend from $D_{\tau}\left(m, k\right)$ to some $S_{\tau}\left(\mathbf{A}\right)$, and the second consists of line bundles that can extend from $\C^*_{\tau} \times E$ to some compactification $S_{\tau}\left(\mathbf{A}\right)$. Thus a line bundle on $S_{\tau}\left(\mathbf{A}\right)$ can be described as a pair of objects of $\mathcal{C}_{\tau}\left(m_0, k_0\right)$  and $\mathcal{C}_{\tau}\left(m_\infty, k_\infty\right)$ that restrict to the same object of $\mathcal{C}_{\tau}$, along with gluing data.

In parallel, we decompose the boundary conditions that a Lagrangian $L \in \F\left(\mathbf{A}\right)$ satisfies. This leads us to define two Fukaya categories with relaxed boundary conditions: the  ``partially wrapped" category $\F\left(m, k\right)$ is defined by relaxing the boundary conditions at one end, and the ``fully wrapped" category $\F_{\W}$ by relaxing them at both ends: 
\[ \mathrm{Ob}\, \F\left(m, k\right) = \bigcup_{\left(m_\infty,k_\infty\right)} \Ob\, \, \F\left(\left(m, k\right), \left(m_\infty,k_\infty\right)\right) \]
and
\[ \mathrm{Ob}\, \F_{\W} = \bigcup_{\left(\left(m_0,k_0\right),\left(m_\infty,k_\infty\right)\right)} \Ob\, \, \F\left(\left(m_0, k_0\right), \left(m_\infty,k_\infty\right)\right). \] 

There are localization maps
\begin{equation} \mathcal{F}\left(\left(m_0, k_0\right),\left(m_\infty, k_\infty\right)\right) \to \mathcal{F}\left(m_0, k_0\right) \to \mathcal{F}_{\W}; \end{equation}
\begin{equation}  \mathcal{F}\left(\left(m_0, k_0\right),\left(m_\infty, k_\infty\right)\right)\to \mathcal{F}\left(m_\infty, k_\infty\right) \to \mathcal{F}_{\W} \end{equation}
and 
\begin{equation} \mathcal{F}\left(m,k\right) \to \mathcal{F}_{\W}. \end{equation}
On objects, the first and third maps are inclusions; the second differs by an interchange of the ends of $Y$ and can be expressed as the composition of maps
  \[ \mathcal{F}\left(\left(m_0, k_0\right),\left(m_\infty, k_\infty\right)\right) \to  \mathcal{F}\left(\left(m_\infty, k_\infty\right), \left(m_0, k_0\right)\right) \to \F\left(m_\infty, k_\infty\right).\] 

One expects objects of $\F\left(m,k\right)$ and $\F_{\W}$ to correspond to coherent analytic sheaves on $D_{\tau}\left(m,k\right)$ and $\C^* \times E_{\tau}$, respectively. One also expects the localization maps of Fukaya categories to be mirror to localization maps of sheaves. These expectations are confirmed with the following theorems:

 \begin{thm}\label{openfunctors}

\begin{enumerate}[(a)] \item
For each $\left(m, k\right)$ there is a full and faithful functor
\begin{equation*} \Phi_{\left(m,k\right)}: \mathcal{C}_{\tau}\left(m, k\right) \to H^0 \F_\tau\left(m, k\right). \end{equation*}

\item There is a full and faithful functor
\begin{equation*} \Phi_{\W}: \mathcal{C}_{\tau} \to H^0 \F_{\tau, \Wr}\end{equation*}\end{enumerate}

\end{thm}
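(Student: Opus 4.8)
The plan is to construct $\Phi_{(m,k)}$ and $\Phi_{\W}$ explicitly --- first on objects, then on morphisms --- and to establish full faithfulness by computing morphism spaces on both sides and matching them under the evaluation map $T \mapsto e^{2\pi i\tau}$. Regard $Y$ as a Lagrangian torus fibration $\pi\colon Y \to \R \times S^1$ with fiber coordinates $(\phi_t,\phi_s)$, mirror to the SYZ fibration on $\C^*\times E_\tau$ whose base is the product of the log-norm coordinate on $\C^*$ with one period coordinate on $E_\tau$. Under the SYZ / family Floer dictionary, a holomorphic line bundle $\L$ on $\C^*\times E_\tau$, equipped with a compatible connection, produces a Lagrangian section of $\pi$ whose slope in the $\phi_s$-direction records the $E_\tau$-degree of $\L$ and whose fiberwise position records its holonomy. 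For a line bundle on a partial compactification, the normal data of $\L$ along the added fiber --- which is exactly the monodromy data built into the logarithmic transformation with parameters $(m,k)$ --- forces the section to acquire a prescribed slope, hence to obey the type-$(m,k)$ boundary condition \eqref{ac} (resp.\ \eqref{bd}) at the relevant end of $Y$; this translation I will read off directly from the transition functions of $\L$ near the multiple fiber. This defines $\Phi_{(m,k)}$ and $\Phi_{\W}$ on objects, with $\Phi_{\W}$ the common refinement intertwining the Fukaya-categorical localization $\F(m,k)\to\F_{\W}$ with the restriction functor $\mathcal{C}_\tau(m,k)\to\mathcal{C}_\tau$.

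On morphisms, the target spaces are $\Hom_{\mathcal{C}_\tau(m,k)}(\L_1,\L_2) = H^0\bigl(D_\tau(m,k),\,\L_1^\vee\otimes\L_2\bigr)$ and $\Hom_{\mathcal{C}_\tau}(\L_1,\L_2) = H^0\bigl(\C^*\times E_\tau,\,\L_1^\vee\otimes\L_2\bigr)$; I compute these with the \v{C}ech cover by $\C^*\times E_\tau$ (on which every relevant line bundle is pulled back from $E_\tau$) together with a neighborhood of the modified fiber, which presents sections as Fourier--theta series in the $\C^*$- and $E_\tau$-coordinates with exponents constrained by the slope data. On the symplectic side, after running the wrapping Hamiltonian isotopy at the relaxed end(s) I put $\Phi\L_1$ and $\Phi\L_2$ into a position where their intersection is clean, so that Floer generators are indexed by the lattice points in the region of $\R\times S^1$ cut out by the two slopes; since $\pi$ is a Lagrangian fibration the strip count reduces to a count of tropical discs in the base, and the resulting generating function, weighted by symplectic area in the Novikov variable $T$, reproduces the same theta series, while the Floer differential matches the \v{C}ech differential of the cover. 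Specializing $T\mapsto e^{2\pi i\tau}$ then identifies the degree-$0$ Floer cohomology with the morphism space, and the analogous count of holomorphic triangles shows the identification is compatible with composition. Faithfulness holds because $\Phi$ is injective on isomorphism classes --- the pair of end-slopes together with the fiberwise holonomy reconstructs $\L$ --- and full faithfulness on $\Hom$ is exactly the equality of generating functions just obtained. (The techniques here are parallel to those behind Theorem~\ref{picequiv}.)

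The main difficulty is the A-side strip count in the presence of wrapping: one must show that after wrapping at the relaxed end(s), where the Lagrangians run off to $t=+\infty$ (resp.\ $t=\pm\infty$) in $Y$, no Floer generators or differential components appear beyond those matching holomorphic sections --- concretely, that strips escaping far in the $t$-direction assemble into Novikov-convergent tails and leave $HF^0$ unchanged. This requires a maximum-principle and energy estimate adapted to the non-compact geometry of $Y$ and to the fact that $\Phi\L$ is a genuine multisection when $(m,k)$ is nontrivial, so that the base count runs over a proper sublattice determined by $m$. A secondary, more combinatorial difficulty is to pin down precisely which discrete invariants of line bundles place them in $\mathcal{C}_\tau(m,k)$ and in $\mathcal{C}_\tau$ --- i.e.\ which line bundles extend to some $S_\tau(\mathbf{A})$ --- and to match these with the admissible slopes at infinity uniformly in the compactification $\mathbf{A}$, so that $\Phi_{(m,k)}$, $\Phi_{\W}$ and the functor $\Phi_{\mathbf{A}}$ of Theorem~\ref{picequiv} assemble into a diagram commuting with the localization maps of Fukaya categories.
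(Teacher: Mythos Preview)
Your overall strategy --- construct $\Phi$ on objects by assigning Lagrangian sections to line bundles, then match morphism spaces by identifying theta-series bases on both sides and verify compatibility with composition --- is the same as the paper's. However, the proposed execution differs from the paper's in several places, and there is at least one conceptual error.

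First, the claim that ``$\Phi\L$ is a genuine multisection when $(m,k)$ is nontrivial'' is wrong. Every object of $\mathcal{C}_\tau$ and $\mathcal{C}_\tau(m,k)$ is a line bundle, and the mirror functor sends each one to a \emph{section} of the SYZ fibration $Y \to \R \times S^1$: explicitly, $L_{(m,k,d,\xi)}$ is the graph of a linear map $\R^2 \to \R^2$ (plus a flat connection), see Section~\ref{mscalcs}. The parameter $m$ affects the slope of the section in the angular directions via the matrix $M(m,k,d,dk^2/m)$, not its multiplicity over the base. Multisections arise only as mirrors to higher-rank bundles, which are outside $\mathcal{C}_\tau$. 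Relatedly, the line bundles in $\mathcal{C}_\tau$ are not in general pulled back from $E_\tau$; their restriction to the fiber over $z$ varies with $z$ (Equation~\eqref{fiber-wise}).

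Second, the paper does not use a \v{C}ech cover or tropical disc count. On the $B$-side, sections of $\L_1^\vee\otimes\L_2$ are written directly as equivariant power series on $\C^*\times\C^*$, from which a basis $\{\sigma_{\overline{(j,a)}}\}$ is extracted whose structure constants under multiplication are classical theta identities (Sections~\ref{labelingsections}, \ref{thetaidentities}). On the $A$-side, the paper chooses specific perturbing Hamiltonians (Section~\ref{hamform2}) so that the perturbed Lagrangians are planar and their lifts to $\R^4$ intersect transversely in a single point per topological grading; the Floer differential then vanishes for degree reasons, the generators $\mathbf{y}_{\overline{(j,a)}}$ are action-corrected intersection points, and $\mu^2$ counts planar triangles in $\R^4$ whose areas and monodromies are computed in closed form (Equations~\eqref{mseq1}, \eqref{mseq2}). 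The proof of part~(b) is then literally the identification of Equations~\eqref{mseq1woo}, \eqref{mseq2woo} with \eqref{30}, \eqref{31}, and of \eqref{46woo}, \eqref{47woo} with \eqref{mseq1}, \eqref{mseq2}. Part~(a) is not done independently: it is deduced from~(b) by showing that both $i_{(m,k)}^*: \mathcal{C}_\tau(m,k) \to \mathcal{C}_\tau$ and $\iota_{(m,k)}: H^0\F_\tau(m,k) \to H^0\F_{\tau,\Wr}$ are embeddings of categories (Propositions~\ref{embedc} and~\ref{embedf}), so $\Phi_{(m,k)}$ is obtained by restricting $\Phi_\W$.

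Finally, a terminology point: ``faithfulness holds because $\Phi$ is injective on isomorphism classes'' conflates two properties. Faithfulness is injectivity on Hom-sets, which here follows from the explicit bijection $\sigma_{\overline{(j,a)}} \leftrightarrow [\mathbf{y}_{\overline{(j,a)}}]$; injectivity on objects is separate and easier.
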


\begin{thm}\label{diagram} The following diagram of categories commutes:
\begin{equation}\label{thediag}
    \begin{tikzcd}[row sep=1.15 em, column sep = 1.5em]
   \Pic  S_{\tau} \left(\mathbf{A}\right)  \arrow[rr] \arrow[dr, swap] \arrow[dd,swap] &&
    \mathcal{C}_{\tau} \left(m_0, k_0\right) \arrow[dd] \arrow[dr] \\
    & H^0 \F_\tau\left(\mathbf{A}\right) \arrow[rr] \arrow[dd] &&
    H^0 \F_\tau \left(m_0, k_0\right) \arrow[dd] \\
    \mathcal{C}_{\tau} \left(m_\infty , k_\infty\right) \arrow[rr,] \arrow[dr] && \mathcal{C}_{\tau} \arrow[dr] \\
    & H^0 \F_\tau \left(m_{\infty},k_\infty\right) \arrow[rr] && H^0 \F_{\tau, \Wr}
    \end{tikzcd}
\end{equation}
where each arrow is given by the functor between the corresponding categories explained above.
\end{thm}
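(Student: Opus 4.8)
The plan is to show that the cube \eqref{thediag} commutes up to natural isomorphism by checking that each of its six two-dimensional faces commutes; for a cube this suffices. The faces come in three types: the \emph{sheaf face}, on the four categories $\Pic S_{\tau}(\mathbf{A})$, $\mathcal{C}_{\tau}(m_0,k_0)$, $\mathcal{C}_{\tau}(m_\infty,k_\infty)$, $\mathcal{C}_{\tau}$ with the restriction functors; the \emph{Fukaya face}, on the four categories $H^0\F_\tau(\mathbf{A})$, $H^0\F_\tau(m_0,k_0)$, $H^0\F_\tau(m_\infty,k_\infty)$, $H^0\F_{\tau,\Wr}$ with the localization functors; and the four \emph{mirror faces}, each containing two of the sheaf categories, two of the Fukaya categories, and the pair of functors $\Phi_{\bullet}$ supplied by Theorems \ref{picequiv} and \ref{openfunctors}.

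The sheaf face commutes essentially tautologically. By construction of the identifications $S_\tau\setminus D_\infty\cong D_\tau(m_0,k_0)$ and $S_\tau\setminus D_0\cong D_\tau(m_\infty,k_\infty)$, both composites $\Pic S_{\tau}(\mathbf{A})\to\mathcal{C}_{\tau}(m_0,k_0)\to\mathcal{C}_{\tau}$ and $\Pic S_{\tau}(\mathbf{A})\to\mathcal{C}_{\tau}(m_\infty,k_\infty)\to\mathcal{C}_{\tau}$ equal the single functor that restricts a coherent analytic sheaf from $S_\tau(\mathbf{A})$ to the open locus $\C^*\times E_\tau = S_\tau(\mathbf{A})\setminus D_{\mathbf{A}}$; the coordinate interchange $z\leftrightarrow 1/z$ built into the second identification is undone once one passes to the common open locus. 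One checks along the way that the intermediate restrictions land in the subcategories $\mathcal{C}_{\tau}(m,k)$ and $\mathcal{C}_{\tau}$, but this holds by the very definition of these subcategories as the line bundles that extend to some $S_\tau(\mathbf{A})$.

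The four mirror faces carry the real content, and here I would unwind the explicit constructions of $\Phi_{\mathbf{A}}$, $\Phi_{(m,k)}$ and $\Phi_{\W}$ from the proofs of Theorems \ref{picequiv} and \ref{openfunctors}. Each of these functors is built by the same recipe: present a line bundle by its factors of automorphy along the $\C^*$-direction and the elliptic direction, and read off a Lagrangian section of $Y$ whose boundary data at each end of the $\R$-factor is prescribed by the factors of automorphy near the corresponding fiber. On the sheaf side the restriction functors forget precisely the gluing data near $D_0$ and/or $D_\infty$; on the Fukaya side the localization functors relax precisely the boundary conditions at the $0$-end and/or the $\infty$-end. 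Hence the Lagrangian attached to a restricted line bundle is the very Lagrangian section attached to the original bundle, now viewed in the larger, partially wrapped category where the relaxed end has been wrapped away; over that end the two sections may be chosen differently, but they are Hamiltonian isotopic there, so each mirror face commutes up to natural isomorphism on objects. Upgrading from objects to functors uses the full faithfulness of the $\Phi_{\bullet}$ together with the compatibility, on $\Hom$-spaces, of the algebraic restriction maps on $\Ext$-groups with the continuation and quotient maps of the Fukaya categories.

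That last compatibility is what I expect to be the main obstacle: the morphism complexes in $\F_\tau(\mathbf{A})$, $\F_\tau(m,k)$ and $\F_{\tau,\Wr}$ are computed with progressively larger wrapping, so one must verify that the mirror functor intertwines ``restrict a representative of a class in $\Ext(L_1,L_2)$'' with ``apply the localization-induced continuation map on Floer cohomology'', and this ultimately rests on a careful comparison of the perturbation data defining the three categories. The remaining point, commutativity of the Fukaya face, is bookkeeping once the localization functors of the type constructed in \cite{GPS2} are in hand: on objects the maps $\F_\tau(\mathbf{A})\to\F_\tau(m_0,k_0)$ and $\F_\tau(m,k)\to\F_{\tau,\Wr}$ are inclusions, while $\F_\tau(\mathbf{A})\to\F_\tau(m_\infty,k_\infty)$ is the end-interchange isomorphism followed by an inclusion, and one checks that the interchange of the two ends of $Y$ is implemented consistently in the two factorizations of the localization $\F_\tau(\mathbf{A})\to\F_{\tau,\Wr}$, so that the two composites into $H^0\F_{\tau,\Wr}$ agree. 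Given the earlier theorems, this shows all six faces commute and hence that the cube does.
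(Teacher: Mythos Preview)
Your strategy is the paper's: decompose into six faces, dispose of the sheaf and Fukaya faces as formal, and locate the content in the mirror faces. The paper streamlines the last step in a way you do not. First, the two mirror faces touching $\mathcal{C}_\tau(m,k)$ and $\mathcal{C}_\tau$ are already the content of the earlier compatibility theorem between $\Phi_{(m,k)}$ and $\Phi_{\W}$, so only the two faces touching $\Pic S_\tau(\mathbf{A})$ remain. Second, since the functors $\mathcal{C}_\tau(m,k)\hookrightarrow\mathcal{C}_\tau$ and $H^0\F_\tau(m,k)\hookrightarrow H^0\F_{\tau,\Wr}$ are embeddings (Propositions~\ref{embedc} and~\ref{embedf}), those two remaining faces commute if and only if the single outer square
\[
\begin{tikzcd}
\Pic S_\tau(\mathbf{A}) \arrow[r,"\Phi_{\mathbf{A}}"] \arrow[d,"i_{\mathbf{A}}^*"'] & H^0\F_\tau(\mathbf{A}) \arrow[d] \\
\mathcal{C}_\tau \arrow[r,"\Phi_{\W}"] & H^0\F_{\tau,\Wr}
\end{tikzcd}
\]
does. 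This collapses four checks to one, which the paper then carries out explicitly: on objects by exhibiting a type-$\Wr$ Hamiltonian carrying the type-$\mathbf{A}$ Lagrangian $L_\gamma$ to the planar Lagrangian $L_\gamma^{\Wr}$ (they differ globally, not just at one end as you suggest, but the difference is absorbed by the wrapping), and on morphisms by pushing the monomial $z_1^{n_1}z_2^{n_2}$ through the coordinate change of Equation~\eqref{cocwoo} and matching it with the Floer generator $\mathbf{y}_{\overline{(j,N)}}$. Your ``main obstacle'' is exactly this morphism-level check, and it is resolved not by an abstract compatibility principle but by tracking one explicit generator through both routes.
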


These theorems provide an explicit and geometric example of a homological mirror symmetry correspondence in the case when the complex side is not K\"{a}hler and admits no symplectic structure. Mirror symmetry for non-K\"{a}hler space has been investigated in some other settings. Most recently, in \cite{aldega}, \'{A}lvarez-C\'{o}nsul, de Arriba de la Hera, and Garcia-Fernandez  show that certain homogeneous surfaces, including the Hopf surface, admit $(0,2)$ mirrors with isomorphic half-twisted models.  In \cite{LTY} Lau, Tseng, and Yau use a Fourier-Mukai transform to obtain a correspondence between different Ramond-Ramond flux terms for pairs of Type IIA and Type IIB supersymmetric systems, and in \cite{Po} Popovici  shows that there is a classical mirror symmetry correspondence for the Iwasawa surface in the sense that there is an isomorphism between the Gauduchon cone (parametrizing the space of symplectic structures) and a space of complex deformations of the same surface.  In \cite{AbFFC}, Abouzaid sketches the construction of a candidate SYZ mirror to the Kodaira-Thurston surface using the existence of a fibration of the surface by smooth Lagrangian tori, and states that the Lagrangian sections of the SYZ fibration should correspond to coherent sheaves twisted by a gerbe on an abelian surface.  Our construction bears most resemblance to this last work.

In the remainder of the introduction, we give more details of the construction of the Fukaya categories, as well as a proposed proof of Conjecture \ref{expected}.

\subsection{Mirror symmetry for compact surfaces} 
The Fukaya-Seidel category associated to a Landau-Ginzburg model was constructed by Seidel (\hspace{1sp}\cite{seidelbook}, \cite{SeLF1}) in the case where the the superpotential is a Lefschetz fibration. There have been recent formulations of this category in terms of stops on a Liouville domain in \cite{SyPW} and in terms of Liouville sectors in \cite{GPS1}. Our construction is closest in spirit to the formulation using a Liouville sector with stops. To the symplectic manifold $Y = \R \times \left(S^1\right)^3$ equipped with the standard symplectic form $\omega = d t \wedge d \phi_t + d s \wedge d \phi_s$, we associate an $A_\infty$ category denoted $\F\left(\mathbf{A}\right)$ with objects linear type-$\mathbf{A}$ Lagrangians with unitary flat connection. This category is morally similar to the monomially admissible Fukaya category introduced in work of Hanlon (\hspace{1sp}\cite{Ha}), in which one demands that Lagrangians have specified radial behavior in certain regions. The actual construction of the category proceeds through a localization procedure similar to that in \cite{Ha} or \cite{GPS1} and inspired by work of Abouzaid and Seidel. There is a quasi-isomorphism
\[ \Hom_{\F\left(\mathbf{A}\right)}^*\left(L,L'\right) \to C F^* \left(\psi_H\left(L\right), \psi_{H'}\left(L'\right)\right) \]
where $H$ and $H'$ are two Hamiltonians chosen so that  $\psi_H\left(L\right)$ and $\psi_{H'}\left(L'\right)$ intersect transversely, such that away from a compact set each Hamiltonian is equal to $\epsilon |t|$ for some $\epsilon$, and such that $\psi_H\left(L\right)$ and $\psi_{H'}\left(L'\right)$ are in \emph{correct position}. Note that at infinity we always perturb in the $t$ direction, so the direction we perturb in is independent of the choice of boundary condition.

The conditions on $H$ and $H'$ ensure that   $\psi_H\left(L\right)$ and $\psi_{H'}\left(L'\right)$ are disjoint away from a compact set, so $\Hom_{\F\left(\mathbf{A}\right)}^*\left(L,L'\right)$ is necessarily finite-dimensional. This agrees with the expectation from the mirror symmetry correspondence: for any two coherent analytic sheaves $\mathcal{F}$ and $\mathcal{G}$ on a compact complex space $X$, $\Hom_{D^b \Coh X} \left(\F, \G[i]\right)$ is finite-dimensional by Cartan-Serre finiteness (\hspace{1sp}\cite{CaSe}). This is in contrast to the situation when $X$ is non-compact.

\subsection{Mirror symmetry for $D_{\tau}\left(m,k\right)$ and $\C^* \times E_{\tau}$}
The definition of $\F\left(m,k\right)$ and $\F_{\W}$ originates from the requirement that the diagram of Theorem \ref{diagram} accommodate any pair $\mathbf{A}=\left(\left(m_0,k_0\right),\left(m_\infty, k_\infty\right)\right)$. The category $\F_{\W}$ is an enlargement of the wrapped category  which is the expected mirror to $\C^* \times E_{\tau}$. This enlargement does not respect any notion of a``cylindrical Lagrangian;"  the objects that make up $\F(\W)$ cannot be simultaneously invariant under the flow of any ``Liouville vector field". Rather, the objects of $\F_{\W}$ are those  which are invariant under \emph{some} linear outward pointing vector field.

The categories $\F\left(m,k\right)$ and $\F_{\W}$ are computed by a categorical localization process. The morphisms can be computed by ``wrapping at an end;" this amounts to perturbing by Hamiltonians which are close to quadratic functions of $t$ at the end where the boundary conditions were relaxed.

\subsubsection{Convergence}

There are several technical challenges that arise in this process. Since  $\R \times \left(S^1\right)^3$ is not an exact symplectic manifold, we cannot directly apply results from \cite{GPS1}. We rectify some of the problems that arise by keeping track of the areas of holomorphic discs and by imposing strong geometric conditions on our Lagrangians (for example, we demand that the Lagrangians are exact when lifted to the universal cover $\R^4$). However, because of the lack of exactness (and the resulting lack of \emph{a priori} energy bounds on J-holomorphic curves), we can no longer bound the image of discs which are counted in the $A_\infty$ products for a fixed set of generators of the Floer complexes; thus the output of a $\mu^n$ product can (and will) have nonzero coefficients on infinitely many generators. We accommodate this possibility by completing the Floer complex to allow infinite series. We first define all Floer complexes over the Novikov ring. In this case, the completion is equal to the following:

\[ \widehat{CF}^*\left(L, K\right) =\left\{ \sum_{ y_i \in L \cap K}  c_i y_i \, \mid \, c_i \in \Lambda, \lim_{t\left(y_i\right) \to \pm \infty}  \val\left(c_i\right) \to \infty \right\}.\]
In Section 4 we prove that this completion is closed under taking $A_{\infty}$ products.

The mirror symmetry statements in Theorem \ref{picequiv}  are equivalences of $\C$-linear categories. These categories require a slightly different completion of the Floer complex, which is described in Section \ref{comps}. From the point of view of mirror symmetry, the completion process is a natural operation which is mirror to completing the space of algebraic functions on $D_{\tau}\left(m, k\right)$ and $\C^* \times E_\tau$ to the space of holomorphic functions. We use the basic fact from complex analysis that one can express the domain of convergence of a Laurent series on $\C^*$ using the limit of the $n$th roots of the absolute value of its coefficients. We also show that we have necessary convergence results for the $\mu^2$ products.

\subsection{Proposed proof of mirror symmetry}\label{pproof}
One hopes that the diagram of Theorem \ref{diagram} can be upgraded to a proof of Conjecture \ref{expected}. We expect that $\F\left(\mathbf{A}\right)$ is the pull back of the diagram of gluing $\F\left(m_0, k_0\right)$ and  $\F\left(m_\infty,k_\infty\right)$ by $\F_{\W}.$ To obtain a proof, we would then express $D^b \Coh_{\an} \, S_{\tau}\left(\mathbf{A}\right)$ as the pull back of a diagram that glues the subcategories of $D^b \Coh_{\an}\, D_\tau\left(m_0, k_0\right)$ and $D^b \Coh_{\an}\, D_\tau\left(m_\infty, k_\infty\right)$  generated by objects in $\mathcal{C}_{m_0,k_0}$  (denoted $\mathcal{D}_\tau\left(m_0, k_0\right)$) and $\mathcal{C}\left(m_\infty,k_\infty\right)$ (denoted  $\mathcal{D}_\tau \left(m_\infty,k_\infty\right)$) via the category $\mathcal{D}_\tau$ which is the subcategory of  $D^b \Coh_{\an} \C^* \times E_\tau$ generated by $\mathcal{C}$. The maps $\mathcal{C}_\tau\left(m_0, k_0\right) \to H^0 \F_q^{s}\left(m_0, k_0\right)$ and $\mathcal{C}_\tau \to H^0 \F_{\W,q}^{s}$ could be upgraded to maps $\mathcal{D}_\tau \left(m_0, k_0\right) \to D^\pi \F_\tau \left(m_0, k_0\right)$ and $\mathcal{D} \to D^\pi \left(\F_{\tau, \Wr}\right)$.
Gluing derived categories is in general a process requiring some care, but in this case the fact that there are only two categories being glued simplifies the computations significantly (\hspace{1sp}\cite{KuLu}). 

In this work, we only compute on the level of cohomology categories; this allows us to compute only $\mu^1$ and $\mu^2$. The proposed proof would require calculating the higher products $\mu^n$ in $\F_{\W}$, which would require significantly more computation. It would also require proving that the categories $\mathcal{D}_{m,k}$ are large enough -- that is, that $\mathcal{D}_\tau \left(m, k\right)$ contains the image of the map
\[ D^b \Coh_{\an}\, S_\tau \left(\left(m, k\right),\left(m_\infty,k_\infty\right)\right) \to D^b \Coh_{\an} \, D_\tau \left(m, k\right) \]
induced by restriction for all pairs $\left(\left(m, k\right),\left(m_\infty,k_\infty\right)\right)$.

The idea of proving mirror symmetry diagrammatically is not new; it is found in, e.g., work of Lee \cite{Lee}  (although in this work is in the ``other direction" -- the Landau Ginzburg model is found on the complex side of the mirror pair).

\subsection{Organization} The paper is organized as follows. In Section \ref{classi} we classify the complex surfaces that we consider. In Section \ref{LG}, we give a more complete description of the SYZ heuristics that guide our construction. Section \ref{adlags} is dedicated to the description of the Lagrangians that make up the objects of the Fukaya categories that we describe; Section \ref{ainfs} describes Floer complexes that will be used to define the $\Hom$ spaces in these categories. In Section \ref{fukcatsec} we define the Fukaya categories. Section \ref{lbcx} describes the categories which will make up the complex side of the mirror symmetry correspondence.  Sections \ref{mscalcs} and \ref{cmpct} are dedicated to the proof of Theorems \ref{picequiv} and \ref{openfunctors}.   In the final chapter of the main text, Section \ref{diagsec}, we relate all the categories via Theorem \ref{diagram}. Appendix \ref{appendix} sketches how the mirror correspondence could be expanded to include some torsion sheaves.

The reader who wishes to understand the flavor of the mirror symmetry equivalences is advised to read Section \ref{LG} for intuition and Sections  \ref{mscalcs} and \ref{cmpct} to see the mirror symmetry correspondence. Example \ref{HS} treats mirror symmetry for the Hopf surface, and Example \ref{example1} is a simple example of how calculations in the wrapped category reflect the behavior of analytic line bundles on the surfaces; these two examples capture most aspects of the mirror phenomena.

\subsection{Acknowledgements} This paper was completed as part of the author's Stanford University doctoral thesis supervised by Denis Auroux, and she would like first and foremost to thank him for his guidance, support, and generosity, as well as the many ideas he contributed to this work. She was co-advised by Ravi Vakil, who provided invaluable insight. She would also like to thank Sheel Ganatra, Fran\c{c}ois Greer, and Andrew Hanlon for helpful conversations and suggestions. The author was supported by NSF grants 1147470 and 2002183, by the Stanford DARE and EDGE fellowships, and by the Simons Foundation grant ``Homological Mirror Symmetry and Applications."

\section{Classification of candidate $S_\tau\left(\mathbf{A}\right)$}\label{classi}
\subsection{Logarithmic transformations}
Let $\left(m,k\right) \in \Z^2$ be primitive with $m > 0$. The logarithmic transformation with parameters $\left(m,k\right)$  replaces a smooth fiber $E_b$ on an elliptically fibered surface $S \to B$ by a fiber with multiplicity $m$. Around a fiber $E_b \subset S$, there exists a neighborhood $U$ which we may write as $U=\Delta_v \times \C^*/ \left(\left(v',w\right) \sim \left(v', q\left(v'\right)w\right)\right)$ where $\Delta_v$ is a disc in $\C$ centered around zero and $0<|q\left(v'\right)|<1$. We base change by writing $v' = v^m$ and write 
\[ U'= U_{v}/\left(\left(v,w\right) \sim \left(e^{2 \pi i/m} v, e^{-2 \pi i k/m} w\right)\right). \] The fibration $U' \to \Delta_v$  has a multiple fiber $E'_b$ at 0. There is a biholomorphic map  \[U' \setminus E_b' \to U \setminus E_b \]
given by 
\begin{equation}\label{coc} \left(v,w\right) \mapsto \left(v^m, v^{k} w\right)\end{equation}
and gluing in $U'$ via this map produces a new elliptic surface $S'$ with the property that there is a biholomorphism $S \setminus E_b \to S' \setminus E_b$. Note that we allow the case where $m=1$. For an elliptic surface $S \to B$ with a multiple fiber $E_b$, there exists a unique $k\, \mod\, m$ for which we can perform an inverse logarithmic transformation to produce a new surface $S'$ with no multiple fiber over $b$.

\begin{remark} If $k \equiv k' \, \mod\, m$, then the surfaces obtained by performing logarithmic transformations with parameters $\left(m,k\right)$ and $\left(m,k'\right)$ are locally biholomorphic near the log-transformed fiber but will in general not be globally biholomorphic. \end{remark}

Let  $e^{2\pi i \tau}= q \in \C^*$ with $|q| <1$, and let $E_\tau$ denote the elliptic curve with modular parameter $\tau$. Let 
\begin{align*} D_\tau\left(m,k\right) &=  \C \times E_\tau/ \left(\left(v,w\right) \sim \left(e^{2 \pi i/m} v, e^{-2 \pi i k/m} w\right)\right);        \end{align*}             
when we write $D_\tau\left(m,k\right)$ we will always mean this complex space along with this choice of coordinates. Let $i_{m,k}: \C^* \times E_\tau \to D_\tau\left(m,k\right)$ denote the inclusion. In coordinates $i_{m,k}$ is given by \[\left(z,x\right) \mapsto \left(z^{1/m}, z^{-k/m} x\right).\] In writing this map, and similar maps throughout this section, we make a consistent choice of the $m$-th root in both factors and then note that the map is invariant under such a consistent choice.

 \subsection{Constructing $S_\tau\left(\mathbf{A}\right)$}
Consider the surface $S_\tau\left(\mathbf{A}\right)$ obtained by performing log transforms with parameters $\left(m_0, k_0\right)$ and $\left(m_\infty,k_\infty\right)$ to $\P^1 \times E_\tau$ over $0$ and $\infty$ respectively. (In this subsection we will frequently omit the $\tau$.) It will be useful to consider $S_\tau \left(\mathbf{A}\right)$ as the union of $D_\tau\left(m_0, k_0\right)$ and $D_\tau \left(m_\infty, k_\infty\right)$ glued over their common open set $\C^* \times E$, via the transition map specified by \eqref{coc}. The transition map $D_\tau \left(m_0, k_0\right)\setminus \{v =0\} \to D_\tau \left(m_\infty,k_\infty\right)$ is given by 
 \[ \label{transmap} \left(v,w\right) \mapsto \left(v^{-m_0/m_\infty}, v^{k_0+\left(m_0k_\infty/m_\infty\right)} w\right). \] By construction, each logarithmically transformed surface is elliptic and fibers over $\P^1$; denote the map $S_\tau\left(\mathbf{A}\right) \to \P^1$ by $\pi$.  We now investigate the properties of the surfaces $S(\A)$.

\begin{remark}\label{dualizing} Each $S\left(\mathbf{A}\right)$ has Kodaira dimension $-\infty$. The canonical bundle is given by
\[ \mathcal{K}_{S\left(\mathbf{A}\right)}= \mathcal{O}_{S\left(\mathbf{A}\right)}\left(-\left(\frac{1}{m_0}+\frac{1}{m_\infty}\right)[F]\right)\]
where $[F]$ is the class of the smooth fiber (see \cite{BPV}). In other words, the reduced preimage $D_{\mathbf{A}}= \pi\inv\left(\{0\} \cup \{\infty\}\right)$ satisfies $[D_{\mathbf{A}}] \in |-K_{S\left(\mathbf{A}\right)}|$. \end{remark}

\begin{remark}  The surfaces of the form $S\left(\left(m, k\right),\left(m, -k\right)\right)$ are quotients of $\P^1 \times E$ by $\Z/ m\Z$ and as such are algebraic. In this work we do not prove mirror symmetry results for these algebraic surfaces, but the predicted mirror spaces do agree with the mirror spaces we obtain. \end{remark}
%he surfaces in this case are quotients of $\P^1 \times \E$ by $\Z/\ell_1 \Z$ acting on the first factor and $\Z/\ell_2 \Z$ acting on the second factor, with $\ell_1 \ell_2 = m$. The mirror Landau-Ginzburg model will be the product of the Landau-Ginzburg model for $\P^1/\left(\Z/m \Z\right)$, which is $T^* S^1$ with $\ell_1$ stops $E$ at both component of the boundary at infinity, and the mirror elliptic curve to $E/\left(\Z/\ell_2\right)$. \end{remark}

\begin{remark}The map $S\left(\mathbf{A}\right) \to \P^1$ is more properly thought of as a map to the weighted projective space $\P^1\left(m_0,m_\infty\right)$. \end{remark}

\begin{prop} \label{fundgpprop} For each $\mathbf{A}$,
\[ \pi_1\left(S(\A)\right) = \mathrm{coker} \begin{pmatrix}  m_0 & - m_\infty & 0 \\   k_0 & k_\infty & 0 \\ 0 & 0 & 0\end{pmatrix} : \Z^3 \to \Z^3. \] \end{prop}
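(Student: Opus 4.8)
The plan is to apply the Seifert--van Kampen theorem to the two-chart cover coming from the construction of $S(\A)$. Write $D_0 = D_\tau(m_0,k_0)$ and $D_\infty = D_\tau(m_\infty,k_\infty)$, viewed as the two open charts of $S(\A)$ with $D_0 \cap D_\infty \cong \C^* \times E_\tau$; all three are path connected and $D_0 \cup D_\infty = S(\A)$, so van Kampen gives
\[ \pi_1\bigl(S(\A)\bigr) = \pi_1(D_0) *_{\pi_1(\C^* \times E_\tau)} \pi_1(D_\infty). \]
The strategy is then to compute $\pi_1(D_\tau(m,k))$ together with the map induced by $i_{m,k}$, to identify the two structure maps out of $\pi_1(\C^*\times E_\tau) \cong \Z^3$, to observe that both are surjective, and then to recognize the amalgam as the quotient of $\Z^3$ by the sum of the two kernels --- which will be precisely the image of the matrix in the statement.

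First I would compute $\pi_1(D_\tau(m,k))$. Since $(m,k)$ is primitive, the $\Z/m\Z$-action defining $D_\tau(m,k) = (\C\times E_\tau)/(\Z/m\Z)$ is free: on the central fibre it acts by translation by an $m$-torsion point of $E_\tau$, hence without fixed points, and off the central fibre the first coordinate already moves. So $D_\tau(m,k)$ is a manifold whose universal cover is $\C_v \times \C_u$, where $u$ uniformizes $E_\tau$; its deck group $\Gamma_{m,k}$ is generated by the two translations $\ell_1,\ell_2$ of $\C_u$ by the periods of $E_\tau$ together with a lift $\gamma$ of the generator of $\Z/m\Z$. A short computation shows $\gamma$ commutes with $\ell_1,\ell_2$ and that $\gamma^m = \ell_1^{-k}$, so $\pi_1(D_\tau(m,k)) = \Gamma_{m,k} \cong \Z^3/\langle(m,k,0)\rangle$ on generators $\gamma,\ell_1,\ell_2$. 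Tracing $i_{m,k}(z,x) = (z^{1/m}, z^{-k/m}x)$ through this cover, and writing $\pi_1(\C^*\times E_\tau) = \Z^3 = \langle \mu,e_1,e_2\rangle$ with $\mu$ the loop around a fibre and $e_1,e_2$ the cycles of $E_\tau$, one finds $(i_{m,k})_*(\mu) = \gamma$ and $(i_{m,k})_*(e_i) = \ell_i$; in particular this map is surjective with kernel $\langle(m,k,0)\rangle$.

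Next I would identify the inclusion of $\C^* \times E_\tau$ into the $\infty$-chart. Composing $i_{m_0,k_0}$ with the transition map $(v,w)\mapsto(v^{-m_0/m_\infty}, v^{k_0+m_0k_\infty/m_\infty}w)$ recorded above yields $(z,x)\mapsto(z^{-1/m_\infty}, z^{k_\infty/m_\infty}x) = i_{m_\infty,k_\infty}(z^{-1},x)$ --- that is, $i_{m_\infty,k_\infty}$ precomposed with the orientation-reversing automorphism $z\mapsto z^{-1}$ of $\C^*$. Hence, with respect to the generators $\gamma_\infty,\ell_{1,\infty},\ell_{2,\infty}$ of $\pi_1(D_\infty)$, the second structure map sends $\mu\mapsto\gamma_\infty^{-1}$ and $e_i\mapsto\ell_{i,\infty}$; it is again surjective, now with kernel $\langle(-m_\infty,k_\infty,0)\rangle$. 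Finally, when both legs of an amalgam are surjective the pushout is the base group modulo the product of the two kernels; as $\Z^3$ is abelian, $\pi_1(S(\A)) = \Z^3\big/\bigl(\langle(m_0,k_0,0)\rangle + \langle(-m_\infty,k_\infty,0)\rangle\bigr)$, which is exactly the cokernel of $\left(\begin{smallmatrix} m_0 & -m_\infty & 0\\ k_0 & k_\infty & 0\\ 0 & 0 & 0\end{smallmatrix}\right)$ acting on $\Z^3$. As a check, for $\A = ((1,0),(1,1))$ this gives $\Z$, matching $\pi_1(S^1\times S^3)$.

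The step I expect to be the main obstacle is getting the induced map on $\pi_1$ for the $\infty$-chart exactly right: one must confirm that changing charts over $\infty$ contributes only the sign flip $\mu\mapsto\gamma_\infty^{-1}$ and does not shear the elliptic cycles $e_1,e_2$ into the meridian direction, as it is this computation that determines the signs of the entries $m_0,-m_\infty,k_0,k_\infty$ in the first two columns. Verifying freeness of the $\Z/m\Z$-actions (so that $\C^2$ is genuinely the universal cover of $D_\tau(m,k)$) and the final van Kampen bookkeeping are then routine.
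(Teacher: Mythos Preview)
Your proposal is correct and follows essentially the same approach as the paper: both apply Seifert--van Kampen to the cover by $D_\tau(m_0,k_0)$ and $D_\tau(m_\infty,k_\infty)$ over $\C^*\times E_\tau$, identify each inclusion as killing the class $(m_0,k_0,0)$ (respectively $(-m_\infty,k_\infty,0)$) in $\pi_1(\C^*\times E_\tau)\cong\Z^3$, and conclude that the amalgam is the quotient by the subgroup these generate. Your write-up is more detailed than the paper's (which simply asserts that each inclusion ``amounts topologically to attaching a two-cell'' to the relevant class), but the argument is the same.
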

\begin{proof} Denote the matrix in the statement of the proposition by $M$. We apply the Seifert-Van Kampen theorem to the cover of $S(\A)$ by $D\left(m_0,k_0\right)$ and $D\left(m_\infty, k_{\infty}\right)$ glued over  $\C^* \times E$. Choose the following basis for $\pi_1\left(\C^* \times E\right) = \Z^3$: 
\begin{align} \left(1,0,0\right) &= \{ \left(e^{2 \pi i t},1\right) \mid t \in [0, 1]\};\\
		     \left(0,1,0\right) &= \{\left(1,e^{2 \pi i t}\right) \mid t \in [0, 1] \};\\	
		     \left(0,0,1\right) & = \{ \left(1, 1+t\left(q-1\right)\right) \mid t \in [0,1] \}. \end{align}
The inclusion $\C^* \times E \into D\left(m_0,k_0\right)$ amounts topologically to attaching a two-cell to the class $\left(m_0,k_0,0\right)$; the inclusion $\C^* \times E \into D\left(m_\infty,k_\infty\right)$ attaches a two-cell to the class $\left(-m_{\infty},k_{\infty},0\right)$ (the negative sign originates from the change in coordinates $z \mapsto z\inv$). Thus we may write
\[ \pi_1\left(S(\A)\right) = \Z^3 /\left(m_0,k_0,0\right) \star_{\Z^3} \Z^3 /\left(-m_\infty,k_\infty,0\right) = \coker M. \]

\end{proof}

\begin{prop} For $\A$ with 
\[ \det \begin{pmatrix} m_0 & - m_\infty \\ k_0 & k_\infty \end{pmatrix} = n \neq 0, \]
the universal cover of $S_{\tau}(\A)$ is $\C^2 \setminus \{0\}$. The fundamental group $\pi_1\left(S_{\tau}(\A)\right)$, written as a quotient of $\Z^3$, acts freely on the universal cover by 
\[ \left(a,b,c\right) \cdot \left(z_1, z_2\right) = \left(\exp\left(\frac{2 \pi i\left(k_{\infty} a + m_{\infty} b+m_\infty c \tau \right)}{n}\right) z_1,  \exp\left(\frac{2 \pi i \left(-k_0 a + m_0 b+c m_0\tau\right)}{n}\right) z_2 \right) , \] 
and $S_{\tau}(\A)$ is the quotient of $\C^2 \setminus \{0\}$ by this action.
\end{prop}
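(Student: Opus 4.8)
The plan is to realize $\C^2\setminus\{0\}$ explicitly as the universal cover of $S_\tau(\A)$, by building a covering map $p\colon\C^2\setminus\{0\}\to S_\tau(\A)$ as the ``completion over the two multiple fibers'' of a torus covering of the open part $S_\tau(\A)\setminus D_\A=\C^*\times E_\tau$, and then matching the displayed formula with the resulting deck action. Since $\C^2\setminus\{0\}\simeq S^3$ is simply connected, once $p$ is known to be a covering its deck group must be $\pi_1(S_\tau(\A))$, which by Proposition \ref{fundgpprop} is $\coker M$ (here $M$ is the matrix of that proposition, whose rank is $2$ precisely because $n\neq 0$, so that $\coker M$ is infinite). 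The real work is thus the construction and verification of $p$.

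First the group theory. The displayed formula manifestly defines an action of $\Z^3$ on $\C^2\setminus\{0\}$ by diagonal rescalings, as the exponents are $\Z$-linear in $(a,b,c)$. An element $(a,b,c)$ acts trivially iff both scalars equal $1$; since $\tau$ lies in the upper half-plane and $m_0,m_\infty,n$ are nonzero, this forces $c=0$, after which the conditions read $N\binom{a}{b}\in n\Z^2$, where $N$ is the matrix with rows $(k_\infty,m_\infty)$ and $(-k_0,m_0)$, which has $\det N=n$ and adjugate equal to the top-left block of $M$. Hence $\binom{a}{b}\in\operatorname{adj}(N)\Z^2$, so the kernel of the $\Z^3$-action is exactly $\operatorname{im}M$ and the action descends to a faithful action of $\coker M=\pi_1(S_\tau(\A))$. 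Writing the universal cover of $\C^*\times E_\tau$ as $\C^2_{\zeta,u}$ with $(z,x)=(e^{2\pi i\zeta},e^{2\pi i u})$ and deck group $\Z^3$ in the basis fixed in Proposition \ref{fundgpprop}, the linear change of coordinates $\xi=(k_\infty\zeta+m_\infty u)/n$, $\eta=(-k_0\zeta+m_0u)/n$ converts the sublattice $\operatorname{im}M$ into integer translations; thus $(z_1,z_2):=(e^{2\pi i\xi},e^{2\pi i\eta})$ identifies $\widetilde{\C^*\times E_\tau}/\operatorname{im}M$ with $(\C^*)^2$, the covering map $(\C^*)^2\to\C^*\times E_\tau$ with $(z_1,z_2)\mapsto(z_1^{m_0}z_2^{-m_\infty},[z_1^{k_0}z_2^{k_\infty}])$, and — by direct substitution — the residual $\coker M$-action on $(\C^*)^2$ with the one in the statement.

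Next, the extension. Composing the torus covering with $i_{m_0,k_0}$ and cancelling yields the map $(z_1,z_2)\mapsto[(z_1z_2^{-m_\infty/m_0},\,z_2^{n/m_0})]$ into $D_\tau(m_0,k_0)$, the $m_0$-th root ambiguity being absorbed by the $\mu_{m_0}$-quotient (using $n\equiv m_\infty k_0\bmod m_0$); this expression is holomorphic and a local biholomorphism on all of $\{z_2\neq 0\}$, hence extends the map across the multiple fiber $\{z_1=0\}$. Doing the symmetric thing on $\{z_1\neq 0\}$ with $i_{m_\infty,k_\infty}$, and noting that the two chart maps agree on $(\C^*)^2$ — there they both factor through the same point of $\C^*\times E_\tau$, whose images in $D_\tau(m_0,k_0)$ and $D_\tau(m_\infty,k_\infty)$ are identified in $S_\tau(\A)$ by the transition map \eqref{coc} — one obtains a holomorphic local biholomorphism $p\colon\C^2\setminus\{0\}\to S_\tau(\A)$, surjective and $\coker M$-invariant (invariance holds on the dense set $(\C^*)^2$, hence everywhere). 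The $\coker M$-action is free (an element fixing a point with $z_1\neq 0$ forces $c=0$ and $\binom{a}{b}$ into the index-$|n|$ lattice $\{k_\infty a+m_\infty b\in n\Z\}$, which contains and hence equals $\operatorname{adj}(N)\Z^2$, so $(a,b,0)\in\operatorname{im}M$; symmetrically for $z_2\neq 0$), and it is properly discontinuous and cocompact since the finite-index subgroup generated by the class of $(0,0,1)$ is a copy of $\Z$ acting on $\C^2\setminus\{0\}$ by the classical Hopf contraction $\operatorname{diag}(e^{2\pi i m_\infty\tau/n},e^{2\pi i m_0\tau/n})$ (an expansion if $n<0$). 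Hence $Q:=(\C^2\setminus\{0\})/\coker M$ is a compact complex surface and $p$ descends to a holomorphic local biholomorphism $\bar p\colon Q\to S_\tau(\A)$; being a local biholomorphism of compact surfaces, $\bar p$ is a finite covering, of degree one because over the dense open $\C^*\times E_\tau$ it restricts to the biholomorphism $(\C^*)^2/\coker M\xrightarrow{\,\sim\,}\C^*\times E_\tau$. Therefore $\bar p$ is a biholomorphism, $p$ is the universal cover, and $\pi_1(S_\tau(\A))=\coker M$ acts on it by the stated formula.

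The step I expect to be the main obstacle is the extension-and-gluing of the third paragraph: one must push the multivalued maps $(z_1,z_2)\mapsto(z_1z_2^{-m_\infty/m_0},z_2^{n/m_0})$ carefully through both $\mu_m$-quotients, check that the root ambiguities cancel consistently and that the resulting map is a local biholomorphism across each multiple fiber, and verify that the two chart definitions match on the overlap via \eqref{coc}. Once $p$ is in hand, the freeness, proper discontinuity, cocompactness, and degree-one comparison are all routine.
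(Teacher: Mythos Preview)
Your proposal is correct and follows essentially the same route as the paper: both construct the covering map via the same chart formulas $(z_1,z_2)\mapsto (z_1 z_2^{-m_\infty/m_0},\,z_2^{n/m_0})$ into $D_\tau(m_0,k_0)$ (and its twin into $D_\tau(m_\infty,k_\infty)$), check the $m_0$-th-root ambiguity is absorbed by the $\mu_{m_0}$-quotient, verify $\coker M$-invariance, and check compatibility on the overlap.

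The only genuine difference is the endgame. The paper concludes that $p$ is the quotient map by simply writing down explicit inverses $f^{-1}(v,w)=(vw^{m_\infty/n},w^{m_0/n})$ and $g^{-1}(v,w)=(w^{m_\infty/n},vw^{m_0/n})$ and checking they agree on the overlap (Equations \eqref{map2}--\eqref{map3}); this shows directly that each chart map descends to a biholomorphism on the quotient. You instead argue that $p$ is a surjective local biholomorphism, that the $\coker M$-action is free, properly discontinuous and cocompact (via the Hopf-type contraction by the class of $(0,0,1)$), and then invoke a compactness/degree-one comparison. Both are valid; the paper's explicit inverses are shorter and also make the formulas \eqref{coc2}--\eqref{map3} available for later use, while your topological packaging makes the logical structure (universal cover $\Rightarrow$ deck group $=\pi_1$) more transparent and gives a cleaner freeness check on the coordinate axes than the paper's one-line assertion.
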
 
\begin{proof}Note that the kernel of the map $\rho: \Z^3 \to \left(\C^*\right)^2 \subset \GL_2\left(\C\right)$ given by 
\[ \rho\left(a,b,c\right) = \left(\exp\left(\frac{2 \pi i\left(k_{\infty} a + m_{\infty} b+m_\infty c \tau \right)}{n}\right),  \exp\left(\frac{2 \pi i \left( -k_0 a + m_0 b+ c m_0 \tau\right)}{n}\right) \right)\]
is equal to the image of the map $M: \Z^3 \to \Z^3$. Moreover, 
\[\im\left(\rho\right) \cap \left( \{1 \} \times \C^* \cup \C^* \times \{1\}\right) = \left(1,1\right). \]
Thus $\rho$  descends to the quotient to give a free action of $\coker M$ on $\C^2 \setminus \{0\}$.

Let $S_0 = \{z_2 \neq 0\}$ and $S_{\infty}= \{z_1 \neq 0\}$. Define a map $f: S_0 \to D_{\tau }\left(m_0,k_0\right)$ by
\[ f\left(z_1, z_2\right) = \left(z_1 z_2^{-m_{\infty}/m_0}, z_2^{n/m_0}\right). \]
We first verify that this map is independent of the choice of branch cut of the $m_0$th root. Indeed, note that $k_0 m_{\infty} \equiv n\,\,\, \mod \, m_0$, so
\begin{align*}& \left(\exp\left(\frac{-2 \pi i m_{\infty}}{m_0}\right) z_1 z_2^{-m_{\infty}/m_0}, \exp\left(\frac{2 \pi i n}{m_0}\right) z_2^{n/m_0}\right)\\
& =\left(\exp\left(\frac{-2 \pi i m_{\infty}}{m_0}\right) z_1 z_2^{-m_{\infty}/m_0},  \exp\left(\frac{2 \pi i k_0 m_\infty}{m_0}\right) z_2^{n/m_0}\right)\\
& \sim \left(z_1 z_2^{-m_{\infty}/m_0}, z_2^{n/m_0}\right). \end{align*}
We also note that
\begin{align*}& f\left(\left(a,b,c\right) \cdot \left(z_1,z_2\right)\right)\\ 
&= \left(\exp \left( \frac{2 \pi i a }{m_0}\right) z_1 z_2^{-m_{\infty}/m_0},  \exp^{2 \pi i \tau c n} \exp \left(\frac{-2 \pi i  k_0 a}{m_0} \right) z_2^{n/m_0}  \right); \\
& \sim f\left(z_1,z_2\right);
\end{align*}
so the map is well-defined. $f$ has an inverse given by
\begin{equation} \label{coc2} f^{\inv} \left(v, w\right) = \left(v w^{m_{\infty}/n}, w^{m_0/n}\right) \end{equation}
which one can also check is well-defined. We similarly define a map $g: S_1 \to D_{\tau}\left(m_\infty,k_\infty\right)$ by 
\[ g\left(z_1, z_2\right) = \left(z_1^{-m_{0}/m_\infty} z_2, z_1^{n/m_\infty}\right) \]
with inverse
\begin{equation} \label{coc3} g\inv\left(v,w\right) = \left(w^{m_{\infty}/n}, v w^{m_0/n}\right). \end{equation}
It remains to show that the two maps agree over $S_0 \cap S_{\infty}$, i.e., that $ f\inv \circ i_{m_0,k_0}\left(z,x\right)= g\inv \circ i_{m_{\infty},k_{\infty}}\left(z\inv,x\right)$. We check that
\begin{align} \label{map2} f\inv \circ \left(i_{m_0,k_0}\right)\left(z,x\right)&= f\inv\left(z^{1/m_0}, z^{-k_0/m_0} x\right) =\left(z^{k_{\infty}/n} x^{m_{\infty}/n}, z^{-k_0/n} x^{m_0/n}\right) \\
\label{map3} g\inv \circ \left(i_{m_\infty,k_\infty}\right)\left(z\inv,x\right)&= g\inv\left(z^{-1/m_{\infty}}, z^{k_{\infty}/m_\infty} x\right)  =\left(z^{k_{\infty}/n} x^{m_{\infty}/n}, z^{-k_0/n} x^{m_0/n}\right).
 \end{align}
\end{proof}

Denote  $f\inv \circ \left(i_{m_0,k_0}\right): \C^* \times E_{\tau} \to S_{\tau}(\A)$ by $i_{\A}$.

\begin{remark}[Terminology] Any surface that is the quotient of $\C^2 \setminus \{0\}$ by a free action of a discrete group $G$ is called a Hopf surface. A \emph{primary} Hopf surface is one where $G = \Z$; topologically these surfaces are all $S^1 \times S^3$. A \emph{secondary} Hopf surface is one where $G$ is not $\Z$; equivalently, one which is not topologically $S^1 \times S^3$; equivalently, one with torsion in its fundamental group. For more background, see \cite{KatoTo}.  Thus all the surfaces we consider are Hopf surfaces which are also elliptic. \end{remark}

\begin{remark} In this notation, the classical Hopf surface $S$ is $S(\A)$ where $\A = \left(\left(1,0\right),\left(1,1\right)\right)$. \end{remark}

\section{Landau-Ginzburg models}\label{LG}
In this section, we explain in more detail the idea that lead us to define the Fukaya category based on the philosophy of performing mirror symmetry ``relative to an anti-canonical divisor," given in work of Auroux (\hspace{1sp}\cite{AuMSTD}). Although these heuristics provide the motivation for the definition of the category, they do not enter into the proofs of any of the theorems in this paper.

Let $\pi: S\left(\mathbf{A}\right) \to \P^1$ denote the map which furnishes the fibration. Let $D_0$ denote $\pi\inv \left(\{0\}\right)$ with its reduced scheme structure and  $D_\infty=\pi\inv\left( \{\infty\}\right)$ with its reduced scheme structure, so $D_{\mathbf{A}} = D_0 \sqcup D_\infty$. By design, $S_{\mathbf{A}} \setminus D_{\mathbf{A}} = \C^* \times E$, and $D_{\mathbf{A}}$ is anti-canonical. %The standard symplectic form on $\C^* \times E$ is compatible with the standard complex structure on $\C^* \times E$ \left(which agrees with the complex structure on $\C^* \times E$ pulled back from the embedding $\C^* \times E \subset S\left(\mathbf{A}\right)$\right). 
 The standard symplectic form on $\C^* \times E$ is compatible with the complex structure on $S_{\mathbf{A}} \setminus D_{\mathbf{A}} = \C^* \times E$; moreover, $\C^* \times E$ admits an SYZ fibration $\C^* \times E \to \R \times S^1$ given by
\[ \left(z, x\right) \mapsto \left(\log |z|, \log |x|\right) \in \R \times \left(\R /|2 \pi i \tau| \Z\right). \]
Each fiber $L_{|z|=e^t, |x|=e^s}=L_{t,s}$ bounds no holomorphic discs in $\C^* \times E$. Guided by the hope that we can construct a mirror space to $S\left(\mathbf{A}\right)$ by correcting the mirror to $S\left(\mathbf{A}\right)\setminus D_{\mathbf{A}}$, we define a SYZ mirror $Y$ consisting of the dual fibration over $\C^* \times E$. This consists of the points
\[ Y = \{ \left( L_{t,s}, \nabla\right) \} \]
where $\nabla$ is a unitary flat connection up to gauge equivalence on the rank-one trivial bundle on $L_{t,s}$. Fixing the symplectic structure and complex structure on $Y$ described in \cite{AuMSTD}, one obtains that 
\[ Y = \R \times S^1 \times S^1 \times S^1. \]
The coordinates on $Y$ are $\left(t,s, \phi_t, \phi_s\right)$; the $\phi_t$ and $\phi_s$ encode the holonomy of $\nabla$ around the loops  $\gamma_1=\{ e^{i \theta_t} e^{t}, e^s  \mid \theta_t \in \left[0,2\pi\right)  \}$ and $\gamma_2= \{ e^t,  e^{i \theta_s} e^{s}\mid \theta_s \in \left[0,2\pi\right) \}$, respectively. The symplectic form on $Y$ is given by $\omega=d t \wedge d \phi_t + d s \wedge d \phi_s$, and the complex structure $J$ is given by $J\left(\p/\p t\right)=\p/\p \phi_t$, $J\left(\p/\p s\right)=\p/\p \phi_s$.

Note that each $L_{t, s}$ is a totally real submanifold of $S\left(\mathbf{A}\right)$ which bounds the following holomorphic discs passing through $D_\A$:
\begin{align*} u_{0}^{t,s}: \D \to D\left(m_0, k_0\right) \subset S\left(\mathbf{A}\right); \\
		      u_{\infty}^{t,s}: \D \to D \left(m_\infty,k_\infty\right) \subset S\left(\mathbf{A}\right).\end{align*}
$u_0^{t,s}$  is given by 
\[ z \mapsto \left(e^{t/m_0} z, e^{s-k_0 t/m_0}\right); \] $u_{\infty}^{t,s}$ is given by the analogous map to $D\left(m_\infty,k_\infty\right)$. 
The boundaries of the discs give elements $[\p u_{0}], [\p u_{\infty}] \in \pi_1\left(L\right) \subset \pi_1\left(\C^* \times E\right)$. In the basis for $ \pi_1\left(\C^* \times E\right)$ of Proposition \ref{fundgpprop},  $[\p u_0] = \left(m_0, k_0,0\right)$ and $[\p u_{\infty}] = \left(-m_\infty, k_{\infty},0\right)$ (one notes that these discs represent the cells we attached in calculating the fundamental group). 

From a dimension counting argument similar to that of Lemma 3.1 of \cite{AuMSTD}, we expect that these are the only rigid holomorphic discs passing through the point $\left(e^t,e^s\right)$ on $L_{t,s}$. If $S\left(\mathbf{A}\right)$ were K\"{a}hler and the symplectic form $\omega$ on $S\left(\mathbf{A}\right)\setminus D_{\mathbf{A}}$ were inherited from a symplectic form $\omega$ on $S\left(\mathbf{A}\right)$, we would define a superpotential recording each disc:
\[ W\left(L, \nabla\right) = e^{-\omega\left(u_0^{t,s}[\D]\right)} \hol_{\nabla}  [\left(m_0, k_0\right)] + e^{-\omega\left(u_\infty^{t,s} [\D]\right)} \hol_{\nabla} [\left(-m_\infty,k_\infty\right)].  \]
We would then define a Fukaya-Seidel category associated to the Landau Ginzburg model $\left(Y,W\right)$ with objects the Lagrangians which fiber over paths asymptotic to the positive real axis under $W$. This would give conditions on the Lagrangians at infinity; in the region where each monomial corresponding to a term recording a disc $u$ in the superpotential dominated, the Lagrangians would be restrained to the region where $\hol_{\nabla} \left([\p u]\right)$ is close to the identity.

Of course we cannot define such a $W$ because there is no symplectic form $\omega$! Thus the failure of $S\left(\mathbf{A}\right)$ to be a symplectic manifold is reflected in the failure of the superpotential $W$ to be well-defined, and in particular in the failure of each monomial term to have a well-defined norm. However, the standard symplectic form on the universal cover endows $S(\A)$ with a natural conformal symplectic structure -- that is, $S(\A)$ is locally a symplectic manifold where the symplectic form is defined only up to multiplication by a constant -- and the tori $L_{t,s}$ are Lagrangian for the conformal symplectic form.  We can use this structure to define a ``relative energy" of the two discs via the universal cover; this will reflect the intuition that the $e^{-\omega\left(u_0[\D]\right)} \hol_{\nabla}  [\left(m_0, k_0\right)]$ term should dominate as $t \to -\infty$, and the   $e^{-\omega\left(u_0[\D]\right)} \hol_{\nabla}  [\left(-m_\infty, k_\infty\right)]$ term should dominate as $t \to \infty$.

Let $\tilde{u}^{t,s}_0$ and $\tilde{u}^{t,s}_\infty$ be lifts of the discs $u^{t,s}_0$ and $u^{t,s}_{\infty}$ to $\C^2 \setminus \{0\}$ so that $\tilde{u}^{t,s}_0\left(1\right)=\tilde{u}^{t,s}_{\infty}\left(1\right)$. After applying the change of coordinates of Equations \eqref{coc2}, \eqref{coc3}, \eqref{map2}, \eqref{map3}, we obtain: 
\begin{align*} \tilde{u}^{t,s}_0 \left(z\right)&= \left(\exp \left( \frac{2 \pi i \tau\ell_1 m_{\infty}}{n}\right)  \exp \left(\frac{2 \pi i \ell_2 m_{\infty}}{n}+\frac{t k_{\infty}+s m_{\infty}}{n}\right) z,\right. \\ 
& \quad\quad\quad \left. \exp\left( \frac{2 \pi i \tau \ell_1 m_{0}}{n}\right) \exp \left(\frac{2 \pi i \ell_2 m_{0}}{n}+\frac{-t k_{0}+s m_{0}}{n}\right)\right) ;  \\
 \tilde{u}^{t,s}_\infty \left(z\right)&=  \left(\exp \left(\frac{2 \pi i \tau \ell_1 m_{\infty}}{n}\right)\exp \left(\frac{2 \pi i \ell_2 m_{\infty}}{n}+\frac{t k_{\infty}+s m_{\infty}}{n}\right) ,\right.\\
 & \quad \quad \quad  \left. \exp\left(\frac{2 \pi i \tau \ell_1 m_{0}}{n} \right) \exp \left(\frac{2 \pi i \ell_2 m_{0}}{n}+\frac{-t k_{0}+s m_{0}}{n}\right)z\right), \end{align*}
where $\ell_1, \ell_2$ are integers depending on the choice of lift. We can then calculate the energy of these lifts of the discs using the standard symplectic form 
\[ \omega_{\C^2}=\frac{i}{2} \left(d z_1 \wedge d \overline{z_1}+  d z_2 \wedge d \overline{z_2}\right) \]
on $\C^2 \setminus \{0\}$, which is the form which provides the conformal symplectic structure on $S(\A)$: we obtain

\begin{align*} A_{0}\left(t,s\right) &= \int_{\D}\left( \tilde{u}_{0}^{t,s}\right)^* \omega_{\C^2} = \pi \exp\left(\frac{2 \pi i \tau \ell_1 m_{\infty}}{n} \right) \exp\left(\frac{2\left(t k_{\infty} + s m_{\infty}\right)}{n} \right); \\
 A_{\infty}\left(t,s\right) &= \int_{\D}\left( \tilde{u}_{\infty}^{t,s}\right)^* \omega_{\C^2} =\pi \exp\left(\frac{2 \pi i \tau \ell_1 m_{0}}{n}\right) \exp \left(\frac{2\left(-t k_{0}+s m_{0}\right)}{n}\right). \end{align*}
We observe that the ratio
\[ \frac{A_0 \left(t,s\right)^{m_0}}{A_{\infty}\left(t,s\right)^{m_{\infty}}} = \exp \left(2 t\right); \]
is independent of the choice of lift and of $s$. Thus as $t \to -\infty$, one can say that $A_0\left(t,s\right)$ is becoming small relative to $A_{\infty}$; as $t \to \infty$,  $A_{\infty}\left(t,s\right)$ is becoming small relative to $A_{0}$. 

We thus define a Fukaya category, denoted $\F\left(\mathbf{A}\right)$, with objects Lagrangians $L$ in $Y$ so that for $t << 0$, \begin{equation}\label{condac} \arg \left(e^{2 \pi i \left(m_0 \phi_t + k_0 \phi_s\right)}\right)  = 0 \in \R/2 \pi \Z  \end{equation} when restricted to $L$ and for $t >> 0$, \begin{equation}\label{condbd} \arg\left(e^{2 \pi i \left(-m_{\infty} \phi_t + k_\infty \phi_s\right)}\right)  = 0 \in \R/2 \pi \Z  \end{equation} when restricted to $L$, with the expectation that objects of such a category will correspond to coherent sheaves on $S\left(\mathbf{A}\right)$.  A formal definition of this $A_\infty$ category will be given in Section \ref{adlags}.

\subsection{Stop removal and (partially-)wrapped categories}
There are restriction maps
\begin{align*} \Coh_{\an} S\left(\mathbf{A}\right) \to \Coh_{\an} \left( S\left(\mathbf{A}\right)  \setminus D_{\infty} \right)= \Coh_{\an} \left(D\left(m_0, k_0\right)\right)\\
	              \Coh_{\an} S\left(\mathbf{A}\right) \to  \Coh_{\an} \left(S\left(\mathbf{A}\right) \setminus D_{0}\right) = \Coh_{\an} D\left(m_\infty, k_\infty\right); \end{align*}
Following the ``stop removal" philosophy of \cite{SyPW} or \cite{GPS1}, we expect that the first map corresponds to a localization map $\F\left(\mathbf{A}\right) \to \F\left(m_0, k_0\right)$, where $\F\left(m_0, k_0\right)$ is a partially wrapped category with objects which are Lagrangians which satisfy $m_0 \phi_t + k_0 \phi_s \to 0$ when $t << 0$; and similarly for the second map after an interchange of the ends $Y$. Roughly speaking, $\F\left(m_0, k_0\right)$ should be the partially wrapped category associated to a superpotential which counts discs that pass through $D_{\mathbf{A}} \setminus D_{\infty} = D_0$. Similarly, we expect that the inclusion $ \C^* \times E \subset D\left(m_0, k_0\right)$ corresponds to a localization map  $\F\left(m_0, k_0\right) \to \F_{\W}$ from a partially wrapped category to a fully wrapped category.  We will define such localizations and show that there is a diagram of the form

 \begin{equation}\label{gluingfuk}
\begin{tikzcd}
& \F\left(m_0, k_0\right) \arrow[dr] & \\
\F\left(\mathbf{A}\right) \arrow[ur]\arrow[dr] && \F_{\W} \\ 
& \F\left(m_\infty,k_\infty\right) \arrow[ur]
\end{tikzcd} \end{equation}

These maps differ from the  stop removal maps found in \cite{GPS2} in that they are not essentially surjective; we will show that the boundary conditions impose conditions on the homology classes of the Lagrangians in each category.

\section{Admissible Lagrangians and perturbing Hamiltonians}\label{adlags}
Let $Y=\left(\R \times (S^1)^3, d t \wedge d \phi_t+ d s \wedge d \phi_s\right)$, with coordinates $\left(t,s, \phi_t, \phi_s\right)$.  Let $\pi: \R^4 \to Y$ denote the covering map. The map $Y \to \R \times S^1$ given by the projection to the first two coordinates provides the SYZ fibration seen in Section \ref{LG} on $\C^* \times E$; we refer to the $\phi_t$ and $\phi_s$ coordinates as the angular coordinates. The term \emph{Lagrangian section} refers to a section of this SYZ fibration. We will define three Fukaya categories on $Y$ with increasingly relaxed boundary conditions on their objects. These boundary conditions will control the behavior of the angular coordinates as $|t|$ grows large.

Fix the following notation for the two ends of $Y:$ for $T >0$ let
\[ Y^{-}_T = \{ \left(t,s, \phi_t, \phi_s\right) \in Y \mid t < -T \} ;\]
\[ Y^{+}_T = \{ \left(t,s, \phi_t, \phi_s\right) \in Y \mid t > T \}. \]
 Let $Y_T = Y^+_T \cup Y^-_T$. For a subset $N \subset Y$,  let $N_T^{\pm}= N \cap Y^{\pm}_T$ and let $N_T = N \cap Y_T$.  Let $t,s,\phi_t,\phi_s: Y \to \R$ denote the corresponding coordinate functions.
 
Define basis elements of $\R^4$: $\e_t = \left(1,0,0,0\right), \e_s = \left(0,1,0,0\right), \e_{\phi_t}=\left(0,0,1,0\right), \e_{\phi_s}=\left(0,0,0,1\right)$. Within this section, let $d: \R^4 \times \R^4 \to \R$ denote the Euclidean distance function. 

\subsection{Admissible planar Lagrangians}

\begin{defn}[Admissible Lagrangians] Let $L$ be a Lagrangian submanifold of $Y$ with following properties: \begin{itemize}
 \item A lift of $L$ to $\R^4$ is a section of the fibration $\R^4 \to \R^2$ given by $\left(t, \tilde{s}, \wt{\phi_t}, \wt{\phi_s}\right) \to \left(t, \tilde{s}\right)$.
 \item $\mu^1\left(L\right)=0  \in H^1\left(L,\Z\right)$.
 \item (\emph{planarity}) $L$ is planar at the ends of $Y$: that is, there exists $T > 0$ such that the lift of $L_T^+$ to $\R^4$ lies on the points of an affine plane $P^+$, and similarly for $L_T^-$.
 \end{itemize} 
Then $L$ is an \emph{admissible Lagrangian}. \end{defn}

 \begin{defn} Let $\left(m_0, k_0\right)$ and $\left(m_\infty,k_\infty\right)$ be two primitive elements in $\Z^2$ such that $m_0, m_\infty>0$, and let $\mathbf{A}$ denote the pair $\left(\left(m_0, k_0\right), \left(m_\infty,k_\infty\right)\right)$.
$L$ is a \emph{type-$\mathbf{A}$} Lagrangian if $L$ is an admissible Lagrangian such that there  exists $T > 0$ such that 
 \begin{align}\label{raycond1}  m_0 \phi_t + k_0 \phi_s &\equiv 0 \in \R/\Z \quad \textrm{on\,} L_-^T; \\
   -m_\infty \phi_t + k_\infty \phi_s &\equiv 0 \in \R/\Z \quad \textrm{on\,} L_+^T. \end{align} 
We will refer to this property -- i.e., the property that the projection of the ends of $L$ to the $\left(\phi_t, \phi_s\right)$ plane lie in a line -- as \emph{linearity}. We call the bounded region in which $L$ does not satisfy the linearity conditions the \emph{non-linear region of L}, and its complement the \emph{linear region of L}.
  \end{defn}

 Let $\mathcal{G}\left(\A\right)$ denote the set of type-$\A$ planar Lagrangians equipped with a grading (i.e., a lift of the phase map $L \to S^1$ from the complex volume form on $Y$) and a choice of  $\mathrm{Pin}$ structures as in \cite{seidelbook}, with each Lagrangian remembering the choice of $\A$. We will frequently abuse notation by suppressing the choice of grading and Pin structures, and denote an element of $\mathcal{G}$ by $L$. Let
\begin{align*} \mathcal{G}\left(m, k\right) &= \bigcup_{\left(m',k'\right)} \G\left(\left(m,k\right),\left(m',k'\right)\right) \\
 \mathcal{G}& = \bigcup_{\left(\mathbf{A}\right)} \Gabcd. \end{align*}
 Thus $\mathcal{G}$ is the set of linear admissible Lagrangians.  
 
\subsection{Topology of Lagrangians}
The conditions above give control over the behavior of the Lagrangians at the ends of $Y$.  Let $L$ be an admissible planar Lagrangian of type $\mathbf{A}$. By the linearity and planarity of $L$, as well as the fact that $L$ is Lagrangian, there exists $\frac{d_-}{r_-} \in \Q$, $E_{-} \in \R^4$, $T > 0$ such that a lift of $L_{T}^-$ to $\R^4$ lies on the points of the affine plane
\[ P^- = \left\{ t \begin{pmatrix} 1 \\ 0 \\ - \frac{d_-}{r_-} \frac{k_0^2}{m_0} \\  \frac{d_-}{r_-} k_0 \end{pmatrix} + s \begin{pmatrix} 0 \\ 1 \\  \frac{d_-}{r_-} k_0 \\ - \frac{d_-}{r_-} m_0 \end{pmatrix} + E_- \Bigm\vert t,s \in \R \right\} \] 
and $L_T^+$ similarly lies on an affine plane $P^+$. 
By considering the set $L \cap \{t = t_0\}$ for any fixed $t_0$, we obtain an element $[\gamma_L] \in H_1\left(T^3\right)$. Using the natural basis for $H_1\left(T^3\right)$ from the coordinates $\left(s,\phi_t, \phi_s\right)$, for a Lagrangian with $P^+$ and $P^-$ of the above form and $\frac{d_\pm}{r_\pm}$ reduced with $r_\pm > 0$, we obtain that \[[\gamma_L] = \left(r_-, d_{-} k_{0}, -d_{-} m_{0}\right) =  \left(r_+, -d_{+} k_{\infty}, -d_{+} m_{\infty}\right).\] We conclude that when $m_{\infty} k_0 + m_0 k_{\infty} \neq 0$,  $d_{+}=d_{-}=0$ and $r_{+}=r_-=1$. In all cases we can write $[\gamma_L]$ as $\left(r, d k_0, -d m_0\right)$. This homology class is an invariant of the Lagrangian under any perturbation .

\begin{defn} With $L$ as above, let the quantity $\frac{d}{r}$ denote the \emph{slope} of the Lagrangian, and let $\frac{m d}{r}$ denote the \emph{total slope} of the Lagrangian $L$. \end{defn}

\begin{defn}Let $L, L' \in \G$ be in correct position. Fix lifts $\tilde{L}$ and $\tilde{L}'$ of $L$ and $\tilde{L}'$;  define a subgroup $\Gamma\left(L, L'\right) \subset \Z^3$ by 
\[ \Gamma\left(L,L'\right)=\left\{\hat{n} \in \Z^3 \mid \pi\left(\tilde{L} \cap \tilde{L}'\right) = \pi\left(\tilde{L} \cap\left( \tilde{L}'-\hat{n}\right)\right)\right\}.  \] 
Note that  $\Gamma\left(L,L'\right)$ is a topological invariant of $L$ and $L'$. 
  \end{defn}
  
\begin{defn}[Interpolating Functions]\label{intfun}
We define the following functions, which will be necessary in the subsequent lemma and throughout. \begin{enumerate}[(a)]
\item For $a$ and $b$ in $\R$, choose monotonic functions $\rho_{a}^b: t \to \R$ such that 
\begin{align*}& \rho_{a}^b = \begin{cases} a & t \leq -1 \\
					b & t \geq 1, \end{cases}  ; \\ 
         & \left|\frac{d \rho_a^b\left(t\right)}{dt}\right| \leq |b-a|; \\
          & \rho_{a}^{b}+\rho_{a'}^{b'}=\rho_{a+a'}^{b+b'} \\
& \rho_{a}^b\left(0\right)=\frac{a+b}{2}; \end{align*}
and $\rho_{a}^b$ is linear with slope $b-a$ on $\left(-1/4, 1/4\right)$;

\item Let $\alpha_+, \alpha_-, \epsilon_+, \epsilon_- > 0$ be such that $\alpha_- + \epsilon_- = \alpha_+ + \epsilon_+$. Let $\sigma_{\alpha_-, \epsilon_-}^{\alpha_+, \epsilon_+}: \R \to \R$ be a smooth function with 
 \[  \sigma_{\alpha_-, \epsilon_-}^{\alpha_+, \epsilon_+}\left(t\right) = \begin{cases} \left(\alpha_++\epsilon_+\right) t^2 & t \in [-1/2, 1/2]\\
													 \alpha_\pm t^2 + \epsilon_\pm |t| & \pm t \in \left[1, \infty\right) \end{cases}; \]
such that											 
\[  \left| \frac{d \sigma_{\alpha_-, \epsilon_-}^{\alpha_+, \epsilon_+}\left(t\right)}{dt} \right| > \frac{2 \alpha_+ t + \epsilon_+}{2} \]
for $t \geq 1/2$, and similarly for $t < -1/2$; and 
\[ \sigma_{\alpha_-, \epsilon_-}^{\alpha_+, \epsilon_+}\left(t\right) + \sigma_{\alpha_-', \epsilon_-'}^{\alpha_+', \epsilon_+'}\left(t\right) =  \sigma_{\alpha_-+\alpha_-', \epsilon_-+\epsilon_-'}^{\alpha_++\alpha_+', \epsilon_++\epsilon_+'}\left(t\right). \]

%For $a,b,c \in \R$, $b>0$, $c>a+\left(1/b\right)$, let  $\sigma_{a,b,c}: \R \to [0,1]$ such that
 %% \[ \sigma_{a,b}\left(t\right) = \begin{dcases} 0 & t \in \left(-\infty, a]\\
%											 1 & t \in [c, \infty\right) \end{dcases}\]
	%										 such that $0\leq |\chi'| b$.
%											 Let $\sigma=\sigma_{1, 2,2}$.
	
\item Let $\chi: \R \to [0,1]$ be a symmetric function with 
\[ \chi\left(t\right) = \begin{cases} 1 & |t| < \frac{1}{2} \\						
					0 & |t| > 2 \end{cases}. \]
					
so that for $t > 0$, $0 \leq \chi\left(t\right) \leq 1$ and such that $ |d \chi(t)/dt| <1.$	For $R \in \R^*$, let $\chi_R\left(t\right)=\chi\left(t/R\right)$.		\end{enumerate} \end{defn}

\begin{defn}[Cylic covers]\label{cycliccovers} For $r \in \Z$, let $Y_r \to Y$ denote the $r$-fold cover where the deck transformation group $\Z/r \Z$ acts as $\left(t,s, \phi_{t}, \phi_{s}\right) \to \left(t, s +1, \phi_t, \phi_s\right)$. Let \begin{align*} &C_r = \R \times \R/r \Z; \\ &S^1_r=\R/r \Z, \end{align*} so $S^1_r$ includes into $C_r$ and $Y_r$ fibers over $C_r$. \end{defn}

\begin{lemma} \label{formofh} Let $L \in \G\left(\A\right)$ with $\gamma_{L}=[r, d k, -d m]$,  let $\tilde{L}$ be a lift of $L$ to $\R^4 = T^* \R^2$, and let $h: \R^2 \to \R$ be a function so that $\tilde{L}$ is the graph of the differential $d h$ in $\R^4 = T^* \R^2$. If $d=0$, then we may write $h$ as a function of the following form:
\begin{equation} \label{diff2} h =  a s  + t \rho_{b^-}^{b^+} \left(t\right) + \rho_{0}^c\left(t\right) + G\left(t,\tilde{s}\right)\end{equation}
where $a, b^-, b^+, c \in \R$, and $G: \R^4 \to \R$ is a function which vanishes for $|t| >> 0$ with the property that $G\left(t,\tilde{s}+1\right)=G\left(t,s\right)$.

If $d \neq 0$, we can write $h$ in the following form:
\begin{equation} \label{diff1} h = \frac{-d m \left(s- \left(k/m\right)t \right)^2}{2 r} + a\left(s+ \left(k/m\right) t\right)+ t \rho_{b^-}^{b^+} \left(t\right) + \rho_0^c\left(t\right) + G\left(t,\tilde{s}\right) \end{equation}
where $b^-, b^+ \in \frac{1}{m} \Z$, $c \in \R$, and $G: \R^4 \to \R$ is a function which vanishes for $|t| >> 0$ with the property that $G\left(t,\tilde{s}+r\right)=G\left(t,s\right)$. \end{lemma}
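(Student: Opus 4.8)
The plan is to reconstruct $h$ from the boundary data encoded in $[\gamma_L]$, the planarity at the two ends, and the Lagrangian (closedness) condition, and then to subtract off explicit ``model'' pieces until the remainder is a compactly-supported $\Z^2$-periodic (resp.\ $r\Z$-shifted-periodic) function $G$. First I would work on the cyclic cover $Y_r$ of Definition \ref{cycliccovers}, so that $L$ lifts to a genuine section with a single-valued primitive $h:\R^2\to\R$; the class $[\gamma_L]=(r,dk,-dm)$ read off in Subsection on the topology of Lagrangians then tells us exactly the monodromy of $\tilde L$ around the $s$-circle, namely that translating $s$ by $r$ shifts $(\wt{\phi_t},\wt{\phi_s})$ by $(dk,-dm)$ while $t$ is unchanged. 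In the case $d=0$ this monodromy is trivial, which is why $G$ can be taken genuinely $1$-periodic in $\tilde s$; when $d\neq 0$ the monodromy is linear in $s$, which forces a quadratic term in $h$.

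Next I would pin down the leading behavior at each end using planarity. Since $L^{\pm}_T$ lies on an affine plane $P^{\pm}$, on each end $dh$ is an affine function of $(t,\tilde s)$; writing out the explicit plane $P^-$ displayed above (with direction vectors involving $d_-/r_-$, $k_0$, $m_0$) and matching $dh$ to it shows that, modulo the quadratic correction, $h$ restricted to $Y^-_T$ is $as + b^- t + (\text{const})+(\text{the quadratic piece})$, and similarly on $Y^+_T$ with a possibly different slope $b^+$ in $t$ but—crucially—the \emph{same} coefficient $a$ of the relevant linear $s$-combination and the same quadratic coefficient, because $[\gamma_L]$ is an invariant that agrees when computed from either end (this is exactly the computation $\left(r_-,d_-k_0,-d_-m_0\right)=\left(r_+,-d_+k_\infty,-d_+m_\infty\right)$). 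The interpolating functions $\rho_{b^-}^{b^+}$ and $\rho_0^c$ of Definition \ref{intfun} are precisely designed to glue these two affine ends: $t\rho_{b^-}^{b^+}(t)$ contributes slope $b^-$ in $t$ for $t\ll0$ and $b^+$ for $t\gg0$, while $\rho_0^c(t)$ absorbs the difference of the two additive constants. Subtracting $as + t\rho_{b^-}^{b^+}(t)+\rho_0^c(t)$ (and, when $d\neq0$, the quadratic term $-\tfrac{dm(s-(k/m)t)^2}{2r}$ plus the linear term $a(s+(k/m)t)$) from $h$ leaves a function $G$ whose differential vanishes for $|t|$ large and which has the stated (shifted-)periodicity in $\tilde s$; since $dG$ vanishes at the ends and $G$ is determined only up to a constant we may normalize $G$ itself to vanish there.

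The one genuinely delicate point, and the step I expect to be the main obstacle, is the claim in the $d\neq 0$ case that the quadratic coefficient is exactly $-dm/(2r)$ and that $b^{\pm}\in\tfrac1m\Z$: this requires carefully translating the slope data $d/r$ of the Lagrangian together with the identification $[\gamma_L]=(r,dk,-dm)$ into the Hessian of $h$, and checking that the planar ends $P^{\pm}$—which were written in terms of $d_-/r_-$ and $k_0,m_0$ on one side and $d_+/r_+$ and $k_\infty,m_\infty$ on the other—are compatible with a \emph{single} global quadratic form on the cover $Y_r$. Concretely one must verify that the second fundamental data forced by planarity at $t\to-\infty$ (a plane tilted by $d_-/r_-$ relative to the $(m_0,k_0)$ direction) and at $t\to+\infty$ agree after accounting for the coordinate interchange between the two ends, so that $\partial^2 h$ is constant and equals the asserted value; the integrality $b^{\pm}\in\frac1m\Z$ then comes from requiring $m_0\phi_t+k_0\phi_s\equiv 0\in\R/\Z$ (resp.\ $-m_\infty\phi_t+k_\infty\phi_s$) along the linear region, i.e.\ from the type-$\mathbf A$ condition \eqref{raycond1} rather than from planarity alone. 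Once that rigidity is established the rest is bookkeeping with the interpolating functions.
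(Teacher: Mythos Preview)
Your outline is broadly right in spirit, but there is a genuine gap in the step where you claim that the linear coefficient $a$ is the same at both ends. You attribute this to the topological invariance of $[\gamma_L]$, but $[\gamma_L]\in H_1(T^3;\Z)$ only records the integer \emph{winding} of $(\phi_t,\phi_s)$ as $s$ traverses the circle, i.e.\ the slope data $(r,dk,-dm)$; it says nothing about the real constant value $a^\pm$ that $g_2=\partial h/\partial s$ takes on the planar ends. In the $d=0$ case both ends have $g_2$ constant, but the constants $a^-$ and $a^+$ are \emph{a priori} unrelated by any homological invariant.

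The paper closes this gap with an analytic argument that you do not mention: set $I(t)=\int_0^1 g_2(t,s)\,ds$ and differentiate under the integral; the Lagrangian condition $\partial g_2/\partial t=\partial g_1/\partial s$ together with the $1$-periodicity of $g_1$ in $s$ gives $I'(t)=\int_0^1 \partial g_1/\partial s\,ds=0$, hence $I$ is constant and $a^-=a^+$. (Equivalently, $I(t)=h(t,1)-h(t,0)$, which is independent of $t$ precisely because $\partial h/\partial t$ is periodic in $s$.) This is the one place in the proof where closedness of $dh$ is genuinely used; without it you cannot conclude that the remainder $G$ has $dG=0$ for $|t|\gg 0$, nor that a single constant $c$ suffices to make $G$ vanish at both ends.

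You have also inverted the difficulty. The quadratic coefficient $-dm/(2r)$ and the integrality $b^\pm\in\frac{1}{m}\Z$ are essentially immediate: the first is read off from the explicit form of the planes $P^\pm$ (together with the observation, made just before the lemma, that $d\neq 0$ forces $(m_0,-k_0)=(m_\infty,k_\infty)=(m,-k)$, so the two ends carry the \emph{same} quadratic form); the second follows by substituting the end behaviour into the boundary condition $m\phi_t+k\phi_s\equiv 0$ after the quadratic has been subtracted. The paper dispatches both in a few lines and reduces to the $d=0$ case, where the $I(t)$ argument carries the real content.
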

\begin{proof} Assume first that $d=0$. Then necessarily $r=1$, and $L$ is a Lagrangian section. 
Write
\[ \tilde{L} = \{ t,s, g_1\left(t,s\right), g_2\left(t,s\right) \mid \left(t,s\right) \in \R \times \R \}. \] We have seen that for $|t| >> 0$, $g_1$ and $g_2$ take constant values; let $g_1 = b^-$ for $t << 0$, $g_1= b^+$ for $t>> 0$, and let $g_2 = a^-$ for $t << 0$, $g_2= a^+$ for $t>>0$. We can assume that $h= b^- t +a^- s $ for $t<< 0$.

%Note that for all $t \in \R$,  the function $s \mapsto g_2\left(t, s\right)$ is a degree zero function $S^1 \to S^1$. 
Consider the function $I: \R \to \R$ defined by 
\[ I\left(t\right) = \int_{0}^1 g_2 \left(t, s\right) \, ds. \]
Note that  $I\left(t\right)=a^-$ for $t << 0$, and  $I\left(t\right)=a^+$ for $t >> 0$. 
Also note that 
\begin{align} \frac{d I}{d t} &=  \int_{0}^1  \frac{\p g_2}{\p t}\left(t, s\right) \, ds \\
			   & = \int_{0}^1 \frac{\p g_1}{\p s}\left(t,s\right)  \, d s \\
			   & =0. \end{align}
Thus $I\left(t\right)$ is constant with value $a = a^- = a^+$.
Let  \[ c = \int_{-\infty}^\infty  g_1\left(t,0\right) - \frac{d}{d t} \left(t \rho_{b^-}^{b^+} \left(t\right)\right) dt;  \] 
this quantity is well-defined since the integrand is bounded and vanishes outside of a compact set. 
Now consider the real-valued function $G$ defined by
\[ G\left(t,s\right) = h\left(t,s\right) -  \left(a s+ t \rho_{b^-}^{b^+} \left(t\right) +\rho_{0}^c\left(t\right)\right). \]
We wish to show that $G$ is a compactly supported function on $\R \times \R/\Z$. By construction, $G$ vanishes identically for $t<<0$. For $t >> 0$, note that 
\begin{align*} G\left(t,s\right) = \int_{-\infty}^t \left(g_1\left(\tau, 0\right) - \frac{d}{d \tau} \left( \tau \rho_{b^-}^{b^+} \left(\tau\right)+\rho_0^c\left(\tau\right) \right)  \right)\, d \tau + \int_{0}^s \left( g_2\left(t, \sigma\right) \, - a \right)\,d \sigma  = 0. \end{align*} Finally note that
\[ G\left(t,s+1\right) -G\left(t,s\right)=  -a + \int_{s}^{s+1} g_2\left(t, \sigma\right) \, d \sigma  = -a +I\left(t\right)= 0. \] Thus $G$ is of the desired form. 

Now assume that $\tilde{L}$ is a Lagrangian section and $d \neq 0$. Then necessarily $\left(m_0, -k_0\right)=\left(m_{\infty}, k_{\infty}\right)=\left(m,-k\right)$. We can write
\[ \tilde{L} = \{ t,s, g_1\left(t,s\right), g_2\left(t,s\right) \mid \left(t,s\right) \in \R \times \R \} \]
where for $|t| >> 0$, 
\begin{align*} g_1= \frac{d k}{r} \left(s- ({k}/{m}) t\right) + b^{\pm} \\
		      g_2 =  -\frac{d m}{r} \left(s- ({k}/{m}) t\right) + a^{\pm}. \end{align*}
We can assume that for $t << 0$, \[ h= -\frac{d m}{2 r}\left(s- ({k}/{m}) t\right)^2+ b^- t + a^- s.\] Let 
\begin{align*} h' &= h +  \frac{d m}{2 r} \left(s- ({k}/{m}) t\right)^2\\
		     g_1' &=  g_1-  \frac{d k}{r} \left(s- ({k}/{m}) t\right)\\
		    g_2'& =g_2+ \frac{d m}{r}\left(s- ({k}/{m}) t\right). \end{align*}		    
		     Then we may apply the above arguments to $h', g_1', g_2'$ to conclude again that 
\[ h'  =  a s  + t \rho_{\left(b^-\right)'}^{\left(b^+\right)'} \left(t\right) + \rho_{0}^c\left(t\right) + G\left(t,\tilde{s}\right). \]
Let $\left(b^{\pm}\right)'=b^{\pm} + a \left(k/m\right)$. The boundary conditions imply that
\[ m \left( \left(b^{\pm}\right)'+ a\left(k/ m\right)\right) + k\, a \in \Z \]
so $m \left( b^{\pm}\right)' \in \Z$. Thus we may write $h$ in the desired form:
\[ h = h' -  \frac{d m}{2 r} \left(s- \left(k/m\right) t\right)^2= -\frac{d m}{2 r}\left(s- \left(k/m\right)t \right)^2 + a\left(s-\left(k/m\right)t \right)+ t \rho_{\left(b^-\right)'}^{\left(b^+\right)'} \left(t\right) + \rho_0^c + G\left(t,\tilde{s}\right).  \]

Finally, assume that $d \neq 0$, but $r \neq 1$.  Let $L_r$ denote a lift of $L$ to $Y_r$; note that $L_r$ is a section of the fibration $Y_r \to C_r.$ Then the above methods will apply, and we can conclude that $h$ is of the desired form.
\end{proof}

\subsection{Perturbing Hamiltonians} 
For a Hamiltonian function $H: Y \to \R$, we denote the time-$1$ flow of the Hamiltonian vector field $X_H$ by $\psi_H$. We first define the class of Hamiltonians that we will use to perturb our Lagrangians.

\begin{defn}\label{pertHdef}A \emph{perturbing Hamiltonian} is a function $H:Y \to \R$ depending only on $t$ and $s$ such that there exists $T >0$ such that
\begin{align*} H\left(t, s\right) &=\alpha_{+} t^2 + \epsilon_+| t |+ f_+\left(t,s\right) \textrm{ on } Y_T^+ \\
 H\left(t,s\right) &=\alpha_{-} t^2 + \epsilon_- |t| + f_-\left(t,s\right) \textrm{ on } Y_T^- . \end{align*}
where $\alpha_{\pm} \in \Q$, $\alpha_{\pm}, \epsilon_{\pm} \geq 0$,
\[ \alpha_- + \epsilon_+ = \alpha_- + \epsilon_-, \] and $f_{\pm} : \R \times S^1 \to \R$ are smooth functions that satisfy the following constraints:
\begin{enumerate}[(a)]
\item there exists $N_{\pm} \in \N$ such that $f_{\pm}\left(t +m N_{\pm}, s\right) = f_{\pm}\left(t, s\right)$ for all $t,s \in Y_T$, for all $m \in \Z$.
\item there exists $t_0 > T$ and $\kappa^{\pm} > 0$ such that $f_{\pm}\left(t,s\right) = 0$ if $|t-t_0|< \kappa^{\pm}$. (Note that then $f\left(t,s\right)=0$ for all $t>T$ such that there exists $m \in \N$ such that $|t-t_0 + m N| < \kappa$.) For each such $f$, fix a $\kappa\left(f_\pm\right) < 1$.
\item $\|f_{\pm}\|_{C^2} <\min\left\{ \frac{\kappa\left(f_\pm\right)^2}{4},  \frac{\alpha_{\pm}}{8} \right\}.$
\end{enumerate}

 We will use $f_{\pm}\left(H\right)$, $\alpha_{\pm}\left(H\right)$, and $\epsilon_{\pm}\left(H\right)$ to mean the corresponding parameter of $H$ at infinity. We define a partial order on perturbing Hamiltonians by setting $H' > H$ if $H' > H$ outside of a compact set. 
   \end{defn}

\begin{defn}  A perturbing Hamiltonian is
 \begin{enumerate}[(a)]
 \item \emph{a type-$\A$ Hamiltonian} if $H$ satisfies 
 \begin{align*} & \alpha_{\pm}\left(H\right) = f_{\pm}\left(H\right) = 0; \\
 & 0 \leq \epsilon_{-} < \frac{1}{m_0}; \\
 & 0 \leq \epsilon_{+} < \frac{1}{m_{\infty}}. \end{align*}
\item \emph{a type-$(m,k)$ Hamiltonian} if $H$ satisfies 
\begin{align*} & \alpha_{-}\left(H\right)=f_{-}\left(H\right)=0. \\
			& 0 \leq \epsilon_{
			-}\left(H\right) < \frac{1}{m_0}.
	 \end{align*}
			\end{enumerate}
\end{defn}

\begin{remark}[Universal notation] In the next section we will define Fukaya categories $\Facbd$, $\F\left(m_0, k_0\right)$, and $\F_{\W}$, the objects of which will (roughly) correspond to Lagrangians in $\Gabcd$, $\G\left(m_0, k_0\right)$, and $\G$, respectively. We will frequently have statements that will have versions for each category. We will shorten these statements by referring to the labelling by $I$; the absence of a label implies that we are working in the $\W$ setting. 

In each case, we refer to the ends of $Y$ where the boundary conditions are relaxed as the \emph{wrapped ends}, so there are no wrapped ends in the $\A$ setting, one wrapped end in the $\left(m,k\right)$ setting, and two wrapped ends in the $\W$ setting.

 \end{remark}

Let $\mathcal{G}_P\left(I\right)$ denote the set of Lagrangians which may be written as $\psi_{H}\left(L\right)$, where $H$ is a type-$I$ Hamiltonian. Note that  $\mathcal{G}_P\left(\A\right) \subset \mathcal{G}_P\left(m_0,k_0\right) \subset \mathcal{G}_P$. We now begin investigating the properties of such perturbed Lagrangians. 

\begin{prop}[Periodicity of Lagrangians at ends] Let $L \in \G_P$. Then there exists $T >0$ and $N \in \N$ such that 
\[ L_{T}^{\pm} \pm N \hat{e}_t = L_{T\pm{N}}^{\pm}. \]
\end{prop}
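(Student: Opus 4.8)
Write $L=\psi_H(L_0)$ with $L_0\in\mathcal G$ and $H$ a perturbing Hamiltonian in the sense of Definition \ref{pertHdef}; since type-$\mathbf A$ and type-$(m,k)$ Hamiltonians are instances of these, it suffices to treat this case. The plan is to describe the lift of $L$ to $\mathbb R^4$ near each end of $Y$, observe that its angular coordinates there are affine in $(t,\tilde s)$ (plus a quadratic in $\tilde s-(k/m)t$) with all $t$-slopes \emph{rational}, and then choose $N$ to clear the resulting denominators.

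First I would record the effect of $\psi_H$. Since $H$ depends only on $t$ and $s$ and $\omega=dt\wedge d\phi_t+ds\wedge d\phi_s$, the Hamiltonian vector field is $X_H=(\partial_t H)\,\partial_{\phi_t}+(\partial_s H)\,\partial_{\phi_s}$; in particular $\psi_H$ fixes $(t,s)$ and acts by $(\phi_t,\phi_s)\mapsto(\phi_t+\partial_t H,\ \phi_s+\partial_s H)$, so $L$ again lifts to a section of $\mathbb R^4\to\mathbb R^2_{(t,\tilde s)}$ and it is enough to track that lift at the two ends. Fix $T>0$ so large that, beyond $\pm T$, both $L_0$ is planar -- so Lemma \ref{formofh} supplies a generating function $h$ for its lift with the correction term $G$ already vanishing -- and $H$ is in its normal form $\alpha_\pm t^2+\epsilon_\pm|t|+f_\pm(t,s)$. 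On $\{\pm t>T\}$, Lemma \ref{formofh} then shows that $\partial_t h$ and $\partial_s h$ are affine in $(t,\tilde s)$ together with the derivatives of $-\tfrac{dm}{2r}(\tilde s-(k/m)t)^2$, with respective $t$-coefficients $-\tfrac{dk^2}{rm}$ and $\tfrac{dk}{r}$ -- both zero when $d=0$ -- which are rational. Adding the perturbation, which contributes $\partial_t H=2\alpha_\pm t+\partial_t f_\pm$ up to a $t$-independent constant and $\partial_s H=\partial_s f_\pm$, keeps the $t$-coefficients rational because $\alpha_\pm\in\Q$, and otherwise introduces only the functions $\partial_t f_\pm,\partial_s f_\pm$, which are periodic in $t$ with integer period by Definition \ref{pertHdef}.

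Now I would choose $N\in\N$ divisible by $rm_0$, by $rm_\infty$, by the $t$-periods of $f_-$ and $f_+$, and by the denominators of $2\alpha_-$ and $2\alpha_+$. For a point of the lift of $L$ lying over $(t,\tilde s)$ with $\pm t>T$, translation by $N\hat{e}_t$ carries it over $(t+N,\tilde s)$ and changes its angular coordinates by $N$ times the rational $t$-slopes recorded above (the $f_\pm$-terms being unchanged under $t\mapsto t+N$, the constant being unaffected); by the choice of $N$ each such change lies in $\Z$, hence is trivial in $Y$, so the translate equals the point of $L$ lying over $(t+N,\tilde s)$. Ranging over all such $(t,\tilde s)$, and using that $\pi\colon\mathbb R^4\to Y$ intertwines $e_t$ with $\hat{e}_t$, this yields both inclusions in the asserted identity $L^{\pm}_T\pm N\hat{e}_t=L^{\pm}_{T\pm N}$ at each end.

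The only input here beyond bookkeeping is the rationality -- of $\alpha_\pm$ (built into Definition \ref{pertHdef}) and of the angular slopes of $L_0$ at the ends (Lemma \ref{formofh}) -- which is precisely the commensurability with the lattice of integer $t$-translations that makes a finite period $N$ exist. Everything else is organizing the three settings $\F\left(\mathbf A\right)$, $\F\left(m,k\right)$, $\F_{\W}$ and the $\tilde s$-versus-$s$ and homology-class ($r$) bookkeeping uniformly, and I expect that uniform organization to be the only (mild) obstacle; the possibly irrational parameters $\epsilon_\pm$ cause no trouble precisely because they do not depend on $t$.
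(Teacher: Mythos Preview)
Your proposal is correct and follows the same approach as the paper: the paper's proof is the single sentence ``This follows from the planarity of the Lagrangians, the rationality of $\alpha_{\pm}$, and the periodicity of the $f_{\pm}$,'' and you have simply unpacked these three ingredients explicitly---planarity yields rational $t$-slopes for the angular coordinates of $L_0$ at infinity, $\alpha_\pm\in\Q$ keeps the perturbed $t$-slope rational, and the integer period of $f_\pm$ handles the non-affine remainder---then chosen $N$ to clear all denominators simultaneously. Your observation that the (possibly irrational) $\epsilon_\pm$ contribute only a $t$-independent constant to $\partial_t H$ on each end, and hence are invisible to the translation, is the one point the paper leaves entirely implicit.
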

\begin{proof} This follows from the planarity of the Lagrangians, the rationality of $\alpha_{\pm}$, and the periodicity of the $f_{\pm}$. \end{proof}

\begin{prop} 
\begin{enumerate}[(a)]
\item Let $H_0$ and $H_1$ be type-$\A$ Hamiltonians such that $H_0 > H_1$ and let  $L_0$ and $L_1$  be type-$\A$ Lagrangians. Then $\psi_{H_0}(L_0)$ and $\psi_{H_1}(L_1)$ are disjoint outside of a compact set. 
\item  Let $H_0$ and $H_1$ be Hamiltonians which are $\left(m,k\right)$-suitable for type-$\left(m,k\right)$ Lagrangians $L_0$ and $L_1$ such that $H_0 > H_1$. Then $\psi_{H_0}(L_0)$ and $\psi_{H_1}(L_1)$ are disjoint on $Y_T^{-}$.
\end{enumerate} \end{prop}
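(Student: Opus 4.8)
The plan is to localize everything to the two ends $Y_T^{\pm}$, where the perturbing Hamiltonians are pure functions of $t$ of the form $\epsilon_{\pm}(H)\,|t|$, so that their time-$1$ flows act there simply by translating the angular coordinate $\phi_t$; the linearity conditions \eqref{raycond1} of $L_0$ and $L_1$ are then shifted by amounts that the inequalities $\epsilon_-<1/m_0$ and $\epsilon_+<1/m_\infty$ built into the definitions of type-$\mathbf{A}$ and type-$(m,k)$ Hamiltonians force to be distinct in $\R/\Z$, whence disjointness.

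Concretely I would first compute the Hamiltonian vector field near an end. Since $\omega=dt\wedge d\phi_t+ds\wedge d\phi_s$ and a perturbing Hamiltonian $H$ is a function of $(t,s)$ only, $X_H=\pm\bigl(\partial_t H\,\partial_{\phi_t}+\partial_s H\,\partial_{\phi_s}\bigr)$, the overall sign depending only on the sign convention. On $Y_T^-$ a type-$\mathbf{A}$ Hamiltonian, and equally a type-$(m,k)$ Hamiltonian, equals $\epsilon_-(H)\,|t|$; there $\partial_s H=0$, so $X_H=\mp\epsilon_-(H)\,\partial_{\phi_t}$ is constant, its flow preserves $\{t<-T\}$, and $\psi_H$ acts on $\{t<-T\}$ by $(t,s,\phi_t,\phi_s)\mapsto(t,s,\phi_t\mp\epsilon_-(H),\phi_s)$; for a type-$\mathbf{A}$ Hamiltonian the same holds at the positive end with $\phi_t\mapsto\phi_t\pm\epsilon_+(H)$. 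In particular $\psi_H$ is affine near the ends (so preserves planarity and linearity), and for $T$ large enough — larger than the relevant thresholds of $L_i$ and of $H_i$ — $\psi_{H_i}(L_i)\cap Y_T^-$ is the $\phi_t$-translate of $L_i\cap Y_T^-$ by $\mp\epsilon_-(H_i)$.

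Next I would track the boundary conditions. The function $m_0\phi_t+k_0\phi_s\colon Y\to\R/\Z$ is well-defined, and it vanishes identically on the negative end of a type-$\mathbf{A}$ Lagrangian; so after the translation above it is $\equiv\mp m_0\epsilon_-(H_0)$ on $\psi_{H_0}(L_0)\cap Y_T^-$ and $\equiv\mp m_0\epsilon_-(H_1)$ on $\psi_{H_1}(L_1)\cap Y_T^-$, with the same sign. These two values coincide in $\R/\Z$ only if $m_0\bigl(\epsilon_-(H_0)-\epsilon_-(H_1)\bigr)\in\Z$; but $H_0>H_1$ outside a compact set reads on $Y_T^-$ as $\epsilon_-(H_0)|t|>\epsilon_-(H_1)|t|$, forcing $\epsilon_-(H_0)>\epsilon_-(H_1)$, and since both lie in $[0,1/m_0)$ we get $0<m_0\bigl(\epsilon_-(H_0)-\epsilon_-(H_1)\bigr)<1$, a contradiction. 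Hence $\psi_{H_0}(L_0)$ and $\psi_{H_1}(L_1)$ lie in disjoint level sets of $m_0\phi_t+k_0\phi_s$ over $Y_T^-$ and are disjoint there. Only $\alpha_-(H_i)=f_-(H_i)=0$ and $\epsilon_-(H_i)<1/m$ were used, so the same argument (with $m$ in place of $m_0$) proves part~(b).

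For part~(a) I would repeat this at the positive end with the linearity condition $-m_\infty\phi_t+k_\infty\phi_s\equiv 0$, the translation $\phi_t\mapsto\phi_t\pm\epsilon_+(H_i)$, and the bound $\epsilon_+(H_i)<1/m_\infty$, obtaining disjointness on $Y_T^+$; combined with the negative end this yields disjointness on $Y_T=Y_T^+\cup Y_T^-$, i.e.\ outside the compact set $[-T,T]\times(S^1)^3$. The argument is entirely elementary once the flow at the ends is identified; the one point that needs care is the sign/coefficient bookkeeping — which end translates $\phi_t$ in which direction and by which integer multiple of $\epsilon_\pm$ — together with the observation that the inequalities $\epsilon_-<1/m_0$ and $\epsilon_+<1/m_\infty$ are exactly what keeps the shifted boundary value nonzero in $\R/\Z$. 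I do not expect any analytic obstacle here.
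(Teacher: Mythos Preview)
Your proof is correct and follows essentially the same approach as the paper: both arguments observe that on the end $Y_T^-$ the flow of a type-$\mathbf{A}$ (or type-$(m,k)$) Hamiltonian shifts $\phi_t$ by a constant, so that $m_0\phi_t+k_0\phi_s\equiv m_0\epsilon_-(H_i)$ on $\psi_{H_i}(L_i)$, and then use $0<\epsilon_-(H_0)-\epsilon_-(H_1)<1/m_0$ to conclude these level sets are disjoint (and similarly at the positive end). The paper's version is simply terser, omitting the explicit Hamiltonian-vector-field computation that you spell out.
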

\begin{proof} Note that points $\left(t,s, \phi_t,\phi_s\right)$ on $\psi_{H_i}(L_i)$ with $t << 0$ satisfy \[m_0 \phi_t + k_0 \phi_s = m_0 \,\epsilon_-\left(H_i\right).\] Since  $H_0 > H_1$, \[ 0< \epsilon_-\left(H_0\right)- \epsilon_-\left(H_1\right) < \frac{1}{m_0},\] and the two Lagrangians are disjoint for $t << 0$. The same reasoning shows disjointness for $t >>0$ in the $\A$ setting.
 \end{proof}
 
We cannot ensure disjointness of perturbed Lagrangians at an end with relaxed boundary conditions; instead we ensure transversality. We first note that any Lagrangian in $\G_P$ is close to a plane determined by the parameters $\alpha_{\pm}$ and $\epsilon_{\pm}$. Let $L \in \G$, and let $H$ be a perturbing Hamiltonian. We can write $H= H' + f$, where $H'$ is quadratic at the ends, and the decomposition is unique at the ends. Then $\psi_{H'}\left(L\right)$ is planar at infinity: there exists $T >0$ such that for any lift $\psi_{H'}\left(\widetilde{L}\right)$ to $\R^4$, there exist  
affine planes $P^+, P^- \subset \R^4$ such that
\[ \psi_{H'}\left(\tilde{L}_T^\pm\right) \subset P^{\pm}. \]
Concretely, we may write 
\begin{equation}\label{planes} P^\pm= \left \{ x,  \begin{pmatrix}-\frac{d}{r}  \frac{k_0^2}{m_0} +\alpha_{\pm}(H)  &  \frac{d}{r}  k_0 \\ \frac{d}{r}  k_{0} &- \frac{d}{r} m_{0}  \end{pmatrix} x + E_i  \Bigm\vert x \in \R^2 \right\}. \end{equation}

\begin{defn} For a Lagrangian $\tilde{L}$ in $\R^4$ which is the lift of a Lagrangian $L$ in $\G_P$, we call the planes $P^+$ and $P^-$ the \emph{corresponding planes} to $\tilde{L}$ and to $L$; we will also refer to the corresponding planes of the Lagrangian $L$ itself. In the latter case, the ambiguity in the lift will be immaterial. \end{defn}

Since $H'$ and $f$ Poisson-commute, we may write
\[ \psi_{H}\left(L_{T}^{\pm}\right)= \psi_{f}\left( \psi_{H'}\left(L_{T}^\pm\right)\right). \]
The $C^2$ bounds on $f$ bound the Hausdorff distance $d_H$ between a perturbed Lagrangian $\psi_{H}\left(L\right)$ and its corresponding planes at infinity:
 \begin{equation} \label{boundfromp} d_H(P_T^\pm, \psi_{H}\left(L_{T}^\pm\right))\leq
 \left(\psi_{H'}\left(L_T^\pm\right) - \psi_{H}\left(L_{T}^\pm\right)\right) \leq  \|\psi_{f_\pm}-\Id_{Y^\pm}\|_{C^{0}} \leq \|D f_{\pm}\|_{C^0} \leq \frac{\kappa_{\pm}^2}{4}. \end{equation}

\subsection{Correct positioning of Lagrangians}
We now investigate to what extent we can ensure transversality by imposing conditions on the perturbing Hamiltonians. The following lemma is key to many of the following arguments:
\begin{lemma}\label{transint}Let $L$ and $L'$ be two admissible Hamiltonians of type $\mathbf{A}=\left(\left(m_0,k_0\right),\left(m_\infty, k_\infty\right)\right)$ and $\mathbf{A'}=\left(\left(m_0',k_0'\right),\left(m_\infty', k_\infty'\right)\right)$ and slope $n$ and $n'$ respectively. 
 \begin{enumerate}[(a)] \item If $m_0 n \neq m_0' n'$ or $k_0 n \neq k_0' n'$, then there exists a constant $K\left(L, L'\right)$ such that for any two perturbing Hamiltonians $H, H'$ such that  $\alpha_\pm\left(H\right)-\alpha_\pm\left(H'\right) > K\left(L, L'\right)$, the corresponding planes $P^\pm$ and $(P^\pm)'$ of $\psi_{H}\left(L\right)$ and $\psi_{H'}\left(L'\right)$ intersect transversely.
\item If $m_0 n=m_0' n'$ and $k_0 n = k_0' n'$ then if $H> H'$, the corresponding planes  $P^\pm$ and $(P^\pm)'$ of $\psi_{H}\left(L\right)$ and $\psi_{H'}\left(L'\right)$ intersect in a line with a constant value of $t$ if $\alpha_{\pm}(H) -\alpha'_\pm(H') > 0$. In this case set $K\left(L,L'\right)=0$.
\end{enumerate}\end{lemma}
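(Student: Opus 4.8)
The statement is essentially linear algebra about the affine planes $P^\pm$ and $(P^\pm)'$ described in \eqref{planes}, so the plan is to reduce both parts to a computation with the $2\times 2$ matrices appearing there. First I would fix lifts and write out, using \eqref{planes}, the explicit form of the corresponding planes: $P^\pm$ is the graph over the $(t,s)$-plane of the linear map
\[ M_\pm = \begin{pmatrix} -\tfrac{d}{r}\tfrac{k_0^2}{m_0} + \alpha_\pm(H) & \tfrac{d}{r} k_0 \\ \tfrac{d}{r} k_0 & -\tfrac{d}{r} m_0 \end{pmatrix}, \]
and $(P^\pm)'$ the graph of the analogous matrix $M_\pm'$ built from $(m_0',k_0')$, the slope $n'=d'/r'$, and $\alpha_\pm(H')$. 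Two such graphs in $\R^4$ intersect transversely precisely when the difference $M_\pm - M_\pm'$ is invertible, and they intersect in an affine line exactly when $M_\pm - M_\pm'$ has rank $1$; moreover when the plane of intersection is the graph of a rank-$1$ map whose kernel is the $s$-direction, the intersection sits at a constant value of $t$. So the whole lemma becomes: control the rank of $M_\pm - M_\pm'$ as a function of the gap $\alpha_\pm(H)-\alpha_\pm(H')$.

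For part (b), the hypotheses $m_0 n = m_0' n'$ and $k_0 n = k_0' n'$ say exactly that the total slopes agree in the relevant sense, i.e. $\tfrac{d}{r} m_0 = \tfrac{d'}{r'} m_0'$ and $\tfrac{d}{r} k_0 = \tfrac{d'}{r'} k_0'$ (using $n = d/r$, $n' = d'/r'$); this forces $\tfrac{d}{r}\tfrac{k_0^2}{m_0} = \tfrac{d'}{r'}\tfrac{k_0'^2}{m_0'}$ as well, since $\tfrac{k_0^2}{m_0} = \tfrac{(k_0 n)^2}{m_0 n}\cdot\tfrac{1}{n}$ and both $k_0 n$, $m_0 n$ and $n$-ratios match up. Consequently $M_\pm - M_\pm' = \operatorname{diag}(\alpha_\pm(H)-\alpha_\pm(H'), 0)$, which has rank exactly $1$ when $\alpha_\pm(H) - \alpha_\pm(H') > 0$, with kernel the $s$-axis. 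Hence the planes meet in a line along which $t$ is constant, and one sets $K(L,L') = 0$; this part is a direct computation once the normalization of the slopes is unwound, so I would just record it carefully.

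For part (a), the hypothesis $m_0 n \neq m_0' n'$ or $k_0 n \neq k_0' n'$ means the $(1,2)$/$(2,1)$ entry or the $(2,2)$ entry of $M_\pm - M_\pm'$ is a fixed nonzero constant independent of $H,H'$ (the off-diagonal entry is $\tfrac{d}{r}k_0 - \tfrac{d'}{r'}k_0'$ and the lower-right entry is $-\tfrac{d}{r}m_0 + \tfrac{d'}{r'}m_0'$, using the relation between slope and total slope). Then $\det(M_\pm - M_\pm')$ is, up to sign, a degree-$1$ polynomial in the single variable $u = \alpha_\pm(H) - \alpha_\pm(H')$ with leading coefficient $-\tfrac{d}{r}m_0 + \tfrac{d'}{r'}m_0'$ (if this vanishes, then $k_0 n \neq k_0' n'$ and the constant term $-(\tfrac{d}{r}k_0 - \tfrac{d'}{r'}k_0')^2 \neq 0$, so the determinant is a nonzero constant). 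In either case $\det(M_\pm - M_\pm')$ is a nonzero polynomial in $u$ of degree $\le 1$, so it has at most one root $u_0$; taking $K(L,L')$ to be any number exceeding $|u_0|$ (or $K(L,L')=0$ in the constant case) guarantees invertibility of $M_\pm - M_\pm'$, hence transversality of the planes, whenever $\alpha_\pm(H)-\alpha_\pm(H') > K(L,L')$. One should also note that $K(L,L')$ depends only on $(m_0,k_0,n)$ and $(m_0',k_0',n')$, i.e. on $L$ and $L'$, not on the Hamiltonians, as required.

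\textbf{Main obstacle.} None of the steps are analytically hard; the one genuine subtlety is bookkeeping the relationship between the \emph{slope} $d/r$, the \emph{total slope} $md/r$, and the entries of \eqref{planes}, and making sure the "or" in the hypothesis of (a) is handled in both branches (off-diagonal entry nonzero versus lower-right entry nonzero) so that the determinant is genuinely a nonzero affine function of $u$ and never the zero polynomial. I would be careful to treat the $\pm$ ends uniformly and to observe that the same $K(L,L')$ works at both ends (or take the max of the two). A secondary point worth a sentence is why transversality of the \emph{planes} is the right notion here — it is what feeds into the later "correct position" definition and the perturbation scheme — but that is a remark rather than part of the proof.
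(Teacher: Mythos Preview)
Your proposal is correct and follows the same approach as the paper: reduce to the rank of the difference $M_\pm - M_\pm'$ of the matrices in \eqref{planes}, observe that $\det(M_\pm - M_\pm')$ is an affine (possibly constant, but not identically zero) function of $u=\alpha_\pm(H)-\alpha_\pm(H')$ in case (a), and in case (b) check that the difference is $\operatorname{diag}(u,0)$ with kernel the $s$-direction. The only addition in the paper's version is an explicit decomposition $M_\pm - M_\pm' = M_1 + M_2(u)$ together with the closed formula $\det = -\tfrac{(m_0'k_0 - m_0 k_0')^2\, n n'}{m_0 m_0'} + u\,(n m_0 - n' m_0')$, recorded there because both are reused later (see Definition~\ref{qdecomp}).
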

\begin{proof}Let $P$ and $P'$ denote the corresponding planes to the perturbed Lagrangians at the $\pm t<<0$ end. Let $\alpha=\alpha_{\pm}\left(H\right)$ and $\alpha'=\alpha_{\pm}\left(H'\right)$, and let $M\left(\alpha\right)$ and $M'\left(\alpha'\right)$ be the symmetric $2\times2$ matrices that appear in Equation \eqref{planes}.   At any $p \in \pi(P \cap P')$, $P$ and $P'$ intersect in a dimension $2-\operatorname{rk}\left(M\left(\alpha\right)-M'\left(\alpha\right)\right)$ set. We now decompose $M\left(\alpha\right)-M'\left(\alpha'\right)$ as the sum of two matrices, only one of which depends on the quantities $\alpha$ and $\alpha'$.
Let $\mu = n m_{0}-n' m_{0}'$, and $\kappa =  n k_{0}- n' k_{0}'$. When  $\mu \neq 0$, we can write 
\begin{align}\label{qdec} M\left(\alpha\right)-M'\left(\alpha'\right) &= \begin{pmatrix} - \left(\kappa\right)^2/ \mu &  \kappa \\
 \kappa & - \mu \end{pmatrix} + \begin{pmatrix}n \left(k_{0}\right)^2/\left(m_0\right)-  n' \left(k_{0}'\right)^2/\left(m_0'\right) + \alpha-\alpha'   & 0 \\
0 & 0 \end{pmatrix}\\
&\coloneqq M_1 + M_2 \left(\alpha-\alpha'\right); \end{align}
when $\mu =0$ we can write 
\begin{align}\label{qdec2}  M(\alpha)-M'(\alpha')&=\begin{pmatrix} n (k_{0})^2/(m_0)- n' (k_{0}')^2/(m_0')  & \kappa \\
\kappa & 0 \end{pmatrix} +\begin{pmatrix} \alpha-\alpha'   & 0 \\
0 & 0 \end{pmatrix}\\
& \coloneqq M_1 + M_2 (\alpha-\alpha').
\end{align}
Using this decomposition, one obtains that
\[ \det\left(M\left(\alpha\right)-M'\left(\alpha'\right)\right)= - \frac{\left(m_0' k_0 - m_0 k_0'\right)^2 n n'}{m_0 m_0'} + \left(\alpha - \alpha'\right)\left( n m_{0}-n' m_{0}' \right)= f\left(\alpha-\alpha'\right) \]
from which we may deduce $(a)$. In the case where $\mu \neq 0$, take $K\left(L, L'\right)$ to be the solution to the equation $f\left(K\left(L, L'\right)\right)$=0; otherwise take $K\left(L, L'\right)=0$.

 If $\mu=\kappa=0$, then $M_1$ vanishes, so $M(\alpha)-M'(\alpha')$ has rank $1$ if $\alpha-\alpha' \neq 0$. In this case one observes that $TP \cap TP'|_{p}$ is spanned by $\left(0,1,n k_{0}, - n m_{0}\right) \in \R^4$. \end{proof}

%\begin{defn} 
%We associate to a pair of Lagrangians $L$ and $L'$, quantities $C\left(L,L'\right)$ in the following way: 
%\begin{enumerate}[(a)] 
%\item When $L$ and $L'$ have different total slope, $C\left(L,L'\right)$ is the $K\left(L,L'\right)$ defined as above.
%\item When $L$ and $L'$ have the same total slope $n$ but represent different homology classes in $T^3$, i.e. $[\gamma_L] \neq [\gamma_{L'}]$, $C\left(L,L'\right) = \frac{1}{n}$.
%\item When $L$ and $L'$ represent the same homology class $[\gamma_L]=[\gamma_{L'}]=\left(a,b,c\right)$, $C\left(L,L'\right)=\frac{1}{|c|+1}$. Note that in this case the Lagrangians must either both be of degree zero, or must be in the same class $\A=\A'=\left(\left(m,k\right),\left(m,k\right)\right).$\end{enumerate}

\begin{defn} Let $L$ and $L'$ be two admissible Hamiltonians of type $\mathbf{A}=\left(\left(m_0,k_0\right),\left(m_\infty, k_\infty\right)\right)$ and $\mathbf{A'}=\left(\left(m_0',k_0'\right),\left(m_\infty', k_\infty'\right)\right)$. Let $H$ and $H'$ be two Hamiltonians perturbing $L$ and $L'$. The ordered pair of Lagrangians $\left(\psi_H\left(L\right),\psi_{H'}\left(L'\right)\right)$ is in \emph{correct position} if $\psi_H\left(L\right)$ and $\psi_{H'}\left(L'\right)$ intersect transversely, if 
 for all but finitely many points $y \in L \cap L'$, $y$ lifts to a point $\tilde{y} \in \tilde{L} \cap \tilde{L'}$ with $ |\pi\left(\tilde{L} \cap \tilde{L}'\right)| \leq 2$, and:
\begin{enumerate}[(a)]
\item In the case where $\A=\A'$: if $H  -H' > 0$, $H-H'$ is an admissible Hamiltonian,  the corresponding planes of $\psi_H\left(L\right)$ and $\psi_{H'}\left(L'\right)$ do not coincide at either end, and any two lifts $\psi_{H}\left(\tilde{L}\right)$ and $\psi_{H'}\left(\tilde{L}'\right)$ to the universal cover are disjoint away from a compact set.
\item In the case where $\A \neq \A'$: if $H-H' > 0$, $H-H'$ is an admissible Hamiltonian,  $\alpha_{-}\left(H\right) - \alpha_{-}\left(H'\right) >  K\left(L, L'\right)$ if $\left(m_0,k_0\right) \neq \left(m_0',k_0'\right)$, $\alpha_{+}\left(H\right) - \alpha_{+}\left(H'\right) >  K\left(L, L'\right)$,and  any two lifts $\psi_{H}\left(\tilde{L}\right)$ and $\psi_{H'}\left(\tilde{L}'\right)$ to the universal cover are disjoint away from a compact set.
\end{enumerate}
An ordered collection $\{L_0, L_1, L_2, \ldots, L_n\}$ is in correct position if each pair $\left(L_i, L_{i+1}\right)$ is in correct position for $0 \leq i \leq n-1$.
\end{defn}

 \begin{defn} Let $L$ and $L'$ be in correct position. We say that $L$ and $L'$ are \emph{essentially transverse} at an end of $Y$ if their corresponding planes intersect in a point on that end, \emph{essentially non-transverse} at an end if their corresponding planes intersect in a line, and \emph{essentially disjoint} at an end if their corresponding planes do not intersect.  Note that in the fully wrapped case, $L$ and $L'$ are either essentially transverse  at both ends of $Y$, or  essentially non-transverse at both ends of $Y$. In this case being essentially transverse is equivalent to satisfying $[\gamma_L] \neq [\gamma_{L'}]$.

We also have a notion of two lifts $\tilde{L}$ and $\tilde{L'}$ being essentially transverse in any setting. Given two such lifts $\tilde{L}$ and $\tilde{L}'$, $\tilde{L}$ and $\tilde{L}'$ are \emph{$I$-essentially transverse}, \emph{$I$-essentially non-transverse}, and \emph{$I$-essentially disjoint} if there exists a type-$I$ Hamiltonian $H$ such that $\psi_{H}\left(\tilde{L}\right)$ and $\tilde{L'}$ intersect in a point, intersect in a line, or do not intersect, respectively.
  \end{defn}

\subsection{Associated quadratic forms and action} 
Let $L, L' \in \G_P$ be two Lagrangians in correct position. Fix lifts $\tilde{L}$ and $\tilde{L}'$ of $L$ and $L'$ in $\R^4$, and corresponding planes $\left(P^{\pm}\right),\left(P^{\pm}\right)'$ of these lifts. 

\begin{defn}\label{qdecomp} \begin{enumerate}[(a)] \item Let $\mu$ and $\kappa$ be defined as above for the Lagrangians $L$ and $L'$. Define the coordinate function $w_{L,L'}: \R^4 \to \R$ by $w_{L,L'}= s - (\kappa/\mu) t$ when $\mu \neq 0$; otherwise let $w_{L,L'} = s$. 
\item Consider the quadratic form defined by $\frac{1}{2}x^T\left(M-M'\right) x$; denote this form by $Q_{L,L', +}\left(x\right)$, and define $Q_{L,L', -}\left(x\right)$ similarly. We can write this sum of two  quadratic forms using the decomposition of Equations \eqref{qdec} and  \eqref{qdec2}: let $Q_{L,L',+}^\beta(x) = \frac{1}{2} x^T M_1 x$, and $Q_{L,L',+}^{\alpha}=\frac{1}{2}x^T M_2 x$. Then except in the case where $\mu=0$, $\kappa\neq0$, the coordinates $(w_{L,L'},t)$ diagonalize the quadratic forms. In this case we can write
\begin{align*} Q_{L,L', \pm} (x)&= Q_{L,L',\pm}^\beta(x) + Q_{L,L',\pm}^{\alpha}(x) \\
Q_{L,L',\pm}^\beta(x) &= \frac{1}{2} \beta w_{L,L'}(x)^2 \\
 			Q_{L,L',\pm}^{\alpha}(x) &=\frac{1}{2}\alpha_{\pm} t(x)^2. \end{align*}
\end{enumerate}
\end{defn} 

\begin{remark} The quadratic form $Q_{L,L}^\beta(x)$ is an invariant of the homology classes $[\gamma_{L}], [\gamma_{L'}]$. Thus to any two Lagrangians, we can associate such a quadratic form without the data of the perturbing Hamiltonians. \end{remark}

   \begin{remark} On any end of $Y$,  $Q_{L,L',{\pm}}$ is degenerate as a quadratic form on $\R^2$ if and only $L$ and $L'$ are essentially non-transverse at that end. On the wrapped end(s) of $Y$, $Q_{L,L',{\pm}}$ has signature $(2,0)$ when $m n - m' n' < 0$, signature $(1,1)$ when $m n - m' n<0$ or $m n - m' n'=0$ and $k d - k' n'\neq 0$, and is degenerate and positive semi-definite when  $m n - m n'=k n - k' n' = 0$. In the fully wrapped setting, the signatures of $Q_{L,L',{+}}$ and $Q_{L,L',{-}}$ agree. \end{remark}

\begin{lemma}\label{primitive} \begin{enumerate}[(a)] \item Let $h$ and $h'$ be two primitives for $\tilde{L}$ and $\tilde{L}'.$ Then for $\pm t >>0$, we can write
\begin{align*} \left(h-h'\right)\left(t,s\right)&= Q_{L,L',{\pm}}\left(t,s\right) + a t + b s + \rho_{0}^{c}\left(t\right) +f\left(t,s\right)+ G\left(t,s\right);\\
		&= \frac{1}{2} \alpha t^2 + Q_{L, L'}^\beta(t,s) + a t + b s + \rho_{0}^{c}\left(t\right) +f\left(t,s\right)+ G\left(t,s\right),  
		\end{align*}
%where $w = w_{L,L'}$ is the coordinate function of Definition \ref{deform} for some $r \in \R$;
where $a, b, c \in \R$, $\alpha \geq 0$, $\|f\|_{C^2} < 1$, and $G\left(t, s\right)$ is a compactly supported function on $C_R$ for some $R \in \N$. On any end with wrapped boundary conditions of the form $m \phi_t+ k \phi_s \equiv 0$, this takes the form 
\[ \frac{1}{2}\beta w^2+ a w + \rho_{0}^{c}\left(t\right) +f\left(t,s\right)+ G\left(t,s\right).\]
\item  For $\pm t >> 0$, $h-h'$ can be rewritten as
\[ Q_{L,L',\pm} \left(y_0\right) + Q_{L,L',\pm} \left(y-y_0\right)  +\rho_{0}^c(t)+ G\left(t,s\right)+f\left(t,s\right) \]
where $y_0$ is an intersection point of the corresponding planes $P^+$ of $\tilde{L}$ and $(P^+)'$ of $\tilde{L}'$. \end{enumerate} 
\end{lemma}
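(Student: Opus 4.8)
The plan is to reduce everything to Lemma \ref{formofh}, which already gives a normal form for a single primitive $h$, and then subtract. First I would fix lifts $\tilde L, \tilde L'$ in correct position and choose primitives $h, h'$ with $\tilde L = \mathrm{graph}(dh)$, $\tilde L' = \mathrm{graph}(dh')$. By Lemma \ref{formofh}, on each end $\pm t \gg 0$ we may write $h$ and $h'$ in the stated form (\ref{diff2}) or (\ref{diff1}), with quadratic parts governed by $(d/r, k_0/m_0)$ and $(d'/r', k_0'/m_0')$ respectively. Subtracting, the quadratic parts combine into exactly the symmetric matrix $M - M'$ appearing in Equation \eqref{planes}, so $h - h' = \tfrac12 x^T(M-M')x + (\text{linear}) + (\text{compactly supported corrections})$; the quadratic piece is by definition $Q_{L,L',\pm}$. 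The linear term $at+bs$ absorbs the difference of the $a(s\pm(k/m)t)$ and $\rho^{b^\pm}_{b^-}(t)t$ terms up to a $\rho_0^c$ piece (using the additivity property $\rho_a^b + \rho_{a'}^{b'} = \rho_{a+a'}^{b+b'}$ of the interpolating functions, and the fact that $t\rho^{b^+}_{b^-}(t) - t b^\pm$ is compactly supported at each end — absorb it into $G$). The $f$ and $G$ pieces combine (after possibly passing to a common cyclic cover $C_R$, $R = \mathrm{lcm}(r,r',\dots)$) into a $C^2$-small periodic piece and a compactly supported piece respectively; one checks the $C^2$ bound on $f$ survives the subtraction because each summand obeys the bound in Definition \ref{pertHdef}. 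The rewriting of the quadratic part as $\tfrac12\alpha t^2 + Q^\beta_{L,L'}$ is just the block decomposition of Equations \eqref{qdec}–\eqref{qdec2} in the diagonalizing coordinates $(w_{L,L'}, t)$, valid off the exceptional case $\mu=0, \kappa\neq0$; in that exceptional case one keeps the form $\tfrac12 x^T(M-M')x$ without splitting, which is still covered by the first displayed line of the lemma. On a wrapped end with boundary condition $m\phi_t + k\phi_s \equiv 0$, one has $d = 0$ forced (or the relevant slope data collapses) so $M_2$ contributes only through $\alpha_\pm$ and $Q^\beta$ reduces to $\tfrac12\beta w^2$; the linear term likewise reduces to $aw$ after absorbing the $t$-dependence into $\rho_0^c + G$, because the boundary condition pins down the $\phi$-coordinates and hence the $t$-linear part of $h - h'$ up to compact support.

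For part (b), I would complete the square. On the end $\pm t \gg 0$ the corresponding planes $P^\pm$ of $\tilde L$ and $(P^\pm)'$ of $\tilde L'$ intersect (in a point when essentially transverse, in a line when essentially non-transverse); pick an intersection point $y_0 \in \pi(P^\pm \cap (P^\pm)')$, i.e. a critical point of the quadratic-plus-linear part $Q_{L,L',\pm}(y) + (\text{linear in } y)$ in the non-degenerate directions. Then $Q_{L,L',\pm}(y) + at + bs = Q_{L,L',\pm}(y_0) + Q_{L,L',\pm}(y - y_0)$ up to a constant (which can be absorbed into $\rho_0^c$ by adjusting $c$), using that $Q_{L,L',\pm}$ is quadratic and $y_0$ is where the linear and quadratic parts balance. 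Strictly: write $Q_{L,L',\pm}(y) + \ell(y) = Q_{L,L',\pm}(y - y_0) + Q_{L,L',\pm}(y_0) + (\nabla Q_{L,L',\pm}(y_0) + \ell)(y - y_0) $ and choose $y_0$ so the cross term in the non-degenerate directions vanishes; in degenerate directions the linear term $\ell$ must already vanish for an intersection point to exist (and $\rho_0^c$, $f$, $G$ carry whatever is left). This gives the claimed expression $Q_{L,L',\pm}(y_0) + Q_{L,L',\pm}(y - y_0) + \rho_0^c(t) + G(t,s) + f(t,s)$.

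The main obstacle I expect is the bookkeeping of the non-quadratic pieces across the two ends simultaneously and across the cyclic cover: Lemma \ref{formofh} is stated with the compactly supported part $G$ periodic under $\tilde s \mapsto \tilde s + r$ (resp. $+1$) and one must check that for two Lagrangians of possibly different slopes $d/r \neq d'/r'$ the difference $h - h'$ still has a well-defined normal form on the common cover $C_R$ with $R$ a common multiple of $r, r'$, and that the "compactly supported in $t$" claim for $G$ genuinely holds — i.e. that all the unbounded-in-$t$ behavior of $h - h'$ is captured by $Q_{L,L',\pm} + at + bs + \rho_0^c(t)$. This requires carefully matching the two ends: the constants $a, b, c$ and the plane parameters differ at $+\infty$ versus $-\infty$, so really one runs the argument once on $Y_T^+$ and once on $Y_T^-$, producing possibly different $(a,b,c)$ on each end, and the statement should be read as an end-by-end assertion (as the "$\pm t \gg 0$" notation indicates). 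A secondary subtlety is the exceptional case $\mu = 0, \kappa \neq 0$ where $(w_{L,L'}, t)$ fails to diagonalize; there one must be content with the unfactored form $\tfrac12 x^T(M - M')x$, and I would simply note that Definition \ref{qdecomp}(b) already excludes the $Q^\beta + Q^\alpha$ splitting in that case, so only the first displayed identity of part (a) is asserted there, and part (b) still goes through because completing the square only needs $Q_{L,L',\pm}$ to be a quadratic form, not a diagonal one.
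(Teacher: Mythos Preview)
Your proposal is correct and takes essentially the same approach as the paper: part (a) by subtracting the normal forms from Lemma~\ref{formofh} and invoking the decomposition of Definition~\ref{qdecomp}, part (b) by completing the square around a critical point $y_0$. The paper's own proof is a two-line sketch of exactly this argument; your version is considerably more detailed and flags bookkeeping issues (the common cyclic cover $C_R$, the end-by-end reading, the non-diagonalizable case $\mu=0,\kappa\neq0$) that the paper leaves implicit.
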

\begin{proof} (a) follows from Lemma \ref{formofh} and the decomposition of Definition \ref{qdecomp}. (b) follows from the fact that any function $F: \R^2 \to \R^2$ of the form $F(x)=x^T M x + (e_1, e_2) \cdot x$ can be expanded around a critical point $x_0$ as $x_0^TMx_0+(x-x_0)^T M(x-x_0)$. \end{proof}

\begin{defn}Let $y \in L \cap L$ lift to $\tilde{y} \in \tilde{L} \cap \tilde{L}'$.  Let $h$ and $h'$ be primitives for $\tilde{L}$ and $\tilde{L}'$ of the form prescribed by \ref{formofh}. Define $S_{\R^4}\left(\tilde{y}\right)$ by 
\[ S_{\R^4}\left(\tilde{y}\right)= (h-h') \left(\tilde{y}\right). \] 
\end{defn}

\begin{defn} Recall the quadratic form $Q_{L,L'+\pm}^\alpha$ given in the decomposition of Equation \eqref{qdec}. Define the \emph{action} of a point $y \in \left(L \cap L'\right)$ with $\pm t\left(y\right) > 0$ by 
\[ S(y) = Q_{L,L', \pm}^\alpha\left(\tilde{y}\right)=\frac{1}{2}\alpha_{\pm} t^2. \] Note that this action is defined regardless of the chosen lift of $\tilde{y}$, since all lifts occur at the same $t$-value. \end{defn}

\begin{prop}\label{quadbounds} \begin{enumerate}[(a)]
\item  There exists $C_1 \in \R$ such that for all $\hn \in \Z^3$, for all $\tilde{y} \in \tilde{L} \cap \left(\tilde{L}' - \hn\right)$ with $\pm t(y) \geq 0$, \[d\left(\tilde{y}, \left(P^\pm\right)\cap \left(\left(P^\pm\right)'-\hn\right)\right)<C_1.\]
\item  There exists $C_2 > 0$ such that for all $\tilde{y} \in \tilde{L} \cap \left(\tilde{L}'-\hn\right)$ with $ \pm t\left(\tilde{y}\right) >  0$, 
\[ \left| S_{\R^4}\left(\tilde{y}\right) - Q_{L,L',\pm} \left(\tilde{y}\right)\right| < C_2. \] 
\item If $L$ and $L'$ are essentially non-transverse at an end, then there exists $C_3 > 0$ such that for all $\tilde{y} \in \tilde{L} \cap \tilde{L'}$, 
\[ \left| S_{\R^4}\left(\tilde{y}\right) -S(y) \right| < C_3. \] 
\end{enumerate} \end{prop}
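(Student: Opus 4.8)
The plan is to prove the three bounds (a), (b), (c) in order, each time exploiting the fact that away from a compact set the perturbed Lagrangians lie uniformly close to their corresponding affine planes $P^\pm$, $(P^\pm)'$, and that the primitives $h, h'$ have the normal form given by Lemma~\ref{formofh} and Lemma~\ref{primitive}. Throughout, I fix lifts $\tilde{L}, \tilde{L}'$ and primitives $h, h'$ of the form prescribed by Lemma~\ref{formofh}; by Lemma~\ref{primitive}(a), on $Y_T^\pm$ we have $(h-h')(t,s) = Q_{L,L',\pm}(t,s) + at + bs + \rho_0^c(t) + f(t,s) + G(t,s)$ with $\|f\|_{C^2}<1$ and $G$ compactly supported on $C_R$.

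\textbf{Part (a).} A point $\tilde{y} \in \tilde{L}\cap(\tilde{L}'-\hn)$ with $\pm t(\tilde{y})\geq 0$ lies, for $\pm t(\tilde{y})$ large, in the intersection of the graphs of $dh$ and $d(h'\circ(\,\cdot\,+\hn))$ over $\R^2$. By Equation~\eqref{boundfromp}, $\psi_H(L_T^\pm)$ lies within Hausdorff distance $\kappa_\pm^2/4$ of $P^\pm$ (and similarly for $L'$ and $(P^\pm)'-\hn$), so $\tilde{y}$ is within a uniform constant of \emph{both} $P^\pm$ and $(P^\pm)'-\hn$. When $L$ and $L'$ are essentially transverse or essentially disjoint at the end, $P^\pm$ and $(P^\pm)'-\hn$ meet transversely (or not at all, in which case the disjointness of the corresponding planes already forces $\tilde y$ into a compact region); transversality gives a lower bound on the angle between the planes, so being uniformly close to both forces $\tilde{y}$ to be within a uniform constant $C_1$ of their intersection. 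In the essentially non-transverse case the two planes are parallel at distance bounded by the sum of the two Hausdorff bounds, and one argues inside each fiber $\{t = \text{const}\}$; the transverse directions are controlled, giving the same conclusion. One must take $C_1$ large enough to also absorb the compact non-linear and non-planar region where $G \neq 0$, which is legitimate since that region is bounded.

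\textbf{Part (b).} Using the normal form from Lemma~\ref{primitive}(a), for $\pm t(\tilde y)>0$ we have $S_{\R^4}(\tilde y) - Q_{L,L',\pm}(\tilde y) = a\,t(\tilde y) + b\,s(\tilde y) + \rho_0^c(t(\tilde y)) + f(\tilde y) + G(\tilde y)$ --- wait, this is not yet bounded, because of the linear term. The correct move is Lemma~\ref{primitive}(b): near a point $y_0 \in P^\pm \cap (P^\pm)'$ the affine-plus-quadratic part $Q_{L,L',\pm}(y) + (\text{linear})$ is exactly $Q_{L,L',\pm}(y_0) + Q_{L,L',\pm}(y - y_0) + \rho_0^c(t) + \cdots$, so the purely quadratic statement should be read against this recentered form. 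Concretely, I would prove the sharper statement $|S_{\R^4}(\tilde y) - Q_{L,L',\pm}(\tilde y - \tilde y_0)| < C_2$ where $\tilde y_0$ is a nearby intersection point of the corresponding (shifted) planes; the difference is then $\rho_0^c(t) + f(\tilde y) + G(\tilde y)$ plus the constant $Q_{L,L',\pm}(\tilde y_0)$ coming from recentering, each term of which is bounded: $\rho_0^c$ is bounded by $|c|$, $\|f\|_{C^2}<1$ bounds $f$ on the relevant region (after noting the periodicity of $f$), $G$ is compactly supported, and by part (a) the point $\tilde y_0$ can be chosen so that $Q_{L,L',\pm}(\tilde y_0)$ is controlled. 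The quantity $Q_{L,L',\pm}(\tilde y)$ in the statement differs from $Q_{L,L',\pm}(\tilde y - \tilde y_0)$ by cross-terms and a constant, and on the locus $d(\tilde y, P^\pm\cap(P^\pm)') < C_1$ from part (a) these are bounded; one then enlarges $C_2$ accordingly. (In the degenerate case this requires part (c) or the analysis below, so I would prove (c) first in the essentially non-transverse situation and use it here.)

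\textbf{Part (c).} Now suppose $L$ and $L'$ are essentially non-transverse at the end, so $Q_{L,L',\pm}$ is degenerate, i.e. $\alpha_\pm = 0$ and (in the non-wrapped or matching-class case) $\mu = \kappa = 0$, or on a wrapped end $\beta = 0$ as well. In that situation $S(y) = Q_{L,L',\pm}^\alpha(\tilde y) = \frac12\alpha_\pm t^2 = 0$, so the claim is that $S_{\R^4}(\tilde y)$ itself is bounded. From Lemma~\ref{primitive}(a), on such an end $(h-h')$ takes the form $\frac12\beta w^2 + aw + \rho_0^c(t) + f(t,s) + G(t,s)$ with $\beta = 0$ (degeneracy), hence $(h-h')(t,s) = aw + \rho_0^c(t) + f + G$. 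The key point is that essential non-transversality constrains the intersection points to a \emph{line of constant $t$} in the corresponding planes (Lemma~\ref{transint}(b)), and more precisely the subgroup $\Gamma(L,L')$ and the correct-position hypothesis $|\pi(\tilde L \cap \tilde L')| \leq 2$ for all but finitely many points force the actual intersection locus $\tilde L \cap \tilde L'$ to project to a bounded set in the $w$-coordinate: the linearity boundary conditions pin down $\phi_t, \phi_s$ at the ends to a discrete set, so the "$aw$" term, evaluated along intersection points, ranges over a bounded (in fact finite modulo the deck group, then bounded after fixing lifts) set. Therefore $S_{\R^4}(\tilde y) = aw(\tilde y) + \rho_0^c + f + G$ is bounded by a constant $C_3$, since each summand is.

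\textbf{Main obstacle.} The delicate point is Part (c), specifically arguing that the intersection points $\tilde L \cap \tilde L'$ project to a bounded set in the degenerate direction $w$ (equivalently, controlling the "$aw$" linear term). This needs the geometric input that essentially non-transverse Lagrangians, after correct positioning, meet only near where their corresponding planes meet --- a line of fixed $t$ --- together with the discreteness forced by the $\mathbb{R}/\mathbb{Z}$ linearity conditions \eqref{raycond1} and the bound $|\pi(\tilde L \cap \tilde L')| \leq 2$; I would isolate this as the crux and prove it by combining part (a) (which puts $\tilde y$ uniformly near $P^\pm \cap (P^\pm)'$) with an explicit description of that intersection line and the periodicity of $\tilde L, \tilde L'$ along it. Parts (a) and (b) are then comparatively routine consequences of the uniform plane-approximation \eqref{boundfromp} and the normal forms, modulo bookkeeping over the compact non-linear region.
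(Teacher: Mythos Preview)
Your Part (a) is essentially the paper's argument: the Hausdorff bound \eqref{boundfromp} puts $\tilde y$ uniformly close to both $P^\pm$ and $(P^\pm)'-\hn$, and linear algebra (a uniform angle bound between the planes, independent of $\hn$) then puts $\tilde y$ close to their intersection. The paper does not split into the transverse/non-transverse/disjoint cases here, but your extra care does no harm.

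The genuine problem is in Part (c), and it propagates backward into your treatment of (b). You write that essential non-transversality forces $\alpha_\pm = 0$ and hence $S(y)=\tfrac12\alpha_\pm t^2 = 0$. This is backwards. Essentially non-transverse means the corresponding planes meet in a line, which by Lemma~\ref{transint}(b) happens exactly when $[\gamma_L]=[\gamma_{L'}]$, i.e.\ $\mu=\kappa=0$; one then checks from \eqref{qdec2} that the remaining diagonal entry of $M_1$ also vanishes (since $n k_0^2/m_0 = n'(k_0')^2/m_0'$ follows from $[\gamma_L]=[\gamma_{L'}]$). Thus $Q^\beta=0$ and $Q_{L,L',\pm}=Q^\alpha_{L,L',\pm}$, while $\alpha_\pm=\alpha_\pm(H)-\alpha_\pm(H')>0$ is exactly what makes the planes meet in a line rather than coincide. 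So $S(y)=Q^\alpha(\tilde y)=Q(\tilde y)$, and Part (c) is a one-line consequence of Part (b): $|S_{\R^4}(\tilde y)-S(y)|=|S_{\R^4}(\tilde y)-Q(\tilde y)|<C_2$. Your attempt to show $S_{\R^4}(\tilde y)$ is itself bounded cannot succeed, since $S_{\R^4}(\tilde y)$ contains the term $\tfrac12\alpha_\pm t(\tilde y)^2$, which is unbounded as $\hn$ varies; and your ``main obstacle'' about bounding the $w$-coordinate of intersection points is beside the point.

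For Part (b) the paper's structure is cleaner than yours: it first observes that $|S_{\R^4}(\tilde y)-Q(\tilde y)|$ depends only on the class $\overline{\hn}\in\Z^3/\Gamma(L,L')$, which immediately handles the essentially-disjoint case by finiteness; in the remaining case it invokes Lemma~\ref{primitive}(b) together with Part (a) to bound by $\max_{|x|<C_1}Q_{L,L',+}(x)+\|\rho_0^c+G+f\|_{C^2}$. Your recentering idea is morally the same as the paper's use of Lemma~\ref{primitive}(b), but you never isolate the $\overline{\hn}$-dependence, and your proposed detour through (c) is both circular and, as above, based on a wrong premise. The correct logical order is (a) $\Rightarrow$ (b) $\Rightarrow$ (c).
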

\begin{proof}\begin{enumerate}[(a)]
\item This follows from Equation \eqref{boundfromp}: there exists $D_0> 0$ such that for such $\tilde{y}$, there exist $x \in P^{\pm}, x' \in (P^{\pm})'-\hn$ such that $d\left(\tilde{y},x \right),d\left(\tilde{y},x'\right) < D_0$.  There is a constant $D_1>0$ independent of $\hn$ such that for any pairs of points $x \in  P^{\pm}$,  $ x' \in (P^{\pm})'-\hn$ with $d(x,x')<C$, $d\left(x,\left(P^\pm\cap \left((P^\pm)'-\hn\right)\right)\right)< C D_1$. Taking $C_1 = 2 D_1 D_0+ D_0$ gives the result.

\item  The quantity $ \left| S_{\R^4}\left(\tilde{y}\right) - Q_{L,L',\pm} \left(\tilde{y}\right)\right|$ depends only on the equivalence class $\overline{\hat{n}} \in \Z^3/ \Gamma(L, L')$. If $L$ and $L'$ are essentially disjoint on the $\pm t \to \infty$ end, then there are only finitely many classes $\overline{\hn} \in \Z^3/\Gamma(L,L')$ so that $\tilde{L}$ and $\tilde{L'}-\hn$ intersect and satisfy $\tilde{L} \cap \left(\tilde{'L}-\hn\right) \subset Y_{0}^\pm.$ Otherwise, by Lemma \ref{primitive}, we can write 
\[ D_+= \max_{|x|<C_1} \left(Q_{L,L',+} \left(x\right)\right)  + \|\rho_0^c + G\left(t,s\right)+f\left(t,s\right)\|_{C^2}. \]
We similarly obtain a bound $D_-$ on the $t>0$ end of $Y$. Take $C_2 = \max\{D_+, D_-\}$. 

\item This follows from (a) and (b), using the fact that in this case $ Q_{L,L',\pm}^\alpha= Q_{L,L',\pm}$. 
\end{enumerate} \end{proof}

\begin{lemma}\label{boundondiff} Fix a lift $\tilde{L}$ of $L$, and let $H$ be a Hamiltonian such that the set $\left(\psi_{H}\left(L\right), L, L'\right)$ is in correct position. Assume also that $Q^\beta$ is a degenerate quadratic form equal to $\frac{1}{2}\beta w_{L,L'}^2$ for some $\beta \in \R$, where $w_{L,L'}$ is the coordinate defined in Lemma \ref{qdecomp}. Then there exists $D > 0$ such that for all $\hn \in \Z^3$, if $y_0 \in \tilde{L} \cap \left(\tilde{L}'-\hn\right)$, and $y_1 \in \psi_{H}\left(\tilde{L}\right) \cap \left(\tilde{L'+ \hn}\right)$, then 
\[ \left|\left(S_{\R^4}\left(y_0\right) - S\left(y_0\right)\right) - \left(S_{\R^4}\left(y_1\right)-S\left(y_1\right)\right)\right|   < D. \] 

If we further assume that $L$ and $L'$ lift to planes in $\R^4$, then 
\[ \left|\left(S_{\R^4}\left(y_0\right) - S\left(y_0\right)\right) - \left(S_{\R^4}\left(y_1\right)-S\left(y_1\right)\right)\right|   = 0. \] \end{lemma}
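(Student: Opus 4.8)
The plan is to reduce everything to an estimate on the \emph{degenerate part} $Q^\beta$ of the action, show that this part is unaffected by the wrapping $\psi_H$, and then exploit the antisymmetry between the translates $\tilde L'-\hn$ and $\tilde L'+\hn$.

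First I would rewrite the quantity to be controlled. By the definition of $S$ as $Q^\alpha_{L,L',\pm}$ and the decomposition $Q_{L,L',\pm}=Q^\alpha_{L,L',\pm}+Q^\beta_{L,L',\pm}$, Proposition \ref{quadbounds}(b) gives $\bigl|\,(S_{\R^4}(\tilde y)-S(y))-Q^\beta_{L,L',\pm}(\tilde y)\,\bigr|<C_2$ for every $\hn$ and every lifted intersection point $\tilde y\in\tilde L\cap(\tilde L'-\hn)$ at the relevant end; under the standing hypothesis this reads $\bigl|\,(S_{\R^4}(y_0)-S(y_0))-\tfrac12\beta\,w_{L,L'}(\tilde y_0)^2\,\bigr|<C_2$. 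Now $Q^\beta$, the coordinate $w_{L,L'}$, and the scalar $\beta$ depend only on $[\gamma_L]$ and $[\gamma_{L'}]$ (the Remark following Definition \ref{qdecomp}), and replacing $L$ by $\psi_H(L)$ changes neither homology class — passing from $L$ to $\psi_H(L)$ only modifies the $t$--$t$ entry of the corresponding slope matrix by $\alpha_\pm(H)$ (Equation \eqref{planes}) and leaves $\mu=nm_0-n'm_0'$ and $\kappa=nk_0-n'k_0'$ untouched. Hence the same estimate holds for the pair $(\psi_H(L),L')$ with the same $\beta$ and the same $w_{L,L'}$, i.e.\ $\bigl|\,(S_{\R^4}(y_1)-S(y_1))-\tfrac12\beta\,w_{L,L'}(\tilde y_1)^2\,\bigr|<C_2'$. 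Subtracting, the problem reduces to bounding $\tfrac12\beta\,\bigl|w_{L,L'}(\tilde y_0)^2-w_{L,L'}(\tilde y_1)^2\bigr|$ by a constant independent of $\hn$.

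Next I would treat this remaining point. If $\beta=0$ there is nothing to do beyond Proposition \ref{quadbounds}(c), which already gives $|S_{\R^4}-S|<C_3$ uniformly; so assume $\beta\ne0$, equivalently $\mu\ne0$. By Proposition \ref{quadbounds}(a), $\tilde y_0$ lies within a uniform distance of a point of $P^{\pm}\cap\bigl((P^{\pm})'-\hn\bigr)$ and $\tilde y_1$ within a uniform distance of a point of $\psi_H(P^{\pm})\cap\bigl((P^{\pm})'+\hn\bigr)$, so it suffices to compare the $w_{L,L'}$--coordinates of these plane intersections. Writing them as linear systems $(M_L-M_{L'})x_0=R(\hn)$ and $(M_{\psi_H(L)}-M_{L'})x_1=R'(\hn)$, I would use two facts: the $w$--component of each system involves only the $w$--$w$ slope entry, which equals $\beta$ in both cases since the wrapping acts in the transverse $t$--direction; and $R(\hn)+R'(\hn)$ is independent of $\hn$, because the part of each that is linear in $\hn$ is $\hn\mapsto M_{L'}\bar\hn-\hat\hn$ for $R$ and its negative for $R'$. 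Together these give $\beta\bigl(w_{L,L'}(x_0)+w_{L,L'}(x_1)\bigr)=\text{const}$, so $w_{L,L'}(\tilde y_0)+w_{L,L'}(\tilde y_1)$ is bounded; combining this with the fact that the solvability of the $t$--component restricts the relevant $\hn$ — to a coset along which the $w$--component of $R$ is constant in the essentially non-transverse case, or to finitely many values in the essentially transverse/disjoint case — one concludes that $w_{L,L'}(\tilde y_0)^2-w_{L,L'}(\tilde y_1)^2$ stays bounded. Taking $D$ to be this bound plus $C_2+C_2'$ finishes the inequality. For the last assertion, when $L$ and $L'$ lift to affine planes one has $\tilde L=P^{\pm}$ and $\tilde L'=(P^{\pm})'$ exactly on the end, the auxiliary terms $\rho_0^c$, $f$, $G$ are absent, Proposition \ref{quadbounds} holds with all error constants $0$, and expanding the quadratic primitive at the plane intersection gives $S_{\R^4}(y)=-Q_{L,L',\pm}(\tilde y)+\text{const}$; running the argument above with exact equalities, and using that the constant in $R(\hn)+R'(\hn)$ has vanishing $w$--component for the fixed lift $\tilde L$ (here "correct position" and the precise shape of the perturbation are used), the two quantities coincide exactly.

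The hard part is the closing step of the reduction: showing $w_{L,L'}(\tilde y_0)^2-w_{L,L'}(\tilde y_1)^2$ is bounded uniformly in $\hn$ when $\beta\ne0$. The subtlety is that each $w$--value individually grows like $|\hn|$, so one genuinely needs the antisymmetry of the $-\hn$ and $+\hn$ translates together with the fact that the wrapping Hamiltonian acts purely in the $t$--direction, leaving the degenerate coordinate $w_{L,L'}$ — hence the part of the action measured by $Q^\beta$ — essentially fixed under the perturbation; making this bookkeeping precise (and, for the equality statement, genuinely exact) is the main work.
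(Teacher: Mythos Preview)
Your reduction to controlling $\tfrac12\beta\,\bigl|w_{L,L'}(\tilde y_0)^2-w_{L,L'}(\tilde y_1)^2\bigr|$ via Proposition~\ref{quadbounds} is exactly right, and you correctly identify the crucial fact that the wrapping Hamiltonian $H$ acts purely in the $t$--direction and leaves $\beta$ and $w_{L,L'}$ untouched.  However, you have been misled by what is evidently a typo in the statement: the second intersection should read $y_1\in\psi_H(\tilde L)\cap(\tilde L'-\hn)$, with the \emph{same} translate as for $y_0$.  This is clear both from the paper's own proof (which compares $x_0\in P^\pm\cap\bigl((P')^\pm-\hn\bigr)$ with $x_1\in P_H^\pm\cap\bigl((P')^\pm-\hn\bigr)$) and from every application of the lemma in the paper (e.g.\ Lemma~\ref{comph} and Proposition~\ref{actionvan}), where $y_0$ and $y_1$ lie in the same $\hn$--graded piece of two Floer complexes related by wrapping.

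With the corrected reading the argument is immediate and your antisymmetry machinery is unnecessary.  In the diagonalizing $(t,w_{L,L'})$--coordinates the plane intersection $x_0=P\cap(P'-\hn)$ solves
\[
\begin{pmatrix}\alpha & 0\\ 0 & \beta\end{pmatrix}\begin{pmatrix}t_0\\ w_0\end{pmatrix}+\begin{pmatrix}e_0\\ e_1\end{pmatrix}=0,
\]
while $x_1=P_H\cap(P'-\hn)$ solves
\[
\begin{pmatrix}\alpha+\alpha_\pm(H) & 0\\ 0 & \beta\end{pmatrix}\begin{pmatrix}t_1\\ w_1\end{pmatrix}+\begin{pmatrix}e_0+\epsilon_\pm(H)\\ e_1\end{pmatrix}=0.
\]
The second row is literally the same equation in both systems, so $\beta w_0=\beta w_1$; hence $Q^\beta(x_0)=Q^\beta(x_1)$ \emph{exactly}, for every $\hn$.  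Combined with the uniform bound $|S_{\R^4}-S-Q^\beta|<C_2$ from Proposition~\ref{quadbounds}(b) this gives the inequality, and in the planar case (no $f,G,\rho$ terms) it gives the equality on the nose.

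By contrast, if one takes the statement literally with opposite translates $\pm\hn$, the claim is in fact false in general: writing $e_1(\hn)=c\cdot\hn+d$ linearly, one gets $\beta^2\bigl(w(x_0)^2-w(x_1)^2\bigr)=e_1(\hn)^2-e_1(-\hn)^2=4\,(c\cdot\hn)\,d$, which is unbounded in $\hn$ whenever $d\neq0$.  Your ``solvability of the $t$--component restricts $\hn$'' step is a hand-wave that cannot rescue this; the essentially non-transverse case places no constraint on the $\hn$--directions entering $e_1$, and in the essentially transverse case infinitely many $\hn$ occur.  So the closing step of your argument does not go through, and the antisymmetry idea should simply be discarded in favor of the one-line observation that $w_0=w_1$.
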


\begin{proof} Let $P^\pm, P_H^\pm,$ and $(P')^\pm$ denote the $(\pm)-$corresponding planes of $\psi_{H}\left(\tilde{L}\right)$, $\tilde{L}$, and $\tilde{L'}$, respectively. From Proposition \ref{quadbounds}, it suffices to show that for all $\hn \in \Z^3$, for $x_0 \in P^\pm \cap\left( \left(P'\right)^\pm-\hn\right)$, $x_1 \in \left(P_H^\pm \left(P'\right)^\pm-\hn\right)$, 
\begin{align*} &\left| \left(Q_{L,L', +}\left(x_0\right)- Q_{L,L', +}^\alpha\left(x_0\right)\right) -\left(Q_{\psi_{H}\left(L\right),L', +}\left(x_1\right)- Q_{\psi_{H}\left(L\right),L', +}^\alpha\left(x_1\right)\right) \right| \\
		 &= \left|Q_{L,L'}^\beta (x_0)-Q_{L,L'}^\beta (x_1)\right|\\
		 & = \frac{1}{2} \beta \left( w_{L,L'}(x_0)^2-  w_{L,L'}(x_1)^2\right) =0
		 \end{align*}

First assume that $t\left(y\right) > 0$. Lift to the universal cover, and work in the $t,w_{L,L'}$ coordinates. In these coordinates, the intersection of the corresponding planes  $P$ and $P' + \hn$ is given by $x_0=\left(t_0,w_0\right)$ solving
\[ \begin{pmatrix} \alpha & 0 \\
				0 & \beta \\ \end{pmatrix} \begin{pmatrix} t_0 \\ w_0 \end{pmatrix} + \begin{pmatrix} e_0 \\ e_1 \end{pmatrix} = 0;  \]
the intersection of the corresponding planes $P_{H}$ and $P' + \hn$ is given by $x_1=\left(t_1, w_1\right)$ solving
\[ \begin{pmatrix} \alpha+\alpha_+\left(H\right) & 0 \\
				0 & \beta \\ \end{pmatrix} \begin{pmatrix} t_1 \\ w_1 \end{pmatrix} + \begin{pmatrix} e_0+\epsilon_+\left(H\right) \\ e_1 \end{pmatrix} = 0.  \]			
Thus  $\beta w_{L,L'}(x_0)^2=\beta  w_{L,L'}(x_1)^2$; when $\beta \neq 0$, $w_{L,L'}(x_0)^2= w_{L,L'}(x_1)^2$.
When $L$ and $L'$ are planar, one has $x_0=y_0$, $x_1= y_1$, and $S_{\R^4}(x_i)=Q_{L,L',+}(x_i)=Q_{L,L',-}(x_i)$.
\end{proof}

\section{$A_\infty$ structures}\label{ainfs}
\subsection{J-holomorphic curves}
The Fukaya categories we consider are $A_\infty$ categories where the higher products $\mu^n$ are defined by counts of pseudo-holomorphic curves. In our case, the definition of the $\mu^n$ products in our $A_\infty$ categories requires the use of specific almost complex structures, the definition of which reflects the controls we impose on the ends of $Y$.

%\begin{lemma} \label{linearJ}  Let $\mathcal{J}\left(\R^4,\omega_{std}\right)$ denote the set of $\omega_{std}$ compatible complex structures on $\R^4$. If $P$ is a plane in $\R^4$ with $\omega_{std}\left(P\right) \neq 0$, then there exists $J \in \mathcal{J}\left(\R^4,\omega_{std}\right)$ such that $J\left(P\right)=P$, i.e. for which $P$ is a complex plane. Moreover, if $v$ and $w$ satisfy $\omega\left(v,w\right) \neq 0$, then there exists a $J \in \mathcal{J}\left(\R^4,\omega_{std}\right)$ such that $J v = w$. {This would be an appropriate place to put in linearity lemma}

\begin{defn} An \emph{eventually constant} almost complex structure is an $\omega$-compatible almost complex structure for which there exists $T$ and $J_{+},J_{-} \in \mathcal{J}\left(\R^4, \omega_{std}\right)$ for which for all points $p \in Y^+_T$, $J|_{p}= J_+$, and for all points $p \in Y^-_T$, $J|_p=J_-$, using the natural trivialization $T Y = Y \times \R^4$. Let $\mathcal{J}_{EC}$ denote the set of eventually constant $J$. \end{defn}

Recall that the choice of an almost complex structure $J$ on $Y$ will induce a metric $g_J$ defined by $g_J\left(v,w\right) = \omega\left(v, J w\right)$; an almost complex structure is eventually constant if and only if the metric tensor is eventually constant. 

\begin{prop} The space of eventually constant almost complex structures is contractible. \end{prop}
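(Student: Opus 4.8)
The plan is to show that $\mathcal{J}_{EC}$ is contractible by exhibiting it as (a section of) a bundle with contractible fibers over a contractible base, or more directly by writing down an explicit deformation retraction. First I would recall the standard fact that $\mathcal{J}(\R^4,\omega_{std})$ — the space of linear complex structures on $\R^4$ compatible with the standard symplectic form — is contractible; this is the classical Gromov/Siegel-upper-half-space description, in which $\mathcal{J}(V,\omega)$ for a symplectic vector space $(V,\omega)$ is diffeomorphic to the Siegel upper half space and in particular is convex in a suitable sense, hence contractible. More generally, for any symplectic manifold the space of $\omega$-compatible almost complex structures is contractible, and the contraction is natural enough to be performed fiberwise and smoothly in parameters.

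The key steps, in order, would be: (1) Observe that an element $J \in \mathcal{J}_{EC}$ is the data of a smooth family $\{J_p\}_{p \in Y}$ of $\omega$-compatible linear complex structures on $\R^4$ (using the trivialization $TY = Y \times \R^4$), subject to the constraint that $J_p = J_+$ for $p \in Y_T^+$ and $J_p = J_-$ for $p \in Y_T^-$, for some $T$ depending on $J$. (2) Fix, once and for all, a smooth contraction of $\mathcal{Z} := \mathcal{J}(\R^4,\omega_{std})$ to a basepoint $J_0$: a map $c: [0,1] \times \mathcal{Z} \to \mathcal{Z}$ with $c(0,\cdot) = \mathrm{id}$, $c(1,\cdot) \equiv J_0$, and $c(r, J_0) = J_0$ for all $r$ — such a contraction exists because $\mathcal{Z}$ is contractible (one can take the geodesic contraction for the natural metric, or transport the convex contraction of Siegel space). (3) Define $H: [0,1] \times \mathcal{J}_{EC} \to \mathcal{J}_{EC}$ pointwise by $H(r, J)_p = c(r, J_p)$; check that $H(r,J)$ is again eventually constant with the same $T$ (since on $Y_T^\pm$ the value is the constant $c(r,J_\pm)$), that it is $\omega$-compatible and smooth in $p$, and that $H(0,\cdot) = \mathrm{id}$ while $H(1,\cdot)$ is the constant almost complex structure equal to $J_0$ everywhere. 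This $H$ is the desired contraction.

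One subtlety worth addressing explicitly: because the constant $T$ in the definition of "eventually constant" is allowed to vary with $J$, $\mathcal{J}_{EC}$ is a union $\bigcup_T \mathcal{J}_{EC}^{(T)}$ of the closed subspaces with a fixed $T$, and one should make sure the topology being used (presumably the $C^\infty_{loc}$ topology, or the direct-limit topology on this union) is compatible with the pointwise contraction; this is routine because the homotopy $H$ preserves the value of $T$, so it restricts to a contraction of each $\mathcal{J}_{EC}^{(T)}$ and these are compatible. Another point is that one should confirm the basepoint constant structure $J_0 \cdot \mathrm{id}$ genuinely lies in $\mathcal{J}_{EC}$ (it does, trivially, with any $T$), and that $\mathcal{J}_{EC}$ is nonempty (the constant structure coming from the complex structure $J$ on $Y$ defined in Section~\ref{LG} works, since it is translation-invariant hence eventually — indeed everywhere — constant).

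The main obstacle, such as it is, is not conceptual but is getting the contraction $c$ of $\mathcal{J}(\R^4,\omega_{std})$ to be genuinely smooth and basepoint-preserving so that the induced pointwise map $H$ stays inside the smooth, $\omega$-compatible, eventually-constant class; this is handled by citing the standard structure theory (the space of compatible linear complex structures is homogeneous under the symplectic group with maximal compact stabilizer $U(2)$, hence diffeomorphic to $Sp(4,\R)/U(2)$, a contractible symmetric space, and the geodesic retraction to a point is smooth and fixes the basepoint). Once that input is in hand, the argument is a formal "apply a contractible fiber contraction fiberwise, and it respects the asymptotic constraint" argument.
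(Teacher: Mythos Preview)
Your proposal is correct and follows essentially the same approach as the paper: apply a contraction of $\mathcal{J}(\R^4,\omega_{std})$ pointwise and observe that this preserves the eventually-constant condition. The paper's proof is a one-liner that chooses the specific contraction given by linearly interpolating the associated metrics $g_J$, whereas you invoke the abstract contractibility of $Sp(4,\R)/U(2)$ and add careful remarks about the varying $T$ and the topology; these elaborations are fine but not needed for the paper's purposes.
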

\begin{proof} The linear homotopy between any $J$ and a fixed $J_0$, constructed using the identification of $J$ with $g_J$, is through the space of eventually constant almost complex structures. \end{proof}

We will use the notation $g_E$ to denote the Euclidean metric on $Y$ pushed forward from the Euclidean metric on $\R^4$. We use $d_J\left(\cdot, \cdot\right)$ to denote the distance function induced from $J$, and $d_E\left(\cdot, \cdot\right)$ to denote the distance function induced by $g_E$. We use the notation $B_J\left(y, R\right) \subset Y$ to denote the $g_J$ ball of radius $R$ around $y'$, i.e. the set of points $y' \in Y$ such that $d_J\left(y,y'\right)<R$, and we use the notation $B_E\left(y,R\right)$ to denote the Euclidean ball centered at $y.$

Note that at each point $y \in Y$ there exists a constant $D_y < 1$ such that for all $v,w \in TY|_y$, 
\begin{equation}\label{normcomp} \frac{1}{D_y} g_E\left(v,w\right) > g_J\left(v,w\right) >D_y  g_E\left(v,w\right). \end{equation}  Let $\mathcal{J}$ be a family of eventually constant complex structures over a compact base; then there exists a universal constant $D_{\mathcal{J}}$ such that \eqref{normcomp} holds with $D_\J$ at all points, for all $J \in \mathcal{J}$. Then $B_{\J} \left(y, R D\right) \subseteq B_E\left(y, R\right)$ for all $y \in Y$. 

For a compact family $\J$, and collection of Lagrangians $L_0, \ldots, L_n \in \G_P$, $L_i = \psi_{H_i}\left(L_i'\right)$, fix  \[T_{\lin} \left(L_0, L_1 \ldots, L_n , \mathcal{J}\right) > 0\] such that the boundary conditions on each $L_i'$, $H_i$,and $J \in \J$ are satisfied on $Y_{T_{\lin}} \left(L_0, L_1 \ldots, L_n , \mathcal{J}\right)$.

%We will need the following result from \cite{Si}, which tells us that if the image of a point of a holomorphic disc $u: \p \to Y$ is far from the image of the boundary $u\left(\p \D\right)$, then the curve has large area:
%\begin{prop}\label{Si}There exists $C_J> 0$ depending only on $J$ \left(which may be domain-dependendent\right) such that if $u: \p \to Y$ is $J$-holomorphic, and $B_R^E\left(u\left(x\right)\right) \cap u\left(\p \D\right)\right) =0$, then $\omega\left(u\left(\D\right) \cap B_R^E\left(u\left(x\right)\right) > C_J R^2$. \end{prop}
%\begin{proof} This is an adaptation of Proposition 4.3.1 \left(ii\right) in \cite{Si}. \end{proof}

\subsubsection{Bounds on J-holomorphic curves} 
Our boundary conditions can be used to derive bounds which will control the image of any $J$-holomorphic curve in a fixed homology class. These bounds will be useful throughout, and in particular will be used to show that the moduli spaces we use to define our structure maps are well-behaved.

Let $\left(L_0, \ldots, L_n\right)$ be a collection of Lagrangians $\G_P\left(I\right)$ in correct position, and let $y_i \in L_i \cap L_{i+1}, 0 \leq i \leq n-1$, $y_n \in L_0 \cap L_{n}$. Let $S$ be a disc with boundary marked points $\{p_0, \ldots, p_n\}$, and let $\p_i S$ denote the boundary component of $S$ so that $p_{i-1}, p_{i} \in \overline{\p_i S}$. Let $\J=\{ J_s \}_{s \in S}$ be a family of eventually constant almost complex structures parametrized by $S$ such that there exists $J$ such that $J_{s} = J$ outside of a compact subset of $Y$ for all $J_s$. Let $u: S \to Y$ be a map satisfying 
\begin{align} &\overline{\p_{\J}} u = 0; \label{Jeqn1}\\
			 &u\left(\p_i S\right) \in L_i  \label{Jeqn2}\\
			 & u\left(p_i\right) = y_i. \label{Jeqn3} \end{align}

We will use counts of such $J$-holomorphic $u$ to define the $A_\infty$ structures on our Fukaya categories. The features of our Lagrangians give us controls over the behavior of such $u$. In the $\A$ setting, we can ensure that $J$-holomorphic curves lie within a compact region in the same way that one does when defining a Fukaya-Seidel category:

\begin{lemma}\label{RIE1} Let $L_0, L_1 \ldots, L_n \in \G_P\left(\A\right)$ be a collection of transverse Lagrangians in $\G_P\left(\A\right)$ in correct position. Then the image of any $u$ satisfying Equations \eqref{Jeqn1}, \eqref{Jeqn2} and \eqref{Jeqn3} is contained within $Y \setminus Y_{T_{\lin} \left(L_0, L_1 \ldots, L_n , \mathcal{J}\right)}$.  \end{lemma}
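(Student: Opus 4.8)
The plan is to adapt the standard maximum-principle / ``no escape to infinity'' argument for Fukaya--Seidel categories to the cylindrical ends of $Y$. The essential point is that on each end $Y^{\pm}_T$ the boundary conditions force the Lagrangians $L_i$ to lie in a fixed linear region, and that in those regions there is a plurisubharmonic ``barrier'' function built from the angular coordinate $m_0\phi_t + k_0\phi_s$ (on the negative end) and $-m_\infty \phi_t + k_\infty \phi_s$ (on the positive end) whose restriction to the relevant Lagrangians is locally constant. Since $J$ is eventually constant, on $Y^{\pm}_T$ (enlarging $T$ if necessary to exceed $T_{\lin}$) the almost complex structure is the constant $J_{\pm}$ coming from the trivialization $TY = Y \times \R^4$, so the geometry there is genuinely linear and the function $\phi := m_0\phi_t + k_0\phi_s$ is pulled back from $\R$ via a linear functional, hence $J_-$-pluriharmonic; in particular $\phi \circ u$ is harmonic wherever $u$ maps into $Y^-_{T_{\lin}}$.

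First I would fix $T = T_{\lin}(L_0,\dots,L_n,\mathcal{J})$ and suppose for contradiction that the image of some $u$ meets $Y^-_T$ (the $Y^+_T$ case is symmetric, using \eqref{condbd} in place of \eqref{condac}). Let $\Sigma = u^{-1}(Y^-_{T})$, a (possibly disconnected) subsurface of $S$ with boundary on $\partial \Sigma \subset u^{-1}(\{t = -T\}) \cup (\Sigma \cap \partial S)$. On $\Sigma \cap \partial S$, $u$ maps into some $L_i$, and by the type-$\mathbf{A}$ condition \eqref{raycond1}, $\phi \circ u$ is constant (valued in $\R/\Z$, but lifting to $\R^4$ along $u$ restricted to a component of $\Sigma \cap \partial_i S$ it is genuinely constant). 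So the harmonic function $t \circ u$ on $\Sigma$ --- note $t$ is also $J_-$-pluriharmonic, being a linear coordinate --- attains its maximum over $\closure{\Sigma}$ on $\partial\Sigma$; but $t\circ u \le -T$ on $\Sigma$ with equality on $u^{-1}(\{t=-T\})$, and the open mapping principle / Hopf lemma for the harmonic function $t \circ u$ forces $t\circ u \equiv -T$ on any component of $\Sigma$ all of whose boundary lies in $\{t = -T\}$, while on components touching $\partial S$ one uses the boundary regularity of $J_-$-holomorphic curves with Lagrangian boundary and the fact that $L_i \cap \{t = -T\}$ is a proper submanifold. The cleaner route, which I would actually carry out, is: apply the maximum principle to the function $t\circ u$ on $\Sigma$, whose only possible maximum locus is $u^{-1}(\{t=-T\})$, to conclude $u(\Sigma) \subset \{t = -T\}$, hence $du$ is tangent to the level set there, hence (since $u$ is $J_-$-holomorphic and $J_-$ preserves no real hyperplane) $u$ is constant on each component of $\Sigma$ --- contradicting that such a component has interior mapping into the open set $Y^-_T$ unless $\Sigma = \emptyset$.

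The main obstacle I expect is the interface between ``interior'' maximum-principle behavior and the Lagrangian boundary condition: one must rule out a component of $\Sigma$ that is pinned to $\{t=-T\}$ only along $\partial S$ while dipping into $t < -T$ in the interior, which requires knowing that $t \circ u$ subharmonic plus the boundary condition $u(\partial_i S) \subset L_i$ with $L_i$ transverse to --- or at least not tangent along an arc to --- the hypersurface $\{t = -T\}$ still yields the maximum on the marked points $y_i$, and the $y_i$ lie in $Y\setminus Y_{T_{\lin}}$ by the correct-position / linearity setup. So the argument reduces to: (i) $t\circ u$ is subharmonic for eventually-constant $J$ on the end (true since $t$ is a coordinate function and $J = J_-$ there makes $t$ pluriharmonic, a fortiori plurisubharmonic); (ii) along any boundary arc $u(\partial_i S) \subset L_i$, either $t\circ u$ is locally constant (if that arc of $L_i$ is at fixed $t$, which does not happen for a section) or $\partial(t\circ u)/\partial\nu < 0$ at an interior maximum of $t\circ u$ restricted to the arc by the Hopf lemma, which is impossible if $t\circ u$ achieved an interior-of-$S$ maximum on that arc; (iii) the only remaining maxima of $t\circ u$ on $S$ are at the marked points $p_i$, where $u(p_i)=y_i$ satisfies $t(y_i) \ge -T_{\lin} \ge -T$. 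Assembling (i)--(iii) gives $t \circ u \ge -T$ everywhere, i.e. $u(S) \cap Y^-_T = \emptyset$; the symmetric argument on the positive end finishes the proof. I would present this with the key estimate being the subharmonicity of $t\circ u$ and cite the planarity and linearity conditions from the definition of $\G_P(\mathbf{A})$ for the boundary behavior, deferring the routine Hopf-lemma bookkeeping.
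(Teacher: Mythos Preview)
Your approach has a genuine gap: the maximum principle applied to $t\circ u$ does not by itself confine the curve, because $t$ is \emph{not} constant on the boundary Lagrangians. Each $L_i$ is a section of $Y\to\R\times S^1$, so $t|_{L_i}$ is surjective onto $\R$, and there is no a priori lower bound for $t\circ u$ along $\partial_i S$. Your step (ii) tries to close this with the Hopf lemma, but the Lagrangian boundary condition does \emph{not} give a Neumann-type condition on $t\circ u$ for a general eventually-constant $J_\pm$. Concretely: at a boundary point one has $du(\partial_\nu)=J_\pm\,du(\partial_\tau)\in J_\pm\,T L_i$, and for a generic $\omega$-compatible constant $J_\pm$ the functional $dt\circ J_\pm$ does not vanish on $T L_i=\mathrm{span}(\hat e_t,\hat e_s)$. (For instance, with $J=MJ_0M^{-1}$ for $M=\begin{psmallmatrix}I&B\\0&I\end{psmallmatrix}$ symplectic and $B$ symmetric, one computes $dt(J\hat e_s)=B_{12}$.) So nothing prevents the minimum of $t\circ u$ from occurring at an interior point of $\partial_i S$ with $t$ arbitrarily negative; your assertion (iii) that extrema occur only at marked points is unjustified.

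The paper's argument avoids this by using, not $t$, but a $J_\pm$-holomorphic linear projection $\Pi_v:\R^4\to\C$ whose \emph{imaginary part} is exactly the angular combination $m_\infty\phi_t-k_\infty\phi_s$ (on the positive end) --- precisely the function you mention in your first paragraph but then abandon. The point is that $\omega(v,\,\cdot\,)=m_\infty\,d\phi_t-k_\infty\,d\phi_s$ for $v=m_\infty\hat e_t-k_\infty\hat e_s$, and the type-$\A$ condition together with correct position forces this imaginary part to be \emph{constant} on each $\widetilde{L_j}$ with pairwise distinct values $m_\infty\epsilon_+(H_j)\pmod\Z$. So $\Pi_v\circ\tilde u$ restricted to $\tilde u^{-1}(Y_{T_{\lin}}^+)$ is a genuine holomorphic map to $\C$ whose boundary lies on a finite set of disjoint horizontal lines; the open mapping theorem then forces that preimage to be empty. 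Your barrier function $\phi=m_0\phi_t+k_0\phi_s$ was the right object on the negative end --- you should complete it to a $J_-$-holomorphic function (by pairing with $g_{J_-}(v,\,\cdot\,)$ rather than with $t$) and run the open-mapping argument there, rather than reverting to $t$.
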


\begin{proof} Let $J_+: \R^4 \to \R^4$ denote the complex structure on $\R^4$ so that $J_s|_{y} = J_+$ for any $s \in S$ and point $y$ such that $t\left(y\right) > T_{\lin} \left(L_0, L_1 \ldots, L_n , \mathcal{J}\right)$, and $g_{J_+}$ denote the induced metric on $\R^4$. 
The non-degeneracy of $\omega$ implies that there exists $v \in \R^4$ such that for any $w = \left(r_1, r_2, r_3, r_4\right) \in \R^4$, 
\[ \omega\left(v,w\right) = m_{\infty} r_3 - k_{\infty} r_4 \]
for all $w \in \R^4$. Consider the linear map $\Pi_v: \R^4 \to \C$ defined by $\Pi_v\left(w\right) =  \left(g_{J_+}\left(v, w\right), \omega\left(v,w\right)\right)$. Note that
\[ \Pi_v\left(J_+ w\right) =\left(\omega\left(v, J_+ J_+ w\right),\omega\left(v, J_+ w\right)\right) = \left(-g_{J_+}\left(v, w\right), \omega\left(v, w\right)\right)= i \Pi\left(w\right). \]
so $\Pi_v$ is $J_+$-holomorphic. Now, observe that the image of any lift $\left(\widetilde{L_j}\right)_{T_{\lin} \left(L_0, L_1 \ldots, L_n , \mathcal{J}\right)}^+$ under $\Pi_v$ is a ray (by the fact that $L_j$ is planar at infinity and Lagrangian), which is contained within the line \[\{ z \in \C \mid \mathrm{Im }\, z = m_{\infty} \epsilon_+\left(H_j\right) + n \}\] for some $n$ depending on the lift; when $i \neq j$ these lines are disjoint by the conditions on the perturbing Hamiltonians $H_j$. Since the composition \[\Pi_z \circ \tilde{u}: \tilde{u}\inv\left( \{ t \geq  T_{\lin} \left(L_0, L_1 \ldots, L_n , \mathcal{J}\right)\}\right) \to \C\] is holomorphic on the interior of $S$ and has compact image with boundary contained within a finite collection of parallel lines, we conclude that  $\tilde{u}\inv\left( \{ t \geq  T_{\lin} \left(L_0, L_1 \ldots, L_n , \mathcal{J}\right)\}\right)=\emptyset$. Thus the image of $u$ is contained within the region $\{y \in Y \mid t\left(y\right) < T_{\lin} \left(L_0, L_1 \ldots, L_n , \mathcal{J}\right)\}$. Applying the same argument at the negative end gives the result.

\end{proof}

After we relax the boundary conditions, we can no longer constrain all holomorphic curves to lie within a compact set via a holomorphic projection. We instead rely on the following bound, which we obtain by using the planar regions of each Lagrangian which are ensured by the vanishing conditions on the non-quadratic part of the perturbing Hamiltonians. 

\begin{lemma}\label{RIE} Let $L_0, L_1 \ldots, L_n \in \G_P$ be a collection of transverse Lagrangians in $\G_P$ in correct position; write $L_i = \psi_{H_i} \left(L_i'\right)$. Let $y_i \in L_{i} \cap L_{i+1}$, $0 \leq i \leq n$, and let $u:S \to Y$ be a $\mathcal{J}$-holomorphic map satisfying Equations \eqref{Jeqn1}, \eqref{Jeqn2}, and \eqref{Jeqn3} for $\J=\{J_s\}_{s \in S}$. For $T > 0$, let $\ell_T^+\left(u\right)$ denote the length of the interval $\left(t \circ u\right)\inv\left(Y_T^+\right) \subset \R$, and $\ell_T^-\left(u\right)$ denote the length of the interval $\left(t \circ u\right)\inv\left(Y_T^-\right) \subset \R$. Let $\ell_T\left(u\right)=\ell_T^+\left(u\right) + \ell_T^-\left(u\right)$. Then there exists  $R > 0$, $M > 0$, and $T_B > T_{\lin}\left(L_0, L_1 \ldots, L_n,\mathcal{J}\right)$, depending only on the collection $\left(L_0, L_1 \ldots, L_n , \mathcal{J}\right)$,  such that for all such $u$ with $\ell_{T_B} \left(u\right) > M$
\[ \ell_{T_B} \left(u\right) \leq R \int_{S} u^* \omega. \]
\end{lemma}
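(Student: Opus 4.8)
The plan is to establish a \emph{linear isoperimetric inequality} at each wrapped end of $Y$ by exhibiting, in the planar region of the relevant end, a ``monodromy'' holomorphic projection analogous to the map $\Pi_v$ of Lemma \ref{RIE1}, and then to convert the resulting bound on the range of $t\circ u$ into a bound on the length $\ell_{T_B}(u)$. The key observation is that, although after relaxing boundary conditions the Lagrangians no longer fiber over disjoint parallel lines under a single projection valid on all of $Y_{T}^{+}$, each $L_i$ \emph{is} planar beyond some $T_{\lin}$, and its corresponding plane $P_i^{+}$ has the explicit form \eqref{planes}; moreover the non-quadratic, non-periodic part $f_{\pm}(H_i)$ of every perturbing Hamiltonian \emph{vanishes} on a family of slabs $\{\,|t - t_0 + mN|<\kappa\,\}$ (Definition \ref{pertHdef}(b)), so on each such slab the perturbed Lagrangian coincides with $\psi_{H_i'}(L_i)$ and hence lies exactly on a plane determined only by the quadratic data $\alpha_\pm,\epsilon_\pm$. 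The plan is to choose $T_B$ large enough and then work on one such slab $I_m = \{\,|t-t_0+mN|<\kappa\,\}$ at a time.

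First I would fix the almost complex structure to be the constant $J_+$ on $Y_{T_B}^{+}$ (after enlarging $T_B$ past the $T$ of Definition of $\mathcal J_{EC}$ and past $T_{\lin}$), and build the holomorphic projection. On a slab $I_m$ choose $v\in\R^4$ with $\omega(v,\cdot)= $ (the linear functional defining the $t$-coordinate direction transverse to the slab), exactly as in Lemma \ref{RIE1}, and form $\Pi_v(w)=(g_{J_+}(v,w),\omega(v,w))$, which is $J_+$-holomorphic. The image of each planar piece $\psi_{H_i'}(\widetilde{L_i})\cap I_m$ under $\Pi_v$ is then a straight segment; the $C^2$-smallness of $f_\pm$ (bound \eqref{boundfromp}: Hausdorff distance $\le \kappa^2/4$) guarantees that over $I_m$ the actual perturbed boundary stays within $\kappa^2/4$ of these segments. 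Now apply the monotonicity inequality for $J_+$-holomorphic curves with Lagrangian boundary (or the classical mean-value/isoperimetric inequality for minimal surfaces in Euclidean $\R^4$, valid since $g_{J_+}$ is flat): any portion of $u$ whose image meets the middle of the slab $I_m$ but does not leave $I_m$ has area bounded below by a constant $c>0$ independent of $m$ (by translation invariance in the $\hat e_t$ direction and periodicity of the $f_\pm$). Summing over the slabs $I_m$ that $u$'s image enters, $\int_S u^*\omega \ge c\cdot\#\{m : \operatorname{im}(u)\cap I_m\neq\emptyset\}$. Since consecutive slabs are separated by a fixed distance $N$, the length $\ell_{T_B}^+(u)$ is at most (that count)$\times(N+2\kappa)$ plus the contribution from at most finitely many ``transitional'' intervals; choosing $M$ larger than that finite contribution yields $\ell_{T_B}^+(u)\le R^+\int_S u^*\omega$ for a suitable $R^+$ depending only on the collection. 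The same argument at the negative end gives $R^-$, and setting $R=\max\{R^+,R^-\}$, with $M$ and $T_B$ the maxima of the two choices, proves the lemma.

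There is a subtlety to handle where the two corresponding planes of a pair $(L_i,L_{i+1})$ are essentially non-transverse, so that $\Pi_v$ of their boundaries lands on \emph{coincident} rather than disjoint lines; in that case I would instead use that $u$'s boundary on consecutive components lies within $\kappa^2/4$ of the same line, so a disc piece trapped in $I_m$ still has a lower area bound by monotonicity against the single limiting Lagrangian plane (the area is controlled from below by the distance the interior of $u$ travels transverse to that line, which is forced to be $\gtrsim 1$ if $u$ spans the slab). Another point requiring care: the slabs $I_m$ for different Lagrangians $L_i$ may be offset (different $t_0(f_\pm(H_i))$), but only finitely many Lagrangians are involved, so I can pass to a common period $N = \mathrm{lcm}$ of the $mN_i$ and intersect the vanishing slabs to obtain a single periodic family of slabs on which \emph{all} the $L_i$ are simultaneously planar; this only shrinks $\kappa$ and enlarges the transitional contribution absorbed into $M$.

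The main obstacle I expect is the passage from ``the image of $u$ enters many slabs'' to a genuine \emph{lower area bound per slab that is uniform in $m$ and in $u$}: one must rule out holomorphic pieces that thread many slabs while having tiny area, e.g.\ long thin ``necks'' nearly tangent to the planes. This is exactly where one needs a monotonicity-type inequality with a constant independent of the piece, and the cleanest route is to invoke the standard monotonicity lemma for $J$-holomorphic curves with totally real boundary in the \emph{flat} metric $g_{J_+}$, using that each slab piece of $u$ is a proper minimal surface in a Euclidean region with boundary on affine subspaces. The periodicity and the vanishing of $f_\pm$ on the slabs are what make the constant uniform; without them the estimate would degrade near infinity. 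Once this uniform per-slab area bound is in hand, the rest is bookkeeping, and the finitely many exceptional (transitional or non-transverse) contributions are swept into the threshold $M$.
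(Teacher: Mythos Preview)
Your proposal identifies the correct ingredients—the periodic planar regions guaranteed by Definition \ref{pertHdef}(b), a monotonicity-type lower bound, and summation over many disjoint regions—but the mechanism you propose for extracting a uniform per-slab area bound has a real gap, and the paper closes it by a different route.

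The difficulty is exactly the one you flag: on a slab $I_m$ where all the $L_i$ happen to be planar, the boundary of $u$ restricted to $u^{-1}(I_m\times (S^1)^3)$ can lie on \emph{several} of the $L_i$ at once, and there is no standard monotonicity lemma for $J$-holomorphic curves with boundary on a \emph{union} of transverse affine Lagrangian planes. The reflection trick needs a single plane. Your suggested fix via a projection $\Pi_v$ with $\omega(v,\cdot)$ equal to the $t$-functional does not help: unlike in Lemma \ref{RIE1}, the planar pieces of the various $L_i$ will not project to disjoint parallel lines, so nothing prevents a thin holomorphic strip with boundary alternating between two nearby planes from threading many slabs with small total area.

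The paper avoids this by never trying to control all boundary components simultaneously. Instead it first uses the maximum principle for $t\circ u$ (available because $J=J_+$ is constant on $Y_{T_B}^{+}$) to know that every $t$-value in the image is attained on $\partial S$; a pigeon-hole over the $n+1$ boundary arcs then produces a single arc $\partial_i S$ whose $t$-range has length $\ge \ell_{T_B}^{+}(u)/(n+1)$, and a further pigeon-hole over the finitely many ``crossing'' values $t_{ij}=t(P_i^{+}\cap P_j^{+})$ produces a subinterval $I^0\subset t(\partial_i S)$ of length $\gtrsim \ell_{T_B}^{+}(u)/(n+1)^2$ containing none of the $t_{ij}$. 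Inside $I^0$ one now places points $y_k\in u(\partial_i S)$ at the periodic $t$-values where $f_{+}(H_i)$ vanishes, additionally requiring $|t(y_k)-t_{ij}|$ large. The linear estimate $d_E(y_k,P_j^{+})\ge C_{ij}\,|t(y_k)-t_{ij}|$ then forces every other Lagrangian $\widetilde{L_j}$, $j\neq i$, to stay \emph{outside} a fixed Euclidean ball $Q_k$ about $y_k$, while $\widetilde{L_i}\cap Q_k$ is exactly the single plane $P_i^{+}\cap Q_k$. Now the boundary of $u$ inside $Q_k$ lies on one plane, the reflection principle plus Gromov monotonicity gives $\int_{u^{-1}(Q_k)}u^*\omega\ge c>0$ with $c$ independent of $k$, and summing over the $\gtrsim \ell_{T_B}^{+}(u)/N$ disjoint balls yields the claim.

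So the idea you are missing is this two-step pigeon-hole (first to a single boundary component, then to an interval free of plane-crossings) which isolates balls where only one Lagrangian is present. Once you add that, your monotonicity argument goes through and coincides with the paper's.
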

\begin{proof} We first fix several pieces of geometric data about the collection $\left(L_0, L_1 \ldots, L_n , \mathcal{J}\right)$. First, let $\epsilon>0$ be such that for each pair $0 \leq i < j \leq n$, two lifts $\widetilde{L_i}$ and $\widetilde{L_j}$ are a distance at least $\epsilon$ away from each other outside of a compact set. For two corresponding planes which have translates that intersect, observe that there exists $C_{ij} > 0$ such that for any two translates $P_i^{\pm}\left(a_i, b_i\right)$, $P_j^{\pm}\left(a_j, b_j\right)$ which intersect at a fixed $t$-value $t_{i,j,a_i, b_j,a_j, b_j}$, for all $y \in P_i^\pm\left(a_i, b_i\right)$,
\[ C_{i j} |t\left(y\right)-t_{i,j,a_i, b_j,a_j, b_j}| < d_E\left(y, P_j^\pm\left(a_j, b_j\right)\right). \]
%if t is distance 1 away, then the points are at least C_ij away. If C_ij is small this is a weak bound.\Let $C = \min_{0 \leq i < j \leq n} C_{ij}$. 
Let $C=\min \{C_{ij}\}$.

Consider now the collection of small perturbing functions $f_+^{i}= f_+\left(H^+_i\right)$. Let $N_i$ denote the period of $f_+^i$, and let $\kappa_i$ denote $\kappa\left(f_+^i\right)$.
Note that for every point $y \in L_i$ with $t\left(y\right)> T_{\lin}\left(L_0, L_1 \ldots, L_n,\mathcal{J}\right)$, $d_E\left(y, P_i^+\right) <  \kappa_i$. Let \[N=N_0 \cdot N_1 \cdots N_n >1.\]  Finally let \[\gamma = \min \{ \kappa_i/2, \epsilon, \delta \}.\]

Let $J_+ \in \End\left(\R^4\right)$ denote the limiting complex structure of the family $\J$ at the $t >> 0$ end of $Y$. For each affine corresponding plane $P_i^+ \subset \R^4$, choose $v_i \in \R^4$ with $g_{J_+}\left(v_i, v_i\right)=1$ such that $v_i$ is $g_{J_+}$-orthogonal to the translate of $P_i^+$ passing though the origin. Let $\Pi_{v_i}: \R^4 \to \C$ denote the map defined above:
\[ \Pi_{v_i}\left(v\right) = \left(g_{J_+}\left(v_i, v\right), \omega\left(v_i, v\right) \right). \] Then there exists $\iota \in \R$ such that 
\[ \Pi_{v_i}\left(P_i^+\right) = \{ z \in \C \mid \operatorname{Re}\left(z\right) = \iota\}, \]
i.e., $P_i$ maps to a vertical line under $\Pi_{v_i}$. Let $w_i$ and $\iota'$ be such that
\begin{align*} &\omega\left(v_i, w_i\right)=g_{J_+} \left(v_i, J w_i\right) = 0\\
& g_{J_+}\left(w_i,  w_i\right)=1;\\
&g_{J_+}\left(P_i^+, w_i\right)=\iota'.\end{align*} Then the holomorphic lines spanned by $v_i$ and $w_i$ are orthogonal. One can then verify that the map 
\[ \Pi_{v_i} \oplus \Pi_{w_i}: \left(\R^4, \omega, J_+\right) \to \left(\C \oplus \C, \omega_{\C} \oplus \omega_{\C}, i \oplus i\right)\] is an isomorphism of K\"{a}hler vector spaces, and the image of $P_i^+$ under such a map is the product of two vertical lines. 
 
Now, let $u: S \to Y$ be such a $\J$-holomorphic map so that $u\inv\left(Y_{T_{\lin}\left(L_0, L_1 \ldots, L_n,J\right)}^+\right) \neq \emptyset$. Lift $u$ to a map $\tilde{u}: S \to \R^4$ with boundary on lifts $\widetilde{L_i}$, and let $P_i^+$ be the  corresponding planes to these lifts. When $P_i^+$ and $P_j^+$ intersect, let $t_{ij} = t\left(P_i^+ \cap P_j^+\right)$.

 Note that $u$ satisfies a maximum principle for the $t$ -coordinate by the argument above, so any $t$-value achieved by $u\left(S\right)$ is achieved by a point $z \in \p S$. For each $i$, $0 \leq i \leq n$, consider the interval $I_i = t\left( \p_i S\right) \subset \R$; let $\ell_i$ denote the length of the interval $I_i$. Note that 
 \[ \sum_{i=0}^n \ell_i \geq \ell_{T_{\lin}\left(L_0, L_1 \ldots, L_n,\mathcal{J}\right)}^+\left(u\right), \]
so there exists some $i$ such that 
\[ \ell\left(I_i\right) \geq \frac{\ell_{T_{\lin}\left(L_0, L_1 \ldots, L_n,\mathcal{J}\right)}^+\left(u\right)}{n}. \]
Then there exists some subinterval $I^0 \subset I_{i}$ of the form $I^0=[t_{i j}, t_{i j'}]$ with $j, j'\neq i$ so that $t_{ij''} \notin \left(t_{i j}, t_{i j'}\right)$ for all $j'' \neq i$, and 
\[ \ell\left(I^0\right) \geq \frac{\ell\left(I_i\right)}{n} \geq \frac{\ell_{T_{\lin}\left(L_0, L_1 \ldots, L_n,\mathcal{J}\right)}^+\left(u\right)}{n^2}. \]

We now choose a sequence of points $y_0, \ldots, y_K \in \tilde{u}\left(\p_i S\right)$ such that \begin{itemize}
\item $t\left(y_k\right) \in I^0$ for all $k$;
\item $t\left(y_k\right) = \left(t_0\right)_i + m_k N$, $m_k \in \N$, and $m_k \neq m_{k'}$ for $k \neq k'$;
\item $|t\left(y_k\right)-t_{ij} |> \frac{\gamma+\delta}{C}$ for $i \neq j$
\item $K$ is large: \[ K \geq \frac{\ell\left(I^0\right)}{N} - 2 N - \frac{2 \gamma+\delta}{C} \geq \frac{\ell_{T_{\lin}\left(L_0, L_1 \ldots, L_n,\mathcal{J}\right)}^+\left(u\right)}{N n^2}  - 2 N - \frac{2 \gamma+\delta}{C}.\]  \end{itemize}

Since $|f_i|<\frac{\kappa_i^2}{4}$, we observe that if $y \in \widetilde{L_i}$ satisfies $d_{E}\left(y, y_k\right) < \frac{\kappa}{2}$, then $y \in P_i^+$. Note also that for all $j \neq i$,
\[ d_E\left(y_k, P_j\right) \geq C |t\left(y_k\right)-t_{ij} | > \gamma+\delta \]
so 
\[ d_E\left(y_k, \widetilde{L_i}\right) > d\left(y_k, P_i\right) - \delta > \gamma. \]

Thus each $y_k$ is a distance at least $\gamma$ away from $\widetilde{L_j}$ for all $j \neq i$, and is a distance at least $\frac{\kappa}{2} > \gamma$ away from the non-planar region of $\widetilde{L_i}$.

Now, for each $y_k$, consider the set 
\[ Q_k = B_E\left(y_k, D_{J^+} \frac{\gamma}{3}\right). \] 
By construction, $Q_k \cap Q_{k'}=\emptyset$ if $k \neq k'$, and $Q_k \cap \widetilde{L_i} \subset P_i^+$.

We also conclude that $Q_k \cap \widetilde{L_j} = \emptyset$ for $j \neq i$. Finally, note that $\widetilde{L_i}$ is planar within $Q_k$, i.e. 
\[ \widetilde{L_i} \cap Q_k =  P_i^+ \cap Q_k. \]

Let $U_k = \tilde{u}\inv\left(Q_k\right) \subset S$. The above shows that $\p S \cap U_k \subset \p_i S$, and $\tilde{u}\left(\p S \cap U_k\right)\subset P_i^+$. By Gromov's monotonicity lemma (\hspace{1sp}\cite{gromov}) and the reflection principle, we can bound 
\[ \int_{U_k} \tilde{u}^* \omega \geq \frac{\pi}{2}\left(\frac{ D_J^+ \gamma }{3} \right)^2.\]

Note that the quantity $D_{J^+} \gamma^2$ depends only on $J_+$ and the Lagrangians $L_0, \ldots, L_n$, and the choice of $L_i$. Letting $R$ denote the minimum over this quantity $D \gamma^2$ over such a choice of $I$ and over both ends, and taking $T_B=T$ gives the result. \end{proof}

\begin{cor}\label{boundedregion} Let $A > 0$, let $\mathcal{S}$ be a compact family of discs with $n$ boundary marked points, let $\J: \mathcal{S} \to \J_{EC}$ be a family of eventually constant almost complex structures, and let $y \in Y$.  Then there exists a compact region $V\left(A, y, \mathcal{S}, \J\right) \subset \left(Y\right)$ such that all $u: S \to Y$ which satisfy Equations \eqref{Jeqn1}, \eqref{Jeqn2}, \eqref{Jeqn3}  with $y \in u\left(S\right)$, $S \in \mathcal{S}$, and which have $\int_{S} u^* \omega < A$, satisfy
\[ u\left(S\right) \subset V\left(A, y, \mathcal{S}, \J\right). \] \end{cor}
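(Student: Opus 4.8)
The plan is to deduce this directly from the linear energy estimate of Lemma~\ref{RIE}. The collection $L_0, \dots, L_n$ of Lagrangians (transverse and in correct position, as in Lemma~\ref{RIE}) underlying Equations~\eqref{Jeqn1}--\eqref{Jeqn3} is part of the given data, so I would first apply Lemma~\ref{RIE} to this collection, uniformly over the compact family $\mathcal{S}$ and the associated family $\J$ of eventually constant almost complex structures. This produces constants $R > 0$, $M > 0$ and $T_B > 0$ such that every $u \colon S \to Y$ with $S \in \mathcal{S}$ satisfying \eqref{Jeqn1}--\eqref{Jeqn3} and $\ell_{T_B}(u) > M$ obeys $\ell_{T_B}(u) \le R \int_S u^*\omega$. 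The one point here that needs a word is the uniformity over $\mathcal{S}$: the proof of Lemma~\ref{RIE} uses only finitely many geometric quantities attached to $(L_0, \dots, L_n, \J)$ --- the separation $\epsilon$ of distinct lifted Lagrangians at infinity, the cone constants $C_{ij}$, the periods $N_i$ of the perturbing functions, and a Gromov monotonicity radius --- and since $\mathcal{S}$ is compact and the chosen almost complex structures vary continuously, each of these can be bounded uniformly, so a single triple $(R, M, T_B)$ works for the whole family.

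Given such constants, let $u$ be any map in the stated class with $\int_S u^*\omega < A$. Then $\ell_{T_B}(u) \le \max\{M, R A\} \coloneqq \ell_0$ in every case (trivially if $\ell_{T_B}(u) \le M$, otherwise from the estimate), so in particular $\ell_{T_B}^{+}(u) \le \ell_0$ and $\ell_{T_B}^{-}(u) \le \ell_0$. Since $S$ is connected and $u$ continuous, $\mathcal{I} \coloneqq t\big(u(S)\big) \subset \R$ is an interval, and it contains $t(y)$ because $y \in u(S)$. The bounds on $\ell_{T_B}^{\pm}(u)$ say that the portions of $\mathcal{I}$ lying in $(T_B, \infty)$ and in $(-\infty, -T_B)$ each have length at most $\ell_0$; a short case analysis on whether the endpoints of $\mathcal{I}$ fall beyond $\pm T_B$ then yields
\[ \mathcal{I} \subseteq \big[\, \min\{-T_B,\, t(y)\} - \ell_0,\ \max\{T_B,\, t(y)\} + \ell_0 \,\big]. \]

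Because $Y = \R \times (S^1)^3$, controlling the $t$-coordinate of $u(S)$ is all that is required: I would set
\[ V(A, y, \mathcal{S}, \J) \coloneqq \big[\, \min\{-T_B,\, t(y)\} - \ell_0,\ \max\{T_B,\, t(y)\} + \ell_0 \,\big] \times (S^1)^3, \]
which is compact, and conclude $u(S) \subseteq V(A, y, \mathcal{S}, \J)$ for every $u$ as above. The essential content is Lemma~\ref{RIE}; the only genuine obstacle in the corollary itself is the uniformity of its constants over the compact parameter family, which compactness handles as indicated. (In the $\mathbf{A}$-setting one may instead cite Lemma~\ref{RIE1}, which already confines the image of $u$ to a fixed compact set independently of $A$.)
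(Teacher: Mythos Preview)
Your proposal is correct and follows the same approach as the paper: apply Lemma~\ref{RIE} to obtain uniform constants $R$, $M$, $T_B$, then use connectedness of $u(S)$ together with the energy bound to confine $t(u(S))$ to a fixed interval. The paper's proof is terser---it gives an explicit threshold $|t(y')| > (T_B + |t(y)| + 1 + A)(R+1)$ and argues by contraposition that a point beyond it forces $\int_S u^*\omega > A$---but your version is actually more careful in tracking the constant $M$ and in spelling out the interval containment, so the content is the same.
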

\begin{proof} We can find a universal $R > 0$ and $T_B$ so that Lemma \ref{RIE} is satisfied for this family. Then if $y' \in Y$ has 
\[ |t\left(y'\right)| >   \left(T_B + |t\left(y\right)|+1+A\right)\left(R +1\right) \] 
and $y' \in u\left(S\right)$, then 
\[ \int_{S} u^* \omega \geq \frac{\ell_{T_B}\left(u\right)}{R} > A. \]
\end{proof}

\begin{cor}\label{boundedmodspaces} The spaces $\overline{\mathcal{M}_A\left(y_0, \ldots, y_n, [\beta], \J\right)}$ defined by 
\[ \bigcup_{\omega\left([\beta]\right) < A}  \overline{\mathcal{M}_A\left(y_0, \ldots, y_n, [\beta], \J\right)}. \]  are compact. \end{cor}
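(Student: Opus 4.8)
The plan is to reduce the assertion to Gromov compactness inside a single compact region of $Y$, using the a priori confinement already established in Corollary~\ref{boundedregion}. Fix the compact family $\mathcal{S}$ of domains (discs with $n+1$ boundary marked points) and the family $\J$ of eventually constant almost complex structures used to define the product. Each of the marked points $y_0,\dots,y_n$ lies in the image of every $u$ representing an element of $\overline{\mathcal{M}_A(y_0,\dots,y_n,[\beta],\J)}$, so Corollary~\ref{boundedregion}, applied with the energy bound $A$ and the point $y=y_0$, produces one compact region $V=V(A,y_0,\mathcal{S},\J)\subset Y$ such that $u(S)\subset V$ for every $\J$-holomorphic $u$ with $\int_S u^*\omega < A$ and \emph{every} class $[\beta]$ with $\omega([\beta])<A$. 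The crucial feature here is that $V$ does not depend on $[\beta]$. Since $V$ is closed and Gromov limits of maps with image in $V$ again have image in $V$, every component of every broken/bubbled stable map in the compactified moduli space also maps into $V$; and since the Lagrangians $L_i$ are properly embedded, each $L_i\cap V$ is compact.

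\textbf{Uniform geometric control and compactness.} Over the compact base of $\J$, the comparison \eqref{normcomp} holds with a uniform constant $D_\J$, so the metrics $g_{J_s}$ are uniformly equivalent to $g_E$ on $V$; in particular the relevant almost complex structures have uniformly bounded geometry on $V$. As each $J_s$ is $\omega$-compatible, the area bound $\int_S u^*\omega<A$ yields a uniform bound on the Dirichlet energy $\tfrac12\int_S|du|^2$ of every such $u$. We are then in the standard situation for Gromov compactness: a fixed compact target region $V$, compact Lagrangian boundary pieces $L_i\cap V$, a compact family of domain curves with a continuous family of $\omega$-compatible, bounded-geometry almost complex structures, fixed boundary punctures mapping to the $y_i$, and a uniform energy bound. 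The Gromov compactness theorem, together with the usual treatment of interior and boundary bubbling and of broken strip configurations, then shows that $\bigcup_{\omega([\beta])<A}\overline{\mathcal{M}_A(y_0,\dots,y_n,[\beta],\J)}$ is sequentially compact, hence compact; in particular only finitely many classes $[\beta]$ with $\omega([\beta])<A$ are represented by a nonempty moduli space, so the union is a finite union of compact spaces.

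\textbf{Where the difficulty lies.} The one genuinely nonstandard point — that $Y$ is noncompact and, because $\omega$ is not exact, there is no global energy--area identity forcing $J$-holomorphic curves to stay bounded — has already been handled by Lemma~\ref{RIE} and Corollary~\ref{boundedregion}: once the curves are trapped in $V$, the argument is the classical compact one. The remaining care is purely bookkeeping: verifying that the confinement region $V$ can indeed be chosen uniformly over all admissible $[\beta]$ (which is exactly the content of Corollary~\ref{boundedregion}, since the bound there depends only on $A$, $y$, $\mathcal{S}$, $\J$), and noting that a Gromov limit cannot develop a component escaping to infinity because $V$ is closed. I therefore expect the write-up to be short, with the substantive work already contained in the preceding lemmas.
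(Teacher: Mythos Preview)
Your proposal is correct and follows essentially the same approach as the paper: confine all curves to a single compact region via Corollary~\ref{boundedregion} (the paper invokes Lemma~\ref{RIE} directly, which amounts to the same thing), then apply Gromov compactness. One small sharpening: rather than invoking ``the usual treatment of interior and boundary bubbling,'' the paper simply notes that sphere and disc bubbling are \emph{excluded} outright by the standing assumptions on the Lagrangians (lifts are sections, so $\langle\omega,\pi_2(Y,L_i)\rangle=0$, and $\pi_2(Y)=0$), which makes the compactness argument cleaner.
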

\begin{proof} We have excluded sphere and disc bubbling by our conditions on the Lagrangians. It suffices to show that all $\left(r,u\right) \in \overline{\mathcal{M}_A\left(y_0, \ldots, y_n, [\beta], \J\right)}$ are contained within some compact region $V_A \subset Y$. But this follows from the fact that the image of each such $u$ passes through the point $y_0$ and must have 
\[   \ell_{T_B}\left(u\right) \leq R \omega\left([\beta]\right) < R A \]
where $T_B$ and $R$ are above.  The result then follows by Gromov compactness. 
\end{proof}

\subsubsection{Moduli spaces of $J$ holomorphic curves}\label{modspaces}
The definition of the moduli spaces that we use to define the $\mu^n$ will be exactly the same as in the literature (c.f. \cite{seidelbook}), but our boundary conditions force us to slightly adapt some standard arguments that are used to show that counts are well-defined. Let $\{ L_\sigma \}_{\sigma \in \Sigma}$ be a set Lagrangians in $\G_{P}$ indexed by a partially ordered set $\Sigma$ with the property that any collection $\left(L_{\sigma_0}, L_{\sigma_1}, \ldots L_{\sigma_n}\right)$ with $\sigma_0 >\sigma_1 > \ldots > \sigma_n$ is in correct position. We inductively define compactifications of moduli spaces of $J$-holomorphic curves using this set of Lagrangians. 

We first define the moduli spaces of bigons. Let $L_0, L_1 \in \{ L_\sigma \}_{\sigma \in \Sigma}$ be two Lagrangians with $L_0 > L_1$. Let $y_0, y_1 \in L_0 \cap L_1$. Let $[\beta] \in \pi_2\left(M,L_0,L_1\right)$. Let $\J=\{J_{x}\}_{x\in[0,1]} \in \mathcal{J}_{EC}$ be a family of eventually constant almost complex structures with the property that there exists $J \in \J_{EC}$ such that for all $x \in [0,1]$, $J_x = J$ outside of a compact set. 

Let $\widehat{\mathcal{M}}\left(y_0,y_1,[\beta], \J\right)$ denote the moduli space of solutions $u: \R \times [0,1] \to Y$ in the class $[\beta] \in \pi_2\left(M,L_0,L_1\right)$ to the equation
\begin{align}\label{Jholo}& \overline{\p}_\J u=0 \end{align}
with the boundary conditions \begin{align}
		    & u\left(\R \times \{0\}\right) \subset L_0,  u\left(\R \times \{1\}\right) \subset L_1; \\
		    & \lim_{t \to \infty}u \left(t,x\right) = y_0, \lim_{t \to  -\infty} u\left(t,x\right) = y_1. \end{align}
Let $\mathcal{M}\left(y_0, y_1, \J\right)$ denote the quotient of  $\widehat{\mathcal{M}}\left(y_0,y_1,\J\right)$ by the action of $\R$ by re-parametrization. For generic $\J\left(L_0,L_1\right)$, the moduli space $\mathcal{M}\left(y_0, y_1, [\beta], \J\left(L_0,L_1\right)\right)$ is smooth, and admits a Gromov compactification by broken strips (since we have excluded sphere or disc bubbling by the condition that $\langle \omega,  \pi_2\left(Y,L_i\right) \rangle=0$, and since all such curves $u$ lie within a fixed compact region by Corollary \ref{boundedregion}).

We then define moduli spaces which will we use to define higher products in the Fukaya categories. Let $\overline{\mathcal{R}_{n+1}}$ be the Deligne-Mumford-Stasheff compactification of the space of discs with $n+1$ boundary marked points in counterclockwise order and let $\overline{\mathcal{S}_{n+1}}$ denote the universal family over the disc. Denote the boundary marked points on such a disc by $p_0, \ldots, p_{n}$. We choose strip-like ends 
\[ \epsilon^{n+1}_j: \R_+ \times [0,1] \times \overline{\mathcal{R}_{n+1}} \to \overline{\mathcal{S}_{n+1}} \]
and
\[ \epsilon^{n+1}_j: \R_- \times [0,1] \times \overline{\mathcal{R}_{n+1}} \to \overline{\mathcal{S}_{n+1}} \]
which parametrize the disc near the marked points. We demand that these ends are compatible near the boundary strata: near each boundary face we have gluing maps
\[ \left(R, \infty\right) \times \overline{\mathcal{R}_{n+1}} \times \overline{\mathcal{R}_{k+1}} \times  \to \overline{\mathcal{R}}_{n+k}, \] and we require that on these boundary collars $\epsilon^{n+k}_j$ agrees with the induced strip-like end for sufficiently large $R$.

Assume that we have chosen domain dependent almost complex structures $\J_{L_0,L_1}$ for each pair of Lagrangians in the set of Lagrangians we are considering such that the moduli spaces of maps $\overline{\mathcal{M}\left(y_0,y_1,[\beta],\J\left(L_0,L_1\right)\right)}$ above are  compact manifolds of the expected dimension. 
For each set of Lagrangians $L_0, \ldots, L_n \in \{ L_\sigma \}_{\sigma \in \Sigma}$ with $L_0 > \ldots > L_n$, we now choose domain-dependent eventually constant almost complex structures $\J\left(L_0, \ldots, L_n\right): \overline{\mathcal{S_n}} \to \mathcal{J}_{EC}$ inductively (see \cite{seidelbook}), demanding compatibility:  
we require that $\J\left(L_0, \ldots, L_n\right)$ agree near each boundary puncture with the complex structure given by the strip-like coordinates, and that above $\overline{\mathcal{R}_{n+1}} \times \overline{\mathcal{R}_{k+1}}$, $\J\left(L_0, L_1,\cdots, L_{n+k}\right)$ be compatible with the decomposition of $\overline{\mathcal{S}_{n+k}}$. We also demand that in each fiber of $\overline{\mathcal{S}_{n+1}}$ over $\overline{\mathcal{R}_{n+1}}$ the almost complex structures be equal to a fixed almost complex structure away from a compact set. (This is an implicit assumption on all the families of eventually constant almost complex structures we will consider.) Note that for each such set $\left(L_0, \ldots, L_n\right)$, there exists a uniform $T$ such that for each $s \in \overline{\mathcal{S_n}}$,  $\J\left(L_0, \ldots L_{n}\right)_s$ is linear on $Y_{T}$.

Now, let $L_0, L_1, \ldots, L_n$ be such a collection of Lagrangians. Let $y_n \in L_{i-1} \cap L_{i}$ for $0 \leq i \leq n-1$, and let $y_{n} \in L_0 \cap L_n$. Let $[\beta] \in \pi_2\left(Y, L_0, \ldots, L_n\right)$.  Define $\overline{\mathcal{M}\left(y_0, \ldots, y_{n-1},y_{n}, [\beta], \J\left({L_0, \ldots, L_d}\right)\right)}$ to be the set of pairs $\left(r,u\right)$ where  $r \in \overline{\mathcal{R}}_{n+1}$ and $u$ is a map from the fiber in $\overline{\mathcal{S}_{n+1}}$ over $r$ satisfies which Equations \eqref{Jeqn1}, \eqref{Jeqn2}, \eqref{Jeqn3} for $\J\left(L_0, \ldots, L_n\right)$. For generic $\J\left(L_0, \ldots, L_n\right)$, this space is smooth and compact (where again we rely on Lemma \ref{RIE} to prove compactness).

\subsection{$A_{\infty}$ products} Throughout this subsection, assume that any collection $\left(L_0, \ldots, L_n, \J\right)$ is a collection of Lagrangians in $I$ with $L_0 > L_1 > \ldots > L_n$, and $\J=\J\left(L_0, \ldots, L_n\right)$ is a domain-dependent family of eventually constant complex structures so that the moduli spaces $\overline{\M}\left(L_0, \ldots, L_n, [\beta], \J\right)$ are well-defined.

Recall the definition of the Novikov field $\Lambda$:
\[ \Lambda = \left\{ \sum_{n \in \N} \xi_n t^{\lambda_n} \mid \xi_n \in \C, \lambda_n \in \R, \lambda_n < \lambda_{n+1}, \lambda_n \to \infty. \right \} \]
The Novikov field has a valuation $\mathfrak{v}: \Lambda \to \R$
\[ \val \left( \sum_{n \in \N} \xi_n t^{\lambda_n} \right) = \min_{n} \{ \lambda_n \mid \xi_n \neq 0 \} \]
which has the property that for all $a,b \in \Lambda$, $\val\left(a+b\right) \geq \min\left( \val(a), \val(b)\right)$ and $\val\left(ab\right) = \val(a) + \val(b)$.

For $L_0, L_1 \in \mathcal{G}_P\left(I\right)$ such that the pair $\left(L_0,L_1\right)$ is in correct position, define the following $\Lambda$ module: 

\[ CF^*\left(L_0, L_1\right) = \bigoplus_{y_n \in L_0 \cap L_1} \Lambda \cdot  y_n \]

%\[ CF^*\left(\left(\psi_{H_0}(L_0), V_0, \nabla_0\right), \left(\psi_{H_0}(L_0), V_1, \nabla_1\right)\right) = \bigoplus_{p \in \psi_{H_0}(L_0) \cap \psi_{H_1}(L_1)} \Lambda \mathrm{hom}\left(V_0 \mid_p, V_1\mid_p\right)  . \]
This module has a $\Z$-grading which originates from the grading on the Lagrangians $L_0$ and $L_1$ (see \cite{AuB}).

\begin{remark} Since our Lagrangians lift to sections of the fibration $T^*\R^2 \to \R^2$, they have a canonical grading from the Morse indices of the critical points of their primitives; see \cite{SeGr}. \end{remark} 

\begin{defn}[Topological grading] \label{gradingrem}$CF^*\left(L_0, L_1\right)$ has an additional grading.
Fix lifts $\tilde{L}_0$ and $\tilde{L}_1$ of $L_0$ and $L_1$; recall that we define $\Gamma\left(L_0, L_1\right) \subset \Z^3$ by 
\[ \Gamma\left(L_0,L_1\right)=\left\{\hat{n} \in \Z^3 \mid \pi\left(\tilde{L}_0 \cap \tilde{L}_1\right) = \pi\left(\tilde{L}_0 \cap\left( \tilde{L}_1-\hat{n}\right)\right)\right\}.  \] 
Define a $\Z^3/\Gamma\left(L_0,L_1\right)$ grading on $CF^*\left(L_0,L_1\right)$  by setting
 \[ CF^*\left(L_0,L_1\right)_{\overline{\hn}} =\bigoplus_{y \in \pi\left(\tilde{L}_0 \cap \left(\tilde{L}_1-\hat{n}\right)\right)}  \Lambda y. \]
 For $y \in CF^*\left(L_0,L_1\right)$, we will use $y_{\overline{\hn}}$ to denote the projection of $y$ to the $\overline{\hn}$ graded piece. We will also write 
\[Y \supset \left(L_0 \cap L_1\right)_{\overline{\hn}} = \pi\left(\tilde{L}_0 \cap\left( \tilde{L}_1 -\hat{n}\right)\right). \]
\end{defn}

%\begin{remark} When $L$ and $L$ are sections,  $\hat{e}_s \in \Gamma(L, L)$, so the map $(j,a) \mapsto i\e_{\phi_t} + a \e_{\phi_s}$ induces a surjective map $\Z^2 \to \Z^3 / \Gamma(L_0, L_1)$. In this case we denote $CF^*(L, L')_{(j,a)}=CF^*(L, L)_{j \hat{e}_{\phi_t} +a \e_{\phi_s}}$.  
\subsection{Completions of the Floer complexes}\label{comps}  Throughout this subsection, let $L_0, L_1 \in \G_P$ be two Lagrangians in correct position. We define two completions of the Floer complex $CF^*(L_0,L_1)$; along the way we will use the following definitions:
\begin{defn}For $y \in L_0 \cap L_1$, we used the boldfaced $\mathbf{y}$ to denote the action-corrected generator $T^{-S(y)} y$. \end{defn} 

\begin{defn} $r \in \R$, $r>0$, and define the following $\C$-subalgebra of the Novikov field: 
\[ \Lambda_{r}= \left \{ \sum c_n T^{\lambda_n} | \lambda_n \in \R, \sum |c_n| e^{-2 \pi r \lambda_n} < \infty \right\}. \]
Define an evaluation map $\ev_\tau: \Lambda_r \to \C$ for every $\tau \in \C$ with $\im \tau \geq r$ by
\[ \ev_\tau \left( \sum c_n T^{\lambda_n} \right)=\sum c_n e^{2 \pi i \tau \lambda_n}. \] 
For $r >0$, define $\ev_{\underline{r}}=\ev_{i r}$. \end{defn}

 We first define a completion over $\Lambda$:
\begin{defn}
\begin{equation} \label{completion}
\widehat{CF}^*\left(L_0, L_1\right) =\left\{ \sum_{ y_i \in L_0 \cap L_1}  c_i y_i \, \mid \, c_i \in \Lambda, \lim_{t\left(y_i\right) \to \pm \infty}  \val\left(c_i\right) \to \infty \right\}.\end{equation} \end{defn}

%Note this is a homomorphism of $\C$-algebras. %For $R \in \R$ with $0 < R<1$, we abuse notation and define the map $\ev_{\underline{r}}$ to be given by 
%\[ \ev_{\underline{r}} \left( \sum c_n T^{\lambda_n} \right)=\sum c_n R^{\lambda_n}. \] 

%Fix a lift $\tilde{L}$ of $L$.

%\begin{defn} Let $y_{-1},y_0, y_1, \ldots$ be an indexing of the countable set $L \cap L'$ with the property that $t\left(y_i\right) < t\left(y_j\right)$ only if $i < j$; we allow this indexing to be by any subset of $\Z$. Denote the induced bijection from $L \cap L'$ by $I$. $I$ is a \emph{valid indexing} if $t\left(y_i\right) > t\left(y_j\right)$ implies $i > j$.  \end{defn}

\begin{defn} Define the following $\Lambda_r$-submodule of $\widehat{CF}^*\left(L_0,L_1\right)$: 
\[ \widehat{CF}_{\underline{r}}^*\left(L_0, L_1\right)=\left\{ \sum_{y_{i} \in L_0 \cap L_1} c_i  \mathbf{y}_i \mid c_i \in \Lambda_r, \lim_{|t\left(y_i\right)| \to \infty} |\ev_{\underline{r}}\left(c_i\right)|^{1/|t\left(y_i\right)|}  =0\right\}. \] 
\end{defn}

The periodicity of the Lagrangians $L_0$ and $L_1$ allows us to define an indexing by a subset of $\Z$ on the set of generators $y \in L_0 \cap L_1$ so that there exists constants $M, C>0$ such that $Mi-C<t(y_i) < Mi + C$. Then the following follows from the fact that a Laurent series $\sum_{i \in \Z} a_i z^i$ converges everywhere on $\C^*$ if and only if $\lim_{|i| \to \infty} |a_i|^{-1/i} = 0$:
\begin{prop}\label{convergence} \[ \sum_{y_{i} \in L_0 \cap L_1} c_i \textbf{y}_i \] defines an element of $\widehat{CF}_{\underline{r}}^*\left(L_0, L_1\right)$ if and only $\sum  \ev_{\underline{r}} \left(c_j\right) e^{-2 \pi c r t\left(y_j\right)}$ converges for all $c \in \R$. \end{prop}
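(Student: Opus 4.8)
The plan is to reduce the statement to the classical criterion for entire convergence of a bilateral power series. First I would recall the indexing of $L_0 \cap L_1$ by a subset of $\Z$ supplied just before the proposition: by periodicity of the Lagrangians at the ends there are constants $M, C > 0$ with $Mi - C < t(y_i) < Mi + C$, so that $t(y_i) \to \pm\infty$ exactly as $i \to \pm\infty$, and the two notions of ``tail'' coincide up to a bounded multiplicative factor. Writing $a_i = \ev_{\underline r}(c_i)$, I would observe that $a_i$ is a well-defined complex number precisely because $c_i \in \Lambda_r$, so that the hypothesis of membership in $\widehat{CF}^*_{\underline r}(L_0,L_1)$ already presupposes $c_i \in \Lambda_r$ and the only remaining content is the limit condition $\lim_{|t(y_i)|\to\infty} |a_i|^{1/|t(y_i)|} = 0$.

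The key step is then to translate that growth condition into a statement about the one-variable Laurent series $\sum_{i \in \Z} a_i z^i$. Using $t(y_i) = Mi + O(1)$, I would show that $\lim_{|t(y_i)|\to\infty} |a_i|^{1/|t(y_i)|} = 0$ is equivalent to $\lim_{|i|\to\infty} |a_i|^{1/|i|} = 0$ (the bounded error in the exponent does not affect whether the $|i|$-th root tends to $0$, since $|a_i|^{O(1)/(|i|(M|i|+O(1)))} \to 1$ whenever $|a_i|^{1/|i|}$ stays bounded, and one handles separately the harmless case $a_i = 0$). By the cited fact from complex analysis, this is exactly the condition that $\sum_{i} a_i z^i$ converge for every $z \in \C^*$. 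Finally I would unwind the definition of $\ev_{\underline r}$: evaluating the Laurent series at $z = e^{-2\pi c r}$ for arbitrary $c \in \R$ (which ranges over all of $\C^*$ on the positive real axis, and convergence of a Laurent series on one circle of each radius gives convergence everywhere) gives exactly $\sum_i a_i e^{-2\pi c r i}$, and since $t(y_i) = Mi + O(1)$ the convergence of $\sum_i \ev_{\underline r}(c_i) e^{-2\pi c r t(y_i)}$ for all $c$ is equivalent, after absorbing $M$ into $c$ and the bounded correction into an overall comparison, to convergence of $\sum_i a_i e^{-2\pi c' r i}$ for all $c'$.

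I expect the main obstacle to be purely bookkeeping rather than conceptual: carefully checking that replacing $t(y_i)$ by $Mi$ throughout — in the exponent of the root and in the exponent of the exponential — preserves all the ``for all $c$'' quantifiers, and that the case of vanishing coefficients $a_i = 0$ (where $|a_i|^{1/|t(y_i)|}$ is literally $0$ and causes no trouble, but must be mentioned) is dispatched. One should also note explicitly that convergence of $\sum_i a_i \zeta^i$ for all $\zeta$ of the form $e^{-2\pi c r}$, $c \in \R$ — i.e. for all positive reals — already forces convergence on all of $\C^*$, since a Laurent series converging at a point of modulus $\rho$ converges absolutely on $|z| = \rho$; this is where the equivalence with the root condition is actually invoked. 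No disc or sphere bubbling, no Floer-theoretic input, and no use of the $A_\infty$ structure is needed here; the proposition is a self-contained convergence lemma, and the proof is a few lines once the indexing is set up.
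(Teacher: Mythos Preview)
Your proposal is correct and follows essentially the same approach as the paper: the paper states the proposition as an immediate consequence of the indexing $Mi - C < t(y_i) < Mi + C$ together with the classical criterion that a Laurent series $\sum_{i\in\Z} a_i z^i$ converges on all of $\C^*$ if and only if $\lim_{|i|\to\infty}|a_i|^{1/|i|}=0$, and you have simply unpacked that one-line justification carefully. Your bookkeeping about absorbing the constants $M$ and $C$ and about passing between convergence at all positive reals and convergence on $\C^*$ is exactly the content the paper leaves implicit.
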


\begin{defn} For $\tau \in \C$ with $\im \tau > 0$, define a graded $\C$-complex $\whCF^*_{\tau}(L_0, L_1)$ via the map $\ev_{\tau}$ on $\whCF^*_{\underline{\im \tau}}(L_0, L_1)$. \end{defn}

%One can prove the following lemma using the fact that on each $Y$, $f-g$ is $C^2$ close to a function of the form $Q\left(t,s\right)+E_0 t + E_1 s+C_0$ for a quadratic form $Q$: 
%\begin{lemma} Let $\Sigma=\ldots, \tilde{y}_0, \tilde{y}_1, \ldots, $ be a lift of $L \cap L'$ so that there exists some $C> 0$ so that $s\left(\tilde{y}_i\right) \in [-C,C]$ for all $i$.  We may define the action $S_{\sigma}$ by $S_{\sigma}\left(y_i\right) = S_{\R^4}\left(\tilde{y}_i\right)$. Define the completion 
%\[ \wh{CF}^*_{\Sigma,C}\left(L, L'\right) = \{ \sum_{i} c_i y_i \mid |\ev_{\underline{r}} \left(c_i -T^{S_{\Sigma}\left(y_i\right)}\right)|{1/|i|} \to 0. \]
%Then $\wh{CF}^*_{\Sigma, C} \left(L, L'\right) =\wh{CF}^*_{\Sigma, C} \left(L, L'\right)$. \end{lemma}
% $\widehat^*{CF}_{R, \Sigma_0, I}^*\left(L_0\right)=\widehat{CF}_{R, \Sigma', I}^*\left(L_0\right)$. \end{lemma}

\subsection{Convergence of Floer products in the Novikov ring completion}
  \begin{prop}\label{finitenessofsum} \begin{enumerate}[(a)]
  \item Let $y_{i} \in L_{i} \cap L_{i+1}$, $0 \leq i \leq n-1$, $y_n \in L_0 \cap L_n$. Then the sum  
 \[ \sum_{[\beta] \in \pi_2\left(Y, L_0, L_1, \ldots, L_{n}\right)} T^{\omega\left([\beta]\right)}\# \overline{\mathcal{M}\left(y_0, \ldots y_{n-1}, y_n, [\beta],\J\right)}, \]
converges in $\Lambda$.

\item For a collection of intersection points $y_i \in L_i \cap L_{i+1}$, $0 \leq i < n$, the sum 
\begin{equation}\sum_{y \in L_0 \cap L_n, \left([\beta]\right) \in \pi_2\left(Y, L_0, \ldots, L_{n}\right)} T^{\omega\left([\beta]\right)}\# \overline{\mathcal{M}\left(y_{0}, \ldots, y, [\beta],\J\right)} y \end{equation}
converges in $\widehat{CF}^*\left(L_0, L_n\right).$
 \end{enumerate} \end{prop}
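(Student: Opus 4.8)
The plan is to derive both statements from the compactness results Corollary \ref{boundedmodspaces} and Corollary \ref{boundedregion}, together with positivity of energy for $\J$-holomorphic curves; the only real input is the length-versus-area estimate of Lemma \ref{RIE}, which replaces the a priori energy bounds that are unavailable because $Y$ is not exact.

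For part (a), fix the corners $y_0, \ldots, y_n$. Positivity of energy gives $\omega([\beta]) = \int_S u^*\omega \geq 0$ for every class $[\beta]$ admitting a map $u$ satisfying \eqref{Jeqn1}--\eqref{Jeqn3}, so all exponents occurring in the sum are nonnegative. For any $A > 0$, Corollary \ref{boundedmodspaces} asserts that $\bigcup_{\omega([\beta]) < A}\overline{\mathcal{M}(y_0,\ldots,y_n,[\beta],\J)}$ is compact; since the part of this space contributing to the $A_\infty$ product is cut out transversely with expected dimension zero, it is a compact $0$-manifold, hence finite. A finite set of curves represents only finitely many classes $[\beta]$ and contributes a finite signed integer count in each, so only finitely many exponents $\omega([\beta])$ occur below $A$ with nonzero coefficient, and all coefficients lie in $\C$. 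Collecting terms by exponent therefore produces a formal sum whose exponents form a nonnegative set meeting every half-line $(-\infty,A)$ in a finite set, i.e.\ a well-defined element of $\Lambda$.

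For part (b), part (a) shows that each coefficient $c_y := \sum_{[\beta]} T^{\omega([\beta])}\,\#\,\overline{\mathcal{M}(y_0,\ldots,y_{n-1},y,[\beta],\J)}$ already lies in $\Lambda$, so it remains to check that $\val(c_y) \to \infty$ as $t(y) \to \pm\infty$, which is precisely the condition for $\sum_y c_y\,y$ to define an element of $\widehat{CF}^*(L_0,L_n)$. Given $A > 0$, apply Corollary \ref{boundedregion} with the fixed corner $y_0$ in place of the basepoint $y$, and with the family $\mathcal{S}$ and $\J = \J(L_0,\dots,L_n)$ used to define the moduli spaces; this produces a compact region $V = V(A, y_0, \mathcal{S}, \J) \subset Y$ containing the image of every $u$ satisfying \eqref{Jeqn1}--\eqref{Jeqn3} with $y_0 \in u(S)$ and $\int_S u^*\omega < A$. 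Choose $T' > 0$ with $V \subset \{\,|t| < T'\,\}$. If $|t(y)| > T'$ then no curve contributing to $c_y$ can have area less than $A$, since such a curve has corners at both $y_0$ and $y$ and hence cannot have image in $V$; therefore $\val(c_y) \geq A$ for all $y$ with $|t(y)| > T'$. As $A$ was arbitrary, $\val(c_y) \to \infty$ as $|t(y)| \to \infty$.

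The step I expect to require the most care is a bookkeeping point rather than a geometric one: the constants $R$, $M$, $T_B$ of Lemma \ref{RIE} and the region $V$ of Corollary \ref{boundedregion} depend on the entire collection $(L_0, \ldots, L_n, \J)$, but in the statement of the proposition this collection is fixed and only the intersection points $y_i$ (and, in (b), the output $y$) vary, so these are genuinely uniform constants; once this is observed, both parts are immediate. One should also note in passing that the constant strips (when $n=1$ and $y_0 = y_1$) have zero energy and do not enter the counts defining the products, so they pose no obstruction to the $\val \to \infty$ conclusion.
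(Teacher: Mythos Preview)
Your proof is correct and follows essentially the same route as the paper: both parts reduce to the length-versus-area estimate of Lemma \ref{RIE}, with part (a) via Gromov compactness (the paper cites Corollary \ref{boundedregion} directly, you cite its consequence Corollary \ref{boundedmodspaces}) and part (b) by bounding the valuation on outputs far from $y_0$ (the paper invokes Lemma \ref{RIE} explicitly, you the packaged Corollary \ref{boundedregion}). Your bookkeeping remarks about uniformity of constants and the irrelevance of constant strips are accurate and harmless.
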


 \begin{proof}\begin{enumerate}[(a)] \item The statement that the sum converges can be rephrased as the following: for each $A >0$, there are only finitely many homotopy classes with $\omega\left([\beta]\right) < A$. This follows from Corollary \ref{boundedregion} and Gromov compactness.
 \item   Again let $A > 0$. Let $T_B=T_B\left(L_0, \ldots, L_n, \J\right)>0$ and $R = R\left(L_0, \ldots, L_n, \J\right) > 0$ be as in the statement of Lemma \ref{RIE}. Then all for all $y \in L_0 \cap L_n$ with \[ |t\left(y\right)|> \left(T_B + |t\left(y_0\right)|+A+1\right)\left(R +1\right),\] for all $u$ contributing to the sum 
\[ \sum_{[\beta] \in \pi_2\left(Y, L_0, L_1, \ldots, L_{n}\right)} T^{\omega\left([\beta]\right)}\# \overline{\mathcal{M}\left(y_0, \ldots y_{n-1}, y, [\beta],\J\right)}, \]
\[ \omega\left([u]\right) \geq \frac{\ell_{T_B}\left(u\right)}{R} \geq A. \]
Then
\[ \val\left(\sum_{[\beta]} T^{\omega\left([\beta]\right)}\# \overline{\mathcal{M}\left(y_0, \ldots y_{n-1}, p, [\beta],\J\right)}\right)  \geq \min_{[\beta]} \omega\left([\beta]\right) \geq A.\]
Thus as $|t\left(y\right)| \to \infty$, the valuation of the coefficient on $y$ becomes large, so the sum converges. \end{enumerate}
\end{proof}

For such a collection of intersection points $y_0, \ldots, y_{n-1}$, define
\[ \mu^n\left(y_{n-1}, \ldots, y_0\right) =  \sum_{y_n \in L_0 \cap L_n, [\beta] \in \pi_2\left(Y, L_0, L_1, \ldots, L_{n}\right)} T^{\omega\left([\beta]\right)}\# \overline{\mathcal{M}\left(y_0, \ldots y_{n-1}, y_n, [\beta],\J\right) y_n}. \]

\begin{prop} The $\mu^n$ can be extended linearly to  maps
\[ \mu^n : \widehat{CF}^*\left(L_{n-1}, L_n\right) \otimes \cdots \otimes \widehat{CF}^*\left(L_{0}, L_1\right)  \to \widehat{CF}^*\left(L_0, L_n\right) \]
for any collection $\left(L_0, \ldots, L_n, \J\right)$; that is, given a collection $w_i \in \widehat{CF}^*\left(L_{i}, L_{i+1}\right)$, $0 \leq i \leq n-1$, with
\[ w_i =  \sum_{y_{\left(m,i\right)} \in L_{i} \cap L_{i+1}}  c_{\left(m,i\right)} y_{\left(m,i\right)}, \]
the sum defined by 
\[ \sum_{m_0, \ldots, m_{n-1}} \left(c_{\left(m_0, 0\right)} \cdots c_{\left(m_{n-1}, {n-1}\right)}\right) \mu^n\left(y_{\left(m_{n-1},n-1\right)}, \ldots, y_{\left(m_{0},0\right)}\right) \]
converges in $\widehat{CF}^*\left(L_0, L_n\right) $.  \end{prop}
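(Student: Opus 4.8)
The plan is to show that the double sum defining $\mu^n$ on completed complexes is well-defined by reducing to two finiteness statements: for each fixed target generator $y_n \in L_0 \cap L_n$, only finitely many tuples $(y_{(m_0,0)}, \ldots, y_{(m_{n-1},n-1)})$ contribute with a given bound on energy; and the resulting coefficient on $y_n$ has valuation tending to $\infty$ as $t(y_n) \to \pm\infty$. First I would fix $y_n$ and a real number $A > 0$, and argue that there are only finitely many tuples of input generators $(y_{(m_0,0)}, \ldots, y_{(m_{n-1},n-1)})$ together with a homotopy class $[\beta] \in \pi_2(Y, L_0, \ldots, L_n)$ such that $\overline{\mathcal{M}(y_{(m_0,0)}, \ldots, y_{(m_{n-1},n-1)}, y_n, [\beta], \J)}$ is nonempty and $\omega([\beta]) < A$. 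This uses Corollary \ref{boundedregion}: any contributing $u$ passes through the fixed point $y_n$ and has area bounded by $A$, so its image lies in a fixed compact region $V(A, y_n, \mathcal{S}, \J)$; hence all of its input intersection points $y_{(m_i,i)}$ lie in a fixed compact set, and by transversality there are finitely many of them, and then by Gromov compactness finitely many classes $[\beta]$.

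Next I would combine this with the valuation estimate: given the inputs $w_i = \sum_m c_{(m,i)} y_{(m,i)}$ lying in $\widehat{CF}^*(L_i, L_{i+1})$, the definition of the completion forces $\val(c_{(m,i)}) \to \infty$ as $t(y_{(m,i)}) \to \pm\infty$. So for any $A > 0$, only finitely many of the product coefficients $c_{(m_0,0)} \cdots c_{(m_{n-1},n-1)}$ have valuation $< A$; combined with the previous paragraph (only finitely many tuples of inputs with $t$-coordinates in a given compact range contribute to a given $y_n$ with $\omega([\beta]) < A$), this shows that the coefficient of each fixed $y_n$ in the output is a well-defined element of $\Lambda$ (each power of $T$ appears finitely often, valuations bounded below). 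Then I would re-run the argument of Proposition \ref{finitenessofsum}(b): using Lemma \ref{RIE}, if $|t(y_n)|$ is large compared to the $t$-coordinates of the (finitely many, for bounded energy) relevant inputs and to $A$, then any contributing $u$ has $\omega([u]) \geq \ell_{T_B}(u)/R \geq A$, so the valuation of the coefficient of $y_n$ is $\geq A$; letting $A \to \infty$ shows this valuation tends to $\infty$ as $t(y_n) \to \pm\infty$, which is exactly the condition for the output to lie in $\widehat{CF}^*(L_0, L_n)$.

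The bookkeeping subtlety — and the step I expect to be the main obstacle — is handling the \emph{interaction} of the two infinite directions: inputs can have arbitrarily large $|t|$ (with correspondingly large valuation) while also the energy $\omega([\beta])$ can grow, and one must ensure these cannot conspire to produce a nonconvergent coefficient on some fixed $y_n$, or to violate the valuation-growth condition on the output. The resolution is that Lemma \ref{RIE} bounds $\ell_{T_B}(u)$, hence controls how far apart the input and output $t$-coordinates can be for a given energy: a curve with inputs at $t$-coordinate $\sim M$ and output at $t$-coordinate $\sim M'$ must have energy $\gtrsim |M - M'|/R$ once $|M-M'|$ is large. So for fixed output $y_n$ and fixed energy bound $A$, the inputs are confined to a bounded $t$-window, reducing to the finite situation above; and the large-valuation contributions from far-out inputs are automatically matched by large energy, forcing large valuation on the output coefficient too. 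I would package these estimates uniformly over $[\beta]$ using the equivalence-class-of-$\hat n$ reasoning already used in Proposition \ref{quadbounds}, and conclude that the triple sum over $(m_0, \ldots, m_{n-1})$, $y_n$, and $[\beta]$ converges in $\widehat{CF}^*(L_0, L_n)$, as claimed.
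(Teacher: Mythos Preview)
Your proposal is correct and follows essentially the same strategy as the paper: the paper also fixes a uniform lower bound $C$ on all input-coefficient valuations, shows convergence of the coefficient on each fixed output $y$ via Lemma \ref{RIE}/Corollary \ref{boundedregion}, and then for the valuation-growth step performs exactly the two-case split you describe (all inputs in a bounded $t$-window forces large energy; some input outside the window forces large input valuation). The invocation of Proposition \ref{quadbounds} at the end is unnecessary --- Lemma \ref{RIE} alone supplies the uniform energy-versus-$t$-spread estimate.
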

\begin{proof} %First, let $r \in \R$ be such that for all pairs $\left(m,i\right)$,
%\[ \val\left( c_{\left(m, i\right)}\right) > r. \]
Let $C \in \R$ be such that $C<0$ and
\[ C < \min_{\left(m,i\right)} \val \left(c_{\left(m, i\right)}\right); \]
then
\[  n C < \val \left( c_{\left(m_0, 0\right)}\cdots c_{\left(m_{n-i}, {n-1}\right)}\right) \]
for all collections $\left(m_0, 0\right), \ldots, \left(m_{n-i}, n-1\right)$. 
Let $y \in L_0 \cap L_n$. The coefficient of $y$ in the above sum is given by 
\[  \sum_{m_0, \ldots, m_{n-1}} \left(c_{\left(m_0, 0\right)}\cdots c_{\left(m_{n-i}, {n-1}\right)}\right) T^{\omega\left([\beta]\right)} \#\overline{\mathcal{M}\left(y_{\left(m_0,0\right)}, \ldots, y_{\left(m_{n-1},n-1\right)}, y, \beta, \J\right)}. \]
We first show that this converges in $\Lambda$ for all $y$; the proof is similar to the proof of part (b) in Proposition \ref{finitenessofsum}. Let $A>0$. Then for any collection of $\left(y_{\left(m_0,0\right)}, \ldots, y_{\left(m_{n-1},n\right)}\right)$ for which one of the points satisfies 
\[ |t\left(y_{\left(m_i, i\right)}\right)|> \left(T_B + |t\left(y\right)|+1+A\right)\left(R +1\right),\]
any $u: S \to Y$ contributing to the count
 \[ \sum_{[\beta]} T^{\omega\left([\beta]\right)}\# \overline{\mathcal{M}\left(y_0, \ldots y_{n-1}, y, [\beta],\J\right)}, \]
 satisfies
\[ \omega\left([u]\right) \geq A. \]
Then for such a collection $y_{\left(m_0,0\right)}, \ldots, y_{\left(m_{n-1},n-1\right)}$ of intersection points, 
\[ \val \left(\sum_{[\beta]} \left(c_{\left(m_0, 0\right)}\cdots c_{\left(m_{n-i}, {n-1}\right)}\right) T^{\omega\left([\beta]\right)} \#\overline{\M\left(y_{\left(m_0,0\right)}, \ldots, y_{\left(m_{n-1},n-1\right)}, y, \beta, \J \right)}\right) \geq A + n C. \]
This condition is satisfied by all but finitely many such collections of endpoints, and we conclude that the coefficient on each $y \in L_0 \cap L_n$ converges.

Now again let $A > 0$. Let $t_A$ be such that if $|t\left(y_{\left(m,i\right)}\right)|> t_A$, $c_{\left(m,i\right)} > A$. There are finitely many $y_{\left(m,i\right)}$ with $|t \left(y_{\left(m,i\right)}\right) |< t_A$, so there exists $t_A'$ such that if a $\J$-holomorphic curve $u$ contributing to the sum has $u\inv\left(Y_{t_A'}\right)\neq \emptyset$ and $u\left(p_i\right) =y_{\left(m,i\right)}$ for some such $y_{\left(m,i\right)}$, then $\omega[u]> A$. Now let $y \in L_0 \cap L_n$ have $|t\left(y\right) |> t_A'$. Then we can separate the  sum calculating the coefficient of $y$ into two parts, where one part is the sum over curves where all boundary marked points go to points close to $y$, and the other is the sum over the other curves, which are forced to have large image when projected to $\R$ under $t: Y \to \R$ and thus large area.
\begin{align*}  &\sum_{m_0, \ldots, m_{n-1}} \left(c_{\left(m_0, 0\right)}\cdots c_{\left(m_{n-i}, {n-1}\right)}\right) T^{\omega\left([\beta]\right)} \#\overline{\M\left(y_{\left(m_0,0\right)}, \ldots, y_{\left(m_{n-1},n\right)}, p, \beta, \J\right)} \\
& = \sum_{|t\left(y_{m_i}\right) |< t_A \text{ for all } i} \left(c_{\left(m_0, 0\right)}\cdots c_{\left(m_{n-i}, {n-1}\right)}\right) T^{\omega\left([\beta]\right)} \# \overline{\M\left(y_{\left(m_0,0\right)}, \ldots, y_{\left(m_{n-1},n\right)}, p, \beta, \J\right)} + \\
& \quad   \sum_{|t\left(y_{m_i}\right)| \geq t_A \text{ for some } i } \left(c_{\left(m_0, 0\right)}\cdots c_{\left(m_{n-i}, {n-1}\right)}\right) T^{\omega\left([\beta]\right)} \#\overline{\M\left(y_{\left(m_0,0\right)}, \ldots, y_{\left(m_{n-1},n\right)}, p, \beta, \J\right)}. \end{align*}
Label the first sum $\Sigma_1$, and the second sum $\Sigma_2$. Each term contributing to $\Sigma_1$ has valuation at least $A + n C$ by the condition that $t\left(y\right) > t_A'$, and every term contributing to $\Sigma_2$ has valuation at least $A+nC$ by the condition on $t_A$. We conclude that 
\[\val\left(c\left(y\right)\right) \geq \min \left\{\val\left(\Sigma_1\right), \val\left(\Sigma_2\right)\right\} \geq A+n C.\] Since there are only finitely many $y \in L_0 \cap L_n$ with $|t\left(y\right)| < t_A$, we conclude that there are only finitely many terms with valuation less than $A+nC$. The convergence of the sum follows. 
\end{proof}

\subsubsection{ $A_\infty$ relations} Since we have excluded disc and sphere bubbling by the assumptions on our Lagrangians, the $\mu^n$ products will satisfy the $A_\infty$ relations (see \cite{seidelbook}) for any collection of perturbed Hamiltonians in correct position with compatible choices of data in the definition of the moduli spaces.

\begin{remark}\label{overZ} In fact, the $A_{\infty}$ relations hold for complexes defined over $\Lambda_{\Z}$, the Novikov ring with coefficients in $\Z$, since the counts of holomorphic discs of any given class are valued in $\Z$. \end{remark}

\begin{remark} Let $\whHF^*\left(L_0, L_1\right)$ denote the cohomology of the complex $\whCF^*\left(L_0, L_1\right)$.  Note that $\mu^2$ induces an associative product $\whHF^*\left(L_1, L_2\right) \otimes \whHF^*\left(L_0,L_1\right) \to \whHF^*\left(L_0, L_2\right)$; we write this product as $[a]\cdot [b] = [\mu^2\left(a,b\right)]$. \end{remark}

\subsection{Continuation elements}

We relate our Floer complexes via continuation elements, with the aim of formally inverting the morphisms given by multiplication with these elements when we define our Fukaya categories. Given $L$ an admissible planar Lagrangian, and $H_0,H_1: Y \to \R$ Hamiltonians suitable for $L$ so that the pair $\left(\psi_{H_1}\left(L\right), \psi_{H_0}\left(L\right)\right)$ is in correct position, one would like to define continuation elements
 \[[c_{H_0 \to H_1}] \in \whHF^*\left(\psi_{H_1}\left(L\right), \psi_{H_0}\left(L\right)\right) \]  by counting maps with moving boundary conditions that interpolate between $\psi_{H_0}\left(L\right)$ and $\psi_{H_1}\left(L\right)$. To implement this, we use moduli spaces of such curves in two situations where those moduli spaces are well behaved.

\subsubsection*{Compactly supported Hamiltonians} 
Let $L,L' \in \G_P$, and let $H$ be a compactly supported Hamiltonian on $Y$ suitable for $L$ such that the pairs $\left(L, L'\right)$ and $CF^*\left(\psi_{H}\left(L\right), L'\right)$ are in correct position. We define a continuation map 
\[ \Phi_{H}: \whCF^*\left(L, L'\right) \to \whCF^*\left(\psi_H\left(L\right), L'\right). \]
Let $S$ denote the strip $\R \times [0,1]$. Let $\rho: \R \to [0,1]$ be a surjective increasing function such that $\rho\left(s\right)=0$ for $s << 0$, and $\rho\left(s\right) =1$ for $s >> 0$. Let $\beta \in \Omega^1\left(S\right)$ be a one-form satisfying 
\begin{align*} \beta |_{\R \times \{1\} } &= d \rho \\
\beta |_{ \R \times \{0\}  } &= 0. \end{align*}
Fix a domain-dependent eventually constant complex structure $\J$ on $Y$.

For $y_0 \in \psi_{H_0}\left(L\right)$, $y_1 \in \psi_{H_1}\left(L\right)$, consider the set of maps $u: S \to Y$ such that 
\begin{align} \label{Jholpert} \left(d u - X_{H} \otimes \beta\right)_\J^{0,1} &= 0 \\ 
			\label{bdry} u\left(s,1\right) &\in \psi_{\rho\left(s\right)H}\left(L\right)\\
			\label{bdry1} u\left(s,0\right) &\in L \end{align}
with
	\begin{align} \label{limity} \lim_{s \to -\infty} u\left(s, \cdot\right) &= y_{0}\\
	 \lim_{s \to \infty} u\left(s, \cdot\right)&= y_1.\end{align}
For such $u$, define the geometric and topological energy of such maps:
\[ E^{\text{geom}}\left(u\right) = \int_{S}\frac{1}{2} \| d u - X_H \otimes \beta \|^{2} = \int_{S} u^* \omega - d \left(u^*H \right) \wedge \beta; \]
\[ E^{\text{top}}\left(u\right) = \int_{S} u^* \omega - d \left(u^* H \beta\right) = E^{\text{geom}}\left(u\right) - \int_S u^*H d \beta.    \]	

We now use the assumption that our Hamiltonians are exact when lifted to the universal cover to  bound the topological energy of $u$: lifting $u$ to the universal cover and expressing $\omega = d \theta$, $\theta |_{L'} = d h$, $\theta |_{L'} = d h'$ for some functions $h,h': \R^2 \to \R$, we use Stokes' theorem to write
\begin{align*} &\int_S u^* \omega - d \left(u^* H \beta\right)  \\ 
 & = - \int_{\p S} u^* \theta - u^*H \beta \\
 &=  -\int_{\R \times \{0\}} u^*d h' - \int_{\R \times \{1\}} \left(u^*\left(d h\right) + \rho u^*\left(d H\right) + u^*H d \rho\right) \\
 & =- \int_{\R \times \{0\}} u^*d h ' + \int_{\R \times \{1\}}\left(u^*\left(d h\right) +\rho u^*\left(d H\right)\right) \\
 &= h'\left(y_0\right)-h'\left(y_1\right)-h\left(y_0\right)+h\left(y_1\right)+H\left(y_1\right)\\
 &= S_{\R^4} \left(\tilde{y}_0\right) -S_{\R^4} \left(\tilde{y}_1\right). \end{align*}
 Note that the quantity $S_{\R^4}\left(\tilde{y}_0\right)-S_{\R^4}\left(\tilde{y}_1\right)$ is independent of the choice of lift of $\tilde{y}_0$. Denote this quantity $S\left(y_0,y_1\right)$.
 Then we obtain a bound on the geometric energy of $u$:
\begin{equation}\label{geomenbd} |E^{\text{geom}}\left(u\right)| - |S\left(y_0, y_1\right)| \leq  \left|\int_S u^*H d \beta\right| \leq \max_{y \in Y} |H| \int_{\p S} \beta = \max |H|. \end{equation}
Away from the support of $H$, Equation \eqref{Jholpert} reduces to the unperturbed Cauchy-Riemann equation, so in this region, a solution $u$ will satisfy the maximum principle. Moreover, in this region, the Hamiltonian isotopy is constant. Thus the methods of Lemma \ref{RIE} can be used in combination with the geometric energy bound above to produce a compact region containing the image of any such $u$. Then for generic $\J$, the moduli space $\M\left(y_0,y_1, \beta, \J\right)$ of such maps is a smooth manifold with dimension depending on the degrees of $y_0$ and $y_1$ as generators of the Floer complex, and this manifold admits a compactification $\overline{\M\left(y_0,y_1,\beta, \J\right)}$.

Define $\Phi_H$ on the generators of $CF^*\left(\psi_{H_0}\left(L\right), L\right)$ by
\[ \Phi_H\left(y\right) = \sum_{y_1 \in \psi_{H_1}\left(L\right) \cap L, u \in \overline{\M\left(y,y_1,\beta, \J\right)} } T^{E^{\mathrm{geom}}\left(u\right)}y_1 \]
where as usual the sum is over all $y$ such that $\overline{\M\left(y,y_1,\beta, \J\right)}$ is zero-dimensional, that is, the generators $y_1 \in CF^*\left(\psi_{H_1}\left(L\right), L\right)$ with the same degree as $y$.

\begin{lemma}\label{comph} For any compactly supported $H$,  $L, L' \in \G_P$ with $\left(L,L'\right)$ and $\left(\psi_{H}(L), L'\right)$ in correct position, and corresponding map $\Phi_{H}$ defined on the generators of $CF^*\left(L,L'\right)$ as above:
\begin{enumerate} [(a)]
\item There exists $T > 0$ with $\supp H \subset Y_T$ such that $\Phi_{H} \left(y\right)= y$ for all $y \in \left(L \cap L'\right)_T$.
\item $ \Phi_H$ extends to a map $\whCF^*\left(L, L'\right) \to \whCF^*\left(\psi_{H}\left( L\right), L'\right)$.
\item $\Phi_{H}$ extends to a map  $\whCF_{\underline{r}}^*\left(L, L'\right) \to \whCF_{\underline{r}}^*\left(\psi_{H}\left( L\right), L'\right)$.
\item $\Phi_{H}$ is a chain map.
\item $\Phi_{H}$ is a quasi-isomorphism, with quasi-inverse $\Phi_{-H}$.
\item For two compactly supported Hamiltonians $H_0$ and $H_1$, $\Phi_{H_1} \circ \Phi_{H_0}$ agrees with $\Phi_{H_1+H_0}$ on cohomology.  \end{enumerate} \end{lemma}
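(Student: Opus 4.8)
The plan is to establish (a)--(f) in order, leaning on the analytic package already assembled just above the statement: the energy identity gives, for every continuation strip $u$ from $y_0$ to $y_1$, the bound $|E^{\text{geom}}(u)| - |S(y_0,y_1)| \le \max_Y |H|$ of \eqref{geomenbd}, and, away from $\supp H$, equation \eqref{Jholpert} reduces to the unperturbed Cauchy--Riemann equation, so the projection argument of Lemma \ref{RIE1} combined with Lemma \ref{RIE} confines the image of every such $u$ to a compact region and makes the moduli spaces $\overline{\M(y_0,y_1,\beta,\J)}$ compact of the expected dimension for generic $\J$ (no disc or sphere bubbling, since $\langle\omega,\pi_2(Y,L)\rangle=0$). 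For (a) I would choose $T$ so large that $H$ vanishes on $Y_T$, that $L$ and $L'$ are linear and planar there, and that $T$ exceeds the confinement radius from Lemma \ref{RIE}; then for $y \in (L\cap L')_T$ a continuation strip asymptotic to $y$ obeys the $t$-maximum principle of Lemma \ref{RIE1} on the end containing $y$, where the equation is translation-invariant with static boundary on the affine planes carrying $L$ and $L'$, and (using the periodicity of $L,L'$ at the ends) this forces the strip into the region where $H\equiv 0$, where the only rigid solution is constant. Hence $\Phi_H(y)=y$ and $\Phi_H-\mathrm{id}$ is supported on the finitely many generators in $Y\setminus Y_T$.

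For (b) and (c) the point is that $\Phi_H$ respects the two completions. For a fixed $y$, the coefficient of $y_1$ in $\Phi_H(y)$ is $\sum_u T^{E^{\text{geom}}(u)}$, and by Lemma \ref{RIE} the contributing strips satisfy $E^{\text{geom}}(u)\ge \ell_{T_B}(u)/R$ with $\ell_{T_B}(u)$ growing linearly in $|t(y_1)-t(y)|$, so $\val$ of this coefficient tends to $\infty$ as $|t(y_1)|\to\infty$ and $\Phi_H(y)\in\whCF^*$; passing to an infinite input $\sum c_i y_i$, the coefficient of a fixed $y_1$ receives contributions only from $y_i$ near $y_1$ (where $\Phi_H(y_i)=y_i$ by (a), contributing $c_i$ with $\val(c_i)$ large) and from far-away $y_i$ (where the strip has large energy), exactly as in the proof that the $\mu^n$ extend to $\whCF^*$. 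This gives (b). For (c) one records in addition, via \eqref{geomenbd}, that in the action-corrected basis the coefficients of $\Phi_H$ lie in a fixed $\Lambda_r$ with uniformly bounded $\ev_{\underline r}$-norm, so the convergence condition $|\ev_{\underline r}(\cdot)|^{1/|t|}\to 0$ defining $\whCF^*_{\underline r}$ is preserved, with (a) again handling the unchanged generators near the ends.

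For (d) I would run the usual codimension-one boundary analysis of the one-dimensional pieces of $\overline{\M(y_0,y_1,\beta,\J)}$, whose boundary consists of a $\psi_H(L)$- or $L'$-Floer strip broken off an asymptotic end of a rigid continuation strip, yielding $\mu^1\circ\Phi_H=\Phi_H\circ\mu^1$; compactness is Lemma \ref{RIE}, and the identity extends to the completions by (b),(c). For (e) I would use the standard homotopy-of-homotopies: concatenating the homotopy defining $\Phi_H$ with that defining $\Phi_{-H}$ and contracting it to the constant homotopy produces $K$ with $\Phi_{-H}\circ\Phi_H-\mathrm{id}=\mu^1 K+K\mu^1$ (and symmetrically on the other side), the relevant two-parameter moduli spaces being confined and compact by the same estimates applied uniformly over the compact parameter space, since the perturbation term stays uniformly bounded and compactly supported; hence $\Phi_H$ is a quasi-isomorphism with quasi-inverse $\Phi_{-H}$. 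For (f), since $\psi_{H_1}\circ\psi_{H_0}$ and $\psi_{H_1+H_0}$ differ by a compactly supported Hamiltonian isotopy, parts (a)--(e) identify their Floer complexes with $L'$, and a gluing argument for the concatenated continuation strip then shows $\Phi_{H_1}\circ\Phi_{H_0}$ and $\Phi_{H_1+H_0}$ are chain homotopic under this identification, hence agree on cohomology.

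The genuinely delicate point will be the confinement underlying (a): ruling out a continuation strip that starts at a generator deep in an end yet dips into the compact region carrying $\supp H$. The plan is to combine the $t$-maximum principle with the periodicity of the Lagrangians at the ends and the energy bound \eqref{geomenbd} to reduce to finitely many translation classes of strips and thence to the unperturbed, translation-invariant equation; this is routine in spirit but is exactly where the non-exactness of $Y$ forces some care. The two-parameter compactness in (e)--(f) is the other bookkeeping-heavy step, but it is a direct iteration of Lemma \ref{RIE} with uniform constants over the compact parameter spaces.
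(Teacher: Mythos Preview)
Your outline is in the right spirit but misses the structural tool that makes the paper's proof work: the topological $\Z^3/\Gamma(L,L')$ grading of Definition~\ref{gradingrem}. Since $H$ is compactly supported, one can choose lifts so that the grading on $CF^*(L,L')$ and on $CF^*(\psi_H(L),L')$ are compatible and the continuation map preserves it; moreover each graded piece $CF^*(L,L')_{\hat n}$ and $CF^*(\psi_H(L),L')_{\hat n}$ is finitely generated. The paper uses this to dispose of (d), (e), (f) in one line: on each finite-dimensional graded piece these are the standard continuation-map statements, with no completion issues at all. Your direct two-parameter moduli space arguments for (e) and (f) would also work, but they force you to redo compactness and convergence uniformly over the completion, which the grading renders unnecessary.

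The more serious issue is in (a). You correctly identify the delicate point (ruling out a strip from a far-out $y_0$ that dips into $\supp H$) and correctly cite \eqref{geomenbd}, but that bound reads $E^{\text{geom}}(u)\le |S(y_0,y_1)|+\max|H|$, and you never explain why $|S(y_0,y_1)|$ is uniformly bounded over all pairs $(y_0,y_1)$ connected by a continuation strip. Neither the $t$-maximum principle nor periodicity gives this. The paper's mechanism is again the grading: $y_0$ and $y_1$ lie in the same graded piece $\hat n$, and since $H$ is compactly supported the corresponding planes of $L$ and of $\psi_H(L)$ coincide, so lifts $\tilde y_0,\tilde y_1$ are both close to the \emph{same} plane intersection $P^\pm\cap((P^\pm)'-\hat n)$. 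Lemma~\ref{boundondiff} (or directly Proposition~\ref{quadbounds}) then bounds $|(S_{\R^4}(y_0)-S(y_0))-(S_{\R^4}(y_1)-S(y_1))|$ uniformly, and $|S(y_0)-S(y_1)|=|\tfrac12\alpha(t(y_0)^2-t(y_1)^2)|$ is bounded because $t(y_0),t(y_1)$ are both near the common plane-intersection $t$-value. This yields a uniform bound on $E^{\text{geom}}(u)$, after which Lemma~\ref{RIE} gives $\ell_{T_B}(u)\le R\,E^{\text{geom}}(u)$, so for $T$ large enough every strip from $y_0\in(L\cap L')_T$ avoids $\supp H$; being an unperturbed, translation-invariant strip it must then be constant. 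Your plan ``reduce to finitely many translation classes'' is gesturing at the grading, but without making it explicit the energy bound does not close.

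Finally, once (a) is in hand, (b) and (c) are much simpler than you propose: $\Phi_H$ agrees with the identity on all but finitely many generators, and on each of those finitely many $y$ the output $\Phi_H(y)$ is a finite sum (finitely many strips, by compactness in the fixed graded piece). So the extension to $\whCF^*$ is immediate, and for $\whCF^*_{\underline r}$ one only needs the trivial remark that generators fixed by $\Phi_H$ have unchanged action.
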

\begin{proof}  By choosing lifts appropriately, we can define the topological $\Z^3/\Gamma(L,L')$ grading of Definition \ref{gradingrem} on the two Floer complexes so that the continuation map must respect the grading. Then for each $\hn \in \Z^3/\Gamma$, we get an induced map 
\[ \Phi_{H}^\hn: CF^*\left(L, L'\right)_\hn \to CF^*\left(\psi_{H}\left(L\right), L' \right)_\hn \]
For all $\hn$, the complexes $CF^*\left(L, L'\right)_\hn$ and  $CF^*\left(\psi_{H}\left(L\right),  L'\right)_\hn$ have finitely many generators. Thus on each graded piece, (d), (e) and (f) hold by standard arguments. Thus it suffices to prove (a),(b), and (c).

From Equation \eqref{geomenbd}, we know that the energy of any strip connecting generators $y_0 \in CF^*\left(L, L'\right)_\hn$ and $y_1 \in CF^*\left(\psi_{H}\left(L\right), L'\right)_\hn$ is bounded by \[ |S\left(y_0,y_1\right)|+\max |H|.\] By Lemma \ref{boundondiff}, we know that there exists $D> 0$ and $\alpha \in \R$ such that 
\[ |S\left(y_0,y_1\right)| < | S\left(y_0\right)-S\left(y_1\right)| + D =\left| Q^\alpha_{L,L', \pm}\left(y_0\right) - Q^\alpha_{L,L', \pm}\left(y_1\right)\right| + D< \left| \frac{1}{2}\alpha\left(t\left(y_0\right)^2 - t\left(y_1\right)^2\right)\right| + D \]
But $\alpha\left(t\left(y_0\right)^2 - t\left(y_1\right)^2\right)$ is uniformly bounded for all such pairs $\left(y_0, y_1\right)$.  So we obtain a uniform bound on the geometric energy $E^{\text{geom}}\left(u\right)$ for all $u$ counted by $\Phi_{H}$.

Away from the support of $H$, Equation \eqref{Jholpert} reduces to the non-perturbed $J$-holomorphic equation. In this region methods of Lemma \ref{RIE} will apply, so we obtain a similar bound of the form
\[ \ell_{T_B} \left(u\right) \leq R E^{\text{geom}}\left(u\right). \]
for some $T_B $ so that $\supp H \cap Y_{T_B} = \emptyset$. Then using the uniform bound on $E^{\text{geom}}\left(u\right)$, we can find $T > T_B$ such that for all $y_0 \in \left(L \cap L'\right)_T$ and maps $u \in \overline{\M\left(y_0, y_1, \beta\right)}$, the image of $u$ must be disjoint from the support of $H$. But in this region, any $u: \R \times [0,1]$ counted by the continuation map must be constant; any non-constant $u$ can re-parametrized by translation by $s$, and thus will come in a higher-dimensional family. Thus the continuation map is the identity on generators outside of a compact set, so (b) follows.

Now, note that for any $y \in L \cap L'$, $\Phi_n\left(y\right)$ counts only finitely many $u$ since the moduli spaces $\overline{\M\left(y,y_0,\beta\right)}$ are empty for all but finitely many $n$, and each zero dimensional moduli space is compact. Thus for any $T$, for a collection $c_i \in \Lambda_r$, 
\[ \Phi_n\left(\sum_{y_i \in Y \setminus Y^T} c_i y_i \right) \in CF_R^*\left(\psi_{H}\left(L\right), L'\right)\subset  \whCF_{\underline{r}}^*\left(\psi_{H}\left(L\right), L'\right).\]
Combining this with (a) shows that that $\Phi_{H}$ extends to a map  $\whCF^*\left(L, L'\right) \to \whCF^*\left(\psi_{H}\left( L\right), L'\right)$. Finally, note that for any generator which is stationary under the flow of $H$, the action of that generator remains unchanged. Thus the extension restricts to a map  $\whCF_{\underline{r}}^*\left( L, L'\right) \to \whCF_{\underline{r}}^*\left(\psi_{H}\left( L\right), L'\right)$. 
\end{proof}

%The following properties of the continuation maps $\Phi_{H}$ can be proved with standard gluing arguments:
%\begin{lemma} \begin{enumerate*}[a] \item $\Phi_{H}: CF^*\left(\psi_{H_0}\left(L\right), L'\right) \to CF^*\left(\psi_{H_1}\left(L\right), L'\right)$ is a chain map.
%\item $\Phi_H: \whCF^*\left(\psi_{H_0}\left(L\right), L'\right) \to \whCF^*\left(\psi_{H_1}\left(L\right), L'\right)$ is a quasi-isomorphism, with quasi-inverse $\Phi_{-H}$. Moreover, $\Phi_H: \whCF_{\underline{r}}^*\left(\psi_{H_0}\left(L\right), L'\right) \to \whCF_{\underline{r}}^*\left(\psi_{H_1}\left(L\right), L'\right)$ is a quasi-isomorphism.
%\item For two compactly supported Hamiltonians $H_0$ and $H_1$, the maps $\Phi_{H_0} \circ \Phi_{H_1}, \Phi_{H_0+H_1}: HF^*\left(L, L'\right) \to HF^*\left(\psi_{H_0+H_1}\left(L\right),L'\right)$ agree on cohomology.
%\end{lemma}

\subsubsection*{Hamiltonians with a unique minimum} 
Let $L \in \G\left(\Wr\right)$, and let $H_0$ and $H_1$ be two Hamiltonians suitable for $L$ such that the function $H_1 -H_0 = H$ has a unique minimum at $t=s=0$ as a function on $\R \times S^1$ with $H\left(0\right)=0$, and such that the pair $\left(\psi_{H_1}\left(L\right), \psi_{H_0}\left(L\right)\right)$ is in correct position.

 Define the continuation element $c_{H_0 \to H_1} \in CF^*\left(\psi_{H_1}\left(L\right), \psi_{H_0}\left(L\right)\right)$ by
\[c_{H_0 \to H_1} =\ \sum_{y_n \in L \cap \{t=s=0\}} y_n, \]
i.e., by the sum of the points fixed by the Hamiltonian isotopy $\psi_{H}^t\left(L\right)$ at the minimum of $H$. We use the notation $c_{H}$ to denote the continuation element $c_{0 \to H}$ here and throughout.

%DO I HAVE TO BE CAREFUL ABOUT COMPATIBILITY OF AC STRUCTURES?

\subsubsection*{Perturbing Hamiltonians}
Let $L$ be an admissible planar Lagrangian, and let $H_0,H_1: Y \to \R$ be perturbing Hamiltonians suitable for $L$ so that the pair $\left(\psi_{H_1}\left(L\right), \psi_{H_0}\left(L\right)\right)$ is in correct position. Let $H= H_1 - H_0$, so $\psi_{H}\left(\psi_{H_0}\left(L\right)\right)=\psi_{H_1}\left(L\right)$. We now fix a way to decompose $H$ into a compactly supported Hamiltonian and a Hamiltonian which has a unique minimum as a function $H: \R \times S^1 \to \R$ at $\left(t,s\right) = \left(0,0\right)$.

Recall that we can write $H$ at the two ends of $Y$ as 
\[ H_{\pm}\left(t, s\right) =\alpha_{\pm} t^2 + \epsilon_\pm | t |+ f_\pm\left(t,s\right). \]
Let $a= \alpha_+ + \epsilon_+ = \alpha_- + \epsilon_-.$ Let $ \sigma_{\alpha_-, \epsilon_-}^{\alpha_+, \epsilon_+}, \chi, \rho: R \to \R$ be extrapolating functions as described in Definition \ref{intfun}. Define
\begin{equation} \label{hamform} H_Q\left(t,s\right) = \sigma_{\alpha_-, \epsilon_-}^{\alpha_+, \epsilon_+}\left(t\right) + \frac{a \chi\left(t\right)\left(\cos\left(2 \pi s\right) +1\right)}{8 \pi} +\rho_0^1\left(-t+1\right)f_-\left(t,s\right) + \rho_0^1\left(t-1\right)f_+\left(t,s\right).
\end{equation}

Note that the map $H \mapsto H_Q$ is linear in $H$.  We have
\begin{align*} &\frac{\p H_Q \left(t,s\right)}{\p t}\\
& = \begin{dcases} 2 \left(\alpha_+ + \epsilon_+\right) t & t \in \left[0,1/2\right); \\
			      \frac{d  \sigma_{\alpha_-, \epsilon_-}^{\alpha_+, \epsilon_+}\left(t\right)}{dt} + \frac{a\left(\sin\left(2 \pi s\right)+1\right)}{8 \pi}\frac{d \chi\left(t\right)}{d t} + \frac{\p}{\p t} \left( \rho_0^1\left(t-1\right) f_+\left(t,s\right)\right) & t \in \left[1/2,2\right);\\
			      2 \alpha_+ t + \epsilon_+ + \frac{ \p f \left(t,s\right)}{\p t} & t \in \left[2, \infty\right). \end{dcases}\end{align*}
Recall that $\|f_+\|_{C^2} < \alpha/8$, and $\alpha_+, \epsilon_+ > 0$; from this we see that the partial derivative is non-vanishing for $t \in [0,1/2)$ and $t \in [2, \infty) $. We can also bound the partial derivative away from $0$ for $t \in [1/2,2)$:
\begin{align*} 
\left |\frac{\p H\left(t,s\right)}{\p t} \right| =&\left|    \frac{d  \sigma_{\alpha_-, \epsilon_-}^{\alpha_+, \epsilon_+}\left(t\right)}{dt} + \frac{a\left(\sin\left(2 \pi s\right)+1\right)}{8 \pi}\frac{d \chi\left(t\right)}{d t} + \frac{d \rho_0^1\left(t-1\right)}{d t} f_+\left(t,s\right) +\left( \frac{\p f_+\left(t,s\right)}{\p t} \right) \rho_0^1\left(t-1\right)   \right| 
\\
& \geq  \frac{ 2 \alpha_+t  + \epsilon_+}{2}   -  \frac{\alpha_+  + \epsilon_+}{4 \pi}  -  2 \|f_+\|_{C^2} > 0.
\end{align*}
From this we conclude that the partial derivative is non-vanishing for $1 \leq t \leq 2$. The same analysis for the $t< 0$ region will show that the partial derivative is non-vanishing for $t < 0$. We conclude that the unique minimum of $H$ occurs at $\left(t,s\right)=\left(0,0\right)$. 

 Let $H_K$ be the compactly supported Hamiltonian such that
\[ H = H_Q + H_K. \] We define a continuation element $c_{H} \in {CF}^*\left(\psi_{H}\left(L\right), L\right)$ by
\[ c_{H} = \Phi_{H_K}\left(c_{H_Q}\right). \]

 \subsubsection{Properties of the continuation elements}
\begin{lemma}\label{decompofh}
\begin{enumerate}[(a)]
\item For all Hamiltonians $H$ perturbing $L \in \G_P$,  $\mu^1\left(c_H\right)=0$. 
\item Let $H$ and $H'$ be two Hamiltonians perturbing $L$ in $\G_P$. Then $\left(H+H\right)_Q=H_Q+H'_Q$ and 
\[ \mu^2\left(c_{H_Q}, c_{H'_Q}\right)=c_{\left(H+H'\right)_Q}.\]
\item Let $H_0, H_1,$ and $H_2$ be Hamiltonians such that the set $\left(\psi_{H_2}\left(L\right), \psi_{H_1}\left(L\right), \psi_{H_0}\left(L\right)\right)$ is in correct position. Then in $HF^*\left(\psi_{H_2}\left(L\right), \psi_{H_0}\left(L\right)\right),$
\[ [c_{H_0 \to H_1}] \cdot  [c_{H_1 \to H_2}]= [c_{H_0 \to H_2}]. \]
\end{enumerate} \end{lemma}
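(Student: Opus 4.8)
The plan is to prove the three items essentially in order, using the linearity of the assignments $H \mapsto H_Q$ and $H \mapsto H_K$ together with the already-established properties of $\Phi_{H_K}$ from Lemma \ref{comph}. For part (a), I would argue that $c_{H_Q}$ is a cocycle in $CF^*(\psi_{H_Q}(L), L)$: the continuation element $c_{H_Q}$ is the sum of the fixed points of the isotopy $\psi^t_H(L)$ at the unique minimum of $H$ on $\R \times S^1$, and the standard argument (e.g.\ in \cite{seidelbook}) that counts of index-$1$ perturbed strips out of such a "bottom" element cancel in pairs shows $\mu^1(c_{H_Q}) = 0$. Since $c_H = \Phi_{H_K}(c_{H_Q})$ and $\Phi_{H_K}$ is a chain map by Lemma \ref{comph}(d), we get $\mu^1(c_H) = \Phi_{H_K}(\mu^1(c_{H_Q})) = 0$. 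One subtlety is that we are working in the completed complexes $\whCF^*$, but $\Phi_{H_K}$ extends to the completions by Lemma \ref{comph}(b)--(c) and $\mu^1$ is continuous there, so the identity passes to the completion.

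For part (b), the identity $(H+H')_Q = H_Q + H'_Q$ is immediate from the formula \eqref{hamform} for $H_Q$, since each of the four summands ($\sigma$, the $\cos$-term, and the two $\rho f_\pm$ terms) is linear in the data $(\alpha_\pm, \epsilon_\pm, f_\pm)$ — this uses the additivity properties of $\sigma$ and $\rho$ from Definition \ref{intfun}. For the product statement $\mu^2(c_{H_Q}, c_{H'_Q}) = c_{(H+H')_Q}$, I would use the standard gluing/homotopy argument: the three Lagrangians $\psi_{H_Q + H'_Q}(L) = \psi_{(H+H')_Q}(L)$, $\psi_{H_Q}(L)$, $L$ all have their "bottom" generators concentrated over $(t,s)=(0,0)$, and the count of index-$0$ triangles with inputs $c_{H_Q}$ and $c_{H'_Q}$ is, after a neck-stretching argument localizing near $(0,0)$, a count of rigid triangles among the linearized pictures, which yields exactly the bottom generators of $\psi_{(H+H')_Q}(L)$ relative to $L$ with coefficient $1$. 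Here I should be careful that the areas of the relevant triangles vanish (the relevant discs are small near the common minimum), so the coefficient is $T^0 = 1$ and no Novikov corrections enter; the exactness on the universal cover and the action bounds of Proposition \ref{quadbounds} / Lemma \ref{boundondiff} control that the triangle is genuinely small.

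For part (c), decompose $H_i \to H_j$ compatibly: writing $H^{01} = H_1 - H_0$, $H^{12} = H_2 - H_1$, $H^{02} = H_2 - H_0 = H^{01} + H^{12}$, we have by linearity $(H^{02})_Q = (H^{01})_Q + (H^{12})_Q$ and $(H^{02})_K = (H^{01})_K + (H^{12})_K$. Then $c_{H_0 \to H_2} = \Phi_{(H^{02})_K}(c_{(H^{02})_Q})$. Using part (b), $c_{(H^{02})_Q} = \mu^2(c_{(H^{01})_Q}, c_{(H^{12})_Q})$ on the nose, and using Lemma \ref{comph}(f), $\Phi_{(H^{02})_K}$ agrees on cohomology with $\Phi_{(H^{12})_K} \circ \Phi_{(H^{01})_K}$, which is a composition of chain maps; combining with the fact that $\Phi$ is a module map over the relevant $A_\infty$ structure up to homotopy (or directly, that $\Phi_{H_K}$ is a chain map so $[\Phi_{H_K}(\mu^2(a,b))] = [\mu^2(\Phi_{H_K}a, \Phi_{H_K}b)]$ after accounting for which Floer complex each factor lives in) yields $[c_{H_0 \to H_1}] \cdot [c_{H_1 \to H_2}] = [c_{H_0 \to H_2}]$ in cohomology.

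The main obstacle I expect is the gluing step in part (b): making rigorous the claim that $\mu^2(c_{H_Q}, c_{H'_Q})$ equals $c_{(H+H')_Q}$ \emph{exactly} (not just up to homotopy) requires showing that the only rigid holomorphic triangles contributing are the small ones near the common minimum $(0,0)$ and that they come with coefficient exactly $1$. This needs a careful energy/area estimate — precisely the kind of estimate provided by Lemma \ref{RIE}, Corollary \ref{boundedregion}, and Lemma \ref{boundondiff} — to rule out large triangles and to verify that the small ones have zero symplectic area, so that no higher-order $T$-powers appear. If the exact equality cannot be arranged for a generic choice of perturbation data, one falls back to an equality in cohomology, which still suffices for part (c); this is presumably why (c) is only asserted on cohomology.
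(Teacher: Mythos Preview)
Your overall strategy matches the paper's, but there are two places where your argument diverges in ways worth noting.

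For part (a), your appeal to a ``standard argument'' from \cite{seidelbook} is a gap. The element $c_{H_Q}$ is \emph{not} defined here as a continuation element via counts of strips with moving boundary conditions; it is defined by fiat as the sum of the fixed points of $\psi_{H_Q}^t(L)$ over the unique minimum $(t,s)=(0,0)$. So the usual closedness argument for continuation maps does not immediately apply, and one must verify $\mu^1(c_{H_Q})=0$ by hand. The paper does this by lifting to the cyclic cover $T^*C_R$, applying a symplectomorphism $\psi_{-G-H_K}$ so that the Lagrangians become \emph{product} Lagrangians in a neighborhood of $\{t=0\}$, choosing a split almost complex structure, confining all contributing discs to that product region, and then reducing to an explicit computation in $T^*S^1_R$: the differential sends each index-$0$ generator $y_n$ to $T^A(y_{n+1/2}-y_{n-1/2})$, and summing over $n \in \Z/R\Z$ gives zero.

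For part (b), you correctly identify the issue (ruling out large triangles and showing the small ones have zero area) but reach for heavier tools than needed. The paper's argument is a one-line Stokes computation: since the lifts of $L$, $\psi_{H_Q}(L)$, $\psi_{(H+H')_Q}(L)$ to $\R^4$ are exact, any disc $u$ contributing to $\mu^2(c_{H_Q}, c_{H_Q \to (H+H')_Q})$ with output $y_2$ has
\[
\int_{\D} u^*\omega \;=\; -\,(H_Q+H'_Q)(y_2)\;\le\;0,
\]
because $H_Q+H'_Q$ has its minimum value $0$ at the origin. Positivity of energy then forces $u$ to be constant, so $y_0=y_1=y_2$ lies over $(0,0)$ and the coefficient is exactly $1$. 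No neck-stretching or monotonicity estimate is required; the exactness on the universal cover together with the normalization $H_Q(0,0)=0$ does all the work. Your part (c) is essentially the paper's argument.
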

\begin{proof} \begin{enumerate}[(a)] 
\item Since $\Phi_{H_K}$ is a chain map, it suffices to show that $\mu^1\left(c_{H_Q}\right)=0$. Note that we can compute the differential $\mu^1\left(c_{H_Q}\right)$ in any cover of $Y$. Lift $\psi_{H}\left(L\right)$ and $\psi_{H_K}\left(L\right)$ to $T^*C_R$, where $r$ is defined using the class $[\gamma_L] = [\left(r, d k, dm\right)]$; the lifts of $\psi_{H}$ and $\psi_{H_I}\left(L\right)$ are then sections of the fibration $T^* C_R \to C_R$. Let $G: C_R\to \R$ be the compactly supported function that appears in the primitive of $L \in T^* \R^2$ in Lemma \ref{formofh}.   The symplectomorphism 
\[ \psi_{-G-H_K}: T^*C_R \to T^* C_R. \]
induces an isomorphism of chain complexes
\[  \whCF^*\left(\psi_{H}\left(L\right), L\right) \to \whCF^*\left(\psi_{H}\left(\psi_{-G-H_K}\left(L\right)\right), \psi_{-G-H_K}\left(L\right)\right). \]

Note that $\psi_{H}\left(\psi_{-G-H_K}\left(L\right)\right)$ and $\psi_{-G-H_K}\left(L\right)$ are both product Lagrangians when restricted to the product submanifold \[\left([-1/2,1/2] \times \R, d t \wedge d \phi_t\right) \oplus  \left(S^1_R \times \R, d s \wedge d \phi_s\right) \subset T^*\left(\R \times S^1_r\right).\] Choose an almost complex structure $\J$ which respects this splitting; recall that the differentials on the Floer complexes can be computed up to homotopy using any almost complex structure that ensures regularity. Any holomorphic disc that contributes to  $\mu^1\left(c_H\right)$ can be constrained to lie in \[ \left([-1/2,1/2] \times S^1, d t \wedge d \phi_t\right) \oplus  \left(S^1_R \times S^1, d s \wedge d \phi_s\right) \subset Y_R\] by using the similar methods of the proof of Lemma \ref{RIE} to obtain \emph{a priori} constraints on the image of $u$ based on $\omega\left(u\right)=H_Q\left(y'\right)$, and then writing $H_Q =  \left(\frac{H_Q}{n}+\ldots+ \frac{H_Q}{n}\right)$, choosing $n$ large and applying (a). But in this region all intersection points lie at $t=\phi_t=0$, so applying the maximum principle to the holomorphic map $\pi_{\left(t, \phi_t\right)} \circ u$ allows us to deduce that $\pi_{\left(t, \phi_t\right)} \circ u$ is constant. Thus any $u$ must split as \[u = \{0\} \times\left( \pi_{s, \phi_s} \circ u\right).\] Such maps are automatically regular since all holomorphic maps to $\C$ are regular. Then the problem reduces to showing that the sum of the index 0 intersection points is closed in $T^*S^1_R$. Labelling the intersection points by their $s$ value, we have \[\psi \left(y_{n}\right) = \left(T^{A} \right)y_{n+1/2} - \left(T^A\right) y_{n-1/2},\] for $n \in \Z/ R \Z$, where $A = H\left(1/2,0\right)-H\left(0,0\right)$. Summing over all the intersection points gives the result.

\item The map $H \mapsto H_Q$ is linear in $H$, so the first equality follows.

Note that $H_Q$ and $H_Q+H_Q'$ each have minima only at $t=s=0$. We can use the exactness of $L$ when lifted to $\R^4$ to express the energy of any $u: \overline{\D} \to Y$ contributing to $\mu^2\left(c_{H_Q}, c_{H_Q \to H_Q+ H'_Q}\right)$ with \[u\left(p_0\right)=y_0, u\left(p_1\right)=y_1, u\left(p_2\right)=y_2\] as \[\int_{\D} u^* \omega = -(H_Q+H_Q')\left(y_2\right).\] Since $H_{Q+Q'}\left(y_2\right) \geq 0$, we must have that $\int_{\D} u^* \omega =0$, so such $u$ must be constant; thus $y_0=y_1=y_2$ must be a fixed point of the isotopy $\phi_{H_2}^t\left(L\right)$. We conclude that 
\begin{align*} \mu^2\left(c_{H_Q}, c_{H_Q \to H_Q+ H'_Q}\right) = \sum_{y \in L \cap \{t = s = 0\}} y \quad = c_{H_0 \to H_2}.\end{align*}

\item By gluing the domain of the maps counted by \[\Phi_{\left(H_1-H_0\right)_K}\left(c_{H_0 \to H_0+\left(H_1-H_0\right)_Q}\right);\]  \[\Phi_{\left(H_2-H_1\right)_K}\left(c_{H_1 \to H_1 + \left(H_2-H_1\right)_Q}\right),\] the space of discs $u: S \to Y$ counted by $\mu^2\left(c_{H_0 \to H_1}, c_{H_1 \to H_2} \right)$ can be deformed to the space of maps counted by $\Psi_{(H_2)_K}\left(c_{\left(H_2\right)_Q}\right)$.

\end{enumerate}
\end{proof}

\begin{lemma}\label{qisomcont}
Let $H_1$ and $H_0$ be type-$I$ Hamiltonians perturbing a type-$I$ Lagrangian $L$, and let $H'$ be a type-I Hamiltonian perturbing a type-$I$ Lagrangian $L'$. Assume that the set \[\left(\psi_{H_1}\left(L\right), \psi_{H_0}\left(L\right), \psi_{H'}\left( L'\right)\right)\] is in correct position. Also assume that $\alpha_{\pm}\left(H_0\right) - \alpha_{\pm} \left(H'\right) > K_{\pm}\left( L, L'\right)$ on the wrapped end(s) of $Y$. Then the multiplication map defines a map 
\[ \mu^2\left( \cdot, c_{H_0 \to H_1}\right): CF^* \left(\psi_{H_0} \left( L\right), \psi_{H'} \left(L'\right) \right)\to  CF^* \left(\psi_{H_1} \left(L\right),\psi_{H'} \left(L'\right)\right) \]
which is a quasi-isomorphism. Similarly, if the set $\left(\psi_{H'}\left( L'\right), \psi_{H_1}\left(L\right), \psi_{H_1}\left(L\right)\right)$ is in correct position and  $\alpha_{\pm} \left(H'\right) - \alpha_{\pm}\left(H_0\right)  > K_{\pm}\left(L', L\right)$ on the wrapped end(s) of $Y$ then the map 
\[ \mu^2\left(c_{H_1 \to H_0}, \cdot\right): CF^*\left(\psi_{H'}\left( L'\right), \psi_{H_0}\left(L\right)\right) \to  CF^* \left(\psi_{H'} \left(L'\right), \psi_{H_1} \left(L\right)\right) \]
is a quasi-isomorphism. 

Moreover, the quasi-isomorphisms defined above extend to maps 
\begin{align*} & \mu^2\left( \cdot, c_{H_0 \to H_1}\right): \whCF^* \left(\psi_{H_0} \left( L\right), \psi_{H'} \left(L'\right) \right)\to  \whCF^* \left(\psi_{H_1} \left(L\right),\psi_{H'} \left(L'\right)\right), \\
 & \mu^2\left(c_{H_1 \to H_0}, \cdot\right): \whCF^*\left(\psi_{H'}\left(L'\right), \psi_{H_0}\left(L\right)\right) \to  \whCF^* \left(\psi_{H'} \left(L'\right), \psi_{H_1} \left(L\right)\right) \end{align*}
which are injective on cohomology, and restrict to quasi-isomorphisms
\begin{align*} & \mu^2\left( \cdot, c_{H_0 \to H_1}\right): \whCF_{\underline{r}}^* \left(\psi_{H_0} \left( L\right), \psi_{H'} \left(L'\right) \right)\to  \whCF_{\underline{r}}^* \left(\psi_{H_1} \left(L\right),\psi_{H'} \left(L'\right)\right), \\
 & \mu^2\left(c_{H_1 \to H_0}, \cdot\right): \whCF_{\underline{r}}^*\left(\psi_{H'}\left(L'\right), \psi_{H_0}\left(L\right)\right) \to  \whCF_{\underline{r}}^* \left(\psi_{H'} \left(L'\right), \psi_{H_1} \left(L\right)\right). \end{align*}
\end{lemma}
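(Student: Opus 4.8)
The plan is to reduce the statement to the analogous statement for the finitely-generated graded pieces $CF^*(-,-)_{\overline{\hn}}$ of Definition \ref{gradingrem}, then to control the behavior of the maps as $t \to \pm\infty$ so that they extend to (resp.\ are quasi-isomorphisms on) the completions. First I would verify the unextended statement: that $\mu^2(\cdot, c_{H_0\to H_1})$ and $\mu^2(c_{H_1\to H_0}, \cdot)$ are quasi-isomorphisms on the uncompleted complexes. The continuation element $c_{H_0\to H_1}$ is a cycle ($\mu^1 c_{H_0\to H_1}=0$ by Lemma \ref{decompofh}(a)), so multiplication by it is a chain map by the $A_\infty$ relations; and Lemma \ref{decompofh}(c) gives $[c_{H_1\to H_0}]\cdot[c_{H_0\to H_1}] = [c_{H_1\to H_1}]$, which acts as the identity on cohomology (it is the unit, since $H_1-H_1$ has its minimum realized by all fixed points and the standard count shows $[c_{H\to H}]$ is cohomologically the identity). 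Hence $\mu^2(\cdot,c_{H_0\to H_1})$ and $\mu^2(\cdot, c_{H_1\to H_0})$ are mutually inverse on cohomology — provided all the intermediate triples are in correct position, which is exactly what the hypotheses on $\alpha_\pm(H_0)-\alpha_\pm(H')$ versus $K_\pm(L,L')$ ensure (via Lemma \ref{transint} and the definition of correct position). The second map follows symmetrically by multiplying on the left.

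The substantive part is the extension to the completions. Here I would use the topological $\Z^3/\Gamma$-grading: $c_{H_0\to H_1}$ is supported on the $\overline{0}$ graded piece (its generators all lie over $\{t=s=0\}$), so $\mu^2(\cdot, c_{H_0\to H_1})$ is block-diagonal with respect to the $\overline{\hn}$ decomposition, and on each block it is the quasi-isomorphism from the previous paragraph between finite-dimensional complexes. To see the map extends continuously to $\whCF^*$, I would argue exactly as in the proof of Lemma \ref{comph}(a)--(b): the energy of any disc contributing to $\mu^2(y, c_{H_0\to H_1})$ with output generator $y'$ is bounded via the exactness of the lifts by $S_{\R^4}(\tilde y) - S_{\R^4}(\tilde y') + (\text{bounded})$, and by Lemma \ref{boundondiff} together with Proposition \ref{quadbounds}(b) this forces $\val$ of the coefficient of $y'$ to grow as $t(y')\to\pm\infty$ whenever $t(y)$ stays bounded — and, since there are only finitely many $y$ with $|t(y)|$ below any threshold, the valuation grows on the whole image. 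This is precisely the condition defining $\whCF^*$, so the map extends; running the same estimate in the reverse direction for $\mu^2(c_{H_1\to H_0},\cdot)$ and using Lemma \ref{decompofh}(c) gives that the composition is, block by block, chain-homotopic to the identity via homotopies that themselves respect the growth conditions, hence the composite on $\whCF^*$ is cohomologically the identity. Injectivity on cohomology for $\whCF^*$ then follows; essential surjectivity can fail there because the homotopy inverse need not preserve the completion, which is why the statement only claims injectivity at the $\whCF^*$ level.

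For the $\whCF^*_{\underline r}$ statement I would then observe that the action-corrected generators $\mathbf y = T^{-S(y)}y$ are exactly engineered so that the relevant estimate becomes a statement about growth of $|\ev_{\underline r}(c_i)|^{1/|t(y_i)|}$, and that the energy bound above, rewritten in terms of $S_{\R^4} - S$ (the ``$\beta$-part'' $Q^\beta_{L,L',\pm}$, which by Lemma \ref{boundondiff} differs by a bounded amount between input and output), translates into the fact that $\mu^2(\cdot,c_{H_0\to H_1})$ shifts the coefficients by a bounded power of $T$ relative to the action; hence it preserves the $\Lambda_r$-coefficient condition and the convergence criterion of Proposition \ref{convergence}. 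Since on each finite graded piece the map is already a quasi-isomorphism and the completions $\whCF^*_{\underline r}$ are built from the same finite pieces with a convergence constraint that both the map and its inverse now respect (the inverse being built from $c_{H_1\to H_0}$, which satisfies the same bounds), the restriction to $\whCF^*_{\underline r}$ is a genuine quasi-isomorphism, not merely injective.

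I expect the main obstacle to be the last point: checking that the homotopy inverse $\mu^2(c_{H_1\to H_0},\cdot)$ and the chain homotopies realizing $[c_{H_1\to H_0}]\cdot[c_{H_0\to H_1}]=\mathrm{id}$ genuinely respect the $\Lambda_r$-completion's convergence condition (so that the quasi-isomorphism descends to $\whCF^*_{\underline r}$ rather than merely the weaker $\whCF^*$), which requires tracking the action shift through a composition of two continuation maps and a homotopy simultaneously, using that the relevant quadratic forms $Q^\beta$ agree for $\psi_{H_0}(L)$ and $\psi_{H_1}(L)$ since they have the same homology class $[\gamma_L]$.
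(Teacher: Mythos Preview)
Your first paragraph contains a genuine gap that cannot be patched without essentially redoing the argument. The continuation element $c_{H_0\to H_1}$ lives in $CF^*(\psi_{H_1}(L),\psi_{H_0}(L))$ and is only defined when the pair $(\psi_{H_1}(L),\psi_{H_0}(L))$ is in correct position, i.e.\ when $H_1-H_0>0$ at infinity. The element $c_{H_1\to H_0}$ would require $H_0-H_1>0$, which is incompatible; and $c_{H_1\to H_1}$ makes no sense (the pair is not transverse, and $H_1-H_1=0$ has no unique minimum). Lemma \ref{decompofh}(c) only gives the one-directional composition law $[c_{H_0\to H_1}]\cdot[c_{H_1\to H_2}]=[c_{H_0\to H_2}]$ for $H_0<H_1<H_2$, which produces no two-sided inverse. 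You also cannot substitute the moving-boundary continuation map $\Phi_{H_1-H_0}$ of Lemma \ref{comph}: that construction and its energy bound \eqref{geomenbd} require $\max|H|<\infty$, whereas at the wrapped ends $H_1-H_0$ has nonzero quadratic part $\alpha_\pm$ in the $(m,k)$ and $\W$ settings. This directedness is precisely why the paper cannot run a standard continuation argument and must do something else.

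The paper's proof is quite different. After reducing to $H=H_Q$ via Lemma \ref{comph}, it works one graded piece $CF^*(L,L')_{\hn}$ at a time, lifts to the cyclic cover $Y_R$ so both Lagrangians become sections, and then explicitly constructs a compactly supported Hamiltonian $G$ (depending on $\hn$) such that both $h-h'-G$ and $h-h'+H-G$ have either no critical points or a unique critical point at $t=s=0$. This forces both $CF^*(L_R,\psi_G(L'_R))_{\hn}$ and $CF^*(\psi_H(L_R),\psi_G(L'_R))_{\hn}$ to be rank one (or zero, or rank two with vanishing differential), and $\mu^2(\cdot,c_{H_R})$ then counts only the constant disc at the origin, visibly an isomorphism. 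The construction of $G$ is a lengthy case analysis ($[\gamma_L]=[\gamma_{L'}]$ or not, crossed with the $\A$, $(m,k)$, and $\W$ settings) and is the real content of the proof. The extension to the completions then uses the explicit output: away from finitely many $\hn$, the map sends $y$ to $\pm\,T^{S_{\R^4}(y)-S_{\R^4}(y_{H_Q})}y_{H_Q}$ (the sign forced by working over $\Lambda_\Z$), and Lemma \ref{boundondiff} bounds that exponent. Your completion sketch is closer in spirit to this last step, but it rests on the graded-piece quasi-isomorphism, which your first paragraph does not establish.
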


\begin{proof} We prove the first case; the second can be proved with the same reasoning. Change notation slightly: let $H_1 - H_0 = H$, and let $L$ now denote the perturbed Lagrangian $\psi_{H_0}\left(L\right)$ and $L'$ now denote  the perturbed Lagrangian $\psi_{H'} \left(L'\right)$. 

We have seen that if $G$ is a compactly supported Hamiltonian, the map
\begin{align*} \Phi_{G}: \whCF^*\left(L, L'\right) \to \whCF^*\left(\psi_{G}\left(L\right), L'\right)
       \end{align*}
is a quasi-isomorphism. Using the decomposition $[c_H]=\Psi_{H_K}\left(H_Q\right)$, we can reduce to the case when $H=H_Q$, so $H$ is a Hamiltonian of the form given by Equation \eqref{hamform}.
 
Note that the multiplication with the continuation element respects the $\Z^3/\Gamma$ grading on the Floer complexes. We first show that 
 \[ \cdot [c_H]_{\hat{n}} : HF^*\left(L,L'\right)_{\hn} \to HF^*\left(\psi_{H}\left(L\right), L'\right)_{\hat{n}} \]
 is an isomorphism for each $\hat{n} \in \Z^3$. 

Recall that we associate to $L$ and $L'$ elements $[\gamma_L], [\gamma_{L'}] \in H_1\left(T^3\right)$ of the form $\left(r, d k, d m\right)$ and $\left(r', d' k', d' m'\right)$. Let $R=r r'$, and consider the cyclic cover $Y_{R} \to Y$ of Remark \ref{cycliccovers}. Let $L_{R}$ and $L'_R$  be lifts of $L$ and $L'$ to $Y_{R}$, respectively, which lift further to $\tilde{L}$ and $\tilde{L}'$; note that both $L_R$ and $L_R'$ are sections of the fibration $Y_R \to C_R$. Let \[c_{H_R} \in \whCF^*\left(L_R, L_R'\right) = \pi_R \inv\left(c_H\right).\] The natural identifications 
\begin{align*} CF^*\left(L,L'\right)_{\hat{n}} &\cong CF^*\left(L_R, L_R'\right)_{\hat{n}} \\
CF^*\left(\psi_{H}\left(L\right),L'\right)_{\hat{n}}& \cong CF^*\left(\psi_{H}\left(L_R\right), L_R'\right)_{\hat{n}}\end{align*}
 commute with the induced map  $\cdot [c_{H_R}]_\hn : HF^*\left(L_R,L'_R\right)_{\hn} \to HF^*\left(\psi_{H}\left(L_R\right), L'_R\right)_\hn$. Thus it suffices to show that this second map is an isomorphism.

Let $\hat{n} \in \Z^3$. Let $h$ be such that $\tilde{L}$ is the graph of $d h$, and let $h'$ be such that $\tilde{L}'-\hn$ is the graph of $d h'_{\hn}$. Note that $\whCF^*\left(L,L'\right)_\hn$ is generated by points of $L_R \cap L_R'$ that lie above critical points of $h - h'$. Assume now that that $h-h'$ and $h-h'+H$ have a critical point at $t=s=0$, and at that point both functions vanish. Let $y_0 \in \whCF^*\left(L_R,L'_R\right)_{\hn}$ and $y_1 \in \whCF^*\left(L,L_R'\right)_{\hn}$, denote the  corresponding generators above this critical point and assume that $y_0$ and $y_1$ have the same degree. Then the constant solution is the only disk which contributes to the product $\mu^2\left(y_0, c_{H_R}\right)$, since all of the disks contributing to this product must have symplectic area zero. We conclude that $\mu^2\left(y_0, c_{H_R}\right)=y_1$.

Note that for $i \neq 0,1,2$, the Floer complexes vanish in degree $i$. We will show that for $i=0,1,2$, there is a compactly supported Hamiltonian $G: Y_R \to \R$ so that the functions $h-h'-G$ and $h-h'+H-G$ have this property, and such that the corresponding generators $y_0 \in CF^i\left(L_R,\psi_{G}\left(L_R'\right)\right)_{\hn}$ and $y_1 \in CF^i\left(\psi_{H}\left(L_R\right),\psi_{G}\left(L_R'\right)\right)_{\hn}$ are closed and  span  rank-1 modules $ HF^i\left(L_R,\psi_{G}\left(L_R'\right)\right)_{\hn}$ and $HF^i\left(\psi_{H}\left(L_R\right),\psi_{G}\left(L_R'\right)\right)_{\hn}$ respectively. Then the map \[\cdot [c_{H_R}]_\hn : HF^*\left(L_R,\psi_{G}\left(L'_R\right)\right)_\hn \to HF^*\left(L_R,\psi_G\left(L'_R\right)\right)_\hn\] is an isomorphism. We then use the commutativity of the below diagram to conclude that $\cdot [c_H]_ \hn$ is an isomorphism: 
     \[\begin{tikzcd}
     HF^i\left(\tilde{L}, \tilde{L}'\right)_{\hn} \arrow{r}{\cdot [c_H]_\hn} \arrow{d}{\Phi_{G}}[swap]{\cong}& HF^i\left(\psi_{H}\left(\tilde{L}\right), \tilde{L}'\right)_{\hn} \arrow{d}{\Phi_{G}}[swap]{\cong}\\
     HF^i\left(L_R, \psi_{G}\left(L'_R\right)\right)_{\hn} \arrow{r}{\cdot [c_{H_R}]_\hn} & HF^i\left(\psi_{H}\left(L_R\right), \psi_{G}\left(L_R\right)\right)_{\hn}. \end{tikzcd}\]
We distinguish the cases $[\gamma_L] = [\gamma_{L'}]$ and $[\gamma_L]\neq [\gamma_{L'}]$, and work in the $\mathbf{A}$, $\left(m,k\right)$, and $\W$ settings separately.

\vspace{10 pt}
\noindent\textbf{${[\gamma_L] \neq [\gamma_{L'}]}$,  $\A$ setting}. Let $\lambda=\frac{k}{2}\left( \frac{d}{r}-\frac{d'}{r'}\right);$ by assumption $\lambda \neq 0$. Let $\epsilon = \epsilon\left(H\right)$, $\epsilon_0=\epsilon\left(H_0\right)-\epsilon\left(H'\right)$, $\epsilon_{1}=\epsilon\left(H_1\right)-\epsilon{H'}$.
%\[\delta = \{\min\{\epsilon\left(H\right), 1-\epsilon\left(H_1\right)\}.\]
 By writing $H=n \left(\frac{1}{n}H\right)$ for some large $n$, we can assume that 
 \[ \epsilon= \epsilon_1- \epsilon_0 < \frac{\epsilon_0}{4 |k/m|+4}. \]
 and
  \[ |\lambda| > 8 \pi \epsilon \left(|k/m|^2+1\right). \]
  Note that $\epsilon\left(\frac{1}{n} H\right)= \frac{\epsilon\left(H\right)}{n}$, so this step involves only one choice of $n$. Using Lemmas \ref{primitive} and \ref{formofh}, we can write
 \[ h - h' = \lambda w^2+ a w+\phi\left(t\right) + G_0\left(t,s\right) \]
where $w=s+\left(k/m\right)t$, $G_0$ is a compactly supported function on $C_R$, and $\phi: \R \to \R$ is a function with the following properties:
\begin{itemize}
\item There exists $c \in \R$ and $T > 0$ such that 
for $t <- T$,
\begin{align*} \phi\left(t\right)&=\left(b_- - \epsilon_0\right)t  \end{align*}
and for $t > T$, 
\begin{align*} \phi\left(t\right)&=\left(b_+ + \epsilon_0\right)t+c. \end{align*}
where $b_+, b_- \in \frac{1}{m}\Z$.
\item If $\left(b_- - \epsilon_0\right)\left(b_+ + \epsilon_0\right) > 0$, then \begin{align*} \left|\frac{d\phi\left(t\right)}{d t} \right| &> \epsilon \left(|k/m|+1\right) \end{align*} for $t \in [-1,1]$, and \begin{align*}\left|\frac{d \phi}{d t} \right| &> \epsilon \end{align*} everywhere.
\item If $\left(b_- - \epsilon_0\right)\left(b_+ + \epsilon_0\right) < 0$, then $\phi\left(t\right)=\left(M/2\right) t^2$ for $t \in [-2,2]$, where $M$ has the same sign as $\left(b_+ + \epsilon_0\right)$, and satisfies
\begin{equation} \label{mbounds} \left|1+\frac{4 k^2\lambda}{m^2\left(|M|+\epsilon\right)}\right| < 2 \end{equation}
and   \begin{align*}\left|\frac{d \phi\left(t\right)}{d t}\right| > \epsilon \end{align*} for $t \notin [-1,1]$.
\end{itemize}

Let $g=h-h'-G_0$. By construction $\p g/\p t$ is non-vanishing if  $\left(b_- - \epsilon_0\right)\left(b_+ + \epsilon_0\right) > 0$, and vanishes only at zero if  $\left(b_- - \epsilon_0\right)\left(b_+ + \epsilon_0\right) < 0$. In the former case let $G=G_0$. In the latter case, consider the compactly supported diffeomorphism $\Theta: C_R \to C_R$ defined by 
\[ \Theta\left(t,w\right) = \left(t, w - \frac{a}{2 \lambda} \chi_{T/\epsilon}\left( t\right)\right). \] Note that the function $g \circ \Theta: C_R \to \R$ has a unique critical point at $w=t=0$. Moreover,  $G_1=g \circ \Theta -g$ is a compactly supported function on $Y_R$. Let $G=G_0+G_1$. We conclude that $\tilde{L}$ and $\psi_{G} \left(\tilde{L}'\right)$ intersect in $\R^4$ only at $t=w=0$. In both cases $\mu^1$ vanishes on $\whCF^*\left(L_R, \psi_{G} \left(L_R'\right)\right)_\hn$; the complex is either zero or concentrated in degree $0,1,$ or $2$.

We wish to show that the same holds for $\whCF^*\left(\psi_{H}\left(L_R\right), \psi_{G} \left(L_R'\right)\right)_\hn$. First assume that  \[\left(b_- -\epsilon_0\right)\left(b_- + \epsilon_0\right) > 0.\] In this case we can write 
\begin{align*} \frac{\p }{\p t} \left(g+H\right) &= \frac{d}{d t} \left(\phi+\sigma_{\epsilon}^{\epsilon}\right)+ \frac{\epsilon k\sin\left(2\pi\left(w-\left(k/m\right) t\right)\right)\chi\left(t\right)}{4m}+ \frac{\epsilon\left(\cos\left(2\pi\left(w-\left(k/m\right) t\right)\right)+1\right)}{8 \pi} \frac{d \chi\left(t\right)}{d t} \\
 &> \left|\frac{d\phi}{d t}\left(t\right)\right| - \frac{\epsilon \left(|k/m|+1\right)}{2}\\
 & > 0. \end{align*}
Thus $\whCF^*\left(L_R, \psi_{-G} (L_R'\right))_\hn =0.$ In this case the continuation map is trivially an isomorphism.

Now assume that  $\left(b_- -\epsilon_0\right)\left(b_- + \epsilon_0\right) < 0$. Then for $t \in [-2T/\epsilon, 2T/\epsilon],$ $w -\frac{a}{2 \lambda} \chi_{T/\epsilon}\left( t\right)=w-\frac{a}{2 \lambda}$, so in this region we can write:
\begin{align*} \frac{\p }{\p t} \left(g+H\right) = \frac{d}{d t} \left(\phi+\sigma_{\epsilon}^{\epsilon}\right)+ \frac{\epsilon k\sin\left(2\pi\left(w-\left(k/m\right) t\right)\right)\chi\left(t\right)}{4m}- \frac{\epsilon\left(\cos\left(2\pi\left(w-\left(k/m\right) t\right)\right)+1\right)}{8 \pi} \frac{d \chi\left(t\right)}{d t} \\
\end{align*}
For $t \in [-1/2,1/2]$, we have the expressions
\begin{align*}& \frac{\p }{\p t} \left(g+H\right) =M t +\epsilon t+\frac{\epsilon k\sin\left(2\pi\left(w-\left(k/m\right) t\right)\right)}{4m} \\
& \frac{\p }{\p w} \left(g+H\right) = 2 \lambda w -\frac{\epsilon \left(\sin\left(2\pi\left(w-\left(k/m\right) t\right)\right)\right)}{4}. \end{align*}
We can conclude that any critical points of $g+H$ in this region satisfy \[ 2\left(k/m\right) \lambda w = \left(M+\epsilon\right)t.\] But we then can bound $ \frac{\p }{\p w} \left(g+H\right)$ away from zero in this for such $t,w$: any solutions to $\frac{\p }{\p w} \left(g+H\right) $ must satisfy
\begin{equation} \label{badeqn} 2 \lambda w -\frac{\epsilon}{4} \left(\sin\left(2\pi \left(1+\frac{2 k^2 \lambda}{m^2\left(M+\epsilon\right)}\right)w\right) \right) = 0. \end{equation}
Using the inequality $|\sin\left(x\right)| < |x|,$ and the bound from Equation \eqref{mbounds},
 %\[2\pi \left(1+\frac{2 k^2 \lambda}{m^2\left(M+\epsilon\right)}\right)< 4 \pi ,\]  
 we write 
\[ \frac{\epsilon}{4} \left(\sin\left(2\pi \left(1+\frac{2 k^2 \lambda}{m^2\left(M+\epsilon\right)}\right)w\right) \right)< 4 \pi \epsilon |w| < 2 \lambda w, \]
so there are no solutions to Equation \eqref{badeqn} in this region other than at the origin.

For $t$ with $1/2 < |t| < 2$, we have  
\begin{align*} &\left|\frac{\p }{\p t} \left(g+H\right) \left(t,w\right) \right|\\
& > M t-  \left|\frac{d \sigma_{\epsilon}^{\epsilon}\left(t\right) }{dt}+\frac{\epsilon k\sin\left(2\pi\left(w-\left(k/m\right) t\right)\right)\chi\left(t\right)}{4m} - \frac{\epsilon\left(\cos\left(2\pi\left(w-\left(k/m\right) t\right)\right)+1\right)}{8 \pi} \frac{d \chi\left(t\right)}{d t}\right|\\
  &> \frac{M}{2} - \epsilon \left(2+|k/m|\right) > 0. \end{align*}
For $t$ with $|t| > 2$, we have 
\begin{align*}  \frac{\p }{\p t} \left(g+H\right) \left(t\right)  = \frac{d\phi\left(t,w\right)}{d t} + \epsilon -\frac{a}{2 \lambda}\frac{T}\epsilon \frac{\p \left(g+H\right) }{\p w} \\
						           \end{align*}
						           so it suffices to show that $ \frac{d\phi\left(t,w\right)}{d t}  + \epsilon$ is non-vanishing; this is true since $\left|\frac{d \phi\left(t\right)}{d t}\right| > \epsilon$ for $t \notin [-1,1]$ 
The same methods show that $\frac{\p \left(g+H \right)\left(t,w\right)}{\p t} \neq 0$ when $t<-1/2$. We conclude that $g+H$ has a unique critical point at $\left(t=w=0\right)$. Hence $\phi_{H}\left(L_R\right)$ and $\psi_{-G} \left(L_R'\right)$ intersect only at $t=w=0$, and $CF^i\left(\psi_{H}\left(L_R\right), \psi_{-G}\left( L_R\right)\right)_\hn$ is concentrated in degree $i$ for $i=0,1,$ or $2$.

We now consider the map $\mu^2\left(\cdot, c_{\tilde{H}}\right): CF^i\left(L_R, \psi_{-G}\left(L_R\right)\right)_\hn \to CF^i\left(\psi_{H}\left(L_R\right), \psi_{-G}\left(L_R\right)\right)_\hn$. By the exactness of the lifts of the Hamiltonians and the vanishing of $H$ at its minima, the area of any $J$-holomorphic curve contributing to this product is zero; thus the product counts only the constant solution over $t=w=0$. This map is evidently an isomorphism. 

\vspace{10 pt}

\noindent \textbf{$[\gamma_L] \neq [\gamma_{L'}], \W$ setting}. 
The proof in this setting is similar. The fact that $L$ and $L'$ are in correct position implies that when $L$ and $L'$ do not have the same total slope, $h-h'$ is of the form
\begin{equation} h - h' = \lambda w^2  +a t^2 +b w+ \phi\left(t\right) + G_0\left(t,s\right) + f\left(t,s\right),\end{equation}
where: $\lambda> 0$; $w=w_{L, L'}=s - c t$ for some $c \in \R$; $a > 0$; $G$ is compactly supported on $C_R$; $f$ satisfies \[|f\left(t\right)|< \alpha=\min\{1, \alpha_{0,-}, \alpha_{0,+}\}\] and $f\left(t\right)$ is zero for $t \in [-2,2]$, and 
\begin{align}& \left|\frac{\p f}{\p w} \right |\leq \frac{\alpha_{0,+}}{8}\\
& \left| \frac{\p f}{\p t }\right| \leq \frac{ \left(1+|c|\right)\alpha_0}{8}, \\
 & \left|\frac{\p f}{\p t }\right|  \leq |a t|; \end{align}
and $\phi: \R \to \R$ is a function with the following properties: $\phi\left(t\right)$ is identically zero for $t \in [-2,2]$ and satisfies 
\begin{align*} \frac{-at}{2}-1&>\frac{d \phi}{dt} \left(t\right) \quad \text{for all} \,\, t > 0; \\
 		        \frac{at}{2}+1&<\frac{d \phi}{dt} \left(t\right)\quad \text{for all} \,\,t <0. \end{align*}
We also assume 
\begin{equation} \label{mbounds2} 2\pi \left|\left( \alpha_+ +\epsilon_+\right)\left(a+  \alpha_ + + \epsilon_+\right) \left(1+ |c| \right)\right| < 2\lambda \end{equation}
and \begin{equation} a-4\left( \alpha_+ +\epsilon_+\right) > 0.\end{equation}

Let  $g=h-h'-G_0$. Then by construction $\p g/\p t > 0$ away from $0$, and the function has a unique critical point at a point $\left(t,w\right)=\left(0,w_0\right)$. Again let $\Theta: C_R \to C_R$ be defined by $\left(t,w\right) \mapsto \left(t, w-w_0\chi_{\epsilon/T}\left(t\right)\right)$ where $T>2$, and let $-G_1=g- g \circ \Theta $. Then $g-G_1$ has a unique critical point at $t=s=0$. Set $G=G_0+G_1$; then $CF^i\left(\psi_{H}\left(L_R\right), \psi_{G}\left(L_R\right)\right)_\hn$.

We wish to show that the same holds for $CF^i\left(\psi_{H}\left(L_R\right), \psi_{G}\left(L_R\right)\right)_\hn$. Write
\begin{align*} &g \circ \Theta+H \\
&= \lambda \left(w'\right)^2  +a t^2 +b \left(w'\right)+ \phi\left(t\right)  + f\left(t,w'\right) + \sigma_{\alpha_-,\epsilon_-}^{\alpha_+, \epsilon_+}\left(t\right)- \frac{\chi\left(t\right) \left(\alpha_+ + \epsilon_+\right)\cos\left(2 \pi \left(w'-ct\right)\right)+1}{8 \pi}\\
&\quad\quad\quad+\rho_0^1\left(-t-1\right)f_-\left(t,w'\right) + \rho_0^1\left(t+1\right)f_+\left(t,w'\right), \end{align*}
where $w' = w -\left(\chi_{T}\left(t\right) b/2 \lambda\right)$. Note that for $t \in [-T,T]$, $\left(t,w'\right)=\left(t,w+C\right)$ for some constant $C$. 

 For $t \in [-1/2, 1/2]$, \[\phi\left(t\right)=\rho_0^1\left(t+1\right)=\rho_0^1\left(t\pm1\right)=0,\] and$\chi\left(t\right)=1$. Thus in this region
\[ \frac{\p\left(g \circ \Theta+H\right)\left(t,w'\right)}{\p t} = \left(2 a + 2\epsilon_++2\alpha_+\right)t + \frac{\left(\alpha_++\epsilon_+\right) c\sin\left(2\pi\left(w-c t\right)\right)}{4};\]
 \[ \frac{\p }{\p w} \left(g \circ \Theta+H\right) = 2 \lambda w - \frac{\left(\alpha_+ +\epsilon_+\right) \sin\left(2\pi\left(w-c t\right)\right)}{4}.\]
 The same methods as in the above, now using the bound from Equation \eqref{mbounds2}, show that $g-G_0-H$ has no critical points away from $t = 0$ in this region. 
 
  For $t$ with $1/2 < t< 2$, we can bound $ \frac{\p }{\p t} \left(g-G_1+H\right)$ below:
  \begin{align*}  &\frac{\p\left(g \circ \Theta+H\right)\left(t,w\right) }{\p t} \\
  & = 2 a t + \frac{\p f\left(t,w\right)}{\p t}+  \frac{d \sigma_{\alpha_-,\epsilon_-}^{\alpha_+, \epsilon_+}\left(t\right)}{dt} + \frac{d \chi\left(t\right)}{dt}\frac{ \left(\alpha_+ + \epsilon_+\right)\cos\left(2 \pi \left(w-ct\right)\right)+1}{8 \pi}\\
  & \quad \quad \quad+ \frac{\chi\left(t\right) \left(\alpha_+ + \epsilon_+\right)\sin\left(2 \pi \left(w-ct\right)\right)}{4 } + \frac{d \rho_0^1\left(t+1\right)}{d t} f_+\left(t,w\right) +\rho_0^1\left(t+1\right)\frac{ \p f_+\left(t,w\right)}{\p t}\\
  & > 2 a t +\frac{ 2 \alpha_{+} t + \epsilon_{+}}{2} - \frac{\left(\alpha_+ + \epsilon_+\right)}{2} -\frac{ \alpha_+}{4}\\
  & \geq a + \frac{\left(\alpha_+ + \epsilon_+\right)}{2}   -\frac{ \alpha_+}{4}  > 0. \end{align*}
For $t$ with $t > 2$, we have 
\begin{align*} \frac{\p\left(g \circ \Theta+H\right)\left(t,w\right) }{\p t}& =  \frac{\p\left(g +H\right)\left(t,w\right) }{\p t}- \frac{b}{2 \lambda T} \frac{d \chi_t\left(t\right)}{dt} \frac{\p\left(g +H\right)\left(t,w'\right)}{\p w}\\
 \frac{\p\left(g \circ \Theta+H\right)\left(t,w\right) }{\p w}& = \frac{\p\left(g +H\right)\left(t,w'\right)}{\p w}.\end{align*}
Thus it suffices to show that  $  \frac{\p\left(g +H\right)\left(t,w\right) }{\p t}$ is non-vanishing in this region. We have that 
\begin{align*} \frac{\p\left(g +H\right)\left(t,w'\right) }{\p t}= 2 a t + \frac{\p f\left(t,w'\right)}{\p t}+2 \alpha' t+ \frac{\p f_+\left(t,w'\right) }{\p t} 
					& \geq a t -\frac{ \alpha_+}{8} > 0. \end{align*}

Similar methods will show that there are no critical points in the  $t < 0$ region. Thus the unique critical point of $g \circ \Theta+H$ occurs at $t=w=0$. Again this allows us to conclude that the map \[\mu^2\left(\cdot, c_{H_R}\right): \whCF^i\left(L_R, \psi_{G}\left(L_R'\right)\right)_\hn \to \whCF^i\left(\psi_{H}\left(L_R\right), \psi_{G}\left(L_R'\right)\right)_\hn\] is an isomorphism, and thus $\cdot [c_{H}]_{\hn}$ is a quasi-isomorphism.

When $L$ and $L'$ have the same total slope, we can write 
\vspace{10pt} \begin{equation} h - h' = a t^2 + b(t s)+ c s + \phi\left(t\right) + G_0\left(t,s\right) + f\left(t,s\right),\end{equation}
for some $a,b \neq 0$. In this case we again choose a compactly supported diffeomorphism $ \Theta: C_R \to C_R$  that takes the unique critical point of the function $a t^2 + b(t s)+ c s$ to $(0,0)$, and show that for carefully chosen  $\Theta$, the minima of $(h-h'-G_0) \circ \Theta$ and $(h-h'-G_0) \circ \Theta +H$ both lie above $t=s=0$. Let $G=G_0+(h-h'-G_0) \circ \Theta - h-h'$ shows that 
$\mu^2\left(\cdot, c_{\tilde{H}}\right): CF^i\left(L_R, \psi_{-G}\left(L_R\right)\right)_\hn \to CF^i\left(\psi_{H}\left(L_R\right), \psi_{-G}\left(L_R\right)\right)_\hn$. By the exactness of the lifts of the Hamiltonians and the vanishing of $H$ at its minima, the area of any $J$-holomorphic curve contributing to this product is zero; thus the product counts only the constant solution over $t=w=0$. This map is evidently an isomorphism. 

\vspace{10pt}

\noindent \textbf{$[\gamma_L]=[\gamma_{L'}]$, $ \A$ setting.} Again we  lift to $Y_R$, and perturb $L_R$ and $L_R'$ by a compactly supported Hamiltonian.

For $|t| >> 0$, we can write 
\begin{align*} h - h' &= a s + b_{\pm} t+ \epsilon_0 |t|. \end{align*}
Assume first that $a \neq 0$. Then we assume that $|\epsilon| < a$; we further assume that $2 R \epsilon < \epsilon_1$.  We can write 
\begin{align*}\left( h - h' \right)\left(t,s\right)&= a s +\phi\left(t\right)+ \sigma_{\epsilon_0}^{\epsilon_0}\left(t\right) \chi\left(tt\right) + \frac{\epsilon\left( \cos\left(2 \pi s\right) + 1\right)}{8 \pi} +G\left(t,s\right); \\
 \left(h - h' +H \right)\left(t,s\right)&= a s +\phi\left(t\right) + \sigma_{\epsilon_1}^{\epsilon_1}\left(t\right) + G\left(t,s\right). 
 \end{align*}
where $G\left(t,s\right)$ is compactly supported on $C_R$. Let $g=h-h'-G$. Then $\frac{\p g}{\p s}$ is nonvanishing everywhere, as is $\frac{\p g+H}{\p s}$. We conclude that \[\whCF^*\left(L_R, \psi_G\left(L_R'\right)\right)_{\hn} = \whCF^*\left(\psi_{H}\left(L_R\right), \psi_G\left(L_R'\right)\right)_{\hn} = 0,\] so the multiplication map $\whCF^*\left(L_R, \psi_G\left(L_R'\right)\right)_{\hn} \to \whCF^*\left(\psi_{H}\left(L_R\right), \psi_G\left(L_R'\right)\right)_{\hn}$ is trivially an isomorphism.

Now assume $a=0$. Then, similarly as in the first case, we can write
\begin{align*}\left( h - h'+H \right)\left(t,s\right)&=\phi\left(t\right)+ \sigma_{\epsilon_0}^{\epsilon_0}\left(t\right) +c \chi_T\left(t\right) \frac{\epsilon\left( \cos\left(2 \pi s/R\right) + 1\right)}{8 \pi}  +\chi\left(t\right) \frac{\epsilon\left( \cos\left(2 \pi s\right) + 1\right)}{8 \pi} + G\left(t,s\right); 
\end{align*}
and $g = h-h'-G$ 
%\[ \left| \frac{\p}{\p t} \left(\phi + \sigma_{\epsilon}^{\epsilon}\right)\right| > \frac{d}{dt} 2 \epsilon \chi\left(t\right). \]
where $\phi$, $T$ and $c$ are such that $\frac{\p g}{\p t}$ is either non-vanishing (in which case the Floer complexes are both zero and the map is trivially an isomorphism) or vanishes only at $0$, and $\frac{\p g}{\p s}$ vanishes only at $s=0,R/2$, and $\frac{\p g+H}{\p t}$ has the same behavior, and $G$ is a compactly supported function on $C_R$. Then the Floer complexes $\whCF^*\left(L_R, \psi_{G}\left(L_R'\right)\right)_{\hn}$ and $\whCF^*\left(\psi_{H}\left(L_R\right), \psi_{G'}\left(L_R'\right)\right)_{\hn}$ are concentrated in degrees $i$ and $i+1$ for $i=0$ or $i=1$, and are each rank one in each non-zero degree. We can show that the differentials on these complexes vanish using the same methods as in the proof of Lemma \ref{formofh} (b). Thus as graded $\Lambda$-modules,
\[  \wh{HF}^*\left(L_R, \psi_{G}\left(L_R'\right)\right)_{\hn} =   \wh{C F}^*\left(L_R, \psi_{G}\left(L_R'\right)\right)_{\hn} \]
 \[ \wh{H F}^*\left(\psi_{H}\left(L_R\right), \psi_{G}\left(L_R'\right)\right)_{\hn} = \wh{C F}^*\left(\psi_{H}\left(L_R\right), \psi_{G}\left(L_R'\right)\right)_{\hn}.  \]
Thus the multiplication by the continuation map furnishes an isomorphism \[ \wh{H F}^i\left(L_R, \psi_{G}\left(L_R'\right)\right)_{\hn} \cong \wh{HF}^i\left(\psi_{H}\left(L_R\right), \psi_{G}\left(L_R'\right)\right)_{\hn}.\] We conclude that $\cdot [c_H]_\hn^i$ is an isomorphism for each $\hn \in \Z^3$. We can similarly deform $L_R$ and $L_R'$ via a compactly supported Hamiltonian to ensure that the generators in degree $i+1$ lie over $t=s=0$ to see that $\cdot [c_H]_\hn^{i+1}$ is an isomorphism.

\noindent \textbf{Other settings.} The proof in the $[\gamma_L]=[\gamma_{L'}]$ case in the $\W$ setting is similar; again at the ends of $C_R$ the function $h-h'$ will be (a small perturbation of) a quadratic function in $t$ and a linear function in $s$, and we can deform $h$ and $h'$ by compactly supported functions so that the generators of the Floer complexes lie above $t=s=0$ and such that the Lagrangians split as product Lagrangians in for small $t$. 

Finally, in all cases, the proof in the partially wrapped $\left(m,k\right)$ setting can be obtained by combining the methods of the proofs above, treating each end of $Y_R$ separately. 

\vspace{10pt}

\noindent\textbf{Extension to completions.} We now show that these isomorphisms extend to the completions $\whCF^* \left(L, L'\right)$ and $\whCF_{\underline{r}}^* \left(L, L'\right)$. Note that for each $y \in CF^*\left(L, L'\right)$, the output $\mu^2\left(y, c_{H_Q}\right)$ counts only finitely many $u: \D \to Y$; thus the statement is evidently true in the finite $\A$ setting. We now show controls on the outputs for $y \in \left(L \cap L'\right)_T$ for some $T > 0$. 

We first claim that away from a compact set, $\mu^1\left(y\right) = 0$ for all $y \in CF^*\left(L, L'\right)$. This follows from the fact that $L$ and $L'$ are in correct position, so for all but finitely many $\hn \in \Z^3/\Gamma$, $|\tilde{L} \cap \tilde{L}'_\hn| =0, 1$ or $2$, and as shown above, $\chi\left(CF^*\left(L,L'\right)_{\hn}\right)$ is equal to $1$ or $0$ for all $\hn$. Thus away from a finite set $A$ of $\hn \in \Z^3/\Gamma$, the map
\[ \mu^2\left(\cdot, c_{H_Q}\right)_{\hn}: CF^*\left(L, L'\right)_{\hn}\to CF^*\left(\psi_{H}\left(L\right), L'\right)_{\hn} \]
computes the induced map on homology. 

Let $\hn \notin A$, let $y \in CF^*\left(L, L'\right)_{\hn}$ and $y_{H_Q}$ be the generator of the same degree in $CF^*\left(\psi_{H_Q}\left(L\right), L'\right)_{\hn}$; then the map $y \mapsto y_{H_Q}$ defines a bijection
\[ \left\{ \bigcup_{\hn \notin A} \left(\psi_{H}\left(L\right) \cap L' \right)_{\hn} \right\} \leftrightarrow \left\{\bigcup_{\hn \notin A} \left(L \cap L'\right)_{\hn}\right\}. \]
Since $\mu^2\left(\cdot, c_{H_Q}\right)_{\hn}$ is an isomorphism for $\hn \notin A$ and respects the degrees of the generators, so $ \mu^2\left(y, c_{H_Q}\right)_{\hn} =  c y_{H_Q}$
for some $c \in \Lambda^*$. 
%Let $i: \left(L \cap L'\right) \to \Z$ be a valid indexing of  $\left(L \cap L'\right)$. Then using the bijection $I$, we can define an indexing $j: \left(\psi_{H}\left(L\right) \cap L'\right) \to \Z$ so that $|j\left(\left(y_i\right)_{H_Q}\right)-i| < |X_0 \cup X_1|$. 
%Let $\Sigma= \{\tilde{y}_i\}_{i \in \Z}$ be the set of lifts with $s\left(\tilde{y}_i\right) \in [0,1\right)$. Choose a set of lifts $\{\tilde{y}_j\right)\}_{j \in \Z}$ of $\psi_H\left(L'\right) \cap \tilde{L}$ so that for $y_i \in \left(L \cap L'\right) \setminus A$, $\mu^2\left(y_i, c_{H_Q}\right)$ counts a $\J$-holomorphic disc which lifts to a map $u: S \to \R^4$ with  $u\left(p_1\right)=\tilde{y}_i$, $u\left(p_2\right) = \tilde{\left(y_i\right)_{H_Q}}$. Also demand that for $y_j \in A_{H}$,  $\tilde{y}_j$ be action minimizing. By the planarity of the Lagrangians, there exists $D> 0$ such that $d\left(\tilde{y}_i, \tilde{y}_i_{H_Q}\right)<D$. Then for all lifts $\tilde{y}_j$, $|s\left(\tilde{y}_j\right)|<D+1$. Thus we can use this choice of lifts to calculate the action $S: \psi_{H}\left(L\right) \cap L' \to \R$. WILL NEED TO FIXs
For any $u: \D \to Y$ contributing to $\mu^2\left(y,c_{H_Q}\right)$, 
\[ \int_{\D} u^* \omega = S_{\R^4}\left(y\right)- S_{\R^4}\left(y_{H_Q}\right) \]
by Stokes' theorem. Thus \[ \mu^2\left(y, c_{H_Q}\right)_{\hn} =  n T^{ S_{\R^4}\left(y\right)- S_{\R^4}\left(y_{H_Q}\right)} y_{H_Q} \]
for some $n \in \N$. But since the quasi-isomorphisms described above exist over $\Lambda_{\Z}$ (see Remark \ref{overZ}), we must have that $n \in GL_1\left(\Z\right) = \{1,-1\}$. Since $S_{\R^4}\left(y\right)- S_{\R^4}\left(y_{H_Q}\right) > 0$ for all $y \in L \cap L' \setminus X_0$, the map $\mu^2$ extends to a map $\whCF^*\left(L,L'\right)\to \whCF^*\left(\psi_H\left(L\right),L'\right)$.

Now let $y = \sum_{i} c_i \mathbf{y}_i \in \whCF^*\left(L,L'; \Lambda_r\right)$ and let $\mu^2\left(y , c_{H_Q}\right)$ be the formal sum $\sum_{i} \mu^2\left(c_i \mathbf{y_i}, c_{H_Q}\right)$. We now show that this sum defines an element of $\whCF_{\underline{r}}^*\left(\psi_{H}\left(L\right),L'\right)$ if and only if $y \in \whCF_{\underline{r}}^*\left(L,L'\right)$. Note that 
\[ \mu^2\left(y,c_{H_Q}\right) =\sum_{\hn \in A} c_i \mu^2\left(\mathbf{y}_i, c_{H_Q}\right)_{\hn} +\sum_{\hn \notin A} c_i T^{S_{\R^4}\left(y_i\right)-S_{\R^4}\left(\left(y_i\right)_{H_Q}\right)-S\left(y_i\right)-S\left(\left(y_i\right)_{H_Q}\right)} \mathbf{\left(y_i\right)_{H_Q}}.\]
The first sum is finite; write the second sum as $\sum_{i} d_{i} \mathbf{\left(y_i\right)_{H_Q}}$.

By Lemma \ref{boundondiff},  there exists $D> 1$ such that $\left|S_{\R^4}\left(y_i\right)-S_{\R^4}\left(\left(y_i\right)_{H_Q}\right)-S\left(y_i\right)+S\left(\left(y_i\right)_{H_Q} \right)\right|< D$ for all $i$; by the linearity of the equations governing the intersection of the corresponding planes, there exists $C_1, C_2 > 0$ such that for all $y_i$ with $|t\left(y_i\right)|$ sufficiently large, 
\[C_1\left( \left|t\left(\left(y_i\right)_{H_Q}\right)\right| -1 \right)< |t\left(y_i\right)| < C_2\left( \left|t\left(\left(y_i\right)_{H_Q}\right)\right|+1\right). \]
Let $R=e^{-2\pi r}$. Then
 \[  \lim_{|t\left(y_i\right)| \to \infty}\left|R^{D-C_1}  \ev_{\underline{r}}\left( c_i\right)^{\left(C_1|t\left(y_i\right)|\right)\inv}\right|\leq  \lim_{|t\left(y_i\right)| \to \infty} \left|\ev_{\underline{r}}\left(d_i\right)^{|t((y_i)_{H_Q})|\inv}\right| \leq \ \lim_{|t\left(y_i\right)| \to \infty}\left| R^{-D -C_2} \ev_{\underline{r}}\left(c_i\right)^{\left(C_2|t\left(y_i\right)|\right)\inv} \right|  \]
so the middle term vanishes if and only if $\lim_{|t\left(y_i\right)| \to \infty} \ev_{\underline{r}}\left(c_i\right)^{|t(y_i)|\inv}$ vanishes; thus $\mu^2\left(y, c_{H_Q}\right)$ defines an element of $\whCF_{\underline{r}}^*\left(\psi_{H}\left(L\right),L'\right)$ if and only if $y$ converges in $\whCF_{\underline{r}}^*\left(L,L'\right)$. We conclude that the map $\mu^2\left(\cdot, c_{H_Q}\right)$ is a quasi-isomorphism.
\end{proof}

\subsection{Convergence of products over $\C$.}

We now show that the $\mu^2$ products on cohomology converge over $\C$. We first prove a collection of propositions which have proofs that use similar methods to those in the proof in Lemma \ref{qisomcont}. From the proof of the lemma, we can conclude the following: 
\begin{prop}\label{compwelldef} \begin{enumerate}[(a)]
\item For $x = \sum_{i} a_i x_i \in  \whCF\left(L_0, L_1; \Lambda_r\right)$,  $x$ defines an element of the completion $\whCF_{\underline{r}}^*\left(L_0, L_1\right)$ if $[x-x]'=0$ for some $x' \in \whCF_{\underline{r}}^*\left(L_0, L_1\right)$. 
\item Let $x = \sum_{i} a_i x_i \in  \whCF^*\left(L_0, L_1; \Lambda_r\right)$, and assume that there exists a perturbing type-$\Wr$ Hamiltonian $H_0$ such that $\mu^2\left(x, c_{H_0}\right) \in \whCF_{\underline{r}}^*\left(\psi_{H_0}(L_0), L_1\right)$. Then $x \in \whCF_{\underline{r}}^*\left(L_0, L_1; \Lambda_r\right)$. \end{enumerate} \end{prop}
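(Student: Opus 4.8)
\textbf{Proof proposal for Proposition \ref{compwelldef}.}

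The plan is to extract both statements directly from the machinery already assembled in the proof of Lemma \ref{qisomcont}, rather than re-proving anything from scratch. For part (a), the key observation is that the relation ``$[x - x'] = 0$ in cohomology'' means $x - x' = \mu^1(z)$ for some $z \in \whCF^*(L_0, L_1; \Lambda_r)$, together with the fact that $x'$ already lies in $\whCF_{\underline{r}}^*(L_0, L_1)$ by hypothesis. So it suffices to show that the image of $\mu^1$ preserves the $\Lambda_r$-submodule $\whCF_{\underline{r}}^*$, i.e. that $\mu^1(z) \in \whCF_{\underline{r}}^*$ whenever $z \in \whCF_{\underline{r}}^*$, and then conclude $x = x' + \mu^1(z)$ is a sum of two elements of the completion. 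The decomposition of $CF^*(L_0,L_1)$ into the $\Z^3/\Gamma(L_0,L_1)$-graded pieces of Definition \ref{gradingrem} is what makes this work: away from a finite set $A$ of classes $\overline{\hn}$, each graded piece $CF^*(L_0,L_1)_{\overline{\hn}}$ has Euler characteristic $0$ or $1$ and the differential $\mu^1$ restricted to that piece was shown to vanish in the proof of Lemma \ref{qisomcont}. On the finitely many remaining classes, $\mu^1$ changes the action-corrected valuations by a bounded amount (Lemma \ref{boundondiff} and Proposition \ref{quadbounds}), and since $\mu^1$ of a fixed generator counts only finitely many strips, the convergence condition defining $\whCF_{\underline{r}}^*$ — phrased in terms of $\lim_{|t(y_i)|\to\infty}|\ev_{\underline{r}}(c_i)|^{1/|t(y_i)|} = 0$ via Proposition \ref{convergence} — is preserved. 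The only subtlety is keeping track of the action shift $S(y)$ versus the geometric area, but the inequalities relating $t$-coordinates of intersection points across $\mu^1$ (again from the linearity of the equations governing the corresponding planes, as in the final paragraph of the proof of Lemma \ref{qisomcont}) handle exactly this.

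For part (b), I would argue by essentially reversing the ``extension to completions'' argument at the end of Lemma \ref{qisomcont}. There it is shown that $\mu^2(\cdot, c_{H_0})$, on all but a finite set $A$ of graded pieces, acts on a generator $\mathbf{y}_i$ by $\mu^2(\mathbf{y}_i, c_{H_0}) = n_i T^{\lambda_i}\, \mathbf{(y_i)_{H_0}}$ with $n_i \in \{\pm 1\}$ (using that the quasi-isomorphism exists over $\Lambda_\Z$, Remark \ref{overZ}) and with the exponent $\lambda_i = S_{\R^4}(y_i) - S_{\R^4}((y_i)_{H_0}) - S(y_i) + S((y_i)_{H_0})$ bounded in absolute value by some $D$ (Lemma \ref{boundondiff}). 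Moreover the $t$-coordinates satisfy $C_1(|t((y_i)_{H_0})| - 1) < |t(y_i)| < C_2(|t((y_i)_{H_0})| + 1)$. Now suppose $\mu^2(x, c_{H_0}) = \sum_i d_i \,\mathbf{(y_i)_{H_0}} \in \whCF_{\underline{r}}^*(\psi_{H_0}(L_0), L_1)$, so that $\lim_{|t| \to \infty} |\ev_{\underline{r}}(d_i)|^{1/|t((y_i)_{H_0})|} = 0$. Outside the finite contribution from $A$ we have $d_i = n_i T^{\lambda_i} c_i$ with $|\lambda_i| < D$; bounding $R^{\pm D}$ (where $R = e^{-2\pi r}$) and using the two-sided comparison of $|t(y_i)|$ with $|t((y_i)_{H_0})|$, the chain of inequalities displayed in Lemma \ref{qisomcont} shows $\lim |\ev_{\underline{r}}(c_i)|^{1/|t(y_i)|} = 0$ as well. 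Hence $x$ satisfies the defining condition of $\whCF_{\underline{r}}^*(L_0, L_1)$. (The finitely many $\overline{\hn} \in A$ contribute finitely many generators and are therefore irrelevant to the limiting condition.)

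The main obstacle I anticipate is purely bookkeeping: making sure that in part (b) the hypothesis is genuinely used in the direction claimed — that is, that $\mu^2(\cdot, c_{H_0})$ is \emph{injective} on the relevant graded pieces so that convergence of the output forces convergence of the input. This is exactly what the isomorphism statements of Lemma \ref{qisomcont}, graded piece by graded piece, provide, but one must be careful that the action-corrected normalization $\mathbf{y}_i = T^{-S(y_i)} y_i$ is applied consistently on both sides (the generator $\mathbf{(y_i)_{H_0}}$ uses $S((y_i)_{H_0})$, not $S(y_i)$), which is precisely why the exponent $\lambda_i$ carries the four-term combination rather than just $S_{\R^4}(y_i) - S_{\R^4}((y_i)_{H_0})$. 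Once that is tracked correctly, both parts are immediate corollaries of the lemma, and no new geometric input (no new $J$-holomorphic curve analysis) is required — which is presumably why the paper states them as a ``collection of propositions'' following from the proof rather than as independent results.
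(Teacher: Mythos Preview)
Your approach is correct and matches the paper exactly: the paper gives no separate proof, writing only ``From the proof of the lemma, we can conclude the following,'' and you have correctly identified which pieces of that proof are needed---for (a) the vanishing of $\mu^1$ outside a finite set $A$ of graded pieces, and for (b) the ``if and only if'' at the end of the extension-to-completions paragraph of Lemma \ref{qisomcont}.

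One small slip in your write-up of (a): you state the reduction as ``$\mu^1(z) \in \whCF_{\underline{r}}^*$ whenever $z \in \whCF_{\underline{r}}^*$,'' but you have no hypothesis that $z$ itself lies in $\whCF_{\underline{r}}^*$---only that $x - x' = \mu^1(z)$ for \emph{some} $z$ in the larger $\Lambda_r$-module. Fortunately your actual argument proves the stronger statement you need: since $\mu^1$ respects the $\Z^3/\Gamma$-grading and vanishes on every piece outside the finite set $A$, the image $\mu^1(z)$ is supported on finitely many generators regardless of where $z$ lives, hence is trivially in $\whCF_{\underline{r}}^*$. Just adjust the wording of the reduction.
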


\begin{lemma}\label{onlyone}Let $\left(L_0,L_1,L_2\right) \in \G_P\left(I\right)$ be Lagrangians in correct position in $\F\left(I\right)$ such that the differential $\mu^1$ vanishes on $CF^*\left(L_{0},L_{1}\right)$, $CF^*\left(L_{1},L_{2}\right)$,  and $CF^*\left(L_{0},L_{2}\right)$. Let $\hn_1, \hn_2 \in \Z^3$. Let $x_0 \in \left(L_0 \cap L_1\right)_{\hn_1}$ and $x_1 \in \left(L_1 \cap L_2\right)_{\hn_2}$, and let $x_0$ lift to a point $\tilde{x}_0$. Then if  $\left(L_0 \cap L_2\right)_{\hn_1+\hn_2}$ is non-empty, and  if the Floer complex $CF^*\left(L_0, L_2\right)_{\hn_1+\hn_2}$ is generated in degree ${\deg x_0 + \deg x_1}$ by a point $x_2$, the signed number of maps counted by $\mu^2\left(x_1, x_0\right)$ that lift to maps $u: S \to \R^4$ with $u\left(p_0\right)=\tilde{x}_0$ and $u\left(\p_0 S\right)\subset \tilde{L}_0$, $u\left(\p_1 S\right) \subset \tilde{L}_1-\hn_1$, $u\left(\p_2 S\right) \subset \left(\tilde{L}_2-{\hn_1 - \hn_2}\right)$ is $\pm 1$.

 \end{lemma}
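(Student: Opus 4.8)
The plan is to count, in the universal cover $\R^4$, exactly the holomorphic triangles that realize the chosen lift $\tilde x_0$, and to show that after passing to a good representative of each Floer class the only such triangle is the constant one. First I would use the $\Z^3/\Gamma$-grading of Definition \ref{gradingrem}: the product $\mu^2(x_1,x_0)$ respects this grading, so the component of $\mu^2(x_1,x_0)$ landing in $CF^*(L_0,L_2)_{\hn_1+\hn_2}$ is computed by the triangles whose three boundary arcs lift to $\tilde L_0$, $\tilde L_1-\hn_1$, and $\tilde L_2-\hn_1-\hn_2$, normalizing so that the $p_0$-corner sits at $\tilde x_0$. Since $\mu^1$ vanishes on all three complexes and each $(L_i\cap L_j)_{\hn}$-graded piece has Euler characteristic $0$ or $1$, the relevant graded pieces are rank at most one; in particular $x_2$ is a cycle and $[x_2]$ generates $HF^*(L_0,L_2)_{\hn_1+\hn_2}$ in its degree. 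So the count I want is a single integer $n$, the coefficient of $x_2$ in $\mu^2(x_1,x_0)|_{\hn_1+\hn_2}$.

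Next I would pin down $n$ up to sign by invoking the $A_\infty$-structure over $\Lambda_\Z$ (Remark \ref{overZ}): the category is defined over the Novikov ring with $\Z$ coefficients, and multiplication by continuation elements gives quasi-isomorphisms (Lemma \ref{qisomcont}) that are $\Z$-linear; chasing $x_2$ (or its action-corrected form $\mathbf{x}_2$) through such a quasi-isomorphism, together with the fact that $[x_0]$ and $[x_1]$ generate rank-one modules, forces the structure constant to lie in $GL_1(\Z)=\{\pm1\}$, exactly as in the "Extension to completions" argument at the end of the proof of Lemma \ref{qisomcont}. Concretely: choose compactly supported Hamiltonians (as in that proof) so that after perturbing $L_1$ and $L_2$ the generators $x_0,x_1,x_2$ all lie over $t=s=0$ at the minimum/critical locus, and so that near those points the Lagrangians split as products in the $(t,\phi_t)$ and $(s,\phi_s)$ factors; then by Stokes' theorem and the exactness of the lifts the area of any triangle contributing to $\mu^2(x_1,x_0)$ with these endpoints equals $(h_0-h_2)(\tilde x_2)=0$, so every such triangle is constant, and there is exactly one constant triangle through $\tilde x_0$. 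This yields $n=\pm1$ after we know it must be a unit; alternatively, $n=\pm1$ already follows from associativity $[x_2]\cdot$ being invertible against the continuation elements.

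The remaining point is that $n$ is invariant under the compactly supported deformations: a compactly supported Hamiltonian isotopy induces an isomorphism of the graded Floer complexes (it preserves the $\Z^3/\Gamma$-grading and is the identity near infinity, by Lemma \ref{comph}(a)), and multiplication by the corresponding continuation elements intertwines the $\mu^2$ products up to homotopy, so the structure constant in the $\hn_1+\hn_2$-graded piece is unchanged. Since the hypothesis that $\mu^1$ vanishes on all three complexes is exactly what we arranged in each case of Lemma \ref{qisomcont}, I can quote those normalizations directly rather than redo them. Finally I should check the statement is about the \emph{signed} count restricted to lifts with $u(p_0)=\tilde x_0$, which is precisely what the $\Z^3/\Gamma$-graded component computes once the lift of $x_0$ is fixed — no sum over deck translations enters.

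The main obstacle is the deformation-invariance bookkeeping: one must be sure that the compactly supported perturbation used to move $x_0,x_1,x_2$ over $t=s=0$ can be chosen \emph{simultaneously} compatibly with all three pairs $(L_0,L_1)$, $(L_1,L_2)$, $(L_0,L_2)$ being in correct position and with $\mu^1$ staying zero, and that the induced continuation isomorphisms genuinely intertwine $\mu^2$ (not merely on cohomology in a way that could rescale by a non-unit). This is handled exactly by the argument structure of Lemma \ref{qisomcont}: work in the cyclic cover $Y_R$ with $R=r_0r_1r_2$, split off the quadratic-at-infinity part, absorb the compact part into a compactly supported $G$, and use that the $A_\infty$ structure over $\Lambda_\Z$ forces all structure constants to be $\pm1$.
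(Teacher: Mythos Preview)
Your proposal is correct and follows essentially the same route as the paper: deform via compactly supported Hamiltonians $G_1,G_2$ on $Y_R$ so that the three generators all sit over $t=s=0$, observe that Stokes then forces any contributing triangle to have zero energy and hence be constant, and finally invoke the $\Lambda_\Z$ structure (Remark~\ref{overZ}) together with the fact that the continuation maps $\Phi_{G_i}$ are isomorphisms over $\Lambda_\Z$ to conclude the count is $\pm1$. Your identification of the ``main obstacle'' as the simultaneous compatibility of the perturbations with all three pairs is exactly what the paper handles by the existence of such $G_1,G_2$ established in Lemma~\ref{qisomcont}.
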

  \begin{proof} We have seen that there exists $R \in \N$ and Hamiltonians $G_1,G_2 : Y_R \to \R$, which are compactly supported on $Y_R$ such that $\tilde{L}_0, \wt{\psi_{G_1}\left(L_1\right)},  \wt{\psi_{G_2}\left(L_2\right)}$ each intersect at over $t=s=0$, and 
 \begin{align*} \Phi_{-G_1}\left(x_0\right) &= T^{A_0} y_0 \in CF^*\left(L_0,\psi_{G_1}\left(L_1\right)\right);\\ 
 \Phi_{G_1-G_2}\left(x_1\right)&= T^{A_1} y_1 \in CF^*\left(\psi_{G_1}\left(L_1\right),\psi_{G_2}\left(L_2\right)\right);\\
  \Phi_{-G_2}\left(x_2\right)&= T^{A_2} y_2 \in CF^*\left(L_0,\psi_{G_2}\left(L_2\right)\right);
\end{align*}
and
 \[ \Phi_{-G_1}\left(\tilde{x}_0\right) = T^{A_0} \tilde{y_0};\]
 where $y_i$ denotes the corresponding generator of the Floer complex over $t=s=0$. Then any disc counted by $\mu^2\left(y_1, y_0\right)$ with the prescribed boundary conditions must have energy $0$ and hence be constant, so
  \[\mu^2\left(y_0, y_1\right)_{\overline{\hn_1+\hn_2}} =y_2.\] 
  Thus we must have that
  \[ \mu^2\left(x_0,x_1\right)_{\overline{\hn_1+\hn_2}} = n T^{A}x_2 = T^{-A_0- A_1} \psi_{G_2}\left(y_2\right)\] for some $n \in \Z$, where $A = - S_{\R^4} \left(x_2\right)+S_{\R^4}\left(x_0\right)+S_{\R^4}\left(x_1\right)$, and $n$ is the signed number of discs counted by $\mu^2$. But then since $\Psi_{G_2}$ is an isomorphism over $\Lambda_{\Z}$, we conclude that $n=\pm1$. 
 \end{proof}
 
 \begin{cor} If $L_0, L_1, L_2 \in \G_P$ are such that the set $\left(L_0,L_1, L_2\right)$ is in correct position,  $L_0$ and $L_1$ are essentially non-transverse or $L_1$ and $L_2$ are essentially non-transverse, and the differentials of the complexes $CF^*\left(L_0, L_1\right), CF^*\left(L_1, L_2\right)$, and $CF^*\left(L_0, L_2\right)$ vanish, then 
 for each $x \in L_0 \cap L_1, z \in L_1 \cap L_2$, $\mu^2\left(z,x\right)$ counts at most one $J$-holomorphic map. Moreover, there exists a uniform $C > 0$ such that for all such $x, z$ which bound a $J$-holomorphic triangle with end point $y \in L_0 \cap L_2$,
\[ \mu^2\left(\mathbf{z},\mathbf{x}\right)=T^{A} \mathbf{y}\]
where $|A|\leq C$. There also exists $D > 0$ such that if $\mu^2\left(\mathbf{z},\mathbf{x}\right)=T^A \mathbf{y}$, $|t\left(y\right)|<D (|t(z)+t(x)|+1)$ .
  \end{cor}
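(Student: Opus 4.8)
The plan is to decompose $\mu^2(z,x)$ along the $\Z^3/\Gamma(L_0,L_2)$-grading of Definition~\ref{gradingrem} and show that exactly one graded piece contributes, that it contributes a single triangle, and that the resulting power of $T$ is uniformly controlled. For the first point I would exploit that the hypotheses make every graded piece of all three Floer complexes tiny: when a pair of Lagrangians is essentially non-transverse the form $Q_{L,L',\pm}$ is degenerate and positive semidefinite with a one-dimensional circular null direction, so after a Morse perturbation in that direction each graded piece $CF^*(-,-)_{\overline{\hn}}$ has rank at most one in each of (at most) two consecutive degrees, and the vanishing of $\mu^1$ keeps both generators. Thus once $\deg x$ and $\deg z$ are fixed the degree $\deg x+\deg z$ part of each graded piece of $CF^*(L_0,L_2)$ is generated by at most one element, so Lemma~\ref{onlyone} applies: with a fixed lift $\tilde x$ and prescribed translates of $\tilde L_1,\tilde L_2$ the signed count of lifted triangles is $\pm1$ when the target is nonzero and $0$ otherwise. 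I would then upgrade ``signed count $\pm1$'' to ``at most one triangle'' by rerunning the compactly supported reduction from the proof of Lemma~\ref{onlyone} (perturbing so that the Lagrangians split near $t=s=0$, forcing any contributing triangle to be constant in the non-$\C$ factor, whence uniqueness of the remaining planar triangle). Finally, for fixed points $x,z\in Y$, correct position (which bounds $|\pi(\tilde L\cap\tilde L')|$ by $2$ off a finite set) together with the fact that the corresponding planes of $\tilde L_0$ and the translate of $\tilde L_1$ through $\tilde x$ meet in a line of constant $t$ should pin down the single translate, hence the single $\overline{\hn}$, that can occur, so $\mu^2(z,x)$ is supported on one generator $y\in L_0\cap L_2$.

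Next I would identify the exponent. Lifting any contributing triangle $u$ to $\R^4$ and applying Stokes' theorem to $\omega=d\theta$ with $\theta|_{\tilde L_i}=dh_i$, exactly as in the continuation-element energy estimate earlier in this section, gives $\int_S u^*\omega=S_{\R^4}(\tilde x)+S_{\R^4}(\tilde z)-S_{\R^4}(\tilde y)$ for compatible lifts, a quantity which is moreover nonnegative since $\omega$ tames the almost complex structure. Rewriting in the action-corrected basis via $\mathbf{w}=T^{-S(w)}w$ then yields $\mu^2(\mathbf z,\mathbf x)=\pm T^{A}\mathbf y$ with
\[ A=\bigl(S_{\R^4}(\tilde x)-S(x)\bigr)+\bigl(S_{\R^4}(\tilde z)-S(z)\bigr)-\bigl(S_{\R^4}(\tilde y)-S(y)\bigr), \]
and since the structure maps are defined over $\Lambda_{\Z}$ (Remark~\ref{overZ}) the coefficient is a unit in $\Z$, i.e. $\pm1$, giving the stated form $T^A\mathbf y$.

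The uniform bound $|A|\le C$ is the crux and I would attack it through Proposition~\ref{quadbounds}: part (b) rewrites $S_{\R^4}(\tilde w)-S(w)=Q^\beta_{L,L',\pm}(\tilde w)+O(1)$ with uniform error, so assuming (without loss of generality) that $L_0,L_1$ are essentially non-transverse, their corresponding planes are parallel up to the $\alpha t^2$-tilt, $Q^\beta_{L_0,L_1}\equiv0$, and $|S_{\R^4}(\tilde x)-S(x)|$ is bounded by Proposition~\ref{quadbounds}(c). Since $[\gamma_{L_0}]=[\gamma_{L_1}]$ the forms $Q^\beta_{L_0,L_2}$ and $Q^\beta_{L_1,L_2}$ agree, so the remaining terms of $A$ reduce to $Q^\beta_{L_1,L_2}(\tilde z)-Q^\beta_{L_1,L_2}(\tilde y)+O(1)$: if $L_1,L_2$ is also essentially non-transverse this is $O(1)$ immediately, and otherwise I would argue that the corners $z$ and $y$ lie within bounded distance of the lines $P_2\cap P_1$ and $P_2\cap P_0$ respectively (Proposition~\ref{quadbounds}(a)), whose $w_{L_1,L_2}$-coordinates differ by $O(1)$ by parallelism of $P_0$ and $P_1$, so the two unbounded quantities cancel up to a constant. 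For the $t$-bound I would feed $\int_S u^*\omega=S_{\R^4}(\tilde x)+S_{\R^4}(\tilde z)-S_{\R^4}(\tilde y)$ into Lemma~\ref{RIE}: the $t$-image of $u$ is a connected interval containing $t(x),t(z),t(y)$, so $\ell_{T_B}(u)\ge|t(y)|-|t(x)|-|t(z)|-2T_B$ once $|t(y)|$ is large, while $\ell_{T_B}(u)\le R\int_S u^*\omega$; combining with Proposition~\ref{quadbounds}(b) (which makes $S_{\R^4}$ a uniformly bounded perturbation of a quadratic form in $t$) and with the bound on $A$, and rearranging, yields $|t(y)|<D(|t(z)+t(x)|+1)$. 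I expect the main obstacle to be precisely the cancellation of the individually unbounded terms $Q^\beta_{L_1,L_2}(\tilde z)$ and $Q^\beta_{L_0,L_2}(\tilde y)$: this is really a statement about how the homology classes $[\gamma_{L_i}]$ constrain the three corners of a holomorphic triangle, and it may need a separate lemma in the spirit of Lemma~\ref{boundondiff}.
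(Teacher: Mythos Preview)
Your outline is broadly right, but there is a real gap in the first step, and the last two steps are more indirect than necessary.

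\textbf{Concentration on a single output.} You assert that correct position together with the constant-$t$ intersection line of the corresponding planes of $\tilde L_0$ and $\tilde L_1-\hn_1$ ``should pin down the single translate'' of $\tilde L_2$. It does not: once $\tilde x$ is fixed, the admissible lifts $\tilde z$ of $z$ lie in $(\tilde L_1-\hn_1)\cap(\tilde L_2-\hn_1-\hn_2)$ for $\hn_2$ ranging over a full coset of $\Gamma(L_1,L_2)$, and these are genuinely different translates of $\tilde L_2$. The reason all the resulting outputs land in the \emph{same} graded piece of $CF^*(L_0,L_2)$ is the observation you do not make: essential non-transversality of $L_0,L_1$ forces $[\gamma_{L_0}]=[\gamma_{L_1}]$, and the stabilizer subgroups $\Gamma(L,L')$ are determined by these homology classes, so $\Gamma(L_1,L_2)=\Gamma(L_0,L_2)$. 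Hence the classes $\overline{\hn_1+\hn_2}\in\Z^3/\Gamma(L_0,L_2)$ coincide for all admissible $\hn_2$, and Lemma~\ref{onlyone} then gives the signed count $\pm1$ in that single graded piece. (Your attempted upgrade from signed count $\pm1$ to ``literally one triangle'' is a separate issue; the paper does not carry this out, and only the signed statement is used downstream.)

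\textbf{The bound $|A|\le C$.} You correctly isolate this as the crux and anticipate needing ``a separate lemma in the spirit of Lemma~\ref{boundondiff}.'' No new lemma is needed: Lemma~\ref{boundondiff} applies directly. Because $L_0,L_1$ are essentially non-transverse, there is a Hamiltonian $H$ which is $C^2$-close to a quadratic in $t$ with $\psi_H(\tilde L_0)=\tilde L_1-\hn_1$; then $\tilde y\in\tilde L_0\cap(\tilde L_2-\hn)$ and $\tilde z\in\psi_H(\tilde L_0)\cap(\tilde L_2-\hn)$ with the same $\hn=\hn_1+\hn_2$, and Lemma~\ref{boundondiff} gives $|(S_{\R^4}(\tilde z)-S(z))-(S_{\R^4}(\tilde y)-S(y))|<C_2$ uniformly. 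Proposition~\ref{quadbounds}(c) bounds the $x$-term since $Q^\beta_{L_0,L_1}\equiv 0$. Your proposed cancellation of $Q^\beta_{L_1,L_2}(\tilde z)$ against $Q^\beta_{L_0,L_2}(\tilde y)$ is essentially a re-derivation of the proof of that lemma.

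\textbf{The $t$-bound.} Your route through Lemma~\ref{RIE} and the energy formula is unnecessarily indirect (and as written risks a circular dependence on $t(y)$ through $S_{\R^4}(\tilde y)$). The paper's argument is pure linear algebra: by Proposition~\ref{quadbounds}(a) each corner is within a fixed distance of the intersection of the corresponding planes, and solving the three $2\times2$ systems shows that the $t$-coordinate of $P_0\cap(P_2-\hn)$ is a linear expression in those of $P_0\cap(P_1-\hn_1)$ and $(P_1-\hn_1)\cap(P_2-\hn)$, with coefficients independent of $\hn$.
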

\begin{proof} We apply the previous lemma to prove the first statement when $L_0$ and $L_1$ are essentially non-transverse. Let $x \in L_0 \cap L_1, z \in L_1 \cap L_2$ be such that $\mu^2\left(z,x\right)$ counts at least one map. 
Let $\hn_2, \hn_2' \in \Z^3$ be such that $z$ lifts to points $\tilde{z}, \tilde{z}'$ in $\left(\tilde{L}_1+\hn_1\right) \cap \left( \tilde{L}_2-\hn_2 \right)$ and $\left(\tilde{L}_1+\hn_1\right) \cap \left( \tilde{L}_2-\hn_2'\right)$ respectively. Then $\hn_2-\hn_2'$ defines an element of the group $\Gamma\left(L_1, L_2\right) \subset \Z^3$ of Definition \ref{gradingrem}. Since $L_0$ and $L_1$ are essentially non-transverse, $[\gamma_{L_0}] =[ \gamma_{L_1}]$, so $\Gamma\left(L_1, L_2\right)=\Gamma\left(L_0, L_2\right)$ since the stabilizer subgroups are topological invariants. But then $\overline{\hn_1 + \hn_2-\hn_1 + \hn_2'} = \overline{\hn_2-\hn_2'}\in \Gamma\left(L_0,L_2\right)$. Thus $\mu^2\left(z,x\right)$ is concentrated in $CF^*\left(L_0, L_2\right)_{\overline{\hn_1 +\hn_2}}$; by the previous lemma the signed number of maps counted by $\mu^2\left(z,x\right)$ is 1.

Now, fix $x$ and $z$, and let $u: S \to \R^4$ be the unique triangle which contributes to $\mu^2\left(\mathbf{z},\mathbf{x}\right)=T^A \mathbf{y}$; lift $u$ to a map $S \to \R^4$ which maps boundary marked points to $\tilde{x}$,  $\tilde{z}$,and $\tilde{y}$ .  Apply Stokes' theorem to write
\[ A=  \left(S_{\R^4}\left(\tilde{x}\right)-S(x)\right)+\left(S_{\R^4}\left(\tilde{z}\right) -S(z)\right)- \left(S_{\R^4}\left(\tilde{y}\right)-S(y)\right). \]
Since $L_0$ and $L_1$ are essentially non-transverse, there exists a Hamiltonian $H: \R^4 \to \R$ which is $C^2$-close to a quadratic function of $t$ such that $\psi_{H}\left(L_1\right)=\psi_{H}\left(L_0\right)$. We can thus apply Lemma \ref{boundondiff}; there exists a uniform $C_2 >0$ such that $|\left(S_{\R^4}\left(z\right) -S(z)\right)- \left(S_{\R^4}\left(y\right)-S(y)\right)|< C_2$ for all $z$ and $y$. We apply Lemma \ref{quadbounds} (c) to bound  $|\left(S_{\R^4}\left(x\right)-S(x)\right)| < C_3$. Then taking $C=C_2 + C_3$ gives the second statement.

The third statement follows from linear algebra. All points are close to the intersection points of the corresponding planes of the Lagrangians, and the $t$-value at which two planes $P_0$ and $P_2$ intersect is determined by a linear expression in the intersection points of the planes $P_0$ and $P_1$ and the planes $P_1$ and $P_2$.

\end{proof}

\begin{prop} \label{conv} Let $L_0, L_1, L_2 \in \G_P$ be such that the set $\left(L_0,L_1, L_2\right)$ is in correct position. Let $x \in \whCF_{\underline{r}}^*\left(L_{0}, L_{1}\right)$, $z \in \whCF_{\underline{r}}^*\left(L_{1}, L_{2}\right)$ satisfy $\mu^1\left(x\right)=0$, $\mu^1\left(z\right)=0$. Then $\mu^2\left(z,x\right)$ converges in $\whCF^*_{\underline{r}}\left(L_0, L_2\right)$. \end{prop}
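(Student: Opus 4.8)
The plan is to reduce the assertion to the elementary fact that a product of two Laurent series convergent on all of $\C^{*}$ is again convergent on all of $\C^{*}$, in the guise of Proposition \ref{convergence}. That $\mu^2(z,x)$ defines an element of the coarser completion $\whCF^*(L_0,L_2)$ over $\Lambda$ is already contained in the convergence results of this section (Proposition \ref{finitenessofsum} together with the extension of the $\mu^n$ to $\whCF^*$, via Lemma \ref{RIE} and Gromov compactness), so the remaining content is the finer $\Lambda_r$-statement. Write $x=\sum_i a_i\mathbf x_i$ and $z=\sum_j b_j\mathbf z_j$ with $a_i,b_j\in\Lambda_r$ and $|\ev_{\underline r}(a_i)|^{1/|t(x_i)|}\to 0$, $|\ev_{\underline r}(b_j)|^{1/|t(z_j)|}\to 0$. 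Since $\mathbf x_i=T^{-S(x_i)}x_i$ (and similarly for $z_j$ and for any output $y$), a $J$-holomorphic triangle $u$ in a class $[\beta]$ with corners $x_i,z_j,y$ contributes the monomial $T^{E}\mathbf y$, where
\[ E \;=\; \omega([\beta]) - S(x_i) - S(z_j) + S(y). \]
Using the exactness of the lifts of $L_0,L_1,L_2$ and Stokes' theorem, $\omega([\beta])=S_{\R^4}(\tilde x_i)+S_{\R^4}(\tilde z_j)-S_{\R^4}(\tilde y)$, so $E$ coincides with the combination $\bigl(S_{\R^4}(\tilde x_i)-S(x_i)\bigr)+\bigl(S_{\R^4}(\tilde z_j)-S(z_j)\bigr)-\bigl(S_{\R^4}(\tilde y)-S(y)\bigr)$ of differences of the kind estimated in Proposition \ref{quadbounds}.

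There are two geometric inputs. First, Lemma \ref{RIE} gives $\ell_{T_B}(u)\le R\,\omega([\beta])$ for a uniform $R,T_B$ depending only on $(L_0,L_1,L_2,\J)$; since the image of $u$ meets each of $\tilde x_i,\tilde z_j,\tilde y$, this yields a uniform $D>0$ with $|t(y)|\le D\bigl(|t(x_i)|+|t(z_j)|+\omega([\beta])+1\bigr)$. Second, Proposition \ref{quadbounds}(b),(c), Lemma \ref{boundondiff}, and — whenever at least one of the three pairs $(L_0,L_1)$, $(L_1,L_2)$, $(L_0,L_2)$ is essentially non-transverse — the corollary preceding this proposition, together bound each of the three bracketed quantities in $E$ up to a uniform additive constant by the corresponding degenerate ("angular") quadratic quantity $Q^\beta$, whence there is a uniform $C$ with $E\ge -C$ on all contributing triangles. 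In the remaining subcase, where all three pairs are essentially transverse (which in the $\W$ setting occurs at both ends simultaneously), one works on a single $\Z^3/\Gamma$-graded summand, where the complexes have boundedly many generators in each $t$-slice; deforming $L_0,L_1,L_2$ by compactly supported Hamiltonians so that the relevant generators sit over $t=s=0$ exactly as in the proof of Lemma \ref{qisomcont} gives the same lower bound $E\ge -C$ with constants uniform over the graded pieces.

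With these bounds I would assemble the estimate as follows. Fix $c\in\R$; by Proposition \ref{convergence} it suffices to show $\sum_{y\in L_0\cap L_2}\ev_{\underline r}(c_y)\,e^{-2\pi c r\,t(y)}$ converges, where $c_y$ is the coefficient of $\mathbf y$ in $\mu^2(z,x)$. Applying $\ev_{\underline r}$, using $\ev_{\underline r}(T^{\lambda})=e^{-2\pi r\lambda}$ and the formula for $E$, one obtains the term-by-term majorization
\[ |\ev_{\underline r}(c_y)| \;\le\; e^{-2\pi r S(y)}\;\Bigl(\sum_i |\ev_{\underline r}(a_i)|\,e^{2\pi r S(x_i)}\Bigr)\Bigl(\sum_j |\ev_{\underline r}(b_j)|\,e^{2\pi r S(z_j)}\Bigr)\;\sup_{i,j}\sum_{[\beta]}\bigl|\#\overline{\M}(\ldots)\bigr|\,e^{-2\pi r\,\omega([\beta])}, \]
the inner sum over $[\beta]$ being finite by Gromov compactness and the lower bound $E\ge -C$. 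Splitting the sum over $y$ at a large threshold $|t(y)|\le T'$ versus $|t(y)|>T'$: in the first range there are finitely many $y$ and, over each, finitely many triangles of bounded area (Gromov), so the coefficients there are genuine elements of $\Lambda_r$; in the second range the length bound forces $|t(x_i)|+|t(z_j)|+\omega([\beta])$ to be large for any contributing triangle, so either one of the decaying factors $|\ev_{\underline r}(a_i)|^{1/|t(x_i)|}$, $|\ev_{\underline r}(b_j)|^{1/|t(z_j)|}$ is small or $e^{-2\pi r\,\omega([\beta])}$ is small, and after absorbing the (polynomially many, again by the length bound) choices of $(i,j,[\beta])$ compatible with a given $y$ the tail is summable. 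This shows $\mu^2(z,x)\in\whCF_{\underline r}^*(L_0,L_2)$.

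The main obstacle is the bookkeeping of the action corrections in the essentially-transverse subcase. There $S_{\R^4}(w)-S(w)$ carries an angular quadratic term $Q^\beta$ that is not bounded, so one cannot simply invoke Lemma \ref{boundondiff}; instead one must verify, one $\Z^3/\Gamma$-graded piece at a time, that the three quadratic forms $Q_{L_0,L_1}$, $Q_{L_1,L_2}$, $Q_{L_0,L_2}$ — whose signatures are pinned down at both ends precisely because the triple is in correct position — combine so that $E$ still stays bounded below along every sequence of contributing triangles. Once that uniform lower bound $E\ge -C$ is in hand, everything else is the routine product-of-Laurent-series argument outlined above.
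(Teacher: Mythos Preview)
Your approach has a genuine gap in the essentially transverse case, and the displayed majorization is not usable as written.

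First, the majorization
\[
|\ev_{\underline r}(c_y)| \;\le\; e^{-2\pi r S(y)}\Bigl(\sum_i |\ev_{\underline r}(a_i)|\,e^{2\pi r S(x_i)}\Bigr)\Bigl(\sum_j |\ev_{\underline r}(b_j)|\,e^{2\pi r S(z_j)}\Bigr)\cdot(\ldots)
\]
fails because the inner sums diverge: $S(x_i)=\tfrac12\alpha_\pm t(x_i)^2$ grows quadratically in $t(x_i)$, whereas the hypothesis $|\ev_{\underline r}(a_i)|^{1/|t(x_i)|}\to 0$ only gives exponential decay. Splitting $E$ back into $\omega([\beta])$ and the three action terms forfeits exactly the cancellation that makes $E$ well-behaved.

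Second, and more seriously, the uniform lower bound $E\ge -C$ is not enough in the transverse case. For fixed inputs $x_i,z_j$ there are infinitely many contributing triangles, indexed by a lift parameter $n\in\Z$ (the choice of lift of $z_j$), each producing a \emph{different} output $y_{ijn}$. To get $\sum_y |\ev_{\underline r}(c_y)| R^{ct(y)}<\infty$ for all $c$ you must sum over $(i,j,n)$, and $R^{A_{ijn}}\le R^{-C}$ alone gives a divergent sum over $n$. Your ``polynomially many $(i,j,[\beta])$ for each $y$'' is also not right here: in the planar transverse setting every pair $(i,j)$ potentially contributes to each output $y$ (with $n$ then determined).

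What the paper does instead is reduce via continuation elements (Lemma \ref{qisomcont} and Proposition \ref{compwelldef}) to planar Lagrangians in the transverse case, then compute the action-corrected exponent \emph{explicitly}:
\[
A_{ijn}+c\,t(y_{ijn})=\frac{(d_0'm_0-d_1'm_1)(d_1'm_1-d_2'm_2)}{2(d_0'm_0-d_2'm_2)}\bigl(n-w_0(\tilde x_i)+w_1(\tilde z_j)\bigr)^2+\text{(linear in }n,i,j).
\]
The leading coefficient is positive precisely because $\mu^2(z,x)\neq 0$ forces the sign condition $(d_1'm_1-d_0'm_0)(d_2'm_2-d_1'm_1)(d_2'm_2-d_0'm_0)>0$ via the degree grading. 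This quadratic growth in $n$ makes the sum over $n$ a convergent Gaussian; the remaining sum over $(i,j)$ then factors and converges by hypothesis. So the crucial ingredient you are missing is not ``$E$ bounded below'' but rather that $E$ \emph{grows quadratically in the lift parameter}, and this requires the signature computation, not merely placing generators at $t=s=0$ one graded piece at a time. The general (non-planar) case is then deduced by passing through the continuation quasi-isomorphisms and invoking Proposition \ref{compwelldef}.
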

\begin{proof}
First assume that $L_0$ and $L_1$ are essentially non-transverse, and that the differential vanishes on the complexes $CF^*\left(L_0, L_1\right)$, $CF^*\left(L_1, L_2\right)$, and $CF^*\left(L_0,L_2\right)$.  Let $C$ and $D$ be as in the above proposition. Let $R=e^{-2 \pi r}.$
Write $x=\sum_{i} a_i \mathbf{x}_i$, $z = \sum_{j} b_j \mathbf{z}_j$.  If  $\mu^2\left(\mathbf{x}_i, \mathbf{z}_j\right)$ is non-vanishing, then we can write $\mu^2\left(\mathbf{x}_i, \mathbf{z}_j\right)=C_{ij} T^{A_ij} \mathbf{y}_{ij}$ for some $y_{ij} \in L_0 \cap L_2$ where $C_{ij}=0$ or $1$; if $C_{ij}=1$ then $y_{ij}$ is unique. Write then
\[ \mu^2\left(x,z\right) = \sum_{i,j} a_i b_j C_{ij} T^{A_{ij}} \mathbf{y}_{ij}, \]
We now show this sum satisfies the criterion of Proposition \ref{convergence}, i.e., for all $c \in \R$,
\[ \sum_{i,j}\left|R^{A_{ij}} \ev_{\underline{r}}\left(a_{i} b_{j}\right) R^{c t\left(y_{ij}\right)} \right|< \infty. \]
Note that 
\begin{align*} &\sum_{i,j} |R^{A_{ij}}   \ev_{\underline{r}} \left(a_{i} b_{j}\right) R^{-|c t\left(y_{ij}\right)|}| \\
&\leq R^{-C-1} \sum_{i,j} R^{ -| c D t\left(x_i\right)|} R^{-|c Dt\left(z_j\right)|} |\ev_{\underline{r}}\left(a_i b_j\right)|\\
& = R^{-C-1} \left( \sum_{i}R^{-| c D t\left(x_i\right)|} |\ev_{\underline{r}}\left(a_i\right)| \right) \left( \sum_{j}R^{-|c D t\left(z_j\right)|} \left|\ev_{\underline{r}} \left(b_j \right)\right| \right)  < \infty.
 \end{align*}
The same method will show convergence for all $c < 0$.

Now assume that $L_0, L_1,$ and $L_2$ are planar Lagrangians which each intersect transversely with $[\gamma_{L_i}]=[r_i, d_i m_i, d_i k_i]$. Now fix several pieces of data. Let  $M_0,M_1,M_2 \in \operatorname{Sym}_{2 \times 2} \left(\R\right)$, $E_0, E_1, E_2 \in \R^2$ be such that for $i=0,1,2$,
\[ L_i =\{ (X, M_i  X+ E_i ) \mid X \in \R^2. \}\]
 Recall from Definition \ref{qdecomp} that to each pair of Lagrangians we associate quadratic forms and coordinates $w_{L_i, L_j}$. Denote these quadratic forms $Q_{L_{0},L_{1}}, Q_{L_{1},L_{2}},Q_{L_{0},L_{2}} $ by $Q_0,Q_1$, and $Q_2$ respectively. Let $Q_{i}=Q^\alpha_i+Q^{\beta}_i$, and let $w_i$ be the coordinate associated to the pair of Lagrangians in question. 

Let $d_i' = d_i/r_i$, $i=0,1,2$. We now show that $\mu^2(x,z) \neq 0$ implies \[(d_1'm_1 -d_0' m_0)(d_2' m_2 - d_1' m_1)(d_2' m_2 - d_0 'm_0)>0.\] Note that for all pairs of transverse planar Lagrangians $L$ and $L'$, $CF^*(L, L')=0$ is concentrated in a single degree $i$, where $i=0$ or $1$ depending on the signature of $Q_{L,L'}$. Assume that $(d_1' m_1 -d_0 'm_0)=0$. Then for all $x_i \in L_1 \cap L_0$, $\det (M_1 -M_0) < 0$, so $\deg(x_i)=1$. Then the non-vanishing of the product implies that $z$ is non-zero in degree $0$, hence concentrated in degree zero. But this implies that $d_2 ' m_2 - d_1' m_1>0$, hence $d_2' m_2 - d_0' m_0 >0$, so the output is concentrated in degree zero; this contradicts the fact that $\mu^2(x,z)$ is concentrated in $\deg (z)+\deg(x)=1$.

 Define a linear map $B: \R^2 \times \R^2 \to \R^2$ as follows: for a pair $\left(z,x\right)$, let $B\left(z,x\right)$ denote the unique point in $\R^2$ such that there is a planar triangle $\Delta\left(x,z\right)$ in $\R^4$ with sides lying in translates of the planar Lagrangians $L_i$ and vertices lying over $z,x,$ and $B\left(z,x\right)$.
Explicitly, $B\left(z,x\right)$ is given by \[\left(M_0-M_2\right)\inv\left(\left(M_1-M_2\right) z +\left(M_0-M_1\right)x\right).\]  Define $A\left(z,x\right)$ to be the symplectic area of the triangle  $\Delta\left(z,x\right)$. 
Extend $B$ and $A$ to maps on $\R^4 \times \R^4$ by composing with the projection to the first two coordinates.

Let $x_i \in L_0 \cap L_1$ and $z_j \in L_1 \cap L_2$; fix $\hn_1, \hn_2 \in \Z^3$ such that $\tilde{x}_i \in \tilde{L_1}-{\hn_1}$, $\tilde{z}_j \in \left(L_1 -\hn_1\right) \cap \left( L_2 - \hn_1 -\hn_2\right)$ satisfy $0<w_0(\tilde{x}_i), w_1(\tilde{z}_j)<1$. Using Proposition \ref{onlyone}, we can write the product $\mu^2\left(z_j, x_i\right)$ as the sum of products corresponding to different choices of the lift of $z_j$: 
\[ \mu^2\left(z_j, x_i\right)= \sum_{n \in \N} \mu^2\left(z_j, x_i\right)_{\hn_1+\hn_2+n \hat{e}_s},\]
where
\[ \mu^2\left(z_j, x_i\right)_{\hn_1+\hn_2+n \hat{e}_s}=\pm T^{A_{ijn}} \mathbf{y}_{ijn}. \]
Here  $\mathbf{y}_{ijn}=\pi(\mathbf{B\left(\tilde{x}_i, \tilde{z}_j-n\hat{e}_s\right)})$ denotes the action-corrected generator of $CF^*\left(L_0,L_2\right)_{\hn_1+\hn_2+n \hat{e}_s}$. For $c \in \R$, define the following function: 
\begin{equation} \label{functionsum} f_{ij}(c)= \sum_{n} R^{A_{ijn}+c t\left(y_{ijn}\right)}. \end{equation}
%Assume first that $R^c > 1$.  We show that there exists $D_1 > 0$ such that 
%\[ f_{ij}(c) < R^{c D_1\left( |t\left(x_i\right)| + | t\left(z_j\right)|\right) } \]
%for all $c$ and all pairs $x_i, z_j$. In particular, this shows that $\mu^2\left(x_i, z_j\right)$ defines an element of the completion $CF^*\left(L_0, L_2\right)$ by the criterion of Proposition \ref{convergence}.
Then we can write $f_{i j}(c)$ as
\[ f_{ij}(c) =R^{-\left(S(x_i) +S(z_j)\right)} \sum_{n \in \N}R^{S\left(B\left(\tilde{z}_j+n \hat{e}_s, \tilde{x}_i\right)\right)} R^{A\left(\tilde{z}_j+n \hat{e}_s, x\right)} R^{c t\left(B\left(\tilde{z}_j+n \hat{e}_s, \tilde{z}\right)\right)}. \]
Expanding the exponent on $R$, applying Stokes theorem to find $A\left(\tilde{z}_j+n \hat{e}_s, x\right)$, and expressing the differences $S-S_{\R_4}$ in terms of the degenerate quadratic forms $Q^\beta_i$ allows us to write the coefficient on $R$ on the $n$th term as:
\begin{align*} &-S(x_i)-S(z_j)+S(B(\tilde{z}_j+n \hat{e}_s, \tilde{x}_i)) + A(\tilde{z}_j+n \hat{e}_s, \tilde{x}_i)+c t(B(\tilde{z}_j+n \hat{e}_s, \tilde{x}_i))\\ 
&=  -S(x_i)-S(z_j)+S(B(\tilde{z}_j+n \hat{e}_s, \tilde{x}_i)) + S_{\R^4}(\tilde{x}_i)+S_{\R^4}(\tilde{z}_i+n \hat{e}_s) - S_{\R^4}(B(\tilde{z}_j+n \hat{e}_s, \tilde{x}_i))\\
&\quad\quad \quad \quad +c\, t(B(\tilde{z}_j+n \hat{e}_s, \tilde{x}_i))\\
&= -Q^\beta_0(\tilde{x}_i)-Q^\beta_1(\tilde{z}_j+n\hat{e}_{s}) + Q^{w}_2(B(\tilde{z}_j+n \hat{e}_s, \tilde{x}_i)) + c \,t B(\tilde{z}_j+n \hat{e}_s, \tilde{x}_i). \end{align*} 
A calculation then shows that this expression is equal to
\[ \left(\frac{\left(d_0' m_0 - d_1' m_1\right)\left(d_1' m_1-d_2 'm_2\right)}{2 \left(d_0' m_0-d_2 'm_2\right)}\right) \left(n - w_0\left(\tilde{x}_i\right)+w_1\left(\tilde{z}_j\right)\right)^2 + c t\left(B\left(\tilde{z}_j+n \hat{e}_s, \tilde{x}_i\right)\right). \]

Since $m=\left(d_0 'm_0 - d_1' m_1\right)\left(d_1 'm_1-d_2 'm_2\right)\left(d_0' m_0-d_2 m_2\right)\inv$ is positive by assumption, $f_{i j}(c)< \infty$ for all $c$. This proves that $\mu^2\left(z_j, x_i\right)$ defines an element of the completion. 

Now, for $x \in \whCF_{\underline{r}}^*\left(L_{0}, L_{1}\right)$, $z \in \whCF_{\underline{r}}^*\left(L_{1}, L_{2}\right)$, write 
\begin{align*} & x= \sum_{i} a_i x_i \\
		      &z =  \sum_{j} b_j z_j.  \end{align*}
		    
Define
    \[ f_{x,z}(c)= \sum_{i,j,n \in \Z} \ev_{\underline{r}}\left(a_i\right) \ev_{\underline{r}}\left(b_j\right) R^{A_{ijn}+c y_{ijn}}; \]
we wish to show that $f_{x,z}(c)< \infty$ for all $c \in \R$. By the linearity of $B$, there exists $D > 0$ such that $|t\left(B\left(\tilde{z}_j+n \hat{e}_s, \tilde{x}\right)\right)|< D( |t(z)|+D |t(x)| + D|n|+1)$ for all $i,j,n \in \Z$. Then we can write
     \begin{align*} f_{x,z}(c) &=\sum_{i, j,n \in \Z} \left| R^{m \left(n- w_0\left(\tilde{x}_i\right)-w_1\left(\tilde{z}_j\right)\right)^2/2} \ev_{\underline{r}}\left(a_i b_i\right) R^{c  t\left(B\left(\tilde{z}_j+n \hat{e}_s, \tilde{x}_i\right)\right)} \right|\\
     &< R^{-2 m} \sum_{n}\left( R^{m n^2/2} R^{-3 -| D c n|} \left(\sum_{i,j} \left| \ev_{\underline{r}}(a _i b_j) R^{D c |t\left(x_i\right)|}R^{D c |t\left(z_j\right)|}\right) \right| \right) \\
   &\leq R^{-2 m}  \left(\sum_{n} R^{m n^2/2} q^{-3-|D c n|}\right)
 \left(\sum_{i} \left |\ev_{\underline{r}}( a _i) R^{- | D c t\left(x_i\right)|} \right| \right)
 \left(\sum_{j} \left |\ev_{\underline{r}}(b_j) R^{- |D c t\left(b_j\right)|} \right| \right)   \\
     & < \infty. \end{align*}

To prove the general case, note that there exist type-$\Wr$ Hamiltonians $H_0$, $H_1$, and $H_2$ such that the set $\left(\psi_{H_0}(L_0), \psi_{H_1}(L_1), \psi_{H_2}(L_2)\right)$ is in correct position in $\F\left(\Wr\right)$ and such that $\psi_{H_0}(L_0), \psi_{H_1}(L_1)$ and  $\psi_{H_2}(L_2)$ satisfy the conditions prescribed above (so each Lagrangian is planar if the Lagrangians are essentially tranverse, and are small perturbations of planar Lagrangians otherwise).  Let \begin{align*} &\Phi_2: \whCF_{\underline{r}}^*\left(L_0, L_2\right) \to \whCF_{\underline{r}}^*\left(\psi_{H_0}(L_0), \psi_{H_2} \left(L_2\right)\right) ;\\ 
&\Phi_1: \whCF_{\underline{r}}^*\left(L_1, L_2\right) \to \whCF_{\underline{r}}^*\left(\psi_{H_1}\left( L_1\right), \psi_{H_2} \left(L_2\right)\right) ;\\
&\Phi_0: \whCF_{\underline{r}}^*\left(L_0, L_1\right) \to \whCF_{\underline{r}}^*\left(\psi_{H_0}(L_0), \psi_{H_1} \left(L_1\right)\right),\end{align*}
 denote the quasi-isomorphisms from multiplication furnished by the continuation elements. Then
\[ \left[\Phi_{H_2} \left(\mu^2\left(z,x\right)\right)\right]=\left[\mu^2\left(\Phi_1\left(z\right) ,\Phi_0\left(x\right)\right)\right] \]
as classes in $HF^*(\psi_{H_0}(L_0), \psi_{H_1}(L_1))$.
We have shown that\[\mu^2\left(\Phi_1\left(z\right), \Phi_0\left(x\right)\right) \in \whCF^*_{\underline{r}}\left(\psi_{H_0}(L_0),\psi_{H_1}\left(L_2\right)\right).\] Appealing to Proposition \ref{compwelldef}, we deduce that $\Phi_{H_2} \left(\mu^2\left(z,x\right)\right) \in \whCF^*_{\underline{r}}\left(\psi_{H_0}(L_0),\psi_{H_2}(L_2)\right)$; then the same proposition allows us to conclude that $\mu^2\left(z,x\right)$ is an element of the completion  $\whCF^*_{\underline{r}}\left(L_0,L_2\right)$.

\end{proof}

\subsection{Unitary flat connections}We will enlarge our Fukaya categories by equipping our Lagrangians with local systems (see \cite{AuB}). Let $\nabla$ be a unitary flat connection on the trivial rank-one vector bundle $V_0 \to L$, and denote this data by $\left(L, \nabla\right)$. Define
\[ \whCF^*\left(\left(L, \nabla \right), \left(L', \nabla' \right)\right)= \bigoplus_{y \in L \cap L'} \Hom\left(V_0|_y, V_0'|_y\right) \]
and weight each disc $u$ contributing to a product $\mu^n$ by the monodromy $e^{2 \pi i \nu}: (V_{0})_0|_{y_{\out}} \to (V_0)_n|_{ y_{\out}}$  obtained by parallel transport around its boundary: 
\[ \mu^n\left( y_{n-1}, \ldots, y_{0}\right) = \sum_{[u]} \#\mathcal{M}\left( y_0, \ldots, y_{n-1}, y_{\out}\right) T^{\omega[u]} e^{2 \pi i \nu} y_{\out} \]
Since $\nabla$ is unitary, adding such coefficients does not affect the analysis of the convergence of the $\mu^n$ over $\Lambda$ or over $\C$; we can thus define a completion $\widehat{CF}^*\left(\left(L, \nabla \right), \left(L, \nabla' \right)\right)$ as in Equation \eqref{completion}. (In general, in all proofs of convergence results, we will take Lagrangians without local systems.)

We say that a collection of Lagrangians with local systems $\{\left(L_0, \nabla_0\right), \ldots, \left(L_n, \nabla_n\right)\}$ is in correct position if the set $\{\left(L_0, \ldots, L_n\right)\}$ is in correct position. We can relate Floer complexes with continuation elements defined in the same way above, but now weighting by the parallel transport of the connections. 

\begin{prop}\label{kingprop} Let $L$ and $L'$ be in $\G_P\left(I\right)$. Let $\left(\psi_{H}\left(L\right), \nabla\right)$ and $\left(\psi_{H'}\left(L'\right), \nabla' \right)$ be in correct position, and assume that $\alpha_{\pm}\left(H_0\right) - \alpha_{\pm} \left(H'\right) > K_{\pm}\left( L, L'\right)$ on the wrapped end(s) of $Y$. Let $\hat{n} \in \Z^3$. Then if  $\tilde{L}$ and $\tilde{L'}$ are $I$-essentially transverse,  
\[ HF^*\left(\left(\psi_{H}\left(L\right), \nabla\right), \left(\psi_{H'}\left(L'\right), \nabla'\right)\right)_{\hn} = \Lambda. \]
If   $\tilde{L}$ and ${\tilde{L'}-\hn}$ are $I$-essentially non-transverse and the monodromies of $\nabla$ and $\nabla$ around the loop $[\left(1,0,0\right)] \in \pi_1\left(Y\right)=\pi_1\left(T^3\right)$ agree, then 
\[ HF^*\left(\left(\psi_{H}\left(L\right), \nabla\right), \left(\psi_{H'}\left(L'\right), \nabla'\right)\right)_{\hn} = \Lambda \oplus \Lambda \]
where there is a generator in degrees $i$ and $i+1$ for $i=0$ or $1$. Otherwise 
\[  HF^*\left(\left(\psi_{H}\left(L\right), \nabla\right), \left(\psi_{H'}\left(L'\right), \nabla'\right)\right)_{\hn} = 0. \]
\end{prop}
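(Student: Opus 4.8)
The plan is to reduce the computation of $HF^*$ with local systems to the already-established computations without local systems (Lemma \ref{qisomcont} and its supporting material), tracking how the unitary monodromy modifies the picture. First I would pass to the cyclic cover $Y_R \to Y$ with $R = r r'$, where $[\gamma_L]=(r, dk, -dm)$ and $[\gamma_{L'}]=(r', d'k', -d'm')$, lifting $\psi_H(L)$ and $\psi_{H'}(L')$ to sections $L_R$, $L_R'$ of $T^*C_R \to C_R$. Using the $\Z^3/\Gamma(L,L')$-grading of Definition \ref{gradingrem}, it suffices to compute each graded piece $HF^*((\psi_H(L),\nabla),(\psi_{H'}(L'),\nabla'))_{\hn}$ separately, and on each such piece the complex has finitely many generators, so the question is purely about ranks and the vanishing of $\mu^1$.

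The key observation is that the flat unitary connections $\nabla,\nabla'$ on the trivial rank-one bundles contribute to $\mu^1$ only through the holonomy factor $e^{2\pi i \nu}$ attached to each strip, and since $\nabla$ is unitary this does not disturb any of the energy/compactness estimates. Concretely, in each $\hn$-graded piece, after deforming $L_R, L_R'$ by a compactly supported Hamiltonian $G$ (exactly as in the proof of Lemma \ref{qisomcont}) so that the relevant generators lie over $t=s=0$ and the Lagrangians split as products on a neighborhood of that locus, the differential $\mu^1$ reduces — via the splitting argument of Lemma \ref{decompofh}(a) and the maximum principle in the $(t,\phi_t)$ factor — to the differential of a Floer complex for two linear Lagrangians in $T^*S^1_R$, twisted by the ratio of the two monodromies around $[(1,0,0)] \in \pi_1(T^3)$. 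When the Lagrangians are $I$-essentially transverse the complex in each $\hn$-piece is concentrated in one degree with rank $1$ regardless of the connection (the holonomy just rescales the single generator), giving $HF^*_{\hn}=\Lambda$. When they are $I$-essentially non-transverse, the complex is the Floer complex of two parallel circles in $T^*S^1_R$: it has rank-one groups in two consecutive degrees $i, i+1$, and the differential between them is, up to a unit, $1 - (\text{monodromy of }\nabla^{-1}\nabla' \text{ around }[(1,0,0)])$; hence $HF^*_{\hn}=\Lambda\oplus\Lambda$ precisely when the two monodromies around $[(1,0,0)]$ agree, and $HF^*_{\hn}=0$ otherwise. The remaining case — $\hn$ such that $\tilde L$ and $\tilde L' - \hn$ simply do not intersect even after wrapping — gives $0$ trivially.

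Then I would assemble these graded pieces: summing over $\hn \in \Z^3/\Gamma(L,L')$ and re-interpreting each summand via the natural identification $CF^*(L,L')_{\hn} \cong CF^*(L_R, L_R')_{\hn}$ yields the three stated formulas. The degree in which the rank-one or rank-two group sits is determined by the signature of the quadratic form $Q_{L,L',\pm}$ as recorded in the Remark following Definition \ref{qdecomp}, which is exactly the $i\in\{0,1\}$ appearing in the statement.

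The main obstacle I expect is the essentially non-transverse case: one must verify that the twisted differential between the two consecutive-degree generators is genuinely $(1 - \chi_{\nabla^{-1}\nabla'})$ times a unit — i.e., that the holomorphic strips in $T^*S^1_R$ that contribute come in a pair whose boundaries traverse the loop $[(1,0,0)]$ with opposite orientations, and that there are no further contributions. This requires the same $\Gamma$-equivariance and ``only one disc'' input as Lemma \ref{onlyone} and the Corollary following it, now carrying the monodromy weight; once that is in place, everything else is bookkeeping with the $\Z^3/\Gamma$-grading and the signature computation. A secondary subtlety is checking that the compactly supported deformation $G$ used to normalize the generators can be chosen \emph{uniformly} across the infinitely many $\hn$-pieces (as in the ``Extension to completions'' portion of the proof of Lemma \ref{qisomcont}), so that the conclusion is about the full completed complex and not just each graded summand; this follows because $G$ can be taken supported near the non-linear regions, which are common to all lifts.
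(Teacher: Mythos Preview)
Your proposal is correct and follows essentially the same route as the paper: reduce to a single $\hn$-graded piece, normalize so that the complex has one generator (essentially transverse) or two in consecutive degrees (essentially non-transverse), and in the latter case observe that $\mu^1$ is a unit times the difference of the two monodromies around $[(1,0,0)]$. The paper's version is considerably terser because it simply cites the normalization already carried out inside the proof of Lemma \ref{qisomcont} (choosing a further type-$I$ Hamiltonian $H''$ so that $|\psi_{H''}\psi_H(\tilde L)\cap(\psi_{H'}(\tilde L')-\hn)|=1$ or $2$) and then writes $\mu^1(y_i)=T^A(e^{2\pi i\nu}-e^{2\pi i\nu'})$ directly; note also that the statement is for a \emph{fixed} $\hn$, so your ``secondary subtlety'' about choosing $G$ uniformly in $\hn$ and the final ``summing over $\hn$'' step are unnecessary.
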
										
\begin{proof} We have seen that if  $\tilde{L}$ and ${\tilde{L'}-\hn}$ are $I$-essentially transverse,  there exists some type-$I$ Hamiltonian $H''$ such that $\left|\psi_{H''} \circ \psi_{H} (\tilde{L}) \cap \psi_{H'} \left(\tilde{L'}\right)-\hn \right|=1$; then  
\[  HF^*\left(\left(\psi_{H}\left(L\right), \nabla\right), \left(\psi_{H'}\left(L'\right), \nabla'\right)\right)_{\hn} \cong  HF^*\left(\left(\psi_{H''} \circ \psi_{H}\left(L\right), \nabla\right), \left(\psi_{H'}\left(L'\right), \nabla'\right)\right)_{\hn} = \Lambda. \]

If $\tilde{L}$ and $\tilde{L'}$ are $I$-essentially non-transverse, we have seen there exists some type-$I$ Hamiltonian $H''$ such that $CF^*\left(\psi_{H''} \circ \psi_{H} (\tilde{L}) , \psi_{H'} \left(\tilde{L'}\right)\right)_{\hn}= \Lambda y_i \oplus \Lambda y_{i+1}$, with generators of degree $i$ and $i+1$ for $i=0$ or $1$. Denote the monodromies of  $\nabla_0$ and $\nabla_1$ around the loop $[\left(1,0,0\right)] $ by $e^{2 \pi i \nu}$ and $e^{2 \pi i \nu'}$ respectively. Then \[\mu^1(y_{i})= T^{A} (e^{2 \pi i \nu}-e^{2 \pi i \nu'});\] note that this differential vanishes if and only $e^{2 \pi i \nu}=e^{2 \pi i \nu'}$. 
\end{proof}

\begin{defn}[Complexified action corrections]  Let $(L_0, \nabla_0), (L_1, \nabla_1) \in \G_{P}$. Let $w_{L_0, L_1}$ denote the coordinate function associated to the pair of Lagrangians.  Then we can always decompose   $\nabla_0 - \nabla_1$ as
\begin{equation}\label{nablade} \nabla_0 - \nabla_1 = 2 \pi i( B \mathrm{d} w_{L_0,L_1} + A \mathrm{d }t). \end{equation}
For $y$ a generator of $CF^*(L_0, L_1)$, define the \emph{complexified action-corrected generator} 
\[ \mathbf{y}=e^{-2 \pi i A t(y)}T^{-S(y)}y. \]
\end{defn}

\section{Fukaya categories}\label{fukcatsec} 
\subsection{Definition of Fukaya categories} We define three variants of the Fukaya categories via the localization approach found in unpublished work of Abouzaid and Seidel (\hspace{-.2pt}\cite{whatisthis}), which has recently been used in other works, e.g. \cite{GPS1}. This approach is well-suited to our purposes as it allows us to relate our categories easily through localization maps following the work of Ganatra, Pardon, and Shende in \cite{GPS2}. 

For $I= \mathbf{A}$, $\left(m,k\right)$, or $\W$, consider a set of Lagrangians $\{L_{\sigma}\}_{\sigma \in  \Sigma_{I}}$ with $L_{\sigma} \in \G\left(I\right)$ which contains at least one representative of each class of type-I Lagrangians up to compactly supported Hamiltonian isotopies. Then choose for each $L_\sigma$, $\sigma \in \Sigma_I$, an $I$-cofinal sequence $H_{\sigma, 0}, H_{\sigma, 1}, \ldots$ of perturbing type-$I$ Hamiltonians with the property that any set
\[ \{ \psi_{H_{\sigma_0, i_0}} \left(L_{\sigma_0}\right), \psi_{H_{\sigma_1, i_1}} \left(L_{\sigma_1}\right), \ldots, \psi_{H_{\sigma_k, i_k}}\left(L_{\sigma_k}\right)\}\]
with $i_0 > i_1 > \ldots > i_k$ is in correct position. By $I$-cofinal we mean a sequence of perturbing type-I Hamiltonians with the property that for any generic $L' \in \G_P\left(I\right)$, there exists $n \in \N$ such that the set $\left(\psi_{H_{\sigma,n}}\left(L_{\sigma}\right), L'\right)$ is in correct position.

\subsubsection{Definition of $\mathcal{O}_I$} Let $\mathcal{O}_I$ be the $A_{\infty}$ category with objects $\left(L_{\sigma}, \nabla, i\right)$ where $\sigma \in \Sigma_I$, $\nabla$ is a flat unitary connection on $V \to L$ the rank-one trivial $\C$-bundle, and $i \in \N$.
In the following, we often simplify notation by writing $\left(L_0, \nabla, i\right)$ as an object $L$; when unambiguous, we refer to the entire set as well as its grading data and $\mathrm{Spin}$ structures as the ``Lagrangian." The reader should keep in mind that when referring to the Lagrangian, we mean this entire set of data.

Let $\left(L_{\sigma_0}, \nabla_0, i_0\right) > \left(L_{\sigma_1}, \nabla_1, i_1\right)$ when $i_0 > i_1$. The morphisms in the directed category $\mathcal{O}_I$ are defined by:
\begin{align*} &\mathcal{O}_I\left(\left(L_{\sigma_0}, \nabla_0, i_0\right), \left(L_{\sigma_1}, \nabla_1, i_1\right)\right)\\
&\quad = \begin{cases}  \wh{CF}^*\left(\left(\psi_{H_{\sigma_0,i}}\left( L_{\sigma_0}\right), \nabla\right), \left(\psi_{H_{\sigma_1},j}\left( L_{\sigma_1}\right), \nabla_1\right)\right) &  i_0 > i_1  \\
							\Lambda \{e\} & \left(L_{\sigma_0}, \nabla_0, i_0\right) = \left(L_{\sigma_1}, \nabla_1, i_1\right) \\
							0 & \textrm{otherwise}. \end{cases}\end{align*}
Here $\{e\} \in \whCF^*\left(\left(\psi_{H_0}\left(L\right), \nabla_0\right), \left(\psi_{H_0}(L_0), \nabla_0\right)\right)$ is defined to be a formal element of degree zero. 
The higher products 
\[ \mu^n:  \mathcal{O}_I\left(L_{n-1},L_n\right) \otimes \cdots \otimes \mathcal{O}_I\left(L_0, L_1\right)\to \mathcal{O}_I \left(L_0, L_n\right) \]
are defined when $L_0 \geq L_1 \geq \ldots \geq L_n$, using compatible choices of strip-like ends and almost-complex structures as described in \ref{modspaces}. As previously noted, the $\mu^n$ will satisfy the $A_\infty$ relations for any finite collection of Lagrangians where each pair $\left(L_i, L_j\right)$ with $i > j$ is in correct position.

Let $\sigma \in \Sigma$ and let $\nabla$ be a flat unitary connection on $L_{\sigma}$. Let $L_{\sigma}^i$ be shorthand for the data $\left(\psi_{H_{\left(\sigma, i\right)}}\left(L_{\sigma}\right),\nabla\right)$, when used in Floer complexes.

\subsubsection{Localization} For $j>i$, let 
\[ c_{\left(\left(L_\sigma, \nabla\right),j,i\right)} \in \whCF^*\left(L_{\sigma}^{j},L_{\sigma}^{i}\right) \]
denote the continuation element. Denote the set of all such quasi-units by $Q_I$:
\[ Q_I=\{c_{\left(\left(L, \nabla\right), j,i \right)}\}_{\left(L, \nabla\right) \in O\left(I\right), j > i}. \]

We define $\F\left(I\right)=\O_I[Q\inv]$ by taking the $A_{\infty}$ quotient category in the sense of Lyubashenko-Ovsienko (\hspace{1sp}\cite{LyOv}). This defines a category with the same objects as $\mathcal{O}_I$. The morphisms between two objects are calculated by considering the composition of functors.
\[ \O_I \to \mathrm{Tw} \, \O_I \to  \mathrm{Tw}\, \O_I/ \textrm{Cones}\left(Q_I\right). \]

For any pair $\left(L_{\sigma}, L_{\sigma'}\right) \in \Ob\, \F\left(I\right)$, the $\Lambda$-modules $\wh{HF}^*\left(L_{\sigma}^{i+j}, L_{\sigma'}^j\right)$, $i \geq 0$, form a directed system where the maps between objects are furnished by the continuation elements. We use the following lemma from the work of Ganatra-Pardon-Shende (Lemma 3.57 in \cite{GPS1});
\begin{prop}
\[ H^*\Hom_{\F\left(I\right)}\left(L_{\sigma}, L_{\sigma'}\right)\cong \lim_{i \to \infty} \wh{HF}^*\left(L_{\sigma}^{j+i}, L_{\sigma'}^{j}\right). \]
for any $j$. \end{prop}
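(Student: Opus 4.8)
The plan is to identify $H^*\Hom_{\F(I)}(L_\sigma,L_{\sigma'})$ with the colimit of the Floer cohomologies $\wh{HF}^*(L_\sigma^{j+i},L_{\sigma'}^j)$ by unwinding the definition of the localized $A_\infty$ category $\F(I)=\O_I[Q_I^{-1}]$ and appealing to the general machinery of Lyubashenko--Ovsienko quotients together with Lemma 3.57 of \cite{GPS1}. First I would recall the precise model for $\mathrm{Tw}\,\O_I/\mathrm{Cones}(Q_I)$: morphisms from $L_\sigma$ to $L_{\sigma'}$ are computed by a bar-type complex whose terms involve alternating sequences of morphisms in $\O_I$ and formal inverses of cones on the quasi-units $c_{((L,\nabla),j,i)}\in Q_I$. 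The key structural input is that each $c_{((L_\sigma,\nabla),j,i)}$ is, after passing to cohomology, an isomorphism onto its image in the appropriate directed system: this is exactly what Lemma \ref{qisomcont} (together with Lemma \ref{decompofh}(c), which gives $[c_{H_0\to H_1}]\cdot[c_{H_1\to H_2}]=[c_{H_0\to H_2}]$) establishes, so the cones $\mathrm{Cone}(c_{((L_\sigma,\nabla),j,i)})$ are acyclic after localization and inverting them is harmless.

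The main steps, in order: (1) Observe that the objects $L_\sigma^i$, $i\in\N$, equipped with the continuation morphisms form a directed system inside $\O_I$, and that in $\mathrm{Tw}\,\O_I$ each $L_\sigma^i$ becomes isomorphic to $L_\sigma^j$ for $j\le i$ after inverting the $Q_I$ (since the cone on a quasi-unit becomes zero). Hence in $\F(I)$ all the $L_\sigma^i$ are canonically isomorphic, and $\Hom_{\F(I)}(L_\sigma,L_{\sigma'})$ may be represented using any choice of representatives. (2) Compute the morphism complex in the localized category using the standard presentation: by Lyubashenko--Ovsienko \cite{LyOv}, $\Hom_{\O_I[Q_I^{-1}]}(L_\sigma,L_{\sigma'})$ is quasi-isomorphic to the homotopy colimit $\mathrm{hocolim}_i \wh{CF}^*(L_\sigma^{j+i},L_{\sigma'}^j)$ over the directed system generated by $\mu^2(\cdot,c)$; this is the content of invoking Lemma 3.57 of \cite{GPS1}, whose hypotheses (a directed system of objects with the localizing morphisms cofinal) are met here by construction of the $I$-cofinal sequences $H_{\sigma,0},H_{\sigma,1},\ldots$. (3) Pass to cohomology: since homotopy colimit over a directed (filtered) system commutes with taking cohomology (the relevant $\lim^1$ vanishes because the system is indexed by $\N$ and the transition maps are the continuation multiplications, which by Lemma \ref{qisomcont} are injective on the relevant graded pieces and eventually isomorphisms on each $\Z^3/\Gamma$-graded summand), we get $H^*\Hom_{\F(I)}(L_\sigma,L_{\sigma'})\cong\varinjlim_i \wh{HF}^*(L_\sigma^{j+i},L_{\sigma'}^j)$. (4) Independence of $j$: shifting $j$ by one is absorbed by the cofinality of the system together with Lemma \ref{decompofh}(c), so the colimit does not depend on the choice of $j$.

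The step I expect to be the main obstacle is (3), the interchange of cohomology with the homotopy colimit in the completed setting. Unlike the exact situation of \cite{GPS1}, our Floer complexes are the completions $\wh{CF}^*$ (infinite series with valuation growth conditions), so one must check that the directed system $\{\wh{CF}^*(L_\sigma^{j+i},L_{\sigma'}^j)\}_i$ with transition maps $\mu^2(\cdot,c_{((L_\sigma,\nabla),j+i,j+i-1)})$ is well-behaved: that these maps are defined on the completions (Lemma \ref{qisomcont} gives this), that they are compatible up to coherent higher homotopies with the $A_\infty$ structure so the homotopy colimit computes the localization (this is where one leans hardest on \cite{LyOv} and \cite{GPS1}), and that the colimit of completed complexes still has cohomology equal to the colimit of cohomologies. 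For the last point the saving grace is the $\Z^3/\Gamma$-grading of Definition \ref{gradingrem}: on each graded summand $\wh{CF}^*(\cdot,\cdot)_{\overline{\hn}}$ the complexes are finite-dimensional and the transition maps are eventually isomorphisms by Lemma \ref{qisomcont}, so the colimit stabilizes summand-by-summand and no genuine $\lim^1$ obstruction arises; one then reassembles the graded pieces, using the valuation growth conditions to control the infinite direct sum. I would therefore organize the proof so that the bulk of the work is this graded-piece stabilization argument, with the Lyubashenko--Ovsienko and Ganatra--Pardon--Shende results cited for the formal identification of the localization with the homotopy colimit.
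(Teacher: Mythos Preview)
Your proposal is correct and follows the same route as the paper: both invoke Lemma~3.57 of \cite{GPS1} to identify morphisms in the localization $\O_I[Q_I^{-1}]$ with the direct limit of Floer cohomologies along the cofinal sequence of continuation maps. The paper, however, gives no proof at all---it simply states the proposition as an instance of the cited lemma, without further justification.

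Your elaboration in steps (1)--(4) is a faithful unpacking of what that citation means, and your identification of step~(3) as the delicate point is well taken: the GPS argument is written for uncompleted Floer complexes in the exact/Liouville setting, and the paper does not explicitly check that the passage to the completions $\wh{CF}^*$ causes no trouble. Your proposed fix via the $\Z^3/\Gamma$-grading (each graded piece finitely generated, transition maps eventually isomorphisms by Lemma~\ref{qisomcont}, hence the system stabilizes summand-by-summand) is exactly the right way to close that gap, and is more careful than what the paper records. In short, you and the paper agree on the mechanism; you have simply done the due diligence that the paper leaves implicit.
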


\begin{remark} While we use completed Floer complexes in every variant of our Fukaya categories, the morphism complexes that appear for the unwrapped category are finitely generated free $\Lambda$-modules, so the use of the completion is immaterial in that instance. In the partially wrapped category we have seen that the completion is not necessary in the sense: the subspace of $\widehat{CF}^*$ consisting of finite sums of generators is closed under the $\mu^n$. In contrast, it is essential to work with the completion for the fully wrapped category. \end{remark}

\subsection{Invariance of choices}{\label{inv}}
\begin{prop} The  choices involved in the construction of $\F\left(I\right)$ are immaterial: different choices of strip-like ends, complex structures, and 1-forms $\beta$ used to define continuation elements will produce quasi-isomorphic $A_\infty$ categories. \end{prop}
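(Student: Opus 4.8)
The plan is to prove invariance by the standard ``continuation functor'' argument adapted to our localized setting, closely following the strategy of \cite{GPS1}. Suppose we are given two collections of auxiliary data $\mathfrak{D}_0$ and $\mathfrak{D}_1$ --- each consisting of a choice of representative Lagrangians $\{L_\sigma\}$, a choice of $I$-cofinal sequence of perturbing Hamiltonians $H_{\sigma,i}$ for each $\sigma$, a choice of strip-like ends, compatible domain-dependent eventually constant almost complex structures, and the one-forms $\beta$ entering the continuation maps --- producing categories $\F(I)_{\mathfrak{D}_0}$ and $\F(I)_{\mathfrak{D}_1}$. First I would reduce to the case where the two data sets share the same set of representative Lagrangians $\{L_\sigma\}$: passing from one representative in a compactly-supported Hamiltonian isotopy class to another is itself implemented by a continuation element of the type in Lemma \ref{comph}, which is already inverted in the localized category, so this move induces a quasi-isomorphism. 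After this reduction, the remaining freedom is in the choice of perturbing sequences and the Floer-theoretic auxiliary data.

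Next I would interleave the two Hamiltonian sequences. Since both $\{H_{\sigma,i}^0\}$ and $\{H_{\sigma,i}^1\}$ are $I$-cofinal, one can choose a common refinement: a single sequence $\{H_{\sigma,i}\}$ together with order-preserving cofinal subsequences recovering each of the two original sequences up to the partial order $>$ on perturbing Hamiltonians. Build a ``big'' directed $A_\infty$ category $\mathcal{O}_I^{\mathrm{big}}$ whose objects include all the $L_\sigma^i$ from both data sets, with morphisms given by the appropriate completed Floer complexes $\whCF^*$ computed with a chosen compatible family of eventually constant almost complex structures (such a family exists because $\J_{EC}$ is contractible, by the proposition on eventually constant almost complex structures, so any two compatible choices over the Deligne--Mumford--Stasheff families can be connected). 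Both $\mathcal{O}_I^{\mathfrak{D}_0}$ and $\mathcal{O}_I^{\mathfrak{D}_1}$ include as full $A_\infty$-subcategories into $\mathcal{O}_I^{\mathrm{big}}$, and each inclusion is cofinal in the sense needed to apply the localization lemma (Lemma 3.57 of \cite{GPS1}, quoted above): the colimit computing $H^*\Hom_{\F(I)}$ is unchanged under passing to a cofinal subsequence. Hence localizing $\mathcal{O}_I^{\mathrm{big}}$ at the union of the quasi-unit sets yields a category receiving quasi-equivalences from both $\F(I)_{\mathfrak{D}_0}$ and $\F(I)_{\mathfrak{D}_1}$, which gives the claim.

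For the dependence on strip-like ends, the one-forms $\beta$, and the almost complex structures (with the representatives and Hamiltonian sequences now fixed), the argument is the usual one: two such choices are connected by a path in the (contractible) space of admissible data, and a generic such path defines a one-parameter family over which one builds an $A_\infty$-functor by counting parametrized moduli spaces. The key technical point one must check here --- and this is where our setting differs from the exact case --- is that the a priori area bounds needed for compactness of these parametrized moduli spaces still hold. This follows exactly as in Lemma \ref{RIE} and Corollary \ref{boundedregion}: the defining feature there, namely that any $J$-holomorphic curve in a fixed homotopy class has image length along the ends controlled by its energy, is insensitive to the precise (eventually constant) almost complex structure or strip-like ends used, since the proof only uses the planar behavior of the Lagrangians at infinity and Gromov monotonicity for a uniformly comparable metric. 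Likewise the convergence of the $\mu^n$ in the completions $\whCF^*$, established in Section \ref{comps} and the convergence propositions, is stable under these choices by the same estimates.

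The main obstacle I anticipate is \emph{not} the abstract homological algebra of localization (that is handled by \cite{LyOv} and \cite{GPS1} verbatim) but rather verifying that the interleaved/refined Hamiltonian sequences can be chosen so that \emph{every} finite subcollection drawn from both data sets --- now living together in $\mathcal{O}_I^{\mathrm{big}}$ --- is simultaneously in correct position, in the precise sense of the correct-position definition (including the conditions $\alpha_\pm(H)-\alpha_\pm(H') > K(L,L')$ at the wrapped ends, and disjointness of universal-cover lifts away from compact sets). One must check that correct position is a generic and cofinal-stable condition: given any finite collection of Lagrangians, the set of perturbing Hamiltonians putting them in correct position with a fixed one is non-empty, open in an appropriate sense, and contains arbitrarily large Hamiltonians (so cofinality survives). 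This is implicit in the construction of the categories in Section \ref{fukcatsec}, and the honest work of the proof is to observe that nothing in that construction is disturbed by enlarging the index set to accommodate two data sets at once, since the constants $K(L,L')$ depend only on the Lagrangians and not on the auxiliary Floer data.
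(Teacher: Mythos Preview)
Your proposal is correct but proves more than the statement asks. In the paper this proposition concerns \emph{only} the Floer-theoretic auxiliary data (strip-like ends, domain-dependent almost complex structures, the one-forms $\beta$), and the paper's proof is a single sentence deferring to the contractibility of the space of such data and citing \cite{GPS1}. Your third paragraph covers exactly this and matches the paper's approach.

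The invariance under choice of representative Lagrangians and cofinal Hamiltonian sequences --- which occupies your first, second, and fourth paragraphs --- is stated and proved in the paper as a \emph{separate} proposition immediately following this one. There the paper uses precisely the ``big category'' / interleaving argument you outline (embedding both $\mathcal{O}_I$ and $\mathcal{O}_I'$ into a common partially-ordered $\mathcal{O}_I''$ and localizing), again following \cite{GPS1}, Proposition 3.39. So your proposal effectively merges the two propositions into one; nothing is wrong, but you have read the scope of the present statement too broadly. The concern you flag at the end about arranging correct position for mixed collections is real and is exactly what the paper's proof of the second proposition addresses via the partially-ordered index set.
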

\begin{proof} This is a standard fact that follows from the contractibility of the space of choices of such data; see \cite{GPS1}. \end{proof}

The following proposition and its proof is identical to Proposition 3.39 in \cite{GPS1}; we record the proof here since it will also be used below.
\begin{prop} The $A_\infty$ quasi-equivalence class of $\F\left(I\right)$ is independent of choices of cofinal sequences or isotopy classes. 
\end{prop}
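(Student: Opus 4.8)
The statement asserts that $\F(I)$ (for $I = \mathbf{A}$, $(m,k)$, or $\W$) does not depend, up to $A_\infty$ quasi-equivalence, on the choice of set of representatives $\{L_\sigma\}_{\sigma \in \Sigma_I}$ or on the choice of $I$-cofinal sequences $H_{\sigma,0}, H_{\sigma,1}, \ldots$ of perturbing Hamiltonians. The plan is to follow the proof of Proposition 3.39 of \cite{GPS1} verbatim in spirit, adapting it to our localization setup. The essential mechanism is that localization of a directed $A_\infty$ category at the continuation morphisms produces a category whose morphism spaces are colimits of Floer complexes (as recorded in the proposition of Ganatra--Pardon--Shende just cited), and colimits over cofinal sub-systems agree.

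\emph{Step 1: Reduce to a common refinement of two sets of choices.} Given two collections of data $(\{L_\sigma\}_{\sigma \in \Sigma}, \{H_{\sigma,i}\})$ and $(\{L'_{\sigma'}\}_{\sigma' \in \Sigma'}, \{H'_{\sigma',i}\})$, I would form a third collection whose set of Lagrangians is (a subset of) $\{L_\sigma\} \cup \{L'_{\sigma'}\}$, large enough to contain at least one representative of each isotopy class, and whose perturbing sequences are chosen so that all finite subcollections are in correct position; this is possible because the space of choices of perturbing Hamiltonians putting a finite collection in correct position is nonempty (by Lemma \ref{transint} and the discussion of correct position, exactly as in \cite{GPS1}). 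It then suffices to show that each of the two original categories is quasi-equivalent to this common one, so we may assume one of the two sets of choices \emph{contains} the other: either $\{L'_{\sigma'}\} \subseteq \{L_\sigma\}$ with the same perturbing sequences on the shared Lagrangians, or the two have the same Lagrangians and one sequence $H'_{\sigma,i}$ is a subsequence (equivalently, a cofinal refinement) of $H_{\sigma,i}$.

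\emph{Step 2: Enlarging the set of Lagrangians.} If $\{L'_{\sigma'}\} \subseteq \{L_\sigma\}$, the inclusion of directed categories $\mathcal{O}'_I \hookrightarrow \mathcal{O}_I$ sends quasi-units to quasi-units, hence descends to a functor $\F'(I) \to \F(I)$. This functor is fully faithful because $H^*\Hom_{\F(I)}(L_\sigma, L_{\sigma'}) \cong \varinjlim_i \wh{HF}^*(L_\sigma^{j+i}, L_{\sigma'}^j)$ is computed by the \emph{same} directed system on both sides (the continuation sequences agree on the shared Lagrangians). It is essentially surjective, and in fact a quasi-equivalence, because every $L_\sigma$ is compactly-supported-Hamiltonian-isotopic to some $L'_{\sigma'}$, and by Lemma \ref{comph} such an isotopy induces a quasi-isomorphism of Floer complexes compatible with continuation elements; hence $L_\sigma$ is isomorphic in $\F(I)$ to an object of the image of $\F'(I)$. (Here one uses Lemma \ref{qisomcont} to see that multiplication by the relevant continuation elements is a quasi-isomorphism, so the colimit is unchanged.)

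\emph{Step 3: Refining the cofinal sequences; the main obstacle.} The case where the Lagrangians are fixed but one sequence $H'_{\sigma,i}$ is a cofinal subsequence of $H_{\sigma,i}$ is where the real content lies. The colimit $\varinjlim_i \wh{HF}^*(L_\sigma^{j+i}, L_{\sigma'}^j)$ over a cofinal subsequence equals the colimit over the full sequence, \emph{provided} the transition maps (multiplication by continuation elements) are cofinal-compatible, i.e. that $c_{(j,i)} = c_{(j,k)} \cdot c_{(k,i)}$ on cohomology for $i < k < j$ — which is exactly Lemma \ref{decompofh}(c). The subtlety, and what I expect to be the hardest point to get right, is organizing this into an honest $A_\infty$ quasi-equivalence rather than just an isomorphism on cohomology-level morphism spaces: one must produce an $A_\infty$ functor between the localizations and check it respects all higher products. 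The clean way, following \cite{GPS1}, is to invoke the general fact that the Lyubashenko--Ovsienko quotient $\mathrm{Tw}\,\mathcal{O}_I/\mathrm{Cones}(Q_I)$ only depends on the full subcategory generated by the objects and the set $Q_I$ of morphisms being inverted up to quasi-isomorphism, together with the observation that passing to a cofinal subsequence of each $H_{\sigma,i}$ does not change the set of isomorphism classes of objects in the quotient nor the localized morphism spaces. So the argument is: build the common enlargement as in Step 1, note both inclusions of directed categories send $Q$ into $Q$, invoke the colimit formula and Lemma \ref{decompofh}(c) to see the induced functors are fully faithful, and invoke Lemma \ref{comph}/\ref{qisomcont} for essential surjectivity. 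Assembling these into the statement that all choices yield quasi-equivalent categories completes the proof; I would cite \cite{GPS1} and \cite{LyOv} for the formal properties of localization that make Steps 2 and 3 go through without re-deriving them.
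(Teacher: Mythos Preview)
Your proof is correct and follows the same strategy as the paper: build a common enlargement, show each original category embeds as a quasi-equivalence, and cite \cite{GPS1} for the colimit formula and the formal properties of the Lyubashenko--Ovsienko localization. The organizational difference is that the paper handles both sources of choice (isotopy representatives and cofinal sequences) in a single stroke by allowing the directed category to be indexed by a \emph{partially} ordered set rather than by $\N$: one forms $\mathcal{O}''_I$ with object set essentially $\Z_+ \times (\mathcal{O}_I \sqcup \mathcal{O}'_I)$ under the lexicographic partial order, chooses new cofinal sequences for every object in this larger poset, and then both inclusions $\F(I) \to \F''(I)$ and $\F'(I) \to \F''(I)$ are quasi-equivalences directly by Equation \eqref{isos}. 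This partial-order device means one never has to arrange for the Hamiltonian sequences on shared Lagrangians to literally coincide or to be subsequences of one another, which is the bookkeeping your Step~1 reduction requires in order to land in exactly one of your Steps~2 or~3. Your route is a valid alternative and makes the two mechanisms (more Lagrangians vs.\ finer sequences) more explicit; the paper's is shorter because cofinality in the poset absorbs both at once.
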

\begin{proof}The idea is that given two choices $\Sigma_I$, $\Sigma_I'$ and choices of cofinal sequences, we can embed each Fukaya category into a larger category containing both categories, and each embedding will be a quasi-equivalence. 

The observation in \cite{GPS1} is that one can define a wrapped category with only a partial ordering on its elements. Let $N$ be a partially ordered set, and let $\mathcal{O}_I$ be a partially ordered set consisting of Lagrangians $\left(L_{\sigma},\nabla, a\right)$ with $a \in N$, with associated perturbing Hamiltonians $H_{\sigma, a}.$ Assume that $\mathcal{O}_I$ has the property that every totally ordered collection in $\mathcal{O}_I$ is in correct position. Assume also that for each Lagrangian $L$, there is a sequence $L < L^{\left(1\right)} < L^{\left(2\right)} < \ldots $ which is $I$-cofinal. Let $Q_I$ be a set of quasi-units so each continuation element corresponding to the positive perturbations $L^{\left(i\right)} \to L^{\left(j\right)}$ is an element of $Q_I$. Then we may localize $\mathcal{O}_I$ at $Q_I$ to construct a category $\F\left(I\right)$. For each $L,L'\in \O_I$, the maps below are isomorphisms (Lemma 3.37, \cite{GPS1}):
\begin{equation}\label{isos} \lim_{i} \whHF^*\left(L^{\left(i\right)}, L\right) \to \lim_{i} H^*\Hom_{\F\left(I\right)} \left(L^{\left(i\right)}, L\right) \leftarrow H^*\left(\Hom_{\F\left(I\right)}\left(L,L'\right)\right). \end{equation}

Now, let $\mathcal{O}_I$, and $\mathcal{O}_I'$ denote two such categories constructed with two choices of $\Sigma_I$. Let the objects of $\mathcal{O''}_I$ consist of the set $\Z_{+} \cup \mathcal{O}_I \sqcup \mathcal{O}_I$ with the lexicographical partial order. Find a sequence of Hamiltonians for each object in $\mathcal{O}_I \cup \mathcal{O}'_I$ which is cofinal with respect to this partial ordering; let $Q_I''$ be the corresponding continuation elements. Let $\F''\left(I\right)$ be the localization of $\mathcal{O}''$ at $Q_I''\cup Q_I \cup Q_{I}''$. Then there are natural functors $\F\left(I\right) \to \F''$ and $\F'\left(I\right) \to \F''\left(I\right)$, each of which is a quasi-equivalence by Equation \eqref{isos}.
\end{proof}

\subsection{Localization functors}Let $\A=\left(\left(m_0,k_0\right),\left(m_{\infty}, m_{k_\infty}\right)\right)$. We now define \emph{localization functors}
\[ \F \left(\mathbf{A}\right) \to \F \left(m_0, k_0\right) \to \F_{\W}. \]

The functor $\F\left(\mathbf{A}\right) \to \F \left(m_0, k_0\right)$ is constructed as follows: given $\mathcal{O}_\A$ and a set of quasi-units $Q_\A$, we construct a category $\mathcal{O}_{\left(m_0, k_0\right)}$ with a partial ordering and a set of continuation morphism $Q_{\left(m_0,k_0\right)}$ such that each element $\left(L,\nabla, i\right)\in \mathcal{O}_\A$ is an element of $\mathcal{O}\left({m_0,k_0}\right)$, and $Q_\A \subset Q_{\left(m_0,k_0\right)}$, and the choice of strip-like ends and complex structures are compatible. We can then use this set of data to define $\F\left(m_0, k_0\right)$. Similarly, we choose $\mathcal{O}_\W$ to include $\mathcal{O}_{\left({m_0,k_0}\right)}$ and $Q_{\left(m_0,k_0\right)}  \subset  Q_\W$. Then there exist localization morphisms 
 \begin{align} \label{loca} \iota_{A}: \F \left(\mathbf{A}\right) & \to \F\left(m_0, k_0\right); \\  
					 \iota_{\left(m,k\right)}: \F\left(m,k\right) &\to \F_{\W} \end{align}
which are the inclusion on objects. 
(Compare \cite{GPS2}, Section 5.)

\begin{remark} \label{inversion}Let $\Psi$ be the symplectomorphism $Y \to Y$ given by  $\left(t,s, \phi_t, \phi_s\right) \mapsto \left(-t, s, -\phi_t, \phi_s\right)$. Then $\Psi$ induces equivalences of categories  \[\F_{\W} \to \F_{\W}\] and \[\F\left(\left(m_0,k_0\right), \left(m_{\infty},k_{\infty}\right)\right) \to \F\left(\left(m_\infty,k_\infty\right), \left(m_{0},k_{0}\right)\right).\]

When all categories are defined via compatible choices, the following diagram commutes:

\[
\begin{tikzcd}
&\F\left(m_0, k_0\right)\arrow{rd}{i_{\left(m_0,k_0\right)}}&\\
\F\left(\mathbf{A}\right)  \arrow{ru}{i_A} \arrow{rd}{i_A \circ \Psi^*} && \F_{\W} \\
&\F\left(m_\infty, k_{\infty}\right) \arrow{ru}[swap]{\Psi^* \circ i_{\left(m_\infty,k_\infty\right)}} & \\
\end{tikzcd}\label{pullbackdiag}
\]
In particular, there is a diagram of categories
\[
\begin{tikzcd}
&H^0 \F\left(m_0, k_0\right)\arrow{rd}&\\
H^0\F\left(\mathbf{A}\right)  \arrow{ru} \arrow{rd}&& H^0 \F_{\W} \\
&H^0 \F\left(m_\infty,k_{\infty}\right) \arrow{ru} & \\
\end{tikzcd}\label{pullbackdiag2}
\]
independent of the choices involved in the creation. 
\end{remark}

\begin{remark}\label{catequiv} \begin{enumerate}[(a)]
\item For all $n \in \Z$, there is an equivalence of $A_{\infty}$ categories \[I_n: \F\left(m_0, k_0\right) \to \F\left(m_0,k_0+n m_0\right)\] for $n \in \Z$.
\item For all $n \in \Z$, there is an equivalence of $A_{\infty}$ categories \[ I_n: \F\left(\left(m_0,k_0\right),\left(m_\infty,k_{\infty} n\right)\right)\to \F\left(\left(m_0,k_0+ n m_0\right),\left(m_{\infty}, k_{\infty}-n m_{\infty}\right)\right).\] Moreover, for each $n$ the following diagram commutes: 
\begin{equation}
\begin{tikzcd} 
& \F\left(\left(m_0,k_0\right),\left(m_\infty,k_{\infty}\right)\right)\arrow{r}{I_n} \arrow[d]  &\F\left(\left(m_0,k_0+ n m_0\right),\left(m_{\infty}, k_{\infty}-n m_{\infty}\right)\right)\arrow{d}\\
& \F\left(m_0, k_0\right) \arrow{r}{I_n} & \F\left(m_0,k_0+n m_0\right) \\ 
 \end{tikzcd}
\end{equation}
where the vertical arrows are the localization functors.
\end{enumerate}The equivalences $I_n$ are induced by the symplectomorphism $Y \to Y$ given by \[\left(t,s,\phi_t, \phi_s\right) \mapsto \left(t, s- n t, \phi_t + n \phi_s, \phi_s\right);\] some slight care must be taken to ensure that the perturbing Hamiltonians used to define the category are sent to perturbing Hamiltonians.
 \end{remark}

\begin{defn} For $\tau \in \C$ with $\im \tau > 0 $, define $H^0 \F_\tau \left(I\right)$ to be the $\C$-linear cohomology category where the morphisms are defined via the map $\ev_\tau: \Lambda_{e^{-2 \pi i \im \tau}} \to \C$. \end{defn}

The following lemma will be essential as we perform calculations in our categories:
\begin{prop}\label{htc} There is a quasi-isomorphism
\[  \widehat{C F}_{\tau}^*\left(\psi_{H}\left(L_{\sigma}\right),\psi_{H'}\left(L_{\sigma'}\right)\right) \cong \Hom_{\F_{\tau} \left(I\right)}^*\left(L_{\sigma}, L_{\sigma}'\right) \]
for any $I$-type $H,H'$ such that the pair $\left(\psi_{H}\left(L_{\sigma}\right),\psi_{H'}\left(L_{\sigma'}\right)\right)$ is in correct position and  $H$ and $H'$ satisfy the conditions of Lemma \ref{qisomcont}. \end{prop}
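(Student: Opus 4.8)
The plan is to reduce the statement to the colimit description of Hom-spaces already recorded just above (the Ganatra--Pardon--Shende lemma, Proposition stating $H^*\Hom_{\F(I)}(L_\sigma,L_{\sigma'})\cong\varinjlim_i \wh{HF}^*(L_\sigma^{j+i},L_{\sigma'}^j)$), combined with the cofinality of the chosen Hamiltonian sequences and the quasi-isomorphism statement of Lemma \ref{qisomcont}. The point is that a single pair $(\psi_H(L_\sigma),\psi_{H'}(L_{\sigma'}))$ in correct position, with $H,H'$ satisfying the hypotheses of Lemma \ref{qisomcont}, computes the same colimit as the chosen cofinal system, provided we can compare it to that system via continuation maps.

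First I would set up the comparison. Fix the cofinal sequences $H_{\sigma,0},H_{\sigma,1},\dots$ and $H_{\sigma',0},H_{\sigma',1},\dots$ used in the definition of $\F(I)$, so that $H^*\Hom_{\F(I)}(L_\sigma,L_{\sigma'})\cong \varinjlim_i \wh{HF}^*(\psi_{H_{\sigma,j+i}}(L_\sigma),\psi_{H_{\sigma',j}}(L_{\sigma'}))$. Given the pair $(\psi_H(L_\sigma),\psi_{H'}(L_{\sigma'}))$ from the statement, use $I$-cofinality of the sequences to find indices $i$ large enough that $(\psi_{H_{\sigma,i}}(L_\sigma),\psi_H(L_\sigma))$ and $(\psi_{H'}(L_{\sigma'}),\psi_{H_{\sigma',i'}}(L_{\sigma'}))$ are in correct position, and such that the relevant $\alpha_\pm$-differences exceed the constants $K_\pm(L_\sigma,L_{\sigma'})$ of Lemma \ref{transint} on the wrapped ends. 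Then Lemma \ref{qisomcont} gives that multiplication by the continuation elements $c_{H\to H_{\sigma,i}}$ and $c_{H_{\sigma',i'}\to H'}$ induces quasi-isomorphisms
\[
\wh{CF}^*(\psi_H(L_\sigma),\psi_{H'}(L_{\sigma'})) \xrightarrow{\ \sim\ } \wh{CF}^*(\psi_{H_{\sigma,i}}(L_\sigma),\psi_{H'}(L_{\sigma'})) \xrightarrow{\ \sim\ } \wh{CF}^*(\psi_{H_{\sigma,i}}(L_\sigma),\psi_{H_{\sigma',i'}}(L_{\sigma'})),
\]
and by Lemma \ref{decompofh}(c) these are compatible with the continuation maps internal to the cofinal system, so on cohomology $\wh{HF}^*(\psi_H(L_\sigma),\psi_{H'}(L_{\sigma'}))$ maps isomorphically into the directed system and hence onto its colimit. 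Combining with the Ganatra--Pardon--Shende proposition yields the claimed quasi-isomorphism $\wh{CF}^*_\tau(\psi_H(L_\sigma),\psi_{H'}(L_{\sigma'}))\cong\Hom^*_{\F_\tau(I)}(L_\sigma,L_{\sigma'})$ after applying $\ev_\tau$; here one uses that Lemma \ref{qisomcont} already provides the statement at the level of the $\Lambda_{\underline r}$-completions, so $\ev_\tau$ may be applied throughout.

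The steps, in order: (1) recall the colimit formula and the directed system of the chosen cofinal sequences; (2) locate $(\psi_H(L_\sigma),\psi_{H'}(L_{\sigma'}))$ in this picture by a pair of continuation morphisms, checking the correct-position and $\alpha_\pm>K_\pm$ hypotheses are met for sufficiently deep members of the cofinal sequences; (3) invoke Lemma \ref{qisomcont} to see these continuation morphisms are quasi-isomorphisms on the appropriate completions, and Lemma \ref{decompofh}(c) to see they commute with the system's structure maps; (4) conclude that the single pair computes the colimit, hence the morphism space in $\F(I)$; (5) apply $\ev_\tau$ to pass to $\F_\tau(I)$.

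The main obstacle I anticipate is step (2)--(3): ensuring that the hypotheses of Lemma \ref{qisomcont} can genuinely be arranged simultaneously — that is, that one can interpolate between the \emph{given} $H,H'$ and members of the \emph{chosen} cofinal sequences through correct-position pairs with the required inequalities on the quadratic coefficients at the wrapped ends. This is essentially a bookkeeping argument about the partial order on perturbing Hamiltonians and the constants $K_\pm(L,L')$, but it is where the various cofinality and transversality conditions must be knit together carefully, and one must be slightly careful that the local systems $\nabla,\nabla'$ are carried along (invoking Proposition \ref{kingprop} and the complexified action corrections where monodromies enter). Everything downstream of that is a formal consequence of the colimit characterization.
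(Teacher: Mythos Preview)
Your proposal is correct and follows essentially the same route as the paper's proof: use $I$-cofinality to interpolate the given pair $(\psi_H(L_\sigma),\psi_{H'}(L_{\sigma'}))$ into the chosen directed system via continuation elements, invoke Lemma~\ref{qisomcont} to see these continuation maps are quasi-isomorphisms, and then appeal to the colimit description (Equation~\eqref{isos}) to identify the result with $\Hom_{\F_\tau(I)}^*(L_\sigma,L_{\sigma'})$. The paper's argument is slightly leaner---it only perturbs on one side at a time and does not explicitly invoke Lemma~\ref{decompofh}(c) or worry about the local systems---but the skeleton is identical, and your extra care about the $\alpha_\pm>K_\pm$ bookkeeping and the passage via $\ev_\tau$ is appropriate.
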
 
\begin{proof}  By the $I$-cofinality of the sequences $\{L_{\sigma}^{\left(n\right)}\}_{n \in \N}$ we can always find $n > 0$ such that the set 
\[\left(\psi_{H_{\sigma, n}}\left(L_{\sigma}\right), \psi_{H}\left(L_{\sigma}\right), \psi_{H'}\left(L_{\sigma'}\right)\right)\] is in correct position. 
Then by Lemma \ref{qisomcont}, multiplication by the continuation elements give quasi-isomorphisms
\[  \widehat{C F}_\tau^*\left(\psi_{H}\left(L_{\sigma}\right),\psi_{H'}\left(L_{\sigma'}\right)\right) \to  \widehat{C F}_\tau^*\left(L_{\sigma}^n,\psi_{H'}\left(L_{\sigma'}\right)\right) \to \whCF_\tau^*\left(L_{\sigma}^n,L_{\sigma'}\right). \]
Moreover, by the same lemma we know that for all $n' > n$, the maps  \[\whCF_\tau^*\left(L_{\sigma}^n,L_{\sigma'}\right) \to \whCF_\tau^*\left(L_{\sigma}^{n'},L_{\sigma'}\right)\] are quasi-isomorphisms. Thus we can use Equation \eqref{isos} to deduce that 
the map \[ \whCF_\tau^*\left(L_{\sigma}^n,L_{\sigma'}\right) \to \Hom_{\F_\tau\left(I\right)}^*\left(L_{\sigma}, L_{\sigma'}\right)\]
is a quasi-isomorphism.
 \end{proof}

\begin{remark}[Stops]\label{stopform} The category $\F\left(\mathbf{A}\right)$ could equivalently be defined in terms of stops \`{a} la Sylvan (\hspace{1sp}\cite{SyPW}) or Ganatra, Pardon and Shende (\hspace{1sp}\cite{GPS2}), after reworking some definitions to allow for non-Liouville manifolds.The``boundary at $\infty$" of $Y$ is given by 
\[\left( \{-1\} \sqcup \{1\}\right) \times \left(S^1\right)^3= \left(S^1\right)^3 \sqcup \left(S^1\right)^3\] with coordinates $\left(s, \phi_t, \phi_s\right)$ at each end. We insert two $\left(S^1\right)^2$ stops given by the set of points 
\[ \{ \left(s, \phi_t, \phi_s\right) \mid  m_0 \phi_t + k_0 \phi_s =0 \}. \]
at the negative end and 
\[ \{ \left(s, \phi_t, \phi_s\right) \mid  -m_\infty \phi_t + k_\infty \phi_s =0\}.  \]
at the positive end. This reflects the Landau-Ginzburg heuristics found in Section \ref{LG}. In this setting, the localization maps can be thought of as ``stop removal" maps.

 \end{remark}

\section{Categories of line bundles}\label{lbcx}
The main theorems of this work exhibit a correspondence between categories of Lagrangian sections in $\F$ and categories of line bundles on the complex surfaces $S_{\tau}(\A)$, $D_{\tau}\left(m,k\right)$, and $\C^* \times E_{\tau}$. For a complex manifold $X$, $\Pic_{\an}\left(X\right)$ can be given the structure of a category by taking $\Hom\left(\L, \L'\right)=  \Ext^0\left(\L, \L'\right)$, where the composition of morphisms is given by the Yoneda product. The categories of line bundles we consider are subcategories of $\Pic_{\an}\left(X\right)$; in the case of the open surfaces we consider the category of all line bundles which can extend to a compactification $S_{\tau}(\A)$, and in the case of the compact surfaces we take our category to be the entire analytic Picard group.  

\subsection{Open surfaces}\label{opsurf} We first investigate line bundles on the open elliptic surfaces $D_{\tau}\left(m, k\right)$ and $\C^* \times E_{\tau}$; we will exhibit admissible Lagrangians mirror to these line bundles. Our methods are inspired by Polischuk and Zaslow's work on mirror symmetry for the elliptic curve (\hspace{1sp}\cite{PZ}), and we largely follow the notation in this work throughout. \footnote{\label{foot}The most notable difference is that we reverse the sign conventions in the mirror symmetry correspondence: where we consider Lagrangian $L$ on $E_{\tau}$ in this work, the authors of \cite{PZ} would consider the Lagrangian $-L$.  We also change notation for the translation factor: our $\tau \eta + \nu$ is their $\tau \alpha+ \beta$.}
%what do we mean by this reversal?
Let $\L_{\tau,0}$ be the degree one line bundle on $E_\tau$ called $L$ in \cite{PZ} with total space
\[ \mathrm{Tot}\, \left( \L_{\tau,0}\right) =\left(\C^* \times \C / \left(x,t \right) \sim \left(e^{2\pi i \tau} x, e^{-\pi i \tau} x^{-1} t \right)\right).\] 

\subsubsection{Definition of the line bundles $\L_\tau\left(m,k,d,\xi\right)$} For a pair of coprime integers $\left(m,k\right)$ with $m > 0$, let $g_{m}: D_\tau\left(m,k\right) \to \C \times E_{m\tau}$ denote the degree-$m$ map defined by $\left(v,w\right) \mapsto \left(v^m,w^m\right)$. As in Section \ref{classi}, let $i_{m,k}: \C^* \times E_\tau \to D_\tau\left(m,k\right)$ denote the inclusion map. Now consider the map
\[\left(\pi_{E_{m \tau}} \circ g_m \circ i_{m,k}\right)^*:  \Pic_{\an} E_{m \tau} \to \Pic_{\an} \C^* \times E_{\tau}. \]  
Recall that any degree $d$ line bundle  $\L \in \Pic_{\an} E_{m \tau}$ can be written as  $T_{\xi\inv}^*\left(\L_{m\tau, 0}\right) \L_{m \tau, 0}^{d-1}$ where $T_{\xi\inv}$ denotes the translation map $z \mapsto \xi\inv z$ (written multiplicatively), and $\xi \in \C^*$.
Let $(\eta, \nu)$ be in $\R$. Define
 \begin{align*} \overline{\L_\tau\left(m,k,d, e^{2\pi i (\tau \eta+\nu)}\right)}&= \left(\pi_{E_{m \tau}} \circ g_m \right)^* \left(T_{e^{-2\pi i m(\tau \eta+\nu)}}^*\left(\L_{m \tau, 0}\right)\cdot \L_{m \tau, 0}^{d-1}\right) \in \Pic_{\an} D_\tau\left(m,k\right) \\
\L_\tau\left(m,k,d, e^{2\pi i (\tau \eta+\nu)}\right) &= \left(\pi_{E_{m \tau}} \circ g_m \circ i_{m,k}\right)^*\left( T_{e^{-2\pi im (\tau \eta+\nu)}}^*\left(\L_{m \tau, 0}\right)\cdot \L_{m \tau, 0}^{d-1} \right) \in \Pic_{\an} \C^* \times E_{\tau}. \end{align*}

In coordinates, \[ \left(g_m \circ i_{m,k}\right)\left(z,x\right)= \left(z, z^{-k} x^m\right).\]  
Let $\xi= e^{2\pi i (\tau \eta+\nu)}$.  Letting $i_{z}: E_{\tau} \to \C^* \times E_{\tau}$ denote the inclusion map of the fiber over $z \in \C^*$, we have the following: 
\begin{equation}\label{fiber-wise}i_{z}^*( {\L_\tau\left(m,k,d,\xi\right)})=  T_{\xi\inv z^{k d}}^*\left( \L_{\tau,0} \right)\cdot \L_{\tau,0}^{m d-1}. \end{equation}
We use this fiber-wise description to extend the notation ${\L_\tau\left(m,k,d,\xi\right)}$ to the case where $\left(m,k\right)$ is not primitive in $\Z^2$, so ${\L_\tau\left(m,k,d,\xi\right)}$ describes a line bundle that restricts in this way to each fiber; this is justified with the following lemma:

%Let 
% \[\left(\pi_{E_{m \tau}} \circ g_m \circ i_{m,k}\inv\right)^* \left(\L\right)|_{z \times E}=T_{\left(z^{-k/m}\right)}^* \pi_{m}^*\L\] where $T_r$ denotes the translation map on the elliptic curve: $T_{r}\left(x\right)=rx$. Now, write $\L$ as $T_{\xi}^*\left(\L_0\right) \L_0^{d-1}$ for some $d \in Z$, $\xi=\exp\left({2 \pi i \left(\eta  \tau +  \nu\right)\right)}$ where $\eta, \nu \in \R$ \left(the ambiguity in the choice of $\nu$ will not matter\right). Let \[g_m^*\left(\L\right) = \overline{\L\left(m,k,d, \xi\right)} \in \Pic D\left(m,k\right) \] and  \[\left(g_m \circ i_{m,k}\inv\right)^* \left(\L\right) = {\L\left(m,k,d, \xi\right)}.\]
%In coordinates

%If we demand that $\left(m,k\right)$ are coprime and that $m > 0, the label $\left(m,k,d,\xi\right)$ is unique when $m,k

 \begin{lemma}\label{triv} If a line bundle $\mathcal{L}$ on $\C^* \times E_{\tau}$ satisfies $i^*_z \L= \mathcal{O}_{E}$ for all $z$, then $\mathcal{L} = \mathcal{O}$. \end{lemma}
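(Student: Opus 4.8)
The plan is to show that a line bundle $\mathcal{L}$ on $\C^*\times E_\tau$ which is fiberwise trivial is pulled back from the base $\C^*$, and then to observe that $\Pic_{\an}(\C^*)=0$ since $\C^*$ is Stein and contractible. The key point is a cohomology-and-base-change / seesaw-type argument adapted to the analytic (rather than algebraic) setting.

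First I would consider the projection $p:\C^*\times E_\tau\to \C^*$ and study the pushforward sheaf $p_*\mathcal{L}$ together with the higher direct image $R^1p_*\mathcal{L}$. Since each fiber $E_\tau$ is compact and $i_z^*\mathcal{L}\cong\mathcal{O}_{E_\tau}$ has $h^0=h^1=1$ and (for a degree-zero bundle) these dimensions are constant in $z$, the base change theorem for proper holomorphic maps (Grauert's theorem; the analytic analogue of cohomology and base change, valid because $\C^*$ is reduced and the fiberwise cohomology dimensions are locally constant) shows that $p_*\mathcal{L}$ is a line bundle on $\C^*$ whose formation commutes with base change, i.e. $(p_*\mathcal{L})_z\otimes\kappa(z)\xrightarrow{\sim}H^0(E_\tau, i_z^*\mathcal{L})$ for every $z$. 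Because $\Pic_{\an}(\C^*)=0$ (indeed $H^1(\C^*,\mathcal{O}^*)=0$ as $\C^*$ is a noncompact Riemann surface, hence Stein, with $H^2(\C^*,\Z)=0$), we may trivialize $p_*\mathcal{L}\cong\mathcal{O}_{\C^*}$. The adjunction unit $p^*p_*\mathcal{L}\to\mathcal{L}$ then gives a global section $\sigma$ of $\mathcal{L}$ which, by the base-change identification, restricts on each fiber $E_\tau\times\{z\}$ to a nonzero element of $H^0(E_\tau,i_z^*\mathcal{L})\cong H^0(E_\tau,\mathcal{O})=\C$. A nowhere-zero section of $i_z^*\mathcal{L}\cong\mathcal{O}_{E_\tau}$ trivializes it; so $\sigma$ is nowhere vanishing on all of $\C^*\times E_\tau$, giving $\mathcal{L}\cong\mathcal{O}$.

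The main obstacle is making the base-change step rigorous in the holomorphic-analytic category: the standard statements of Grauert's direct image and base-change theorems are usually phrased for proper maps of complex spaces and require either coherence input or a semicontinuity/constancy hypothesis on fiber cohomology, and one must check that fiberwise triviality indeed forces the relevant dimensions $h^0(E_\tau,i_z^*\mathcal{L})$ to be the locally constant value $1$ (this is immediate here, since \emph{every} fiber restriction is literally $\mathcal{O}_{E_\tau}$, not merely a degree-zero bundle). An alternative, perhaps cleaner, route that sidesteps heavy machinery: cover $\C^*$ by simply connected Stein opens $U$; on $U\times E_\tau$ one can argue directly that $\mathcal{L}|_{U\times E_\tau}$ is trivial — e.g. by noting $H^1(U\times E_\tau,\mathcal{O})\cong H^1(E_\tau,\mathcal{O})$ and tracking the exponential sequence, or by constructing the trivializing section fiberwise and checking holomorphic dependence on the parameter — and then the transition data between such trivializations defines a class in $H^1(\C^*,\mathcal{O}^*)=0$, so the local trivializations glue. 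Either way, once local triviality over a Stein cover of the base is established, vanishing of $\Pic_{\an}(\C^*)$ finishes the argument. I would also remark that this lemma is exactly the ``seesaw principle'' and that the same proof applies verbatim with $E_\tau$ replaced by any compact complex manifold $Z$ with $H^0(Z,\mathcal{O})=\C$ and $\C^*$ replaced by any contractible Stein base.
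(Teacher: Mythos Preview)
Your proposal is correct. The paper takes a slightly more hands-on route that avoids invoking Grauert's base-change theorem: it restricts $\mathcal{L}$ to the horizontal slice $\C^*\times\{0\}$, chooses a nowhere-vanishing section $s$ there using $\Pic_{\an}(\C^*)=0$, and then extends $s$ to all of $\C^*\times E_\tau$ by letting $s_z\in H^0(E_\tau,i_z^*\mathcal{L})$ be the unique section with $s_z(0)=s(z)$; the resulting $\sigma(z,t)=s_z(t)$ is then a global nowhere-vanishing section. This is really the same seesaw argument stripped to its bare bones: the paper's normalization along the slice $\{t=0\}$ plays the role that your trivialization of $p_*\mathcal{L}$ does, and uniqueness of the fiberwise extension substitutes for the adjunction unit. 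Your version has the advantage that holomorphic dependence of the section on $z$ comes for free from the sheaf-theoretic formulation, whereas the paper's argument leaves this verification implicit; on the other hand, the paper's proof is shorter and does not need to cite the analytic base-change machinery. Your alternative via a Stein cover is also fine and is essentially a localized form of the same idea.
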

\begin{proof} This follows from the fact that $\Pic_{\an} \C^* = 0$. Let $\mathcal{L}$ be such a line bundle. Consider the restriction $\mathcal{L}|_{\C^* \times \{0\}}$; let $s: \C^* \times \{0\} \to \mathcal{L}$ be a non-vanishing section. On each fiber there exists a unique section $s_z \in H^0(E, i_z^*\mathcal{L})$ such that $s_z\left(0\right) = s(z)$. Then we can define a global non-vanishing section $\sigma \in H^0(\C* \times E, \mathcal{L})$ by $\sigma\left(z,t\right) = s_z\left(t\right)$. \end{proof}

\begin{remark} \label{ambig} Allowing $\left(m,k\right)$ to be non-coprime is useful, but it introduces a source of non-uniqueness in the labelling beyond that of the choice of $\xi$, namely that $\L\left(m j, k j,d,\xi\right)=\L\left(m ,k ,d j,\xi\right)$ for any integer $j$. {In particular, $\L\left(0, 0, d,\xi\right)=\L\left(m, k, 0,\xi\right)$ for any $(m,k)$.} Note also that this implies that we may always rewrite the label on $\L\left(m j, k j,d,\xi\right)=\L\left(m ,k ,d j,\xi\right)$ so that $\left(m,k\right)$ are co-prime and $m>0$.  \end{remark}

\begin{lemma}\label{mult} Assume that either $m_1 d_1 + m_2 d_2 \neq 0$, or  $m_1 d_1 + m_2 d_2 = k_1 d_1+k_2 d_2 = 0$. Then
\[ \L\left(m_1,k_1,d_1,\xi_1\right) \otimes \L\left(m_2,k_2,d_2,\xi_2\right)=\L\left(m_1 d_1+m_2 d_2,  k_1 d_1+k_2 d_2, 1, \xi_1\xi_2\right) .\]
In the case where $d_1=d_2=d$,
 \[ \L\left(m_1,k_1,d,\xi_1\right) \otimes \L\left(m_2,k_2,d,\xi_2\right)=\L\left(m_1+m_2,k_1+k_2, d, \xi_1 \xi_2\right).\] 
 In the case where $k_1=k_2=k$ and $m_1=m_2=m$, 
  \[ \L\left(m,k,d_1,\xi_1\right) \otimes \L\left(m,k,d_2,\xi_2\right)=\L\left(m,k,d_1+d_2, \xi_1\xi_2\right).\] 
\end{lemma}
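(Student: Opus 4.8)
The plan is to reduce everything to the fiber-wise description of Equation \eqref{fiber-wise} together with Lemma \ref{triv}. Two line bundles on $\C^* \times E_\tau$ agree if and only if their tensor product with the inverse of one of them is trivial, and by Lemma \ref{triv} this can be checked fiber by fiber over $\C^*$. So the strategy is: restrict both sides to the fiber $E_\tau$ over an arbitrary point $z \in \C^*$, use \eqref{fiber-wise} to express each restriction as a product of a translate of $\L_{\tau,0}$ with a power of $\L_{\tau,0}$, and verify the identity of these fiber-wise bundles using the standard multiplication law for line bundles on the elliptic curve $E_\tau$ in terms of their degree and translation parameter (the ``theta-group'' multiplication, i.e. $T^*_{\xi_1}\L_{\tau,0}^{a} \otimes T^*_{\xi_2}\L_{\tau,0}^{b}$ is a translate of $\L_{\tau,0}^{a+b}$ by a parameter determined by $\xi_1, \xi_2$ and the degrees — this is exactly the content of the computations in \cite{PZ} that are invoked implicitly throughout Section \ref{lbcx}).

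First I would write down, via \eqref{fiber-wise}, that
\[ i_z^*\, \L_\tau(m_j, k_j, d_j, \xi_j) = T^*_{\xi_j^{-1} z^{k_j d_j}}\!\left(\L_{\tau,0}\right) \cdot \L_{\tau,0}^{m_j d_j - 1}, \qquad j = 1,2, \]
so that the restriction of the left-hand side of the claimed identity to the fiber over $z$ is
\[ T^*_{\xi_1^{-1} z^{k_1 d_1}}\!\left(\L_{\tau,0}\right) \cdot T^*_{\xi_2^{-1} z^{k_2 d_2}}\!\left(\L_{\tau,0}\right) \cdot \L_{\tau,0}^{(m_1 d_1 + m_2 d_2) - 2}. \]
Then I would apply the elliptic-curve multiplication law: a product $T^*_{a}\L_{\tau,0} \cdot T^*_{b}\L_{\tau,0}$ equals $T^*_{ab}\L_{\tau,0} \cdot \L_{\tau,0}$ when both have degree one (up to the isomorphism identifying bundles of the same degree and translation parameter — here one uses that a degree-$e$ bundle is determined by $e$ and the image of its translation parameter in $E_\tau$, with the group law $E_\tau$ acting), so the displayed expression becomes
\[ T^*_{(\xi_1 \xi_2)^{-1} z^{k_1 d_1 + k_2 d_2}}\!\left(\L_{\tau,0}\right) \cdot \L_{\tau,0}^{(m_1 d_1 + m_2 d_2) - 1}, \]
which by \eqref{fiber-wise} is exactly $i_z^*\, \L_\tau(m_1 d_1 + m_2 d_2,\, k_1 d_1 + k_2 d_2,\, 1,\, \xi_1 \xi_2)$. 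Since $z$ was arbitrary, Lemma \ref{triv} (applied to the tensor product of the left side with the inverse of the right side) gives the first identity. The hypothesis $m_1 d_1 + m_2 d_2 \neq 0$, or $m_1 d_1 + m_2 d_2 = k_1 d_1 + k_2 d_2 = 0$, is precisely what is needed so that the label $\L_\tau(m_1 d_1 + m_2 d_2, k_1 d_1 + k_2 d_2, 1, \xi_1 \xi_2)$ is well-defined in the sense of Remark \ref{ambig} (when the total degree on each fiber is nonzero the translation parameter is unambiguous; when it is zero one needs $k_1 d_1 + k_2 d_2 = 0$ as well so that the fiber-wise bundle is genuinely trivial and independent of $z$, and then $\L_\tau(0,0,1,\xi_1\xi_2) = \L_\tau(m,k,0,\xi_1\xi_2)$ by Remark \ref{ambig}).

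The two special cases are then immediate corollaries: when $d_1 = d_2 = d$ one has $m_1 d_1 + m_2 d_2 = (m_1 + m_2) d$ and $k_1 d_1 + k_2 d_2 = (k_1 + k_2) d$, and using $\L_\tau((m_1+m_2)d, (k_1+k_2)d, 1, \xi) = \L_\tau(m_1+m_2, k_1+k_2, d, \xi)$ from Remark \ref{ambig} gives the stated form; when $m_1 = m_2 = m$ and $k_1 = k_2 = k$ one instead groups the fiber-wise computation as $T^*_{(\xi_1\xi_2)^{-1} z^{k(d_1+d_2)}}(\L_{\tau,0}) \cdot \L_{\tau,0}^{m(d_1+d_2)-1}$, which is $i_z^*\,\L_\tau(m, k, d_1 + d_2, \xi_1 \xi_2)$. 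I expect the only real subtlety — the ``main obstacle'' — to be bookkeeping the translation parameters correctly (signs and the $\pi i \tau$ normalization in the definition of $\L_{\tau,0}$, cf. the footnote \ref{foot} about sign conventions), and making sure the well-definedness caveats of Remark \ref{ambig} are invoked exactly where the degeneracy $m_1 d_1 + m_2 d_2 = 0$ occurs; the geometric content is entirely standard and contained in \cite{PZ}.
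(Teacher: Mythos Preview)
Your proof is correct and follows exactly the same approach as the paper's proof: use Equation~\eqref{fiber-wise} to see that the tensor product of the left side with the inverse of the right side restricts trivially to every fiber, then invoke Lemma~\ref{triv}. The paper's argument is a one-sentence version of what you spell out in detail, and your explicit identification of the fiber-wise product via the theorem of the square is precisely the content hidden in the paper's phrase ``From Equation~\ref{fiber-wise}, we know that \ldots''.
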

 \begin{proof} From Equation \ref{fiber-wise}, we know that \[ \L\left(m_1,k_1,d_1,\xi_1\right) \otimes \L\left(m_2,k_2,d_2,\xi_2\right)\otimes \L\left(k_1 d_1+k_2 d_2,m_1 d_1+m_2 d_2,1, \xi_1\xi_2\right)\inv|_{E_{\tau}} = \O|_{E_\tau};\] the result then follows from  Lemma \ref{triv}. 
 \end{proof}

\begin{prop}\label{extension} 
Let $\mathcal{L}$ be a line bundle on $\C^* \times E_\tau$ which is degree zero when restricted to the fiber. Then $\mathcal{L}$ gives a section of the relative Jacobian $\mathcal{J}\left(\C^* \times E_\tau/ \C^*\right)= \mathcal{J}\left(E_\tau\right) \times \C^*$, i.e. (via the Abel-Jacobi map) a holomorphic map  $f: \C^* \to E_\tau$.  $\mathcal{L}$ extends to a line bundle over $\C \times E_\tau$ if and only if the map $f$ extends across the origin.
\end{prop}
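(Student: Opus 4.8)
The plan is to establish both directions by analyzing the geometry of the section $f\colon \C^* \to E_\tau$ determined by $\mathcal{L}$, together with the structure of line bundles on $\C \times E_\tau$ that restrict to degree zero on fibers. First I would set up the correspondence precisely: since $\mathcal{L}$ is fiberwise degree zero, it determines via the relative Abel--Jacobi map a holomorphic map $f\colon \C^* \to \mathcal{J}(E_\tau) \cong E_\tau$ (using the identification $\mathcal{J}(E_\tau)\cong E_\tau$). The content of the statement is that extendability of $\mathcal{L}$ to $\C \times E_\tau$ is equivalent to extendability of $f$ to a holomorphic map $\C \to E_\tau$. I would prove this by exhibiting an inverse construction: given any holomorphic $f\colon \C \to E_\tau$, build a line bundle on $\C \times E_\tau$ whose fiberwise restriction realizes $f$, using the Poincaré bundle on $E_\tau \times \mathcal{J}(E_\tau)$ pulled back along $(\mathrm{id}, f)\colon \C\times E_\tau \to E_\tau \times \mathcal{J}(E_\tau)$ (twisted, if one wishes, by a pullback from $\C$, which is trivial since $\Pic_{\an}\C = 0$).

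For the ``only if'' direction, suppose $\mathcal{L}$ extends to $\overline{\mathcal{L}}$ on $\C\times E_\tau$. Since $\overline{\mathcal{L}}$ is still fiberwise degree zero (the fiber over $0$ being a limit of the nearby fibers, or by flatness of $\pi$ and constancy of Euler characteristic), it determines a section of $\mathcal{J}(\C\times E_\tau/\C) = \mathcal{J}(E_\tau)\times \C$, i.e.\ a holomorphic map $\bar f\colon \C \to E_\tau$ restricting to $f$ on $\C^*$. Here the key point to justify is that the Abel--Jacobi / relative Jacobian construction is itself holomorphic in the base, so a line bundle on the total space of the family maps to a genuine holomorphic section of the relative Jacobian over the whole base $\C$; this is standard for the smooth elliptic fibration $\C\times E_\tau \to \C$. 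For the ``if'' direction, suppose $f$ extends to $\bar f\colon \C \to E_\tau$. Using the inverse construction above with $\bar f$, we obtain a line bundle $\overline{\mathcal{L}}'$ on $\C\times E_\tau$ whose fiberwise restriction matches that of $\mathcal{L}$ on $\C^*$. Then $\mathcal{L} \otimes (\overline{\mathcal{L}}'|_{\C^*\times E_\tau})^{-1}$ is a line bundle on $\C^*\times E_\tau$ which is fiberwise trivial, hence trivial by Lemma~\ref{triv}; so $\mathcal{L} \cong \overline{\mathcal{L}}'|_{\C^*\times E_\tau}$ extends.

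The main obstacle I anticipate is making the ``inverse construction'' genuinely canonical and checking that it produces exactly the line bundles of the form $\mathcal{L}_\tau(m,k,d,\xi)$ with $d=0$ (the only relevant fiberwise-degree-zero case), and more generally that the Poincaré bundle pullback, possibly twisted by $\mathcal{O}_{\C}^{*}$-data, exhausts all fiberwise-degree-zero line bundles on $\C\times E_\tau$ up to isomorphism. Concretely, one must verify: (i) $\Pic_{\an}(\C\times E_\tau) $ of fiberwise degree zero is classified by $\mathrm{Hom}_{\an}(\C, E_\tau)$ (holomorphic maps), using $\Pic_{\an}\C = 0$ and $H^1(\C,\mathcal{O}) = 0$ via a Leray/seesaw argument; and (ii) the analogous statement over $\C^*$ with $\mathrm{Hom}_{\an}(\C^*, E_\tau)$. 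Granting (i) and (ii), the proposition is the tautology that the restriction map $\mathrm{Hom}_{\an}(\C, E_\tau) \to \mathrm{Hom}_{\an}(\C^*, E_\tau)$ has image exactly the maps that extend, and that this matches the restriction $\Pic^0 \to \Pic^0$ under the classification. I would therefore structure the proof around establishing (i) and (ii) — a seesaw/base-change argument for the smooth elliptic fibration — and then deduce the statement formally, invoking Lemma~\ref{triv} at the step where fiberwise triviality must be upgraded to triviality.
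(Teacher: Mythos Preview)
Your proposal is correct and follows essentially the same approach as the paper: the forward direction is immediate (the paper just says ``clear''), and for the reverse direction both you and the paper construct an extension by pulling back the Poincar\'e bundle along the extended section $\bar f$ and then invoke Lemma~\ref{triv} to identify the result with $\mathcal{L}$ on $\C^*\times E_\tau$. Your pullback $(\mathrm{id},\bar f)^*\mathcal{P}$ is exactly the paper's $\pi_{2*}(\mathcal{P}|_{\overline{C}})$, since $\pi_2$ is an isomorphism from the graph; the additional classification statements (i) and (ii) you propose to prove are not needed, as the Poincar\'e bundle construction plus Lemma~\ref{triv} already does all the work.
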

\begin{proof}
The forward direction is clear. Let $C \subset \mathcal{J}\left(\C^* \times E_\tau/ \C^*\right)$ be the graph of $f$. Let $\overline{C} \subset \C \times E_{\tau}$ be the compactification of $C$ determined by the extension of $f$.  Let $\pi_2$ denote the projection \[ \pi_2: \overline{C} \times_{\C} \left(\C \times E_\tau \right) \to \C \times E_\tau.\] Let $\mathcal{P}$ be the Poincar\'{e} line bundle on $\mathcal{J}\left(\C \times E_\tau/ \C\right) \times_{\C} \left(\C \times E_\tau\right)$ .Then the map $\pi_2$ is an isomorphism, so $\pi_{2*}\left( \mathcal{P}|_{\overline{C}}\right)$ is a line bundle on $\C \times E_\tau$. Moreover, $i_z^* \left(\pi_{2*}\left(\mathcal{P}|_{\overline{C}}\right)\right)=i^*_z \mathcal{L}$ by construction, so by Lemma \ref{triv}, $\pi_{2*}\left(\mathcal{P}|_C\right) = \mathcal{L}$. 
\end{proof}

%need reverse direction as well!

\begin{cor} Parametrize $Pic^d\left(E_\tau\right)$ via the isomorphism $E \to Pic^d\left(E_\tau\right)$ given by $z \mapsto T_{z}^* \L_{\tau,0} \cdot \L_{\tau,0}^d$. Then a line bundle $\L \in \Pic_{\an}\left(\C \times E_\tau^*\right)$ which is degree $d$ on each fiber defines a map $f: \C \to E_{\tau} \cong Pic^d\left(E_{\tau}\right)$. Such a line bundle $\L$ will extend to a line bundle on $\C \times E_{\tau}$ if and only if the function $f$ extends across the origin. \end{cor}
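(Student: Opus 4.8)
The plan is to deduce this corollary directly from Proposition \ref{extension} by observing that a line bundle on $\C \times E_\tau^*$ which is degree $d$ on each fiber differs from a fixed degree $d$ line bundle by one that is degree zero on each fiber. Concretely, first I would fix the reference line bundle $\L_{\tau,0}^d$ pulled back to $\C \times E_\tau^*$ (equivalently, on $\C^* \times E_\tau$), which is degree $d$ on each fiber and visibly extends across the origin. Given $\L$ degree $d$ on each fiber, set $\M = \L \otimes \left(\L_{\tau,0}^d\right)\inv$; this is degree zero on each fiber, so Proposition \ref{extension} applies to $\M$, giving a holomorphic map $g: \C^* \to \mathcal{J}(E_\tau) \cong E_\tau$ via the Abel--Jacobi map, with the property that $\M$ extends across the origin if and only if $g$ does.

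Next I would identify $g$ with $f$ under the stated parametrization. Under $z \mapsto T_z^* \L_{\tau,0} \cdot \L_{\tau,0}^d$, the fiber $i_w^* \L \in \Pic^d(E_\tau)$ corresponds to the point $f(w) \in E_\tau$ with $i_w^* \L \cong T_{f(w)}^* \L_{\tau,0} \cdot \L_{\tau,0}^d$; tensoring by $\left(\L_{\tau,0}^d\right)\inv$ shows $i_w^* \M \cong T_{f(w)}^* \L_{\tau,0} \cdot \L_{\tau,0}^{-1}$, which is exactly the degree zero bundle classified by the point $f(w)$ under the standard Abel--Jacobi identification $\mathcal{J}(E_\tau) \cong E_\tau$. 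Hence $g = f$ as maps $\C^* \to E_\tau$ (up to the fixed normalization, which one checks is immaterial since it only shifts by a constant). Therefore $\L$ extends to $\C \times E_\tau$ if and only if $\M$ does, if and only if $g = f$ extends across the origin, using also Lemma \ref{triv} to see that the extension of $\M$ tensored with the extension of $\L_{\tau,0}^d$ recovers an extension of $\L$ (and conversely restriction is compatible).

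The only real subtlety — and the step I would spend the most care on — is matching the two parametrizations of $\Pic^d(E_\tau)$ and $\Pic^0(E_\tau)$ so that ``the map $f$'' appearing in the corollary is literally the graph map produced inside the proof of Proposition \ref{extension}, including the claim that translating by $\left(\L_{\tau,0}^d\right)\inv$ is an isomorphism of the relative Jacobians $\mathcal{J}(\C^* \times E_\tau / \C^*) \to \mathcal{J}(\C^* \times E_\tau / \C^*)$ commuting with the two Abel--Jacobi identifications. This is a routine but slightly fiddly bookkeeping check with the theta-function conventions of \cite{PZ}; once it is in place the rest is immediate, and the ``if and only if'' for extension across the origin is inherited verbatim from Proposition \ref{extension}.
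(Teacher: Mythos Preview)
Your proposal is correct and matches the paper's intent: the paper states this corollary without proof, treating it as immediate from Proposition~\ref{extension}, and your reduction to the degree-zero case by tensoring with the pullback of $\L_{\tau,0}^{-d}$ is exactly the intended argument. The bookkeeping you flag about matching the two Abel--Jacobi parametrizations is routine and the paper evidently regards it as such.
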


\begin{cor} \label{1683} Let $\mathcal{L}$ be a line bundle on $\C^* \times E_\tau$ of degree $d$ on each fiber. Then there exists a line bundle $\overline{\L}$ on $D_\tau\left(m,k\right)$ with  $i_{(m,k)}^*\overline{\mathcal{L}}=\L$ if and only $m$ divides $d$ and the map $f: \C^* \to E_\tau \cong \Pic^d E_\tau$ has the property that $z^{k d} f\left(z^m\right): \C^* \to \Pic^d \left(E_\tau\right) \cong E_\tau$ defines a holomorphic map which extends across the origin. 
\end{cor}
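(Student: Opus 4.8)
The plan is to reduce everything to the étale $\Z/m$‑cover implicit in the construction $D_\tau(m,k)=\C\times E_\tau/\bigl((v,w)\sim(e^{2\pi i/m}v,e^{-2\pi ik/m}w)\bigr)$. Write $q\colon\C\times E_\tau\to D_\tau(m,k)$ for the quotient map and $\theta$ for the generating deck transformation; the $\Z/m$‑action is free (on $\{v=0\}$ it is translation by a point of exact order $m$, since $\gcd(m,k)=1$), so $q$ is a genuine covering. Restricting $q$ over $D_\tau(m,k)\setminus D_0$ and identifying the target with $\C^*\times E_\tau$ via $i_{m,k}$, one obtains the étale map $p\colon\C^*\times E_\tau\to\C^*\times E_\tau$, $p(v,w)=(v^m,v^kw)$, and $p\circ\theta=p$. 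I will also use that $q^{-1}(D_0)=\{v=0\}$, and that $q$ restricted to a fibre $\{v\}\times E_\tau$ is an isomorphism onto a fibre of $D_\tau(m,k)\to\C$ when $v\neq0$, but a degree‑$m$ étale cover of $D_0$ when $v=0$.

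\emph{Necessity.} Suppose $\overline{\L}$ exists. Then $q^*\overline{\L}$ is a line bundle on $\C\times E_\tau$, so its degree on the fibre $\{v\}\times E_\tau$ is independent of $v$; evaluating at $v\neq0$ gives $d$, and evaluating at $v=0$ gives $m\deg(\overline{\L}|_{D_0})$, whence $m\mid d$. Moreover $q^*\overline{\L}$ restricts over $\C^*\times E_\tau$ to $p^*\L$, so $p^*\L$ extends to $\C\times E_\tau$; by the corollary preceding this one (the degree‑$d$ form of Proposition~\ref{extension}) its classifying map therefore extends across the origin. Finally, a direct Abel--Jacobi computation — the same bookkeeping with the theorem of the square that underlies Equation~\eqref{fiber-wise} — identifies the classifying map of $p^*\L$ with $z\mapsto z^{kd}f(z^m)$: one has $(p^*\L)|_{\{v\}\times E_\tau}=T_{v^k}^*\bigl(\L|_{E_\tau\text{ over }v^m}\bigr)$, and $T_{v^k}^*$ shifts the Abel--Jacobi point of a degree‑$d$ bundle by $v^{kd}$. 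This yields the stated condition.

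\emph{Sufficiency.} Conversely, assume $m\mid d$ and that $z\mapsto z^{kd}f(z^m)$ extends across the origin. By the computation just recalled this is the classifying map of $p^*\L$, so by the preceding corollary $p^*\L$ extends to a line bundle $\mathcal N$ on $\C\times E_\tau$. It remains to descend $\mathcal N$ along $q$. Because $\{v=0\}=\operatorname{div}(v)$ is principal, the restriction map $\Pic_{\an}(\C\times E_\tau)\to\Pic_{\an}(\C^*\times E_\tau)$ is injective; in particular $\mathcal N$ is the unique such extension, and $\theta^*\mathcal N\cong\mathcal N$ (both restrict to $p^*\L$ over the open part, using $\theta^*(p^*\L)=(p\circ\theta)^*\L=p^*\L$). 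Hence $\mathcal N$ carries a $\Z/m$‑equivariant structure, and the set of these is a torsor over $\Hom(\Z/m,H^0(\C\times E_\tau,\mathcal O^*))$; since $H^0(\C\times E_\tau,\mathcal O^*)[m]=H^0(\C^*\times E_\tau,\mathcal O^*)[m]=\mu_m$ and the restriction between them is the identity, the tautological equivariant structure on $p^*\L$ over the open part is the restriction of a unique equivariant structure on $\mathcal N$. Descending $\mathcal N$ with this structure produces a line bundle $\overline{\L}$ on $D_\tau(m,k)$, and by construction $i_{m,k}^*\overline{\L}$ is the descent of $p^*\L$ along $p$, namely $\L$.

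The main obstacle is the descent step in the sufficiency direction: a priori $\mathcal N$ is merely a line bundle on $\C\times E_\tau$ with no $\Z/m$‑action, and one must promote the equivariant structure from the open part. This succeeds precisely because the central divisor upstairs is principal, which simultaneously makes the extension unique and pins down the torsor of equivariant structures. The remaining ingredients — the formula $z^{kd}f(z^m)$ and the divisibility $m\mid d$ — are routine; in fact $m\mid d$ is already forced by the extension condition, since the monodromy of $z\mapsto z^{kd}f(z^m)$ about the origin equals $\bigl(kd+m\,n_1\bigr)a+m\,n_2\,b$, where $(n_1,n_2)$ is the monodromy of $f$, and this vanishes only if $m\mid kd$, hence $m\mid d$.
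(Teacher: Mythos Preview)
Your necessity argument matches the paper's. For sufficiency, however, you and the paper take genuinely different routes. The paper avoids descent entirely: it constructs the extension as a \emph{pullback} rather than a quotient. Namely, given the extended map $h\colon \C\to E_\tau$, it builds a degree-$d/m$ line bundle $\L'$ on $\C\times E_{m\tau}$ with classifying map $h$, and then sets $\overline{\L}=g_m^*\L'$, where $g_m\colon D_\tau(m,k)\to\C\times E_{m\tau}$ is the degree-$m$ map $(v,w)\mapsto(v^m,w^m)$ already introduced in \S\ref{opsurf}. Since $(g_m\circ i_{m,k})(z,x)=(z,z^{-k}x^m)$, a one-line classifying-map check gives $i_{m,k}^*(g_m^*\L')=\L$. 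This is quicker and uses only objects already present in the paper, at the cost of the small sleight of hand of passing to $E_{m\tau}$. Your descent argument is more intrinsic and makes the role of the $\Z/m$-cover transparent; it also yields, essentially for free, the observation in your final paragraph that $m\mid d$ is already forced by the monodromy condition, which the paper does not note.

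One step in your argument should be tightened. From $\theta^*\mathcal N\cong\mathcal N$ you write ``Hence $\mathcal N$ carries a $\Z/m$-equivariant structure,'' but the cocycle obstruction in $H^2\bigl(\Z/m,\operatorname{Aut}(\mathcal N)\bigr)=H^2(\Z/m,\C^*)$ has not been addressed; it does vanish (since $\C^*$ is divisible), and you should say so. Alternatively, bypass the torsor comparison entirely: each tautological isomorphism $\psi_g\colon g^*(p^*\L)\to p^*\L$ extends meromorphically to a section of $\mathcal{H}om(g^*\mathcal N,\mathcal N)$ with some order $n_g$ along $\{v=0\}$, and the cocycle identity forces $g\mapsto n_g$ to be a homomorphism $\Z/m\to\Z$, hence zero; thus each $\psi_g$ extends to an honest isomorphism and the descent datum extends directly. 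With that filled in, your proof is complete.
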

\begin{proof} Recall that the divisor class of the central fiber is $\frac{1}{m}[F]$, where $[F]$ is the class of the generic fiber; thus any line bundle which extends across the central fiber must have degree divisible by $m$ on the generic fiber.

Assume that $\L$ is degree $d=d' m$ on each fiber. First assume that $\left(i_{m,k}\inv\right)_*\mathcal{L}$ extends to $\overline{\L} \in \Pic_{\an} D_{\tau}\left(m,k\right)$.  Recall that $D_{\tau}\left(m,k\right)= \C \times E_\tau/ \left(\left(v,w\right) \sim \left(e^{2 \pi i/m} v, e^{-2 \pi i k/m} w\right)\right)$; pull back $\overline{\L}$ to $\overline{\L}'\in \Pic_{\an} \C \times E_{\tau}$ via the quotient map. Then we obtain a holomorphic map
 \[h: \C \to E \cong \Pic^d \left(E_\tau\right). \]
On $\C^* \subset \C$, \[ h\left(v\right) = v^{k d} f\left(v^m\right);\]
we conclude that $z^{k d} f\left(z^m\right)$ extends across the origin.

Conversely, assume that $m$ divides $d$ and $h\left(z\right)=z^{k d} f\left(z^m\right)$ extends across the origin. Consider the line bundle $\L'$ of degree $d/m$ on $\C \times E_{m \tau}$ determined by the map $h: \C \to E_{m \tau}$. Again let $g$ denote the $m$-to-one map $g_m: D_\tau\left(m,k\right) \to \C \times E_{m \tau}$ given by $\left(z,t\right) \mapsto \left(z^m, t^m\right)$. Note that $i_{(m,k)}^*\left( g_m^* \L'\right)|_{\C^* \times E_{\tau}} =\mathcal{L}$; thus $g_m^* \L'$ gives the desired extension.
\end{proof}

\begin{cor}\label{extension2} For $d \neq 0$, ${\L\left(m_0,k_0,d,\xi\right)}$ extends to $D\left(m_1, k_1\right)$ if and only if $\frac{k_0}{m_0}=\frac{k_1}{m_1}$. \end{cor}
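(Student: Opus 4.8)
The plan is to apply Corollary \ref{1683} to the line bundle $\L\left(m_0, k_0, d, \xi\right)$, translating the extension condition there into the concrete arithmetic of the parameters $\left(m_0, k_0, d\right)$ and $\left(m_1, k_1\right)$. Recall from Equation \eqref{fiber-wise} that $\L\left(m_0, k_0, d, \xi\right)$ is degree $m_0 d$ on each fiber and, under the identification $\Pic^{m_0 d}\left(E_\tau\right) \cong E_\tau$ by $z \mapsto T_z^* \L_{\tau,0} \cdot \L_{\tau,0}^{m_0 d}$, corresponds to the classifying map $f: \C^* \to E_\tau$, $f(z) = \xi\inv z^{k_0 d}$ (up to the normalization of the Abel--Jacobi identification, which is immaterial for the question of holomorphic extension across $0$). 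By Corollary \ref{1683}, $\L\left(m_0, k_0, d, \xi\right)$ extends to $D_\tau\left(m_1, k_1\right)$ if and only if $m_1 \mid m_0 d$ and the map $z \mapsto z^{k_1 (m_0 d)} f\left(z^{m_1}\right) = \xi\inv z^{k_1 m_0 d + k_0 d m_1}$ extends holomorphically across the origin, i.e. as a map to $E_\tau$ it is constant near $0$ (a map $\C^* \to E_\tau$ of the form $z \mapsto c z^N$ extends across the origin precisely when $N = 0$).

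So the first step is to record these two conditions: $m_1 \mid m_0 d$ and $k_1 m_0 d + k_0 d m_1 = 0$. Since $d \neq 0$, the second condition is equivalent to $k_1 m_0 + k_0 m_1 = 0$, i.e. $\frac{k_0}{m_0} = -\frac{k_1}{m_1}$ --- wait, this has the wrong sign relative to the statement, so the second step is to be careful about sign conventions in the transition maps. The subtlety is that $D_\tau\left(m_1, k_1\right)$ is glued to $\C^* \times E$ via $i_{m_1, k_1}$, whose coordinate expression $(z, x) \mapsto (z^{1/m_1}, z^{-k_1/m_1} x)$ carries a specific sign, and one must check whether Corollary \ref{1683} is being applied with $i_{m_1, k_1}$ or its inverse; tracking this through $g_m \circ i_{m,k}(z,x) = (z, z^{-k} x^m)$ and the statement of Corollary \ref{1683} (where the relevant twist is $z^{k d} f(z^m)$) should give $k_1 m_0 d = k_0 m_1 d$ with a consistent sign, hence $\frac{k_0}{m_0} = \frac{k_1}{m_1}$. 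I would verify this directly from the formula $i_z^* \L\left(m_0, k_0, d, \xi\right) = T_{\xi\inv z^{k_0 d}}^*\left(\L_{\tau, 0}\right) \cdot \L_{\tau,0}^{m_0 d - 1}$ in Equation \eqref{fiber-wise}, applying Corollary \ref{1683} with $(m, k) = (m_1, k_1)$ and $f(z) = \xi\inv z^{k_0 d}$ to obtain that the relevant map is $z \mapsto \xi\inv z^{k_1 m_0 d + k_0 d m_1}$ (or $z^{k_1 m_0 d - k_0 d m_1}$, depending on orientation), which extends iff the exponent vanishes.

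The third step is to observe that the divisibility condition $m_1 \mid m_0 d$ is automatically implied once $\frac{k_0}{m_0} = \frac{k_1}{m_1}$ and $\left(m_1, k_1\right) = 1$: writing $\frac{k_0}{m_0} = \frac{k_1}{m_1}$ in lowest terms forces $m_1 \mid m_0$ (since $\gcd(m_1, k_1) = 1$ and $m_1 k_0 = m_0 k_1$ means $m_1 \mid m_0 k_1$, hence $m_1 \mid m_0$), so in particular $m_1 \mid m_0 d$. Conversely, when $\frac{k_0}{m_0} \neq \frac{k_1}{m_1}$ the exponent $k_1 m_0 d + k_0 m_1 d = d(k_1 m_0 + k_0 m_1)$ is nonzero (using $d \neq 0$), so the classifying map does not extend and neither does the line bundle. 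Assembling these gives the equivalence. I do not anticipate a serious obstacle here --- the content is entirely in Corollary \ref{1683}, which has already been proved --- the only real care needed is the bookkeeping of signs in the gluing/transition maps to land on $\frac{k_0}{m_0} = \frac{k_1}{m_1}$ rather than its negative, and the elementary number-theoretic remark that primitivity of $\left(m_1, k_1\right)$ makes the divisibility hypothesis of Corollary \ref{1683} redundant in the presence of the slope-equality condition.
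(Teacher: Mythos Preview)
Your approach is essentially identical to the paper's: apply Corollary~\ref{1683}, read off the classifying map $f$ from Equation~\eqref{fiber-wise}, and observe that the monomial $z^N$ extends across $0$ as a map $\C^* \to E_\tau$ iff $N=0$, forcing $k_1 m_0 = k_0 m_1$. Your treatment is in fact slightly more thorough than the paper's, which silently drops the divisibility hypothesis $m_1 \mid m_0 d$ from Corollary~\ref{1683}; your observation that primitivity of $(m_1,k_1)$ together with $k_1 m_0 = k_0 m_1$ already gives $m_1 \mid m_0$ is the right way to close that gap, and your caution about the sign convention is warranted --- the paper simply asserts $f(z) = \xi z^{-d k_0}$, so you should pin down the convention once and for all rather than leave it ambiguous.
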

\begin{proof} The corresponding map $f: \C^* \to \Pic^{d m_0}\left(E\right)$ is given by $z \mapsto \xi z^{-d k_0}$, so ${\L\left(m_0,k_0,d,\xi\right)}$ extends to $D\left(m_1, k_1\right)$ if and only if the map $\xi z^{k_1 d m_0} z^{-d_0 k_0 m_1}$ extends across the origin. The monomial map $z^n$ from $\C^* \to E$ lifts to a map $\C^* \to \C^*$; this map extends to a map $\C \to \C^*$ if and only if $n = 0$. We conclude that $\L\left(m_0,k_0,d,\xi\right)$ extends if and only if
\[ k_1 m_0 = k_0 m_1. \] \end{proof}

\begin{cor}\label{extension3} A line bundle $\L$ on $\C^* \times E_{\tau}$ extends to a compactification $S_{\tau}(\A)$ of $\C^* \times E_{\tau}$ {if and only if $\L=\L_{\tau}\left(m,k,d,\xi\right)$ for some $\left(m,k,d,\xi\right)$ such that $m d=0$ only if $k=0$.} \end{cor}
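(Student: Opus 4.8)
The plan is to reduce Corollary \ref{extension3} to the preceding results by using the fibration structure $\pi: S_\tau(\mathbf{A}) \to \P^1$ and restricting a putative extension to the two log-transformed neighborhoods $D_\tau(m_0,k_0)$ and $D_\tau(m_\infty,k_\infty)$. First I would observe that $S_\tau(\mathbf{A})$ is covered by $D_\tau(m_0,k_0)$ and $D_\tau(m_\infty,k_\infty)$ glued along $\C^* \times E_\tau$, so a line bundle $\overline{\L}$ on $S_\tau(\mathbf{A})$ restricting to $\L$ on $\C^* \times E_\tau$ exists if and only if $\L$ extends over \emph{both} ends simultaneously, i.e., extends across the central fiber of $D_\tau(m_0,k_0)$ and the central fiber of $D_\tau(m_\infty,k_\infty)$. (Since there is nothing to glue beyond the open set $\C^* \times E_\tau$ itself, and $H^1$ obstructions to gluing vanish once the local extensions exist — the two central fibers are disjoint — this is a genuine ``if and only if''.)

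Next I would treat the forward direction. Suppose $\L$ extends to $S_\tau(\mathbf{A})$. Restricting to $D_\tau(m_0,k_0) = S_\tau(\mathbf{A}) \setminus D_\infty$ and to $D_\tau(m_\infty,k_\infty) = S_\tau(\mathbf{A}) \setminus D_0$, Corollary \ref{1683} tells us $\L$ has fiber degree $d$ divisible by both $m_0$ and $m_\infty$, and that the Abel–Jacobi map $f: \C^* \to \Pic^d(E_\tau) \cong E_\tau$ associated to $\L$ satisfies: $z^{k_0 d} f(z^{m_0})$ extends across the origin (as a map $\C \to E_\tau$), and $z^{-k_\infty d} f(z^{-m_\infty})$ extends across the origin (the sign change coming from the coordinate $z \mapsto z^{-1}$ relating the two charts, as in Proposition \ref{fundgpprop} and the transition map \eqref{transmap}). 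For the converse I would show that any such $\L$ on $\C^* \times E_\tau$ with these two extension properties must be one of the bundles $\L_\tau(m,k,d,\xi)$ with $md=0 \Rightarrow k=0$, and that conversely every such $\L_\tau(m,k,d,\xi)$ does extend. The cleanest route: parametrize $f$ by lifting to $\tilde f: \C^* \to \C^*$ (possible since $\Pic^0$-valued monomials lift), write $f(z) = \xi' z^{-e}$ times a unit, and use the two extension conditions to pin down $e$ and the unit. When $d \neq 0$, Corollary \ref{extension2} already shows extension to $D(m_i,k_i)$ forces $k_0/m_0 = k_\infty \cdot(\text{sign})/m_\infty$, i.e., the common slope, so $\L = \L_\tau(m,k,d,\xi)$ with $(m,k)$ the primitive vector in that slope direction; here $md \neq 0$ automatically. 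When $d = 0$, $f$ is a map $\C^* \to \Pic^0(E_\tau) \cong E_\tau$; extending across \emph{one} origin already forces $f$ constant (a holomorphic $\C \to E_\tau$ from a rational curve is constant), so $\L$ is pulled back from $E_\tau$, i.e., $\L = \L_\tau(0,0,0,\xi) = \L_\tau(m,k,0,\xi)$ for any $(m,k)$ by Remark \ref{ambig}, and this extends trivially; here $md = 0$ but $k$ may be taken $0$, consistent with the stated condition.

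The main obstacle I expect is the bookkeeping in the converse direction: correctly matching the two local coordinate descriptions of $\L$ near the two central fibers through the transition map \eqref{transmap}, including the sign flip $z \mapsto z^{-1}$ and the twisting factors $v^{k}w$, and verifying that the resulting constraints on $(m,k,d,\xi)$ are exactly ``$md = 0$ only if $k=0$'' and no more. A secondary technical point is justifying that local extendability over the two disjoint central fibers suffices for global extendability over $S_\tau(\mathbf{A})$ — this needs the remark that the two fibers have disjoint neighborhoods covering $S_\tau(\mathbf{A})$ together with $\C^* \times E_\tau$, so one glues the local extensions with the identity transition function on the overlap, incurring no cocycle obstruction. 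Everything else is a direct application of Corollaries \ref{1683}, \ref{extension2}, together with Lemma \ref{triv} and the classification of maps from $\P^1$ (or $\C$) into an elliptic curve.
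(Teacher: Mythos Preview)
Your plan—reduce to the two local extension problems over $D_\tau(m_0,k_0)$ and $D_\tau(m_\infty,k_\infty)$ and analyze the Abel--Jacobi map $f$—is the same strategy the paper uses, but two of your key steps do not go through as written.

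\textbf{Fiber degree $d'\neq 0$.} You invoke Corollary~\ref{extension2} to pin down the slope, but that corollary applies only to bundles already known to be of the form $\L(m_0,k_0,d,\xi)$, so the appeal is circular. The paper's proof instead inserts a topological step you omit: if $d'\neq 0$ then $c_1(\overline\L)\cdot[E]=d'\neq 0$, so $[E]\neq 0$ in $H_2(S(\A))$; since every non-algebraic surface in this family has $H^2=0$, this forces $S(\A)$ to be algebraic, i.e.\ $\A=((m,k),(m,-k))$. Only with this symmetry (the \emph{same} $m$ at both ends) do the two conditions from Corollary~\ref{1683} combine to show that $z^{kd'}f(z^m)$ extends to all of $\P^1$ and is therefore constant, whence $f$ is a monomial. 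Without knowing $m_0=m_\infty$, the two conditions involve $f(z^{m_0})$ and $f(z^{-m_\infty})$ separately and do not obviously determine $f$; your line ``write $f(z)=\xi' z^{-e}$ times a unit'' already presupposes a lift of $f$ to a map $\C^*\to\C^*$ that you have not justified.

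\textbf{Fiber degree $d'=0$.} Your claim that extending across \emph{one} origin forces $f$ constant (``a holomorphic $\C\to E_\tau$ from a rational curve is constant'') is false: $\C$ is not compact, and holomorphic maps $\C\to E_\tau$ need not be constant. For instance $f(z)=[\exp(z)]$ extends holomorphically across $z=0$ but has an essential singularity at $z=\infty$. You need \emph{both} extensions to conclude that $f$ extends to a map $\P^1\to E_\tau$, which is then constant since $\P^1$ admits no non-constant map to an elliptic curve.
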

\begin{proof} When $md \neq0$, $\L=\L_{\tau}\left(m,k,d,\xi\right)$ extends to $\P^1 \times E_{\tau}/\Z/m \Z$, where $\Z/m \Z$ acts by $([z_0: z_1], t) \mapsto (e^{2\pi i/m} [z_0: z_1], e^{-2\pi i k/m} t)$; we recognize  this quotient as $S((m,k),(-m,k))$. When $md=0$,  $\L_{\tau}$ extends to any $S_{\tau}(\A)$ as $\pi_{E_\tau}^*(T_{\xi\inv}^*\L_{\tau, 0} \cdot \L_{\tau, 0})$. 

Now let $\L \in \Pic_{\an}^{d'} \C^* \times E$ be a line bundle which extends to a compactification $S(\A)$. First assume that $d' \neq 0$. Then if $L$ extends to a compactification $S(\A)$, $[E] \in H_2\left(S(\A)\right) \neq 0$ since $c_1\left(\L\right) \cdot [E] = d' \neq 0$. Then $S(\A)$ is algebraic, so $\A=\left(\left(m,k\right),\left(m,-k\right)\right)$. Then since the map $f: \C^* \to E$ has the property that  $z^{k d} f\left(z^m\right)$ extends across the origin, $f$ extends to a map $f: \C^* \to \C^*$, and can be written as a Laurent series
\[ f = \sum_{n \in \Z} f_n z^n. \]
Now apply Corollary \ref{1683}. Since  $z^{k d} f\left(z^m\right)$ extends over the origin, $f_n \neq 0$ for $n = -d k/m$ and $f_n =0$ for $n$ with $k d + m n < 0$; since  $z^{k d} f\left(z^m\right)$ extends over infinity, $f_n=0$ for $n$ with 
$k d + m n > 0$. We conclude that $f = \xi z^{-d/m k}$ for some $\xi \in \C$; then $\L= \L\left(m,k,d,\xi\right)$. 

Now assume that $d=0$, and again consider the corresponding function  $f: \C^* \to E$ and its Laurent series  $f = \sum_{n \in \Z} f_n z^n$. Using Corollary \ref{1683}, we conclude that $f_n=0$ for $n \neq 0$. Thus $f$ is constant, so $\L = \L\left(m_0,k_0,0,f_0\right)$ for any $m_0, k_0$.
\end{proof}

\begin{defn} Let $\mC_\tau$ denote the full subcategory of $\mC \subset \Coh_{\an}{\C^* \times E_\tau}$ generated by the set of line bundles ${\L_\tau\left(m,k,d,\xi\right)}$. Let $\mC_\tau\left(m,k\right)$ denote the full subcategory of $\Coh_{\an}{\C^* \times E_\tau}$ with objects $\overline{\L_\tau\left(m,k,d,\xi\right)}$.
 \end{defn}

\subsubsection{{Morphisms in $\mC_\tau$ and $\mC_\tau \left(m,k\right)$}} \label{labelingsections}
Let $\L, \L' \in  \mC_\tau$. Then 
\[ \Hom_{\mC} \left(\L, \L'\right) = H^0 \left(\C^* \times E_\tau, \L' \otimes \L\inv\right). \] 
So it suffices to compute the global sections $H^0 \left(\C^* \times E,   \L\left(m,k,d,\xi\right)\right).$
Fix a line bundle $\L =  \L\left(m,k,d,\xi\right);$ note that sections exist of $\L$ only if $c_1(\L) . [E] = m d \geq 0$. Thus when $d < 0$,  $H^0 \left(\C^* \times E_{\tau},  \L\right)= 0$. 

Assume then that $d \geq 0$. We express the total space of $\L$ as a quotient:
\[  \mathrm{Tot}\left(\L\left(m,k,d,\xi \right)\right)= \left(\C^* \times \C^*\times \C\right)/\left(\left(z,x,t \right) \sim \left(z,e^{2 \pi i \tau}x, \xi z^{-k d} x^{m d} e^{\pi i \tau md }t \right)\right).\]
 The global sections $\sigma \in  H^0(\C^* \times E_{\tau}, \L)$  can be expressed in terms of convergent power series
\[ \sigma(z,x)= \sum_{j,a \in \Z} C_{j,a} z^{j} x^{a}\]
which are equivariant under this $\Z$ action on $\C^* \times \C^* \times \C$.

When $d=0$, we must have $C_{j,a}=0$ for  $a \tau \not\equiv (\tau \eta + \nu) \, \mod \, \Z$; we conclude that sections exist only when $\eta=N \in \Z$, $\nu \in \Z$. Let $\sigma_{\L,(j,N)}$ denote the monomial $z^j x^N$.
 In this case we can write any global section $\sigma$ as the product of $x^N$ with a globally convergent Laurent series:
\begin{align}\label{mseq1woo} H^0(\C^* \times E, \L)& =\left\{ \left(\sum_{j} c_j z^j \right)x^N \Large\mid \lim_{|j| \to \infty} |c_j|^{1/|j|} =0\right \}\\
                                                        & =   \left\{ \sum_{j} c_j \sigma_{\L,\overline{(j,N)}}\Large\mid \lim_{|j| \to \infty} |c_j|^{1/|j|} =0\right \}. \end{align}

\begin{remark}Any degree zero line bundle with sections is isomorphic to $\O_{E \times \C^*};$ however, it will be convenient to keep the extra data provided by $\xi$ as we write the restriction maps $\Pic S_{\tau}(A) \to D_\tau(m,k) \to \C \times E_\tau$. \end{remark}

When $d \geq 0$, recall that sections of line bundles over the elliptic curve can be expressed in terms of the classical theta functions:
\[ \vartheta[c',c''] \left(q,x\right)= \sum_{\ell \in \Z} q^{\frac{\left(\ell+c'\right)^2}{2}} \left(e^{2 \pi i c''}x\right)^{\left(\ell+c'\right)} \]
The theta function  $\vartheta[0,0] \left(q,x\right)$ is a section of $\L_0$. This observation allows us to find a Fr\'{e}chet space basis for $H^0(\C^* \times E, \L)$. Let $e^{2\pi i (\tau \eta+\nu)}=\xi$, and let
\begin{align}\label{sectionid} \sigma_{\L, \overline{(j,a)}} &=  e^{\frac{ \pi i \tau  \eta^2}{m d}}e^{\frac{2 \pi i \nu \eta}{m d}} z^{(ka/m + j)} \vartheta\left[\frac{a}{m d},0\right]\left(q^{m d}, \xi z^{-k d} x^{m d}\right)\\
&=e^{\frac{ \pi i \tau  \eta^2}{m d}}e^{\frac{2 \pi i  \nu \eta}{m d}} \sum_{\ell \in \Z} q^{m d\left(\ell+\frac{a}{m d}\right)^2/2} \xi^{\ell+\frac{a}{m d}} z^{-k d \ell+j} x^{m d \ell+a}; \end{align}
note that  this power series defines an equivariant function which restricts to a theta function on each fiber. Further note that $\sigma_{\L,\overline{(j,a)}}=\sigma_{\L, \overline{(j+kdn,a-mdn)}}$ for any $n \in \Z$; thus these sections are indexed by the quotient $\Z^2/(kd,-md) \cong \Z \oplus \Z/d$, with the isomorphism realized by $(j,a) \mapsto( m j+ k a, a)$. The  factor $(m j+ k a)$ controls the power of $z$ multiplying the theta function, and any globally convergent equivariant power series $\sigma$ decomposes as 
\[ \sigma = \sum^{md-1}_{a =0} g_a(z)z^{k a/m} \vartheta\left[\frac{a}{m d},0\right]\left(q^{m d}, \xi z^{-k d} x^{m d}\right).\]
Thus we can write again write the space of global sections in terms of everywhere convergent Laurent series:
\begin{equation}\label{mseq2woo} H^0\left(\C^* \times E, {\L\left(m,k,d,\xi\right)}\right) = \left\{ \sum_{\overline{(j,a)}\in \Z^2/(kd,-md)} c_{\overline{(j,a)}}  \sigma_{\L,\overline{(j,a)}}\Large\mid \lim_{ |m j+ k a|  \to \infty} |c_{\overline{(j,a)}}|^{1/ m j+ k a } = 0 \right\}. \end{equation}

\subsubsection{Multiplication of sections}\label{thetaidentities} Let $\L_i =\L\left(m_i,k_i,d_i,\xi_i\right)$, $i=0,1,2$, and let  $e^{2\pi i (\tau \eta_i+\nu_i)}=\xi_i$. We now investigate the Yoneda product
\[ \Ext^0(\L_1,\L_2)\times \Ext^0(\L_0, \L_1)\to \Ext^0(\L_0, \L_2);\]
using the identification $\Ext^0(\L, \L')=H^0(\C^* \times E_{\tau}, \L\inv \L')$, we can identify this with the product  
\[H^0 =H^0(\C^* \times E_{\tau},\L_1\inv \L_2)\otimes H^0(\C^* \times E_{\tau},\L_0\inv \L_1) \to H^0(\C^* \times E_{\tau},\L_0\inv \L_2).\]
We investigate the product  $\sigma_{\L_1\inv \L_2, \overline{(j_1,b)}} \cdot \sigma_{\L_0\inv\L_1,\overline{(j_0,a)}}$. When $m_{i+1} d_{i+1} -m_{i}d_{i} >0$ for $i=0$ or $1$, the line bundle $\L_{i+1}\inv \L_i$ has negative degree, so the space of sections is empty. When one of the bundles is of degree zero, i.e. when $m_i d_i =m_{i+1}d_{i+1}$ for some $i$, the multiplication is simple: 
\begin{equation}\label{35woo} \sigma_{\L_1\inv \L_2, \overline{(j_1,b)}} \cdot\sigma_{\L_0\inv \L_1,\overline{(j_0,a)}} =\sigma_{\L_0\inv\L_2, \overline{(j_0+j_1,a+b)}}.  \end{equation}

Now assume both line bundles are of positive degree. Let $m_1 d_1 - m_2 d_2 =m$ and $k_1 d_1 - k_2 d_2 = k$. When $(m_0 d_0 - m_1 d_1)<0$ and $(m_1 d_1 - m_2 d_2)<0$, we can use  Equation \eqref{sectionid} to write \begin{equation}\label{thetaid3}  \sigma_{\L_0\inv\L_1, \overline{(j_0,a)}}\cdot \sigma_{\L_1\inv \L_2, \overline{(j_1,b)}}= \sum_{n\in \Z}e^{2 \pi i (\tau K_{(j_0+j_1-k n,a+b+m n)}+ \theta_{(j_0+j_1-k n,a+b+mn)})} \sigma_{\L_0\inv \L_2, \overline{(j_0 +j_1 - k n, a+b+m n)}} \end{equation}
where 
\begin{align}\label{46woo} &K_{(j_0+j_1',a+b')} =\frac{1}{2}\left(\frac{(a+\eta_0-\eta_1)^2}{m_1 d_1 - m_0 d_0} +\frac{(b'+\eta_1-\eta_2)^2}{m_2 d_2 - m_1 d_1} - \frac{(a+b'+\eta_0-\eta_2)^2}{m_2 d_2- m_0 d_0}\right);\\
        \begin{split}   \label{47woo} &\theta_{(j_0+j_1',a+b')}\\
         &  =(\nu_0-\nu_1)\left(\frac{a+\eta_0-\eta_1}{m_0 d_0 - m_1 d_1} \right)+ (\nu_1-\nu_2)\left(\frac{b'+ \eta_2-\eta_1}{m_1 d_1 - m_2 d_2} \right)-\left(\nu_2 - \nu_0\right)\left(\frac{a +b' + \eta_0 - \eta_2}{m_2 d_2- m_0 d_0}\right). \end{split}
          \end{align}

 \begin{remark} Note that when $(m_0, k_0)=(m_1, k_2)=(m_2, k_2)$, the image of the homomorphism $\Z \to \Z^2$
 \[ n \mapsto n(-k,m)= (- k_1 (d_1-d_2)  n, m_1 (d_1 - d_2) n) \]
is contained within the torsion part of $\Z^2/( -k (d_0-d_2), m(d_0-d_2))$. Thus the sum above has non-zero coefficients on only finitely many generators; the coefficients on those finitely many generators are theta functions in $z$. This reflects the fact that the sections $\sigma_{\L_0 \inv\L_1  \overline{(j_0,a)}}$ and  $\sigma_{\L_1\inv \L_2, \overline{(j_1, b)}}$ are both \emph{algebraic} sections of  algebraic line bundles on the quasi-projective variety $D_\tau(m,k)$.\end{remark}
 
\subsection{Calculations over $D_{\tau}\left(m,k\right)$} Consider the image of the restriction map 
\[ H^0\left(D_\tau\left(m,k\right), \overline{\L\left(m,k,d,\xi\right)}\right) \to H^0\left(\C^* \times E_\tau,  \L\left(m,k,d,\xi\right)\right);\]
we say a global section extends over $D_\tau\left(m,k\right)$ if it lies in the image of this map.

\begin{prop} $\sigma_{\L, \overline{(j,a)}}$ extends over $D_\tau \left(m,k\right)$ if and only if $m j+ k a \geq 0$. \end{prop}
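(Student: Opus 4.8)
The plan is to pull the section $\sigma_{\L, \overline{(j,a)}}$ back to $\C \times E_\tau$ via the quotient map $q \colon \C \times E_\tau \to D_\tau(m,k)$ and decide, monomial by monomial, whether the resulting equivariant power series extends holomorphically across the central fiber $\{v = 0\}$. Concretely, recall that $D_\tau(m,k) = \C \times E_\tau / ((v,w) \sim (e^{2\pi i/m}v, e^{-2\pi i k/m}w))$ and that the inclusion $i_{m,k}\colon \C^* \times E_\tau \to D_\tau(m,k)$ is given in coordinates by $(z,x) \mapsto (z^{1/m}, z^{-k/m}x)$. First I would express the identity $z = v^m$, $x = v^{k}w$ (from Equation \eqref{coc}) so that a monomial $z^j x^a$ on $\C^* \times E_\tau$ becomes $v^{mj + ka} w^{a}$ on $(\C \times E_\tau)\setminus\{v=0\}$. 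Since $\sigma_{\L, \overline{(j,a)}}$ is a sum $\sum_{\ell} (\text{const}) z^{-kd\ell + j} x^{md\ell + a}$ of such monomials (by Equation \eqref{sectionid}), its pullback to $\C\times E_\tau$ is $\sum_\ell (\text{const})\, v^{m(-kd\ell+j) + k(md\ell+a)} w^{md\ell+a} = \sum_\ell (\text{const})\, v^{mj + ka} w^{md\ell + a}$, in which \emph{every} term carries the same power $v^{mj+ka}$ of $v$. This is the crucial observation: the power of $v$ is precisely the integer $mj + ka$, independent of $\ell$.

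From here the argument splits into the two directions. For the ``if'' direction, if $mj + ka \geq 0$ then $v^{mj+ka}$ times the theta-type series $\sum_\ell (\text{const})\, v^0 w^{md\ell+a}$ (after factoring out $v^{mj+ka}$) is a holomorphic function on a neighborhood of $\{v=0\}$ in $\C\times E_\tau$ — one needs only that the residual series $\vartheta[a/(md),0](q^{md}, \xi z^{-kd}x^{md})\big|_{\text{pulled back}}$ converges, which it does since it is (a translate of) a theta function on $E_{\tau}$ in the $w$-variable with no $v$-dependence after the factorization, and it is invariant under the $\Z/m$-action by the remark following Equation \eqref{sectionid}. Descending back to $D_\tau(m,k)$ gives a section of $\overline{\L(m,k,d,\xi)}$ restricting to $\sigma_{\L,\overline{(j,a)}}$, as desired. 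For the ``only if'' direction, suppose $\sigma_{\L,\overline{(j,a)}}$ extends; then its pullback to $\C \times E_\tau$ extends holomorphically across $\{v=0\}$, but that pullback equals $v^{mj+ka}$ times a series whose $\ell=0$ term has a \emph{nonzero} constant coefficient (the coefficient $q^{0}\xi^{a/(md)} \neq 0$), so the lowest-order $v$-exponent appearing is exactly $mj + ka$; holomorphicity across $v = 0$ forces $mj + ka \geq 0$.

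The main obstacle I anticipate is bookkeeping the $\Z/m$-equivariance carefully: one must check that the monomial $v^{mj+ka}w^a$ and its multiples $v^{mj+ka}w^{md\ell + a}$ really are invariant under $(v,w)\mapsto(e^{2\pi i/m}v, e^{-2\pi i k/m}w)$, i.e. that $e^{2\pi i (mj+ka)/m}\cdot e^{-2\pi i k(md\ell+a)/m} = e^{2\pi i((mj+ka) - k(md\ell + a))/m} = e^{2\pi i(mj - kmd\ell)/m} = 1$, which holds since $mj - kmd\ell \in m\Z$. This is routine but needs to be stated so that the ``descent'' steps in both directions are legitimate. A secondary subtlety is making sure the extended object is honestly a holomorphic \emph{section of the line bundle} $\overline{\L(m,k,d,\xi)}$ and not merely a holomorphic function — but this follows because the transition functions of $\overline{\L(m,k,d,\xi)}$ are by construction pulled back (via $g_m$) from those of a line bundle on $\C \times E_{m\tau}$, so the extended power series automatically satisfies the correct equivariance under the deck group $\Z$ of $\C^* \to E_\tau$ as well (this is already built into Equation \eqref{sectionid}). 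Everything else is a direct computation with the explicit series and causes no trouble.
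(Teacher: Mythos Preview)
Your proposal is correct and follows essentially the same approach as the paper: convert to $(v,w)$ coordinates via $z=v^m$, $x=v^k w$, observe that every monomial in $\sigma_{\L,\overline{(j,a)}}$ acquires the common factor $v^{mj+ka}$, and then check $\Z/m\Z$-invariance to descend to $D_\tau(m,k)$. Your treatment of the ``only if'' direction (noting a nonzero lowest-order coefficient so the $v$-order is exactly $mj+ka$) and of the equivariance check is in fact slightly more explicit than the paper's.
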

\begin{proof} Write such a section as a sum of monomials: 
\[ \sigma_{\L, \overline{(j,a)}}= \sum_{\ell \in \Z} C\left(\ell\right) z^{-k d \ell+j} x^{m d \ell+a}: \]
(when $d = 0$, $C\left(\ell\right)=0$ for $\ell \neq 0$; when $d > 0$ the coefficients $C\left(\ell\right)$ are the coefficients of the corresponding $\vartheta$ function). After changing to the $\left(v,w\right)$ coordinates on $\C\times E_\tau$, we write
\[ \sigma_{\L, \overline{(j,a)}}=  v^{m j+ k a}  \sum_{\ell \in \Z} w^{m d \ell + a} : \]
Such section will extend over $v=0$ only if  $m j+ k a > 0$.  Let $g$ denote the generator of $\Z/m \Z$ acting on $\C \times E$; note that  
\[ g \cdot v^{m i+ k a} w^{m d j + a} = e^{2 \pi i k a / m} e^{-2 \pi i k a / m}v^{m i+ k a}  w^{m d j + a} = v^{m i+ k a}  w^{m d j + a} \]
 so any power series of this form is invariant under the action of $\Z/m\Z$, and thus defines a section of $\overline{\mathcal{L}\left(m,k, d, \xi\right)}$. Thus $\sigma_{\L, \overline{(j,a)}}$ is in the image of the restriction map $\iota_(m,k)^*:H^0\left(D_\tau\left(m, k\right), \L\right) \to H^0\left(\C^* \times E_\tau, \L\right)$.  Let $\overline{\sigma}_{\L, \overline{(j,a)}}$ satisfy $\iota_{(m,k)}^*\left(\overline{\sigma}_{\L, \overline{(j,a)}}\right)=\sigma_{\L, \overline{(j,a)}}$; we can then write
\begin{align*} &H^0\left(D_\tau\left(m,k\right),  \L\left(m,k,d,\xi\right)\right) \\
&=  \quad \left\{ \sum_{\overline{(j,a)} \in \Z^2/ \left(-k d, m d\right)} c_{\overline{(j,a)}} \overline{\sigma}_{\overline{(j,a)}}\Large\mid m j+ k a> 0, \lim_{m j + k a \to \infty} |c_{\overline{(j,a)}}|^{|m j +k a|\inv}=0\right\} .\end{align*}
\end{proof}
The theta function identities of the previous subsection govern the multiplications of sections over the $D_\tau \left(m,k\right)$. We conclude
\begin{prop}\label{embedc} \[ i_{(m,k)}^*: \mC_\tau(m,k) \to \mC_{\tau} \] 
is an embedding of categories, i.e., is both faithful and injective on objects. \end{prop}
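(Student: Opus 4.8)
The plan is to establish that $i_{(m,k)}^* \colon \mC_\tau(m,k) \to \mC_\tau$ is fully faithful and injective on objects by exploiting the explicit descriptions of sections built up in Sections~\ref{opsurf} and onward. First I would handle injectivity on objects: the functor sends $\overline{\L_\tau(m,k,d,\xi)}$ to $\L_\tau(m,k,d,\xi)$, so it suffices to observe that the assignment $(d,\xi) \mapsto \L_\tau(m,k,d,\xi)$ is injective on the relevant label set. Here one uses Equation~\eqref{fiber-wise}: restricting $\L_\tau(m,k,d,\xi)$ to a fiber $E_\tau$ recovers $d$ from the fiberwise degree $md$ (recall $m$ is fixed and positive), and then $\xi$ is recovered from the translation parameter of the fiberwise line bundle via the parametrization of $\Pic^{md}(E_\tau)$. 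By Lemma~\ref{triv} two line bundles on $\C^* \times E_\tau$ agreeing on every fiber and differing only in $\Pic_{\an}\C^*$ must be isomorphic, so distinct labels give non-isomorphic bundles in $\mC_\tau(m,k)$ exactly when they do in $\mC_\tau$, and the ambiguity noted in Remark~\ref{ambig} affects source and target identically.

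Next I would prove faithfulness. Since $\Hom_{\mC_\tau(m,k)}(\overline{\L},\overline{\L'}) = H^0(D_\tau(m,k), \overline{\L'}\otimes\overline{\L}^{-1})$ and $\Hom_{\mC_\tau}(\L,\L') = H^0(\C^*\times E_\tau, \L'\otimes\L^{-1})$, and the functor is restriction along $i_{(m,k)}$, faithfulness is precisely the injectivity of the restriction map $H^0(D_\tau(m,k), \overline{\mathcal K}) \to H^0(\C^*\times E_\tau, i_{(m,k)}^*\overline{\mathcal K})$ for $\overline{\mathcal K}$ a tensor combination of the $\overline{\L_\tau(m,k,d,\xi)}$. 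By Lemma~\ref{mult} such a combination is again of the form $\overline{\L_\tau(m',k',d',\xi')}$ (or zero when a negative-degree factor appears, in which case both Hom-spaces vanish), so it is enough to check that $i_{(m,k)}^* \colon H^0(D_\tau(m,k),\overline{\L}) \to H^0(\C^*\times E_\tau, \L)$ is injective for a single line bundle $\overline{\L}$. But the preceding proposition exhibits this map as identifying $H^0(D_\tau(m,k),\overline{\L})$ with the subspace of $H^0(\C^*\times E_\tau,\L)$ spanned by those $\sigma_{\L,\overline{(j,a)}}$ with $mj+ka \geq 0$; since the $\sigma_{\L,\overline{(j,a)}}$ are linearly independent (they are distinct monomials in $z$ times theta functions, indexed by $\Z^2/(kd,-md)$), restriction is injective, being the inclusion of a subspace spanned by a subset of a basis.

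Finally I would assemble these: the functor is injective on objects and, on each Hom-space, is the inclusion of the subspace cut out by the condition $mj+ka\geq 0$, hence injective; faithfulness follows. The statement claims only "faithful and injective on objects," so fullness is not required — in fact $i_{(m,k)}^*$ is generally \emph{not} full, since sections with $mj+ka<0$ lie in the target but not the image, which is why the weaker assertion is stated.

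The main obstacle I anticipate is bookkeeping rather than conceptual: one must verify carefully that an arbitrary morphism space in $\mC_\tau(m,k)$ is indeed of the form covered by the cited proposition, i.e. that the tensor/dual operations stay within the class of $\overline{\L_\tau(m,k,d,\xi)}$'s with matching $(m,k)$ and behave compatibly with restriction. This is exactly the content of Lemma~\ref{mult} in the case $m_1=m_2=m$, $k_1=k_2=k$, together with Corollary~\ref{extension2} guaranteeing that a degree-$d\neq 0$ bundle on $\C^*\times E_\tau$ extends over $D_\tau(m,k)$ only when its slope matches $k/m$ — so the subcategory is closed under the relevant operations and the per-bundle argument suffices. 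The theta-function multiplication formulas~\eqref{thetaid3}–\eqref{47woo} are not needed for injectivity, only for later compatibility statements, so I would not invoke them here.
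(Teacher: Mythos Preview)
Your proposal is correct and follows the same approach the paper implicitly takes: the proposition is stated immediately after the explicit Fr\'{e}chet-basis descriptions of $H^0(D_\tau(m,k),\overline{\L})$ and $H^0(\C^*\times E_\tau,\L)$, and the paper simply writes ``We conclude'' without further argument. Your write-up supplies the details the paper omits---recovering $(d,\xi)$ fiberwise for injectivity on objects, and identifying the restriction map on sections with the inclusion of the sub-basis indexed by $mj+ka\geq 0$ for faithfulness---and correctly notes that the theta identities \eqref{thetaid3}--\eqref{47woo} are not needed here. The opening phrase ``fully faithful'' is a slip (you correct it yourself at the end: the functor is not full).
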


%COC remark??

\subsection{Compact surfaces $S_\tau\left(\mathbf{A}\right)$}\label{csurfaces}
One obtains a description of the Picard groups of the surfaces $S_\tau(\A)$ from the calculation of the fundamental groups of the surfaces in question. Let $\A= \left(\left(m_0,k_0\right), \left(m_{\infty}, k_{\infty}\right)\right)$, and let $n= m_0 k_{\infty} +m_\infty k_{0} \neq 0$; recall that the condition that $n \neq 0$ implies that $S_\tau(A)$ is not algebraic. 

 Recall that 
\[ \pi_1\left(S_{\tau}(\A)\right) = \mathrm{coker} \begin{pmatrix}  m_0 & - m_\infty & 0 \\   k_0 & k_\infty & 0 \\ 0 & 0 & 0\end{pmatrix} : \Z^3 \to \Z^3,  \] 
and $\pi_1\left(S_{\tau }(\A)\right)$ acts freely on the universal cover $\C^2 \setminus \{0\}$ by 
\[ \left(a,b,c\right) \cdot \left(z_1, z_2\right) = \left(\exp\left(\frac{2 \pi i\left(k_{\infty} a + m_{\infty} b+m_\infty c \tau \right)}{n}\right) z_1,  \exp\left(\frac{2 \pi i \left( -k_0 a + m_0 b+ m_0 c \tau\right)}{n}\right) z_2 \right). \]  Let $\Lambda \subset \Z^2$ denote the lattice spanned by the columns of the matrix
 \[\begin{pmatrix} m_0 & - m_\infty \\ k_0 & k_\infty \end{pmatrix}, \] and let $\Lambda^* \subset \Q^2$ denote the dual lattice. Elements of $\Lambda^*$ are those elements $\left(f,g\right) \in \Q^2$ that satisfy
\begin{align}\label{colattice1} m_0 f + k_0 g \in \Z; \\ 
		   \label{colattice2} -m_{\infty} f + k_{\infty}g \in \Z. \end{align}
		     
It is known (see \cite{Ma}) that any line bundle on a Hopf surface  has trivial pullback to the universal cover; thus there is a map $\Hom\left(\pi_1\left(S_{\tau}\left(\mathbf{A}\right)\right), \C^*\right) \to \Pic S_{\tau}(\A)$ described as follows: given an element $\gamma$ of $\Hom\left(\pi_1\left(S_{\tau}\left(\mathbf{A}\right)\right), \C^*\right)$, we  define a  $\pi_1\left(S_{\tau}(\A)\right)$ action on $\left(\C^2 \setminus \{0\}\right) \times \C$ by 
\[ \ell \cdot \left(z_1, z_2, t\right) = \left(\ell\left(z_1,z_2\right), \gamma\inv\left(\ell\right) t\right). \] This defines a holomorphic line bundle $\L_{\gamma}$ on $S_{\tau}(\A)$ with total space
\[ \mathrm{Tot}\, \L_{\gamma} = \left(\left(\C^2 \setminus \{0\}\right) \times \C\right) / \pi_1\left(S_{\tau}(\A)\right). \]
It is also known that this map $\Hom\left(\pi_1\left(S_{\tau}\left(\mathbf{A}\right)\right), \C^*\right) \to \Pic S_{\tau}(\A)$ is an isomorphism.  Note 
\[ \Hom\left(\pi_1\left(S_{\tau}\left(\mathbf{A}\right)\right), \C^*\right)= \Hom(\Z \oplus \Z^2/\Lambda, \C^*) \cong  \C^* \oplus  \left(\Lambda^* / \Z^2\right). \]
The canonical isomorphism is given by $(\xi, f, g) \mapsto \left((a,b,c) \mapsto \xi^c e^{2 \pi i f a} e^{2 \pi i g b}\right).$

Let $\L\left(\xi, f, g\right)$ denote the line bundle corresponding to $\left(\xi, f, g\right) \in \C^*\times  \Lambda^*$. Write $\gamma=\left(\xi, f, g\right)$. The cohomology of the line bundles can be calculated via techniques similar to that in \cite{Ma}, which we summarize below.  The sections of $\L(\gamma)$ are given by holomorphic functions on $\C^2 \setminus \{0\}$ (equivalent, by Hartog's lemma, to holomorphic functions  on $\C^2$) which are equivariant under the action of the fundamental group. The set of such functions is given by the monomials $z_1^{n_1} z_2^{n_2}$, where $n_1$ and $n_2$ are positive integers satisfying
\begin{align}\label{cohoeq1} \exp\left( \frac{2 \pi i \tau (m_{\infty} n_1 + m_0 n_2)} {n}\right) &= \xi;  \\ 
		    \label{cohoeq2}\mathrm{exp}\left(\frac{2 \pi i\left( k_{\infty} n_1 - k_0 n_2\right)}{n}\right) & =    \mathrm{exp}\left({2 \pi i f}\right) ; \\
		       \label{cohoeq3} \mathrm{exp} \left( \frac{2\pi i \left(m_{\infty} n_1 + m_0 n_2\right)}{n}\right) & =    \mathrm{exp}\left({2 \pi i g}\right).
		         \end{align}
Denote the number of such pairs $\left(n_1, n_2\right)$ by $N_{\gamma}.$ From Remark \ref{dualizing}, we know that the dualizing sheaf is  
\[ \mathcal{O}_{S\left(\mathbf{A}\right)}\left(-\left(\frac{1}{m_0}+\frac{1}{m_\infty}\right)[F]\right) = \L\left(\frac{-k_{\infty}+k_0}{n}, \frac{-m_{\infty} - m_0}{n}, e^{2 \pi i \tau \left(\frac{-m_{\infty} - m_0}{n}\right)}\right). \] 
 Write $\gamma_0= \L\left(\frac{-k_{\infty}+k_0}{n}, \frac{-m_{\infty} + m_0}{n}, e^{2 \pi i \tau \left(\frac{-m_{\infty} + m_0}{n}\right)}\right)$; then by Serre duality $h^2\left(\L_\gamma\right)=h^0\left(\L_{\gamma\inv \gamma_0}\right).$ One obtains from the Hirzeburch-Riemann-Roch formula that $\chi\left(\L\right)=0$ for any $\L \in \Pic\left(S_{\tau}(\A)\right)$.  Thus we can deduce 
\begin{equation}\label{cohocalcs} 
h^i\left(\L(\gamma)\right) = \begin{cases} N_{\gamma} & i = 0; \\
										N_{\gamma}+N_{g\inv \gamma_0}  & i=1
									;\\
										N_{\gamma\inv \gamma_0} & i=2. \end{cases} \end{equation}
Note that for all $\L$, $h^0$ and $h^2$ cannot both be non-vanishing.

\subsubsection{Products on global sections}
The product \[ H^0\left(S_{\tau}\left(\mathbf{A}\right), \L(\gamma)\right) \otimes H^0\left(S_{\tau}\left(\mathbf{A}\right), \L\left(g'\right)\right) \to H^0\left(S_{\tau}\left(\mathbf{A}\right), \L\left(g g'\right)\right)\]
is induced by the product on the ring of holomorphic functions $\O_{\C^2}$: 
\[ z_1^{n_1} z_2^{n_2} \cdot z_1^{n_1'} z_2^{n_2'} = z_1^{n_1 + n_1'} z_2^{n_2 + n_2'}. \]

\subsubsection{Restriction maps} The restriction maps $\Pic S_{\tau}(\A) \to \Pic_{\an} {D_{\tau}\left(m_0,k_0\right)} \to \Pic_{\an} ({\C^* \times E_{\tau}})$ induced by the inclusion map $\iota_{m,k}$ can be calculated using the change of coordinates of Equation \eqref{map2}.
\[ \L\left(f,g,e^{2 \pi i (\tau \lambda+\theta)} \right) \mapsto \overline{\L\left(m,k,0, e^{2 \pi i \left(\tau \lambda + \theta- \tau g\right)}\right)} \mapsto \L\left(m,k,0, e^{2 \pi i \left(\tau \lambda + \theta- \tau g\right)}\right). \]
The induced maps on global sections are given by
\begin{equation} \label{cocwoo} z_1^{n_1} z_2^{n_2} \mapsto v^{n_1} w^{\left(m_{\infty} n_1 + m_0 n_2\right)/n} \mapsto z^{\left(k_{\infty} n_1 - k_0 n_2\right)/n} x^{\left(m_{\infty} n_1 + m_0 n_2\right)/n}.\end{equation}

\section{Mirror symmetry for $D_{\tau}(m,k)$ and $\C^* \times E_{\tau}$} \label{mscalcs}
\noindent We now prove mirror symmetry results for the open surfaces $D_{\tau}(m,k)$ and $\C^* \times E_{\tau}$. Let 
\[ M(m,k,d,\alpha) = \begin{pmatrix} -\alpha & d k \\
							  d k & - d m \end{pmatrix}, \]
and  when $m \neq 0$, let 
\[E(m,k, \eta) = \begin{pmatrix}  (k/ m) \eta \\     \eta \end{pmatrix};\]
Let $\eta, \nu \in \R$.  Define the following type-$(m,k)$ Lagrangian section and choice of lift to $\R^4$ by
\[ L_{\left(m,k,d,e^{2 \pi i (\tau \eta + \nu)}\right)} = \left( \{(Y, M(m,k,d, d k^2/m) Y									
												    - E(m,k,\eta)) \Large\mid Y \in \R^2 \}                           ,  \mathrm{d}-   2 \pi i \nu \left( (k/m) \mathrm{d}t+ \mathrm{d} s\right)  \right). \]									    This section is dedicated to the proofs of the following  theorems:

\begin{thmu}
There is a full and faithful functor
\begin{equation*} \Phi_{\W}: \mathcal{C}_{\tau} \to H^0 \F_{\tau, \Wr}\end{equation*}
realized on objects by 
\[\Phi_{(\Wr)}\left( \L_\tau\left(m,k,d,e^{2 \pi i (\tau \eta + \nu)} \right)\right)  = [ L_{\left(m,k,d,e^{2 \pi i (\tau \eta + \nu)}\right)}]. \]
\end{thmu}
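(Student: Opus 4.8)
The plan is to establish the functor $\Phi_{\W}$ by first defining it on objects exactly as stated---sending the line bundle $\L_\tau(m,k,d,e^{2\pi i(\tau\eta+\nu)})$ to the planar type-$(m,k)$ Lagrangian section $L_{(m,k,d,e^{2\pi i(\tau\eta+\nu)})}$ equipped with its specified flat connection---and then exhibiting the action on morphisms together with the requisite compatibilities. The key geometric input is that these Lagrangians are \emph{linear} sections of the fibration $Y\to\R\times S^1$, so that by Proposition~\ref{htc} their morphism spaces in $H^0\F_{\tau,\Wr}$ are computed by the completed Floer cohomology $\widehat{CF}^*_\tau$ of suitable perturbations that are in correct position and satisfy the hypotheses of Lemma~\ref{qisomcont}. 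The strategy is then to match, degree by degree, the graded pieces of $\widehat{CF}^*_\tau\big(L_{\L},L_{\L'}\big)$ indexed by $\overline{\hat n}\in\Z^3/\Gamma$ (Definition~\ref{gradingrem}) against the graded pieces of $H^0(\C^*\times E_\tau,\L'\otimes\L^{-1})$ indexed by $\Z^2/(kd,-md)$ as in Equation~\eqref{mseq2woo}, using the explicit theta-function basis $\sigma_{\L,\overline{(j,a)}}$.

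First I would carry out the object-level bookkeeping: verify that $L_{(m,k,d,\xi)}$ is indeed a type-$(m,k)$ admissible planar Lagrangian with the claimed homology class $[\gamma_L]=(r,dk,-dm)$ and total slope matching $md$, and that the assignment is well-defined under the labelling ambiguity of Remark~\ref{ambig} (i.e.\ $\L(mj,kj,d,\xi)=\L(m,k,dj,\xi)$ must map to Hamiltonian-isotopic Lagrangians). Next, for a pair $\L=\L(m_0,k_0,d_0,\xi_0)$, $\L'=\L(m_1,k_1,d_1,\xi_1)$ with $m_1d_1\ge m_0d_0$, I would compute $\widehat{CF}^*_{\underline r}(L_\L,L_{\L'})$ after a small perturbation bringing the pair into correct position; the associated quadratic form $Q^\beta_{L,L'}$ on the coordinate $w_{L,L'}$ is exactly the degenerate form $\tfrac12\bigl(\tfrac{(m_1d_1-m_0d_0)(\cdots)}{\cdots}\bigr)w^2$ appearing in the theta-identity Equation~\eqref{thetaid3}, and the intersection points organize into $\Z^2/(kd,-md)$-families with $t$-values growing linearly. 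Using Proposition~\ref{kingprop} I would show that each graded piece $\widehat{HF}^*_{\tau}(L_\L,L_{\L'})_{\overline{\hat n}}$ is one-dimensional in a single degree when $m_1d_1>m_0d_0$ (matching a single $\sigma_{\L^{-1}\L'}$ generator), is zero when $m_1d_1<m_0d_0$ (matching the vanishing of sections of a negative-degree bundle), and in the degree-zero case recovers the Laurent-series description of Equation~\eqref{mseq1woo}; the convergence condition $\lim|c|^{1/|t(y)|}=0$ defining $\widehat{CF}^*_{\underline{r}}$ then matches the convergence condition on theta-coefficients verbatim. I would then define $\Phi_{\W}$ on morphisms by sending $\sigma_{\L^{-1}\L',\overline{(j,a)}}$ to the corresponding action-corrected generator $\mathbf y_{\overline{\hat n}}$ (with the complexified action correction of the last definition absorbing the $\xi$-data), and check that this is a bijection compatible with the grading.

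The main obstacle---and the heart of the argument---is verifying that $\Phi_{\W}$ is compatible with products, i.e.\ that $\mu^2$ on $H^0\F_{\tau,\Wr}$ matches the Yoneda product on sections. This reduces, via Proposition~\ref{htc} and Lemma~\ref{qisomcont}, to comparing the $\mu^2$ triangle count for three linear Lagrangians $L_{\L_0},L_{\L_1},L_{\L_2}$ in correct position against the theta-function multiplication formula Equations~\eqref{thetaid3}--\eqref{47woo}. The key computation is the identification, carried out in the proof of Proposition~\ref{conv}, of the exponent of $T$ (the symplectic area $A(\tilde z_j+n\hat e_s,\tilde x_i)$ of the unique holomorphic triangle, computed via Stokes and the degenerate quadratic forms $Q^\beta_i$) with precisely the quantity $K_{(j_0+j_1',a+b')}$ of Equation~\eqref{46woo}, and of the parallel-transport holonomy factor with $e^{2\pi i\theta_{(\cdots)}}$ of Equation~\eqref{47woo}; Lemma~\ref{onlyone} guarantees each such triangle is counted with sign $\pm1$, and a sign/orientation check (standard, as in \cite{PZ}) fixes the sign to $+1$. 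Convergence of the resulting infinite sum over $n$ is exactly Proposition~\ref{conv}, so $\mu^2(\mathbf z_j,\mathbf x_i)$ lands in the completion and equals the image of the theta-product. Faithfulness then follows since $\Phi_{\W}$ is a bijection on each morphism space; fullness is immediate for the same reason. The one genuinely delicate point I anticipate is handling the Lagrangians that fail to be planar (those with $d\neq0$ but same total slope, or the degree-zero subtleties where the complex is concentrated in two adjacent degrees): there one must either first deform by a compactly supported Hamiltonian as in Lemma~\ref{qisomcont}, or argue directly that the extra degree-$1$ generators contribute nothing to the $H^0$-level statement.
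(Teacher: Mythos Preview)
Your proposal is correct and follows essentially the same approach as the paper: define $\Phi_{\Wr}$ on objects as stated, identify the completed Floer groups with the section spaces by matching the $\Z^2/(kd,-md)$-indexed generators $\sigma_{\L,\overline{(j,a)}}$ with action-corrected intersection points $\mathbf y_{\overline{(j,a)}}$, and then verify $\mu^2$ agrees with the theta multiplication by computing triangle areas and holonomies and matching against Equations~\eqref{46woo}--\eqref{47woo}. The paper's execution differs only in organization: it first fixes a very specific list of conditions (a)--(g) on the perturbing Hamiltonians (ensuring $S_{\R^4}(y)=Q_{L_i,L_j}(y)$ exactly, so all intersections lie on corresponding planes), carries out the morphism and $\mu^2$ computations in dedicated subsections culminating in Equations~\eqref{mseq1}--\eqref{mseq2} (rather than extracting them from the proof of Proposition~\ref{conv}, which is purely a convergence statement), and separately proves independence of the Hamiltonian choice (Proposition~\ref{actionvan}) so that the class $[\mathbf y_{\overline{(j,a)}}]$ is well-defined in $H^0\F_{\tau,\Wr}$.
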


 \begin{thmu} 
For each $\left(m, k\right)$ there is a full and faithful functor
\begin{equation*} \Phi_{\left(m,k\right)}: \mathcal{C}_{\tau}\left(m, k\right) \to H^0 \F_\tau\left(m, k\right) \end{equation*} 
realized on objects by 
\[\Phi_{(m,k)}\left( \L_\tau\left(m,k,d,e^{2 \pi i (\tau \eta + \nu)} \right)\right)  =[  L_{\left(m,k,d,e^{2 \pi i (\tau \eta + \nu)}\right)}]. \]
\end{thmu}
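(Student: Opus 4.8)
The plan is to prove the two theorems in parallel, since by Proposition \ref{embedc} the category $\mathcal{C}_\tau(m,k)$ embeds faithfully into $\mathcal{C}_\tau$, and by Remark \ref{stopform} together with the localization functor $\F(m,k) \to \F_{\Wr}$ we have a compatible diagram on the mirror side; so it suffices to (i) construct the functors $\Phi_{(m,k)}$ and $\Phi_{\Wr}$ on objects as prescribed, (ii) define them on morphisms by matching the explicit Frechet bases of $H^0$ of the relevant line bundles with explicit bases of the Floer cohomology groups, (iii) check compatibility with products, and (iv) check full faithfulness. First I would verify that the Lagrangian section $L_{(m,k,d,e^{2\pi i(\tau\eta+\nu)})}$, as written via the matrix $M(m,k,d,dk^2/m)$ and vector $E(m,k,\eta)$, is indeed a type-$(m,k)$ (resp. linear admissible) Lagrangian of the correct total slope $md/r$, with the correct boundary behavior at the wrapped end(s); this is a direct computation from the definitions in Section \ref{adlags}, using that the complexified action correction $-2\pi i\nu((k/m)\mathrm{d}t+\mathrm{d}s)$ encodes the translation parameter $\xi$ exactly as $\phi_t,\phi_s$ encode holonomy in Section \ref{LG}.

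The core of the argument is the morphism-level computation. For two such sections $L_0 = L_{(m_0,k_0,d_0,\xi_0)}$ and $L_1 = L_{(m_1,k_1,d_1,\xi_1)}$ in correct position, I would use Proposition \ref{htc} to identify $\Hom^*_{\F_\tau(I)}(L_0,L_1)$ with $\whCF^*_\tau(\psi_H(L_0),\psi_{H'}(L_1))$ for suitable perturbing Hamiltonians, and then compute the latter directly: after perturbing, the intersection points $\psi_H(L_0)\cap\psi_{H'}(L_1)$ are counted (on the universal cover, using the $\Z^3/\Gamma$-grading of Definition \ref{gradingrem}) by the critical points of the difference of primitives $h-h'$, which by Lemma \ref{primitive} is a quadratic form plus lower-order terms. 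The generators should be indexed, exactly as the sections $\sigma_{\L,\overline{(j,a)}}$ of Equations \eqref{mseq1woo} and \eqref{mseq2woo}, by $\Z^2/(k d, -m d)$ where $md = m_1 d_1 - m_0 d_0$, and the action $S(y) = Q^\alpha_{L_0,L_1,\pm}$ should reproduce the exponent $e^{\pi i\tau\eta^2/md}e^{2\pi i\nu\eta/md}$ and the theta-function weights $q^{md(\ell+a/md)^2/2}$ appearing in $\sigma_{\L,\overline{(j,a)}}$. Here the completion $\whCF^*_{\underline{r}}$ and its convergence criterion (Proposition \ref{convergence}) matches the Frechet condition $\lim|c_{\overline{(j,a)}}|^{1/(mj+ka)}=0$ on sections; and the degree considerations (the signature of $Q_{L_0,L_1,\pm}$, Remark after Definition \ref{qdecomp}) match the vanishing of $H^0$ when $md<0$ with the Floer cohomology being concentrated in degree $1$. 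The grading should be checked to agree with the Morse-index grading from Remark after the definition of $CF^*$.

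Once $\Phi$ is a bijection on each $\Hom$-space, fullness and faithfulness reduce to checking that it respects composition, i.e. that $\mu^2$ on the Floer side reproduces the Yoneda product on sections given by the theta-function identities of Section \ref{thetaidentities} (Equations \eqref{35woo}, \eqref{thetaid3}, \eqref{46woo}, \eqref{47woo}). This is the classical Polishchuk--Zaslow computation: a holomorphic triangle with boundary on the three perturbed sections has area computed by Stokes' theorem (as in the proof of Lemma \ref{qisomcont} and Proposition \ref{conv}), and summing over the lattice of lifts $n\mapsto n(-k,m)$ of the middle generator reproduces the theta-series, with the quadratic exponent $K_{(j_0+j_1',a+b')}$ of Equation \eqref{46woo} arising as the value of a quadratic form at the critical point $B(\tilde z_j + n\hat e_s,\tilde x_i)$ exactly as in the coefficient computation in the proof of Proposition \ref{conv}, and the linear term $\theta$ from the complexified action corrections $-2\pi i\nu((k/m)\mathrm{d}t+\mathrm{d}s)$ via Equation \eqref{nablade}. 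The convergence of this sum over $\C$ is precisely Proposition \ref{conv}. I would treat the degenerate cases (one of the line bundles of degree zero, where \eqref{35woo} holds and the triangle is rigid; and the same-total-slope case $(m_0,k_0)=(m_1,k_1)=(m_2,k_2)$, where the sum is finite and the sections are algebraic) using Lemma \ref{onlyone} and its corollary. The main obstacle I expect is bookkeeping: getting the index set $\Z^2/(kd,-md)$, the action corrections, and the quadratic/linear exponents to match the theta-function formulas on the nose, including signs and the choices of lifts and branch cuts; the analytic inputs (convergence, compactness, invariance) are all already in hand from Sections \ref{ainfs}--\ref{fukcatsec}, so the difficulty is combinatorial and notational rather than conceptual.
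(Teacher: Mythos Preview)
Your proposal is correct and follows essentially the same strategy as the paper: the paper first proves the fully wrapped statement $\Phi_{\Wr}$ by matching the Fr\'echet bases $\{\sigma_{\L,\overline{(j,a)}}\}$ with the action-corrected Floer generators $\{[\mathbf{y}_{\overline{(j,a)}}]\}$ (Equations \eqref{mseq1woo}, \eqref{mseq2woo} with \eqref{30}, \eqref{31}) and the theta-identity coefficients \eqref{46woo}, \eqref{47woo} with the triangle area/monodromy computations \eqref{mseq1}, \eqref{mseq2}, and then deduces $\Phi_{(m,k)}$ by restriction using that both $i_{(m,k)}^*:\mC_\tau(m,k)\to\mC_\tau$ (Proposition \ref{embedc}) and $\iota_{(m,k)}:H^0\F_\tau(m,k)\to H^0\F_{\tau,\Wr}$ are embeddings of categories. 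One small point: to make the reduction go through you need the Fukaya-side localization to be faithful, which is the content of Propositions \ref{inclusion} and \ref{embedf} rather than Remark \ref{stopform}; otherwise your outline matches the paper's argument in all essentials.
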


\begin{thmu}\label{commute1}
The following diagram commutes:     
 \[\begin{tikzcd}
 \mathcal{C}_{\tau}\left(m, k\right) \arrow{r}{i_{(m,k)}^*}\arrow{d}{\Phi_{(m,k)}}& \mathcal{C}_{\tau} \arrow{d}{\Phi_{(\Wr)}} \\
H^0 \F_\tau\left(m, k\right)\arrow{r}{\iota_{(m,k)}} &H^0 \F_{\tau, \Wr}.
\end{tikzcd}\]
\end{thmu}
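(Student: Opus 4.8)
\textbf{Proof proposal for Theorem \ref{commute1}.}

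The plan is to verify commutativity of the square on objects and on morphisms separately, and in both cases to reduce the statement to a direct comparison of the explicit formulas already recorded in Sections \ref{lbcx} and \ref{mscalcs}. On objects this is essentially immediate: $i_{(m,k)}^*$ sends $\overline{\L_\tau(m,k,d,\xi)}$ to $\L_\tau(m,k,d,\xi)$ (by the definition of $\mC_\tau(m,k)$ and $\mC_\tau$ in Section \ref{opsurf}), while $\iota_{(m,k)}$ is the inclusion on objects (Equation \eqref{loca}); on the other side, both $\Phi_{(m,k)}$ and $\Phi_{(\Wr)}$ send a line bundle to the class of the \emph{same} Lagrangian section $L_{(m,k,d,\xi)}$ described at the start of Section \ref{mscalcs}. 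Since the section $L_{(m,k,d,\xi)}$ is type-$(m,k)$ and its defining data (the matrix $M(m,k,d,dk^2/m)$, the shift $E(m,k,\eta)$, and the flat connection) do not depend on which ambient Fukaya category we view it in, the two composites agree on objects.

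For morphisms, I would use Proposition \ref{htc}: choosing suitable perturbing Hamiltonians, $\Hom$ in $H^0\F_\tau(m,k)$ and in $H^0\F_{\tau,\Wr}$ between the relevant Lagrangian sections is computed by the $\C$-linear completion $\whCF^*_\tau$ of the corresponding Floer complex, and the localization functor $\iota_{(m,k)}$ on cohomology is induced (via Proposition \ref{htc} and the directed-system description of $\Hom$ in $\F(I)$) by the map of Floer complexes coming from a common choice of Hamiltonians that is type-$(m,k)$. So the right-hand vertical composite $\iota_{(m,k)} \circ \Phi_{(m,k)}$ and the ``lower'' composite $\Phi_{(\Wr)} \circ i_{(m,k)}^*$ are both maps from $\Hom_{\mC_\tau(m,k)}$ to $\whHF^*_\tau$ computed in the $\Wr$ setting. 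On the complex side, $i_{(m,k)}^*$ is the restriction map on global sections of line bundles, which in coordinates is the identity inclusion of everywhere-convergent Laurent/theta series (Section \ref{labelingsections}; the sections $\overline\sigma_{\L,\overline{(j,a)}}$ restrict to $\sigma_{\L,\overline{(j,a)}}$, and the multiplication rules \eqref{35woo}, \eqref{thetaid3} are the same in both settings). The core of the argument is therefore: in the proofs of the first two theorems (Theorems \ref{openfunctors}(a) and \ref{openfunctors}(b)), $\Phi_{(m,k)}$ and $\Phi_{(\Wr)}$ are each defined by an explicit correspondence $\sigma_{\L,\overline{(j,a)}} \leftrightarrow \mathbf{y}$ between basis sections and action-corrected intersection points that only sees the geometry of the Lagrangian sections near a finite part of $Y$ together with the asymptotic periodicity; since the Lagrangian sections, the intersection points, and the $\mu^2$ triangle counts between them are \emph{literally the same} in $\F(m,k)$ and in $\F_\Wr$ (the only difference being which boundary conditions the ambient category remembers, which does not affect sections of type-$(m,k)$), the two descriptions of $\Phi$ on $\Hom$-complexes agree on the nose. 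Commutativity of the square then follows by matching these compatible bases.

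There is one technical point to watch, which I expect to be the main obstacle: I need to know that $\iota_{(m,k)}$, applied to the \emph{image} of $\Phi_{(m,k)}$, is actually computed by the naive inclusion of Floer complexes rather than by some continuation map that could alter coefficients. This is handled by choosing, once and for all, compatible data (cofinal sequences of type-$(m,k)$ Hamiltonians sitting inside cofinal sequences of type-$\Wr$ Hamiltonians, and compatible strip-like ends and almost complex structures) as in the construction of the localization functors in Section \ref{fukcatsec}; with such compatible choices, the localization functor is the identity on objects and, via Proposition \ref{htc} and Equation \eqref{isos}, is represented on morphism cohomology by the continuation-element quasi-isomorphisms, which by Lemma \ref{qisomcont} and the argument in Lemma \ref{onlyone} preserve the action-corrected generators up to $\pm 1$ — and the $\pm 1$ ambiguity is pinned down by working over $\Lambda_\Z$ (Remark \ref{overZ}). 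The remaining verification is then bookkeeping: check that the bijection of generators induced by $\iota_{(m,k)}$ matches, section-by-section, the restriction map $i_{(m,k)}^*$ under the identifications of Theorems \ref{openfunctors}(a) and (b), and that the $\mu^2$-coefficients (given by the theta-function identities \eqref{thetaid3}) are intertwined. This is routine given the explicit formulas but is the step requiring the most care, since it is where the energy/action normalizations on both sides must be seen to coincide.
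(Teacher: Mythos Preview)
Your proposal is correct and follows essentially the same route as the paper. The paper packages the argument slightly more cleanly: rather than verifying commutativity on objects and morphisms separately, it first records that both horizontal arrows are embeddings of categories (Proposition \ref{embedc} for $i_{(m,k)}^*$ and Proposition \ref{embedf} for $\iota_{(m,k)}$), then \emph{defines} $\Phi_{(m,k)}$ as the restriction of $\Phi_{\Wr}$ through these embeddings, so commutativity holds by construction. The technical point you flag --- that $\iota_{(m,k)}$ on the image of $\Phi_{(m,k)}$ is computed by the naive inclusion of Floer complexes --- is exactly Proposition \ref{inclusion}, which the paper proves just before invoking it; your appeal to Lemma \ref{qisomcont} and Lemma \ref{onlyone} with a $\pm 1$ ambiguity is a slight detour, since Proposition \ref{inclusion} gives the identification on the nose.
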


\subsection{Calculations in $H^0\F_{\tau, \W}$}
We first calculate in the wrapped category $H^0\F_{\tau, \W}$. 
\subsubsection{Perturbing Hamiltonians}\label{hamform2} For each triple of Lagrangians $(L_0, L_1,L_2)$, choose type-$\W$ Hamiltonians $H_0, H_1, H_2$ satisfying the following conditions:
 \begin{enumerate}[(a)]
 \item The set $(\psi_{H_0}(L_0), \psi_{H_1}(L_1),\psi_{H_2}(L_2))$ is in correct position.
\item $H_i= H_{\alpha_i}(t)+f_i(t,s)$ for some $f(t,s)$ with  $\|f_i\|_{C^2}<1$.
\item $\alpha_i(t)- \alpha_j(t) > K(L_i, L_j)$ for each $i <j$. 
\item For each degree zero generator $y \in CF^0(\psi_{H_i}(L_i), \psi_{H_j} (L_j))$ with $i < j$, $f_i(y)- f_j(y)=0$.
\item When two lifts $\psi_{H_i}\left(\tilde{L}_i\right)$ and  $\psi_{H_j}\left(\tilde{L}_j\right) - \hn$ are not essentially disjoint, so they are not disjoint after perturbing by any compactly supported Hamiltonian, 
$  \left|\psi_{H_i}\left(\tilde{L}_i\right) \cap \psi_{H_j}\left(\tilde{L}_j\right)- \hn\right|=1$ or $2$.
\item $\left|\psi_{H_i}\left(\tilde{L}_i\right) \cap \psi_{H_j}\left(\tilde{L}_j\right)- \hn\right| \subseteq P_i \cap (P_j + \hn)$, where  $P_i$ and $P_j $ are the corresponding planes of the respective Lagrangians, and equality holds when the two planes intersect transversely (i.e., when $[\gamma_{L_0}] \neq [\gamma_{L_1}]$).
\item When $P_i$ and $P_j$ do not intersect transversely, the degree zero generators $y \in \psi_{H_i}\left(\tilde{L}_i\right) \cap\left( \psi_{H_j}\left(\tilde{L}_j\right)- \hn\right)$ have $s(y)\in \N$.
\end{enumerate}
Note that these conditions ensure that $S_{\R^4}(y)=Q_{L_i, L_j}(y)$ for all $y$ since equality holds for all points on the intersection points of the corresponding planes.

By Proposition \ref{htc}, for such choices of Hamiltonian, the natural map 
\[ CF^*(\psi_{H_0}(L_0), \psi_{H_1}(L_1)) \to \Hom_{\F_\Wr}^*(L_0, L_1)\]
is a quasi-isomorphism; thus we can compute the cohomology category $ \Hom_{H^0 \F_\Wr}(L_0, L_1)$ using such choices of Hamiltonians.

%Let \[L_i = L_{\left(m_i,k_i,d_i,e^{2 \pi i (\tau \eta_i + \nu_i)}\right)}, i=0,1.\] By Lemma \ref{htc}, the natural map
%\[ \whHF^0_\tau \left(\psi_{H_0}(L_0), \psi_{H_1}(L_1)\right) \to \Hom_{H^0 \F_{\tau, \Wr}}(L_0, L_1) \]
%is an isomorphism for perturbing Hamiltonians $H_0$ and $H_1$ with $\alpha(H_0)-\alpha(H_1)$ sufficiently large. We now calculate $\whHF^0\left(\psi_{H_0}(L_0), \psi_{H_1}(L_1)\right)$ using a chosen family of perturbing Hamiltonians. Let $H_{\alpha}(t)=\frac{\alpha}{2} t^2$.

\begin{remark}[Notation]  Let $L_0 =  L_{\left(m_0, k_0,d_0,e^{2 \pi i (\tau \eta_0 + \nu_0)}\right)}$, $L_1= L_{\left(m_1,k_1,d_1,e^{2 \pi i (\tau \eta _1+ \nu_1)}\right)}$. \begin{enumerate}[(a)]
\item Let $\mathbf{y}\in CF_{\tau}^*(\psi_{H_0}(L_0), \psi_{H_1}(L_1))$ denote the action-corrected generator $\mathbf{y}=e^{-2 \pi i (A t(y)+\tau S(y))}y$, that is, the image of the action-corrected generator $e^{-2 \pi i  A t(y)} T^{-S(y)}y \in CF^*(\psi_{H_0}(L_0), \psi_{H_1}(L_1))$ under the evaluation map \[\ev_\tau:  CF^*(\psi_{H_0}(L_0), \psi_{H_1}(L_1)) \to CF_{\tau}^*(\psi_{H_0}(L_0), \psi_{H_1}(L_1))\].

\item The stabilizer subgroup $\Gamma(L_0,L_1) \subset \Z^3$ is equal to 
\[\Z \left(\hat{e}_s + k_0 d_0 \hat{e}_{\phi_t} - m_0 d_0 \hat{e}_{\phi_s} \right) \oplus \Z ((k_0 d_0 - k_1 d_1) \hat{e}_{\phi_t} - (m_0 d_0 - m_1 d_1)\hat{e}_{\phi_s}).\] 
Thus the the map $\Z^2 \to \Z^3$ defined by $(j,a) \mapsto j e_{\phi_t} + a e_{\phi_s}$ descends to an isomorphism \[ \Z^2/((k_0 d_0 - k_1 d_1), -(m_0 d_0 - m_1 d_1)) \cong \Z^3/\Gamma(L_0, L_1),\]the indexing group for the topological grading on the Floer complexes.  Let  $\mathbf{y}_{(j,a)}^{H_0, H_1}\in \R^4$ denote a generator of $CF^*\left(\psi_{H_0}(\tilde{L}_0), \left(\psi_{H_1}(\tilde{L}_1)-j e_{\phi_t} -a e_{\phi_s}\right)\right)_{(j,a)}$;  let  \[\mathbf{y}^{H_0,H_1}_{\overline{(j,a)}}=\pi_Y\left(\mathbf{y}_{(j,a)}^{H_0,H_1}\right)\] denote the corresponding generator of the Floer complex. \end{enumerate} \end{remark}

\begin{remark} The conditions on the Hamiltonians ensure that for any \[y= (Y,\Phi) \in \psi_{H_0}(\tilde{L}_0) \cap \left(\psi_{H_1}(\tilde{L}_1)-j e_{\phi_t} -a e_{\phi_s}\right),\] $Y$ satisfies the equation
\begin{equation}\label{27} \begin{pmatrix}-\frac{d_0( k_0)^2}{m_0} +\frac{d_1( k_1)^2}{m_1} + \alpha_0-\alpha_1    & k_0 d_0 - k_1 d_1\\
					k_0 d_0 - k_1 d_1 & -m_0 d_0 + m_1 d_1\end{pmatrix} Y- \begin{pmatrix}j - \frac{k_0}{m_0} (\eta_0)+ \frac{k_1}{m_1} (\eta_1) \\ a -(\eta_0) + \eta_1 \end{pmatrix} =0, \end{equation}
					as this is the equation which describes the intersection of the corresponding planes. \end{remark}

\subsubsection{Morphisms} Let $L_0 =  L_{\left(m_0, k_0,d_0,e^{2 \pi i (\tau \eta_0 + \nu_0)}\right)}$, $L_1= L_{\left(m_1,k_1,d_1,e^{2 \pi i (\tau \eta _1+ \nu_1)}\right)}$. We now calculate
\[ \Hom_{H^0 \F_{\tau, \Wr}}(L_0, L_1) \cong \whHF_{\tau}^0(\psi_{H_0}(L_0), \psi_{H_1}(L_1)). \]
\subsubsection*{Case 1: $L_0$ and $L_1$ represent the same homology class.} The conditions on $H_0$ and $H_1$  ensure that 
\[CF_\tau^i(\psi_{H_0}(L_0), \psi_{H_1}(L_1))_{(j,a)} = \begin{cases} \C \cdot \mathbf{y}_{(j,a)}^{H_0, H_1} & i = 0 \\[5pt]
											  \C \cdot \mathbf{z}_{(j,a)}^{H_0, H_1} & i = 1 \end{cases} \]
when $\psi_{H_0}(\tilde{L}_0)$ and   $\psi_{H_1}(\tilde{L}_1) + j \e_{\phi_t} + a\e_{\phi_s}$
	are not essentially disjoint, and 	
\[CF^*(\psi_{H_0}(L_0), \psi_{H_1}({L_1}))_{(j,a)} = 0 \]					 
otherwise. In the former case, we appeal to Proposition \ref{kingprop} to deduce that   \[HF^0(\psi_{H_0}(L_0), \psi_{H_1}({L_1}))_{\overline{(j,a)}} = \begin{cases}
 																	 \Lambda \mathbf{y}_{\overline{(j,a)}}^{H_0, H_1} &  \nu_0 - \nu_1 \in \Z \\
																	 0 &  \nu_0 - \nu_1 \notin \Z \end{cases}.\]

It now remains to calculate the $(j,a) \in \Z^2$ such that $\psi_{H_0}(\tilde{L}_0)$ and $\psi_{H_1}\tilde{L}_1) -j \hat{e}_{\phi_t} - a \hat{e}_{\phi_s}$ intersect. Note that if $\eta_0 - \eta_1 \notin \Z$,
 \[ \psi_{H_0}(L_0) \cap \left( \psi_{H_1}(L_1)-i \hat{e}_{\phi_t} - a \hat{e}_{\phi_s}\right) = \emptyset\] 
 for all $(j,a) \in \Z$. When $\eta_0 - \eta_1 = N \in \Z$,
  \[|\psi_{H_0}(L_0) \cap \left( \psi_{H_1}(L_1)-i \hat{e}_{\phi_t} - a \hat{e}_{\phi_s}\right) | = 2\] 
  if and only if $a = N$. 

 Let $t_j= t\left(y_{{(j,N)}}^{H_0, H_1}\right)$; from Equation \eqref{27}, we read off that $ t_j = j(\alpha_0-\alpha_1)\inv.$ We conclude that 
  
  \begin{align}\label{28} \Hom_{H^0 \F_{\tau, \Wr}}(L_0, L_1) & \cong \whHF_\tau^0(\psi_{H_0}(L_0), \psi_{H_1}(L_1));\\
  											\label{29} &= \left\{ \sum_{j \in \Z} c_j \mathbf{y}_{\overline{(j,N)}} \mid \lim_{|t_j| \to \infty} |c_j|^{(t_j)\inv} =0 \right\} \\
											\label{30} &=\left\{ \sum_{j \in \Z} c_j \mathbf{y}_{\overline{(j,N)}} \mid \lim_{|j| \to \infty} |c_j|^{j \inv} =0 \right\}.
\end{align}

\subsubsection*{Case 2: $L_0$ and $L_1$ represent different homology classes }Now assume that $[\gamma_{L_0}]\neq [\gamma_{L_1}]$. Let $\alpha (H_i)= \alpha_i +d_i (k_i)^2/m_i$. Let $m= m_1 d_1 - m_0 d_0$, $k=k_1 d_1 - k_0 d_0$, $d=1$.

 Then 
the quadratic form $Q_{\psi_{H_0}(L_0), \psi_{H_1}(L_1)}$ of Definition \ref{qdecomp} is given by \[Q_{\psi_{H_0}(L_0), \psi_{H_1}(L_1)}= x^T M(m,k,d, \alpha_0-\alpha_1) x;\]
its decomposition is given by
\[ Q_{(L_0, L_1)}^\alpha +  Q_{(L_0, L_1)}^\beta=\alpha t^2 + Q_{(L_0, L_1)}^\beta . \] In this case the Floer complex $CF^0 \left(\psi_{H_0}(L_0), \psi_{H_1}(L_1)\right)$ is concentrated in a single degree $i$. $i$ is zero if and only the signature of $Q_{\psi_{H_0}(L_0), \psi_{H_1}(L_1)}$ is $(2,0)$, if and only if $m_0 d_0 < m_1 d_1$. In this case $CF^0 (\psi_{H_0}(L_0), \psi_{H_1}(L_1))_{(j,a)}$ is nonzero for all $(j,a)$ since two transverse planes will always intersect, and generated by a single intersection point. We write
 \[ CF_{\tau}^0 \left(\psi_{H_0}(L_0), \psi_{H_1}(L_1)\right)_{(j,a)} = \C \mathbf{y}_{(j,a)}^{H_0, H_1}. \]
 Write $t_{j,a}= t\left (y_{(j,a)}^{H_0, H_1}\right)$; from Equation \eqref{27}, we read off that  
 \begin{equation}\label{tineq} t\left (y_{(j,a)}^{H_0, H_1}\right)=c_1(m j +m k) + c_2. \end{equation}
 for two constants $c_1$ and $c_2$, with $c_1 > 0$. 
We conclude that
\begin{align} \Hom_{H^0 \F_{\tau, \Wr}}(L_0, L_1) &= \whHF_\tau^0 (\psi_{H_0}(L_0), \psi_{H_1}(L_1))_{(j,a)}  ;\\
  &= \left\{ \sum_{\overline{(j,a)} \in \Z^2/(-k,m)} c_{\overline{(j,a)}} \mathbf{y}_{\overline{(j,a)}}^{H_0, H_1} 
\mid \lim_{\left|t_{j,a}\right| \to \infty} \left| c_{\overline{(j,a)}}\right|^{(t_{j,a})\inv}\right\} =0\\
\label{31} &= \left\{ \sum_{\overline{(j,a)} \in \Z^2/(-k,m)} c_{\overline{(j,a)}} \mathbf{y}_{\overline{(j,a)}}^{H_0, H_1} 
\mid \lim_{|m j+ k a| \to \infty}  \left| c_{\overline{(j,a)}} \right|^{|m j+ ka|\inv}=0\right\}.
\end{align}

\subsubsection{Independence of choices of the Hamiltonian.}
\begin{prop}\label{actionvan} Let $L_0$ and $L_1$ be as above. For any choices of $H_0, H_1, H_0', H_1'$ where $H_0$ and $H_1$ satisfy the conditions above, 
\[\left[\mathbf{y}_{\overline{(j,a)}}^{H_0, H_1}\right]=\left[\mathbf{y}_{\overline{(j,a)}}^{H_0', H_1'}\right] \]
as classes in $\Hom_{H^0 \F_{\tau}}(L_0,L_1)$.
\end{prop}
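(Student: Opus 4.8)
The plan is to show that the class $[\mathbf{y}^{H_0,H_1}_{\overline{(j,a)}}]$ in $\Hom_{H^0\F_\tau}(L_0,L_1)$ does not depend on the choice of admissible perturbation data, by comparing any two choices via continuation maps and checking that the continuation maps send action-corrected generators to action-corrected generators with unit coefficient. First I would reduce to the case where only the Hamiltonian perturbing $L_1$ (say) is changed, handling each Lagrangian separately: since $\Hom_{H^0\F_\tau}(L_0,L_1)$ is computed by $\whCF^*_\tau(\psi_{H_0}(L_0),\psi_{H_1}(L_1))$ for any admissible pair by Proposition \ref{htc}, and since the quasi-isomorphism realizing the identification is multiplication by a continuation element $c_{H_1 \to H_1'}$ (resp.\ $c_{H_0' \to H_0}$), it is enough to control the effect of $\mu^2(\,\cdot\,, c_{H_0\to H_0'})$ and $\mu^2(c_{H_1'\to H_1}, \,\cdot\,)$ on each topologically graded piece. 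The conditions imposed in Section \ref{hamform2} on the Hamiltonians — in particular conditions (d), (f), (g), which force $S_{\R^4}(y) = Q_{L_0,L_1}(y)$ for every generator and force the generators of the degree-zero pieces to sit exactly on the intersections of the corresponding planes with integral $s$-coordinate — are exactly what makes both $H_0,H_1$ and $H_0',H_1'$ lie in the class of "good" perturbation data, so the comparison takes place between two members of that class.

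The key computational input is Lemma \ref{qisomcont}, and more specifically the final "Extension to completions" portion of its proof: there it is shown that for $\hn$ outside a finite set, $\mu^2(\,\cdot\,, c_{H_Q})_{\hn}$ sends the degree-preserving generator $y$ to $n\,T^{S_{\R^4}(y) - S_{\R^4}(y_{H_Q})}\,y_{H_Q}$ with $n \in GL_1(\Z) = \{\pm 1\}$, because the quasi-isomorphisms exist already over $\Lambda_\Z$ (Remark \ref{overZ}). So I would argue: for each $\overline{(j,a)}$, multiplication by the continuation element carries $\mathbf{y}^{H_0,H_1}_{\overline{(j,a)}}$ to $\pm\,\mathbf{y}^{H_0',H_1'}_{\overline{(j,a)}}$ after the action corrections are taken into account, since $S(y) = Q^\alpha_{L_0,L_1}(y) = \tfrac12\alpha_\pm t(y)^2$ depends only on the $t$-coordinate and on $\alpha_\pm$, while the difference $S_{\R^4} - S$ is governed by the degenerate part $Q^\beta$, which by conditions (f)--(g) vanishes on these generators. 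The sign ambiguity is then eliminated exactly as in the finite-dimensional parts of Lemma \ref{comph}(d)--(f) and Lemma \ref{decompofh}(c): composing the continuation map from $(H_0,H_1)$ to $(H_0',H_1')$ with the one back again is homotopic to the identity (Lemma \ref{decompofh}(c) applied to the relevant triples), so the composite scalar is $+1$, forcing each individual sign to be $+1$ after possibly absorbing it into the (arbitrary up to sign) choice of lift/Pin data; more cleanly, one fixes orientations so that the canonical generators over $t=s=0$ in the cyclic cover picture of Lemma \ref{qisomcont} are matched, and then the scalar is literally $1$.

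The main obstacle I expect is bookkeeping rather than conceptual: one must check that the action correction $\mathbf{y} = e^{-2\pi i(At(y) + \tau S(y))}y$ transforms correctly under the continuation map, i.e.\ that the complexified holonomy term $A$ (from the decomposition $\nabla_0 - \nabla_1 = 2\pi i(B\,\mathrm{d}w_{L_0,L_1} + A\,\mathrm{d}t)$ of Definition "Complexified action corrections") is the same for both choices of Hamiltonian — which holds because $A$ is determined by the flat connections and the homology classes, not by the perturbation — and that the $T$-power produced by Stokes' theorem, namely $S_{\R^4}(y) - S_{\R^4}(y')$, is exactly $S(y) - S(y')$ on these generators, so that $\ev_\tau$ of the coefficient is $1$. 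This is where conditions (d)--(g) of Section \ref{hamform2} do all the work, and the argument is essentially a repackaging of the extension-to-completions step of Lemma \ref{qisomcont} together with Lemma \ref{boundondiff}. I would also note at the end that the same argument, applied verbatim end-by-end, covers the partially wrapped $(m,k)$ and the unwrapped $\mathbf{A}$ settings, so the proposition can be stated uniformly.
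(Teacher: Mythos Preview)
Your strategy is the paper's strategy: reduce to a single continuation move (the paper does this via an intermediate pair $(H_0'',H_1'')$ dominating both; you do it one Hamiltonian at a time — equivalent), then check that multiplication by the continuation element sends the action-corrected generator to the action-corrected generator with coefficient $1$, using that the unique triangle has area $S_{\R^4}(y)-S_{\R^4}(y')$ and appealing to Lemma~\ref{boundondiff}. The paper carries this out by the direct computation culminating in Equation~\eqref{87}; you package the same inputs through the ``extension to completions'' step of Lemma~\ref{qisomcont}. Either route works.

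There is one genuine imprecision to fix. You assert that $Q^\beta$ ``vanishes on these generators'' by conditions (f)--(g). That is only true in Case~1 (essentially non-transverse, where $Q^\beta\equiv 0$). In Case~2 one has $Q^\beta(y)=\tfrac12\beta\,w_{L_0,L_1}(y)^2$, which is generally nonzero. The correct statement, and what the paper actually uses, is that $w_{L_0,L_1}\bigl(y^{H_0,H_1}_{(j,a)}\bigr)=w_{L_0,L_1}\bigl(y^{H_0',H_1}_{(j,a)}\bigr)$ (this is the content of Lemma~\ref{boundondiff} for planar Lagrangians, and follows directly from Equation~\eqref{27} since the second row does not involve $\alpha$). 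Hence $Q^\beta$ is \emph{unchanged}, so $(S_{\R^4}-S)(y^{H_0,H_1})=(S_{\R^4}-S)(y^{H_0',H_1})$, which is what you need. The same $w$-invariance is also what kills the $B\,\mathrm{d}w$ contribution to the holonomy around the triangle, leaving only the $A\,\mathrm{d}t$ part, which then exactly cancels the $e^{-2\pi i A t(y)}$ factors in the complexified correction — the paper writes this out explicitly, and you should too rather than just noting that ``$A$ is the same for both choices.'' With that correction and that holonomy step made explicit, your argument is complete and coincides with the paper's.
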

\begin{proof} For any set of Hamiltonians $H_0, H_1, H_0', H_1'$,  there exists a set of Hamiltonians $H_0'', H_1''$ such that the sets $(\psi_{H_0''}(L_0), \psi_{H_1''}(L_1), \psi_{H_0}, \psi_{H_1})$ and $(\psi_{H_0''}(L_0), \psi_{H_1''}(L_1), \psi_{H_0'}(L_0), \psi_{H_1}(L_1))$ are both in correct position; thus it suffices to show that in the case that $(\psi_{H_0'}(L_0), \psi_{H_1'}(L_1), \psi_{H_0}(L_0), \psi_{H_1'}(L_1))$ is  in correct position
\begin{equation} \label{87} \mu^2\left(c_{H_1 \to H_1'}, y_{(j,a)}^{H_0', H_1'}\right) = \mu^2\left(y_{(j,a)}^{H_0, H_1}, c_{H_0 \to H_0'} \right) \in CF^*(\psi_{H_0'}(L_0), \psi_{H_1}(L_1)). \end{equation}
Write
 \begin{equation}\label{nablad} \nabla_0-\nabla_1= 2 \pi i \left(\left(\nu_0 -\nu_1\right) \mathrm{d}w +A\,\mathrm{d}t\right)\end{equation}
 as in Equation \eqref{nablade}. We have seen that $\mu^2\left(y_{(j,a)}^{H_0,H_1}, c_{H_0 \to H_1'}\right)$ counts one triangle with area 
\[ S_{\R^4}\left(y_{(j,a)}^{H_0, H_1}\right)- S_{\R^4}\left(y_{(j,a)}^{H_0', H_1}\right); \]
the monodromy around this triangle is given by 
\[ \exp\left(2 \pi i \left (B\left( w\left(y_{(j,a)}^{H_0,H_1}\right)-  w\left(y_{(j,a)}^{H_0',H_1}\right)\right)+A\left( t \left(y_{(j,a)}^{H_0,H_1}\right)-  t \left(y_{(j,a)}^{H_0',H_1}\right)\right) \right)\right).\]
Recall that $w\left(y_{(j,a)}^{H_0, H_1}\right)=w\left(y_{(j,a)}^{H_0', H_1}\right)$; thus this quantity is equal to 
\[ \exp\left(2 \pi i A \left( t\left(y_{(j,a)}^{H_0,H_1}\right)- t \left(y_{(j,a)}^{H_0',H_1} \right)\right)\right).\]
Since \[\mathbf{y}_{(j,a)}^{H_0,H_1}=e^{-2 \pi i \left(\tau S\left(y_{(j,a)}^{H_0,H_1}\right)+A t\left(y_{(j,a)}^{H_0,H_1}\right)\right)} y_{(j,a)}^{H_0,H_1},\] and similarly for $\mathbf{y}_{(j,a)}^{H_0',H_1}$, we write
\[ \mu^2\left(\mathbf{y}_{(j,a)}^{H_0,H_1}, c_{H_0 \to H_1'}\right) = e^{ 2\pi i \tau \left(S_{\R^4}\left(y_{(j,a)}^{H_0, H_1}\right)- S\left(y_{(j,a)}^{H_0, H_1}\right)\right)-\left(S_{\R^4}\left(y_{(j,a)}^{H_0', H_1}\right)- S\left(y_{(j,a)}^{H_0', H_1}\right)\right)} \mathbf{y}_{(j,a)}^{H_0',H_1}.\]
When $[\gamma_{L_0}]=[\gamma_{L_1}]$, 
\[ S_{\R^4}\left(y_{(j,a)}^{H_0, H_1}\right)- S\left(y_{(j,a)}^{H_0, H_1}\right)=S_{\R^4}\left(y_{(j,a)}^{H_0', H_1}\right)- S\left(y_{(j,a)}^{H_0', H_1}\right)=0;\] 
when $[\gamma_{L_0}]\neq [\gamma_{L_1}]$ we can apply Lemma \ref{boundondiff} to conclude 
\[ \left(S_{\R^4}\left(y_{(j,a)}^{H_0, H_1}\right)- S\left(y_{(j,a)}^{H_0, H_1}\right)\right)-\left(S_{\R^4}\left(y_{(j,a)}^{H_0', H_1}\right)- S\left(y_{(j,a)}^{H_0', H_1}\right)\right)=0. \] 
Thus 
\[\mu^2\left(\mathbf{y}_{(j,a)}^{H_0,H_1}, c_{H_0 \to H_1'}\right)= \mathbf{y}_{(j,a)}^{H_0',H_1}.\]
We can repeat this process on the left to show that $\mu^2\left(c_{H_1 \to H_1'}, y_{(j,a)}^{H_0', H_1'}\right) = \mathbf{y}_{(j,a)}^{H_0',H_1};$ we conclude that the equality of Equation \eqref{87} holds.

\end{proof}

\begin{defn}Let $\left[\mathbf{y}_{\overline{(j,a)}}^{H_0, H_1}\right]=[y_{\overline{(j,a)}}]\in \Hom_{H^0\F_{\tau, \Wr}}(L_0,L_1).$ We will also denote this generator by $[y_{L_0,L_1, \overline{(j,a)}}]$ when it is useful to record the extra data.  \end{defn}

\subsubsection{$\mu^2$ calculations}
Let  \begin{align*} L_i =& L_{\left(m_i,k_i,d_i,e^{2 \pi i (\tau \eta_i + \nu_i)}\right)}, i=0,1,2;\\
					&\left[x_{\overline{(j_0,a)}}\right]\in \Hom_{H^0 \F_{\tau,\W}}(L_0,L_1); \\
					&\left[z_{\overline{(j_1,b)}}\right]\in \Hom_{H^0\F_{\tau,\W}}(L_1, L_2);\\
				      &\left[y_{\overline{(j_2,c)}}\right]\in \Hom_{H^0\F_{\tau,\W}}(L_0, L_2).\end{align*}		
Let $w_0, w_1,$ and $w_2$ be the coordinates associated to the pairs $(L_0,L_1)$, $(L_1,L_2)$, and $(L_0, L_2)$ (so $w_0 = s - (k_0 d_0 - k_1 d_1)/(m_0 d_0 - m_1 d_1) t$ when $m_0 d_0 - m_1 d_1 \neq 0$, and so on).
Write								 
\[\left[x_{\overline{(j_0,a)}}\right]\cdot\left[z_{\overline{(j_1,b)}}\right]=\sum_{\overline{(j_2,c)}} A_{\overline{(j_2,c)}}\left[y_{\overline{(j_2,c)}}\right]. \]
We now calculate the coefficients in the above equation. Choose representative Hamiltonians $H_0$, $H_1$, $H_2$ and generators $\mathbf{x}_{\overline{(j_0,a)}}, \mathbf{z}_{\overline{(j_1,b)}}$.
First assume that $m_{i} d_{i} - m_{i+1} d_{i+1}=0$ for $i=0,1$; then $\Hom_{H^0 \F_{\tau}}(L_i, L_{i+1})\neq 0$ only if $k_{i} d_{i} - k_{i+1} d_{i+1}=0$ as well. In this case we have seen in the proof of Proposition \ref{conv} that the product $\mu^2\left(\mathbf{z}_{\overline{(j_1,b)}}, \mathbf{x}_{\overline{(j_0,a)}}\right)$ counts only a single $J$-holomorphic map which has vanishing action-corrected area; thus
\begin{equation}\label{mseq3woo} \mu^2\left(\mathbf{z}_{\overline{(j_1,b)}}, \mathbf{x}_{\overline{(j_0,a)}}\right)=\mathbf{y}_{\overline{(j_0 +j_1, {a+b})}}.\end{equation}

Otherwise, fix a lift $x_{(j_0, a_1)} = \tilde{L}_0 \cap \left( \tilde{L}_1-(j_0 \hat{e}_{\phi_t}, a \hat{e}_{\phi_s}) \right)$. Let $k= k_1 d_1 - k_2 d_2$, $m=m_1 d_1 - m_2 d_2$. Using Proposition \ref{onlyone}, write the product as the sum of products corresponding to different choices of the lift of $z_{\overline{(j,a)}}$: let $(-k,m)= (k_1 d_1 - k_2 d_2, m_1 d_1 - m_2 d_2)$, and write
\begin{equation}\label{multeq} \mu^2\left(\mathbf{z}_{\overline{(j_1,b)}}, \mathbf{x}_{\overline{(j_0,a)}}\right) = \sum_{\ell \in \Z }e^{2 \pi i( \tau C_{(j_0+j_1 - k \ell ,a+b+m\ell)}+ \theta_{(j_0+j_1-k\ell,a+b+m\ell)})} \mathbf{y}_{\overline{(j_0+j_1-k \ell ,a+b+m\ell)}} \end{equation}
The coefficients $C_{(j_0+j_1 - k \ell ,a+b+m\ell)}$ and $\theta_{(j_0+j_1-k\ell,a+b+m\ell)}$ record the action-corrected area and monodromy of the unique  J-holomorphic map with the two boundary marked points mapping to the intersection points 
\begin{align*} x_{(j_0,a)}&= \tilde{L}_0 \cap  \left( \tilde{L}_1-(j_0 \hat{e}_{\phi_t}, a \hat{e}_{\phi_s}) \right); \\
		     z_{(j_1',b')}&= \left( \tilde{L}_1-(j_0 \hat{e}_{\phi_t}, a \hat{e}_{\phi_s}) \right)  \cap  \left( \tilde{L}_2-(j_0+j_1') \hat{e}_{\phi_t}- (a+b') \hat{e}_{\phi_s}\right);\\
		    y_{(j_0+j_1',a+b')} &= \tilde{L}_0 \cap \left( \tilde{L}_2-(j_0+j_1') \hat{e}_{\phi_t}- (a+b') \hat{e}_{\phi_s}\right). \end{align*}		   
We conclude that in $H^0 \F_{\tau, \W}$, 
\[ [z_{(j_1,b)}]\cdot [x_{(j_0,a)}]=\sum_{\ell \in \Z} e^{2 \pi i( \tau C_{(j_0+j_1 - k \ell ,a+b+m\ell)}+ \theta_{(j_0+j_1-k\ell,a+b+m\ell)})}[ y_{\overline{(j_0+j_1-k \ell ,a+b+m\ell)}}]. \]
		    
We now find the coefficients on the terms above. Write \begin{align}\label{actionterm} &C_{(j_0,a),(j_1',b')} \\
&=S_{\R^4}\left(x_{(j_0,a)}\right)-S\left(x_{(j_0,a)}\right) + S_{\R^4}\left(z_{{(j_1',b')}}\right) -S\left(z_{(j_1',b')}\right) -S_{\R^4}\left(  y_{(j_0+j_1',a+b')} \right)+S\left(  y_{(j_0+j_1',a+b')}\right)  \\
		&=Q_{L_0, L_1}^{\beta}\left(x_{(j_0,a)}\right)+Q_{L_1, L_2}^{\beta}\left(z_{(j_1,' b')}\right)-Q_{L_0,L_2}^{\beta} \left(y_{(j_0+j_1',a+b')}\right)\\
	          \begin{split} &= \frac{1}{2}\left((m_0 d_0 - m_1 d_1) \left(w_{0}(x_{(j_0,a)})\right)^2
	          +(m_2 d_ 2- m_1 d_1) \left(w_{1}(z_{(j_1',b')})\right)^2\right.\\
	         & \left. \quad -(m_0 d_0-m_2 d_2) \left(w_2\left(y_{(j_0+j_1',a+b')}\right)\right)^2\right).\end{split}
		 \end{align}
 Write $x_{(j_0,a)} =(X, \Phi)$. Let $\alpha =\alpha_0-\alpha_1 +\frac{(k_0 d_0 - k_1 d_1)^2}{m_0 d_0 - m_1 d_1}$.
Note that $X=(x_t, x_s)$ is the solution to the equation 
\begin{align*} 0&=\left(\begin{pmatrix}- \frac{(k_0 d_0 - k_1 d_1)^2}{m_0 d_0 - m_1 d_1} & k_0 d_0 - k_1 d_1\\
					k_0 d_0 - k_1 d_1 & -m_0 d_0 + m_1 d_1\end{pmatrix} + \begin{pmatrix} \alpha & 0\\ 0&0 \end{pmatrix} \right)X- \begin{pmatrix}j_1 - k_0/m_0 (\eta_0)+ k_1/m_1 (\eta_1) \\ a -(\eta_0) + \eta_1 \end{pmatrix} \\
					&=
					 \begin{pmatrix} (k_0 d_0 - k_1 d_1)w_0\left(x_{(j_0,a)}\right) + \alpha t\left(x_{(j_0,a)}\right)\\  -(m_0 d_0 + m_1 d_1)w_0\left(x_{(j_0,a)}\right)\\\end{pmatrix} +\begin{pmatrix}j_1 - k_0/m_0 (\eta_0)+ k_1/m_1 (\eta_1) \\ a - \eta_0 + \eta_1 \end{pmatrix}.
					\end{align*}
From this, we can read off
\[w_{0}\left(x_{(j_0,a)}\right)=-\frac{a - \eta_0 + \eta_1}{m_0 d_0-m_1 d_1}. \]
We can similarly show that 
\begin{align*} w_{1}\left(z_{(j_1', b')}\right)=-\frac{b' - \eta_1 + \eta_2}{m_1 d_1-m_2 d_2};\\
		     w_2\left(y_{(j_0,a)+\overline{(j_1',b')}}\right)=-\frac{a+b' - \eta_0 + \eta_2}{m_2 d_2-m_0 d_0}; \end{align*}
Thus we can write the term in Equation \eqref{actionterm} as
\begin{equation}\label{mseq1} C_{(j_0,a),(j_1',b')} =\frac{1}{2}\left(\frac{(a+\eta_0-\eta_1)^2}{m_1 d_1 - m_0 d_0} +\frac{(b'+\eta_1-\eta_2)^2}{m_2 d_2 - m_1 d_1} - \frac{(a+b'+\eta_0-\eta_2)^2}{m_2 d_2- m_0 d_0}\right) \end{equation}
Using the decomposition of $\nabla$ of Equation \eqref{nablad}, write
\begin{align} \theta &=( \nu_0 - \nu_1)w_{0}\left(x_{(j_0, a)}\right) + ( \nu_1 - \nu_2)w_{1}\left(z_{(j_1', b')}\right)+(\nu_0 - \nu_2)w_2\left(y_{(j_0+j_1',a+b')}\right)\\
 \label{mseq2}&=(\nu_0-\nu_1)\left(\frac{a+\eta_0-\eta_1}{m_0 d_0 - m_1 d_1} \right)+ (\nu_1-\nu_2)\left(\frac{b'+ \eta_2-\eta_1}{m_1 d_1 - m_2 d_2} \right)+\left(\nu_2 - \nu_0\right)\left(\frac{a +b' + \eta_0 - \eta_2}{m_2 d_2- m_0 d_0}\right).\end{align}

\subsection{Calculations in  $H^0\F_{\tau}(m,k)$}We now calculate in the partially wrapped category  $H^0 \F_\tau (m,k)$. Consider the collection $L_{(m,k, d, \eta)}$ for fixed $k$, $m$. For each pair $L_0$ and $L_1$ of such Lagrangians, choose type-$(m,k)$ Hamiltonians $H_0$ and $H_1$with the following properties: 
\begin{enumerate}[(a)] \item The set $(\psi_{H_0}(L_0), \psi_{H_1}(L_1))$ is in correct position. 
\item There are type-$\W$ Hamiltonians $H_0', H_1'$ satisfying  constraints (a)-(f) of  Section \ref{hamform2} so that  $H_0=H_0'$ on $Y_T^+$, where $T$ is such that 
\begin{align*} \pi_{Y}\left(\left\{\tilde{L}_0\cap\left( \tilde{L}_{1} - (j \e_{\phi_s},a\e_{\phi_t}) \right) \mid m j+ k a \geq 0 \right\} \right) \subseteq Y_T^+ \\
		     \pi_{Y}\left( \left\{\tilde{L}_0 \cap\left( \tilde{L}_{1} - (j \e_{\phi_s},a\e_{\phi_t}) \right) \mid m j+ k a < 0 \right\} \right) \subseteq Y_T^-; \end{align*}
we see that such a $T$ is guaranteed to exist by Equation \eqref{27}. We also demand that $H_0(T-t)=H_0(T+t)$.
\item $\psi_{H_0}(L_0)$ and  $\psi_{H_1}(L_1)$ are disjoint on $Y_T^-$ for all $i<j$. 
 \end{enumerate} 
There is a natural inclusion of complexes \[\iota_{i,j}: CF_\tau^*(\psi_{H_0}(L_0), \psi_{H_1}(L_1)) \to CF^*_\tau(\psi_{H_0'}(L_0), \psi_{H_1'}(L_1))\] for $i < j$ induced by the inclusion of generators which respects the actions $S_{\R^4}$ and $S$, thus a natural inclusion \[\iota_{i,j}: \whHF^0_{\tau}(\psi_{H_0}(L_0), \psi_{H_1}(L_1)) \to \whHF^0_\tau(\psi_{H_0'}(L_1), \psi_{H_0'}(L_1)).\] Let $\overline{y}_{\overline{(j,a)}}$ denote the generator of  $CF_\tau^*(\psi_{H_0}(L_0), \psi_{H_1}(L_1)) $ which maps to $y_{\overline{(j,a)}}$; then using Proposition \ref{htc} we can write 
\begin{align*} \Hom_{H^0 \F_{\tau}(m,k)} (L_0, L_1) &\cong \whHF_{\tau}^0(\psi_{H_0}(L_0), \psi_{H_1}(L_1)) \\
									   & \cong\left\{ \sum_{\overline{(j,a)}, m j + k a \geq 0} c_{\overline{(j,a)}} \overline{y}_{\overline{(j,a)}}  \mid \lim_{m j + ka \to \infty} c_{\overline{(j,a)}}^{(m j + ka)\inv}=0\right\}. \end{align*}

 \begin{prop} \label{inclusion}The inclusion of complexes \[\iota_{(0,1)}: CF_\tau^*(\psi_{H_0}(L_0), \psi_{H_1}(L_1)) \to CF^*_\tau(\psi_{H_0'}(L_0), \psi_{H_1'}(L_1'))\] agrees with the map 
 \[ \Hom_{H^0 \F_\tau(m,k)}(L_0, L_1) \to \Hom_{H^0 \F_{\tau, \Wr}} (L_0, L_1).\] 
 \end{prop}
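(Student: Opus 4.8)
The plan is to show that the localization functor $\F_\tau(m,k) \to \F_{\tau,\Wr}$, which on objects is the identity inclusion $L_{(m,k,d,\xi)} \mapsto L_{(m,k,d,\xi)}$, is computed on morphisms precisely by the complex-level inclusion $\iota_{(0,1)}$. The key point is that both the source Hom-space and the target Hom-space have been identified, via Proposition \ref{htc}, with honest Floer complexes: $\Hom_{H^0\F_\tau(m,k)}(L_0,L_1) \cong \whHF_\tau^0(\psi_{H_0}(L_0),\psi_{H_1}(L_1))$ using the type-$(m,k)$ Hamiltonians $H_0,H_1$ chosen above, and $\Hom_{H^0\F_{\tau,\Wr}}(L_0,L_1) \cong \whHF_\tau^0(\psi_{H_0'}(L_0),\psi_{H_1'}(L_1))$ using the type-$\Wr$ Hamiltonians $H_0',H_1'$. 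Since by construction $H_0 = H_0'$ and $H_1 = H_1'$ on $Y_T^+$, the generators of the two complexes on that end literally coincide, and the extra generators of the wrapped complex are exactly those lying in $Y_T^-$, i.e. those indexed by $\overline{(j,a)}$ with $mj+ka<0$. So $\iota_{(0,1)}$ is the inclusion of a sub-$A_\infty$-module, and we must check it is the localization map.

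First I would set up the diagram carefully. Choose a common $(m,k)$-cofinal/$\Wr$-cofinal tower so that both $\F_\tau(m,k)$ and $\F_{\tau,\Wr}$ are defined from compatible data (this is exactly the kind of compatible-choices setup used in the construction of $\iota_{(m,k)}$ in Equation \eqref{loca} and in Remark \ref{inversion}); this is legitimate by the invariance-of-choices propositions in Section \ref{inv}. Then by Proposition \ref{htc} applied in both categories, the map $\iota_{(m,k)}$ on $\Hom(L_0,L_1)$ fits into a square whose vertical arrows are the quasi-isomorphisms $CF_\tau^* \to \Hom_{H^0\F}$ furnished by multiplication with continuation elements. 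The content of the proposition is that the top horizontal arrow in this square — the localization functor read through these identifications — is $\iota_{(0,1)}$. This reduces to a statement about continuation elements: the localization functor $\O_{(m,k)} \to \O_\Wr$ is the identity on objects and on Floer generators, and the quasi-units $Q_{(m,k)}$ are a subset of $Q_\Wr$, so multiplication by a quasi-unit $c_{H \to H'}$ in $\F_\Wr$ restricted to the image of $\F_{(m,k)}$ agrees with multiplication by the same quasi-unit in $\F_{(m,k)}$. Because the type-$(m,k)$ continuation elements only ``wrap'' at the $t\ll 0$ end (the $(m,k)$ boundary condition is relaxed only there) while the type-$\Wr$ ones wrap at both ends, one needs that the $\mu^2$-products computing these continuation maps agree; but this follows from the action/energy bookkeeping — Lemma \ref{boundondiff} and the exactness of the lifted Hamiltonians force the relevant $J$-holomorphic triangles to be the constant ones over $t=s=0$, so the answer is independent of which end is being wrapped. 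Concretely, the two squares (one for $\F_{(m,k)}$, one for $\F_\Wr$) built from $\iota_{(0,1)}$ and the continuation quasi-isomorphisms commute on the nose because $\iota_{(0,1)}$ respects both $S_{\R^4}$ and $S$ and sends generators to generators.

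The main obstacle I expect is the bookkeeping around the continuation elements at the \emph{wrapped} negative end: in $\F_{(m,k)}$ the $t\ll0$ end is already being wrapped, so the morphism complex $\Hom_{H^0\F_\tau(m,k)}(L_0,L_1)$ is a genuine direct limit over increasingly large perturbations at that end, and one must verify that $\iota_{(0,1)}$ is compatible with that limit — i.e. that the square
\[
\begin{tikzcd}
\whHF_\tau^0(\psi_{H_0}(L_0),\psi_{H_1}(L_1)) \arrow[r,"\iota_{(0,1)}"] \arrow[d] & \whHF_\tau^0(\psi_{H_0'}(L_0),\psi_{H_1'}(L_1)) \arrow[d] \\
\Hom_{H^0\F_\tau(m,k)}(L_0,L_1) \arrow[r,"\iota_{(m,k)}"] & \Hom_{H^0\F_{\tau,\Wr}}(L_0,L_1)
\end{tikzcd}
\]
commutes, where the vertical maps are the Proposition \ref{htc} isomorphisms. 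This commutativity should fall out of naturality of the continuation maps together with the fact that $H_0=H_0'$, $H_1=H_1'$ on $Y_T^+$ and that the Hamiltonians agree with a common reference on the non-wrapped end; I would verify it by writing out the relevant $\mu^2$ with a continuation element and invoking Lemma \ref{qisomcont} and Proposition \ref{actionvan} to see the action-corrected coefficients are trivial. Everything else is essentially a matching of indexing sets: the wrapped complex's generators indexed by $\overline{(j,a)}$ with $mj+ka\ge 0$ are precisely the image of $\iota_{(0,1)}$, matching the description of $\Hom_{H^0\F_\tau(m,k)}(L_0,L_1)$ computed just above.
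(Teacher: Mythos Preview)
Your outline is correct and matches the paper's approach: you correctly identify that one must verify commutativity of the square whose vertical arrows are the Proposition~\ref{htc} continuation quasi-isomorphisms and whose top arrow is $\iota_{(0,1)}$. However, you leave the verification vague (``should fall out of naturality\dots I would verify it by writing out the relevant $\mu^2$''), and the paper supplies a concrete construction that you do not identify.

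The paper reduces to $H_1 = H_1' = 0$ and then builds explicit type-$\Wr$ Hamiltonians $G$ and $G'$ with the crucial property $G + H_0 = G' + H_0'$, so that the continuation maps $\mu^2(\cdot, c_G)$ out of $CF^*_\tau(\psi_{H_0}(L_0), L_1)$ and $\mu^2(\cdot, c_{G'})$ out of $CF^*_\tau(\psi_{H_0'}(L_0), L_1)$ land in the \emph{same} target complex $CF^*_\tau(\psi_{G+H_0}(L_0), L_1)$. The construction is $G(t) = H_0(T + (T-t)) + f$ and $G'(t) = H_0(T - |T-t|) + f$ for a small correction $f$; this is exactly where the otherwise-unmotivated hypothesis in the setup, ``we also demand that $H_0(T-t) = H_0(T+t)$,'' is used. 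Once the targets coincide, commutativity is immediate because both paths send the action-corrected generator $\mathbf{y}_{(j,a)}$ to the same $\mathbf{y}^G_{(j,a)}$, the disc areas being cancelled by the action correction.

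Your appeal to Proposition~\ref{actionvan} is not quite the right tool: that proposition compares two choices of Hamiltonians \emph{within} the $\Wr$ setting, not the compatibility between the $(m,k)$ and $\Wr$ complexes. The mechanism here is not generic action-triviality but the engineered equality $G + H_0 = G' + H_0'$ from the reflection trick. (A minor point: your description of which end is wrapped is slightly garbled --- the $(m,k)$ boundary condition sits at $t \ll 0$ and is \emph{kept} in $\F(m,k)$; the type-$(m,k)$ Hamiltonians have $\alpha_- = 0$, so wrapping occurs at the positive end, and passing to $\F_\Wr$ adds wrapping at the negative end.)
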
\begin{proof} It suffices to show that in the case $H_1 = H_1'=0$, there exists type-$\W$ Hamiltonians $G$ and $G'$ such that $G + H_0 = G' + H_0'$, such that the sets \[(\psi_{G+ H_0} (L_0), \psi_{H_0}(L_0), L_1)\] and \[(\psi_{G'+ H_0'} (L_0), \psi_{H_0'}(L_0), L_1)\] are in correct position, and such that the following diagram commutes: 
  \[\begin{tikzcd}
&CF_\tau^*(\psi_{H_0}(L_0), L_1) \arrow{r}{\iota_0,1}\arrow{d}{\mu^2(c_G, \cdot)} & CF_\tau^*(\psi_{H_0'}(L_0), L_1)\arrow{d}{\mu^2(c_{G'}, \cdot)} \\
 & CF_\tau^*(\psi_{G+H_0}(L_0), L_1) \arrow{r}{=} &CF_\tau^*(\psi_{G'+H_0'}(L_0), L_1). 
\end{tikzcd}\]
 Let $T > 0$ be such that $H_0=H_0'$ on $Y_T^+$. Then defining $G$ and $G'$ by $G(t)=H_0(T+(T-t))+f$ and $G'(t)= H_0(T-|T-t|)+f$, where $f$ is a small function supported on a neighborhood of $T$ such that $H_0(T-|T-t|)+f $ function, provides such $G$ and $G'$. (Here we use the fact that $H_0'$ satisfies $H_0'(T-t)=H_0'(T+t)$.) Note that for $t > T$, $H_0(t) = H_0'(t)$ and $G(t)=G'(t)$. Let $y_{(j,a)}$ denote the unique generator of $CF^0(\psi_{H_0}(L_0), L_1)_{(j,a)}$; let $y^G_{(j,a)}$ denote the unique generator of $CF^0(\psi_{G+H_0}(L_0), L_1)_{(j,a)}$. Then \[\mu^2\left(\mathbf{y}_{(j,a)}, c_{G}\right)=\mu^2\left(\iota_{0,1}\left(y_{(j,a)}\right),  c_{G'}\right) =\mathbf{y}^G_{(j,a)};\] in both cases the areas of the discs contributing to the count defining $\mu^2$ are cancelled by the change in action of the generators. 
 \end{proof}

We conclude the following:  
\begin{prop}\label{embedf}
 \[  \iota_{(m,k)}: H^0 \F_\tau (m,k) \to H^0 \F_{\tau, \Wr}\]
 is an embedding of categories, i.e., is both faithful and injective on objects. 
\end{prop}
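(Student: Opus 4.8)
The plan is to obtain both assertions directly from the explicit descriptions of the morphism spaces computed in this section, combined with Propositions \ref{inclusion} and \ref{actionvan}, which pin down the functor $\iota_{(m,k)}$ on cohomology. This is the partially-wrapped counterpart of Proposition \ref{embedc}.

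Injectivity on objects should be immediate from the construction: by Equation \eqref{loca} and the surrounding discussion the localization functor $\iota_{(m,k)}$ is the inclusion on objects, arising from an inclusion $\mathcal{O}_{(m,k)}\subseteq\mathcal{O}_{\W}$ with $\Sigma_{(m,k)}\subseteq\Sigma_{\W}$, and distinct triples $(L_\sigma,\nabla,i)$ remain distinct. So there is nothing to prove here beyond citing the set-up.

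For faithfulness I would fix objects $L_0,L_1$ of $\F_\tau(m,k)$ and, using Proposition \ref{htc}, compute both $\Hom_{H^0\F_\tau(m,k)}(L_0,L_1)$ and $\Hom_{H^0\F_{\tau,\W}}(L_0,L_1)$ with type-$(m,k)$ and type-$\W$ Hamiltonians $H_0>H_1$ chosen compatibly as in Section \ref{hamform2}, so that a single family of generators $y_{\overline{(j,a)}}$, indexed by $\Z^3/\Gamma(L_0,L_1)$, serves for both complexes, the ones at the wrapped end being exactly those with $mj+ka\geq 0$. The computations of this section present $\Hom_{H^0\F_\tau(m,k)}(L_0,L_1)$ as the completed module freely generated (over $\Lambda_r$, and over $\C$ after $\ev_\tau$) by $\{\overline y_{\overline{(j,a)}}: mj+ka\geq 0\}$ and $\Hom_{H^0\F_{\tau,\W}}(L_0,L_1)$ as the completed module freely generated by all the $\mathbf y_{\overline{(j,a)}}$, the differentials vanishing by Proposition \ref{kingprop}. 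Proposition \ref{inclusion} identifies the map induced by $\iota_{(m,k)}$ with the one coming from the inclusion of complexes $\iota_{(0,1)}$, i.e. $\overline y_{\overline{(j,a)}}\mapsto \mathbf y_{\overline{(j,a)}}$; Proposition \ref{actionvan} shows this is independent of the Hamiltonian choices and hence genuinely computes $\iota_{(m,k)}$ on cohomology. Therefore $\iota_{(m,k)}$ identifies $\Hom_{H^0\F_\tau(m,k)}(L_0,L_1)$ with the submodule of $\Hom_{H^0\F_{\tau,\W}}(L_0,L_1)$ spanned by the subset $\{mj+ka\geq 0\}$ of its free generating set; being the completion of an inclusion of generators, the map is injective. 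The argument is uniform in the two cases $[\gamma_{L_0}]=[\gamma_{L_1}]$ and $[\gamma_{L_0}]\neq[\gamma_{L_1}]$, only the indexing set of generators changing.

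I do not expect a genuine obstacle at this stage: the real work has already been done in the preceding $\mu^1$- and $\mu^2$-computations and in Propositions \ref{inclusion}, \ref{actionvan} and \ref{kingprop}. The single point that must be handled with care — and which is exactly what Proposition \ref{inclusion} delivers — is that $\overline y_{\overline{(j,a)}}$ maps to $\mathbf y_{\overline{(j,a)}}$ on the nose rather than to a nontrivial $\Lambda_r$-multiple or a linear combination of generators; granting that, faithfulness reduces to the trivial fact that the set of generators with $mj+ka\geq 0$ injects into the full generating set $\Z^3/\Gamma(L_0,L_1)$.
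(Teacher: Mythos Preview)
Your proposal is correct and matches the paper's approach: the paper gives no explicit proof, simply writing ``We conclude the following'' after Proposition~\ref{inclusion}, so everything you wrote is precisely the intended reasoning spelled out. The one caveat is that the explicit Hom-space computations in this section are carried out only for the planar Lagrangians $L_{(m,k,d,\xi)}$, not arbitrary objects of $\F_\tau(m,k)$; the paper makes the same implicit leap, and the method of Proposition~\ref{inclusion} (choosing $H_i,H_i'$ agreeing on $Y_T^+$ and symmetric about $T$) does not use anything special about these Lagrangians, so this is harmless.
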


 \subsection{Mirror symmetry statements} 
We now prove the theorems in the beginning of the section, which are restated below with proofs; the proofs amount to matching up the structure coefficients.

\begin{thm}\label{ms1}
There is a full and faithful functor
\begin{equation*} \Phi_{\W}: \mathcal{C}_{\tau} \to H^0 \F_{\tau, \Wr}\end{equation*}
realized on objects by 
\[\Phi_{\Wr}\left( \L_\tau\left(m,k,d,e^{2 \pi i (\tau \eta + \nu)} \right)\right)  =  [L_{\left(m,k,d,e^{2 \pi i (\tau \eta + \nu)}\right)}]. \]
\end{thm}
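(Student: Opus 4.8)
The plan is to assemble the morphism and product computations of Sections~\ref{labelingsections} and \ref{thetaidentities} on the sheaf side and of Section~\ref{mscalcs} on the Floer side into a single dictionary, and then to check that this dictionary is a $\C$-linear functor that is bijective on each morphism space. On objects $\Phi_{\Wr}$ is already prescribed; the first thing I would record is that it is well defined, i.e.\ that the data $\bigl(M(m,k,d,dk^2/m),\,E(m,k,\eta),\,\mathrm{d}-2\pi i\nu((k/m)\mathrm{d}t+\mathrm{d}s)\bigr)$ defining $L_{(m,k,d,e^{2\pi i(\tau\eta+\nu)})}$ is invariant under the relabelings of Remark~\ref{ambig} (replacing $(mj,kj,d)$ by $(m,k,dj)$ leaves all three unchanged, and $\L(0,0,d,\xi)=\L(m,k,0,\xi)$ corresponds to the zero section with trivial connection in every presentation). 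On morphisms I would, given $\L_0=\L_\tau(m_0,k_0,d_0,\xi_0)$ and $\L_1=\L_\tau(m_1,k_1,d_1,\xi_1)$, use Lemma~\ref{mult} to put $\L_0^{-1}\otimes\L_1$ in normal form, invoke Proposition~\ref{htc} to identify $\Hom_{H^0\F_{\tau,\Wr}}(L_0,L_1)$ with $\whHF_\tau^0(\psi_{H_0}(L_0),\psi_{H_1}(L_1))$ for Hamiltonians chosen as in Section~\ref{hamform2}, and Proposition~\ref{actionvan} to see that the resulting classes $[y_{L_0,L_1,\overline{(j,a)}}]$ are choice-independent; then I would define $\Phi_{\Wr}$ by $\sigma_{\L_0^{-1}\L_1,\overline{(j,a)}}\mapsto[y_{L_0,L_1,\overline{(j,a)}}]$ and extend by completed linearity.

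Next I would verify that this map is a linear isomorphism on each $\Hom$-space, which gives fullness and faithfulness together. This is a matter of matching index sets, gradings, and convergence conditions. When $\deg(\L_0^{-1}\L_1)=m_1d_1-m_0d_0<0$ both sides vanish: the sheaf has no sections, and the Floer complex of Case~2 sits in degree~$1$, so its degree-$0$ part is zero. When the degree is positive, Case~2 gives a degree-$0$ group indexed by $\Z^2/(k_1d_1-k_0d_0,-(m_1d_1-m_0d_0))\cong\Z^3/\Gamma(L_0,L_1)$, matching the index set of \eqref{mseq2woo}, with the same everywhere-convergence condition on the coefficients (compare \eqref{mseq2woo} with \eqref{31}, via Proposition~\ref{convergence}). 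When the degree is zero the Lagrangians are homologous, and Case~1 together with Proposition~\ref{kingprop} gives a nonzero degree-$0$ group precisely when $\eta_0-\eta_1\in\Z$ and $\nu_0-\nu_1\in\Z$, which is exactly the condition for $\L_0^{-1}\L_1$ to have global sections in the $d=0$ analysis of Section~\ref{labelingsections}; here the convergence conditions \eqref{30} and \eqref{mseq1woo} also agree. I would also check that $\Phi_{\Wr}$ preserves units, i.e.\ that $\sigma_{\O,\overline{(0,0)}}\in H^0(\C^*\times E_\tau,\O)$ maps to the unit of $\Hom_{H^0\F_{\tau,\Wr}}(L,L)$, by identifying $[y_{L,L,\overline{(0,0)}}]$ with the image of the formal unit under the localization construction.

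The substantive step is functoriality: compatibility of $\Phi_{\Wr}$ with composition, i.e.\ that the Yoneda product $H^0(\L_0^{-1}\L_1)\otimes H^0(\L_1^{-1}\L_2)\to H^0(\L_0^{-1}\L_2)$ agrees under the dictionary with $\mu^2$ on cohomology. Since $\mathcal C_\tau$ and $H^0\F_{\tau,\Wr}$ are ordinary $\C$-linear categories, no higher products intervene and associativity on both sides is already known, so it suffices to match structure constants on the chosen bases. In the degenerate ranges this is immediate: when one of $\L_i^{-1}\L_{i+1}$ has negative degree both products are zero, and when one has degree zero \eqref{35woo} matches \eqref{mseq3woo}. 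In the remaining range one compares the theta-function product \eqref{thetaid3}, with coefficients \eqref{46woo} and \eqref{47woo}, against the Floer product \eqref{multeq}, with coefficients \eqref{mseq1} and \eqref{mseq2}: both are sums over $\ell\in\Z$ of the generator indexed by $\overline{(j_0+j_1-k\ell,\,a+b+m\ell)}$ weighted by $e^{2\pi i(\tau C+\theta)}$, and the two computations were organized so that the action exponent $C_{(j_0,a),(j_1',b')}$ of \eqref{mseq1} is literally the quantity $K_{(j_0+j_1',a+b')}$ of \eqref{46woo}, while the monodromy exponent $\theta$ of \eqref{mseq2} is the $\theta$ of \eqref{47woo} once the sign and translation conventions are reconciled.

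The hard part is this final termwise comparison, but most of its weight has been front-loaded into the two independent computations (of $\mu^2$ via $J$-holomorphic triangles between planar sections, and of the theta-function products via classical theta identities) already carried out above. What genuinely remains is careful bookkeeping: the non-coprime relabelings of Remark~\ref{ambig}, the three relative-degree regimes, the reconciliation of signs and translation factors, and the check that the topological ($\Z^3/\Gamma$) gradings and the Fr\'echet-module structures are respected, so that the bijection of topological bases upgrades to an isomorphism of the completed morphism spaces.
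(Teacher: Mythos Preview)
Your proposal is correct and follows essentially the same approach as the paper's proof: define the functor on objects and on basis morphisms by the prescribed dictionary, then verify that the Hom-spaces match by identifying Equations~\eqref{mseq1woo} with~\eqref{30} and~\eqref{mseq2woo} with~\eqref{31}, and that composition is respected by identifying~\eqref{35woo} with~\eqref{mseq3woo}, \eqref{46woo} with~\eqref{mseq1}, and~\eqref{47woo} with~\eqref{mseq2}. Your version is in fact more thorough than the paper's (which is little more than a list of equation identifications), as you also address well-definedness under the relabelings of Remark~\ref{ambig}, the three degree regimes separately, and unit preservation.
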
\begin{proof}
Let $\L_i = L_{\left(m_i,k_i,d_i,e^{2 \pi i (\tau \eta_i + \nu_i)}\right)}$, $i=0,1$ and  $L_i= L_{\left(m,k,d,e^{2 \pi i (\tau \eta_i + \nu_i)}\right)}$. Having defined $\Phi_{\Wr}(\L_i)=[L_i]$, define for $\sigma_{\L_0 \inv \L_1,\overline{(j,a)}} \in \Ext^0(\L_0, \L_1)$,
\[ \Phi_{\Wr}(\sigma_{\L_0 \inv \L_1, \overline{(j,a)}}) = [y_{L_0,L_1,\overline{(j,a)}}].  \] 

Identifying Equations \eqref{mseq1woo} and Equations \eqref{30}, and \eqref{mseq2woo} and \eqref{31}, shows that this induces an isomorphism 
\[ \Hom_{H^0 \F_\tau}(\Phi_\Wr(L_0),\Phi_{\Wr}(L_1)) =  \Hom_{H^0 \F_\tau}(\Phi_\Wr(L_0),\Phi_{\Wr}(L_1)). \]
for all pairs of Lagrangians. By identifying Equations \eqref{35woo} and \eqref{mseq3woo}, \eqref{46woo} and \eqref{mseq1}, and \eqref{47woo} and \eqref{mseq2}, we see that this  isomorphism respects the composition of morphisms. \end{proof}

By restricting the domain of the functor $\Phi_{W}$ on objects, and using the fact that the localization functors  \[  i_{(m,k)}: H^0 \F_\tau (m,k) \to H^0 \F_{\tau, \Wr};\] 
 \[ \iota_{(m,k)}^*:\mC_{\tau}(m,k) \to \mC_{\tau} \]
 are both embeddings of categories, we obtain the following theorems:
\begin{thm}
There is a full and faithful functor
\begin{equation*} \Phi_{(m,k)}: \mathcal{C}_{\tau}(m,k) \to H^0 \F_{\tau}(m,k)\end{equation*}
realized on objects by 
\[\Phi_{(m,k)}\left( \L_\tau\left(m,k,d,e^{2 \pi i (\tau \eta + \nu)} \right)\right)  = [ L_{\left(m,k,d,e^{2 \pi i (\tau \eta + \nu)}\right)}]. \]
\end{thm}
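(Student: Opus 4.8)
The plan is to deduce this theorem directly from Theorem~\ref{ms1} (the $\Wr$-case just proved) together with the two ``embedding'' results, namely Proposition~\ref{embedc} (that $i_{(m,k)}^*\colon \mC_\tau(m,k)\to\mC_\tau$ is faithful and injective on objects) and Proposition~\ref{embedf} (that $\iota_{(m,k)}\colon H^0\F_\tau(m,k)\to H^0\F_{\tau,\Wr}$ is faithful and injective on objects), plus the commutativity Theorem~\ref{commute1}. So no new Floer-theoretic computation is needed; the work is purely diagrammatic.

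First I would record the definition of $\Phi_{(m,k)}$ on objects exactly as in the statement, $\Phi_{(m,k)}(\L_\tau(m,k,d,\xi)) = [L_{(m,k,d,\xi)}]$, and on morphisms by sending $\overline{\sigma}_{\L_0^{-1}\L_1,\overline{(j,a)}}$ (the section extending over $D_\tau(m,k)$, which exists exactly when $mj+ka\ge 0$) to the generator $[\overline{y}_{L_0,L_1,\overline{(j,a)}}]$ of $\Hom_{H^0\F_\tau(m,k)}$. Next I would invoke the computation in the partially wrapped category carried out in the subsection ``Calculations in $H^0\F_\tau(m,k)$'': there it is shown that $\Hom_{H^0\F_\tau(m,k)}(L_0,L_1)$ is spanned (as a suitable completion) by the generators $\overline{y}_{\overline{(j,a)}}$ with $mj+ka\ge 0$, with the same convergence condition $\lim_{mj+ka\to\infty}|c_{\overline{(j,a)}}|^{(mj+ka)^{-1}}=0$ that governs $H^0(D_\tau(m,k),\overline{\L(m,k,d,\xi)})$. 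Matching these two descriptions shows $\Phi_{(m,k)}$ is a bijection on morphism spaces. Then I would use Theorem~\ref{commute1}: since $\iota_{(m,k)}\circ\Phi_{(m,k)} = \Phi_{\Wr}\circ i_{(m,k)}^*$ and the right-hand composite respects products (Theorem~\ref{ms1} says $\Phi_{\Wr}$ does, and $i_{(m,k)}^*$ is a functor), while $\iota_{(m,k)}$ is faithful, it follows formally that $\Phi_{(m,k)}$ is compatible with composition; faithfulness and injectivity on objects are likewise inherited from the corresponding properties of $\Phi_{\Wr}$ and $\iota_{(m,k)}$, $i_{(m,k)}^*$.

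Concretely the argument runs: (i) the object map is well-defined and injective because $i_{(m,k)}^*$ is injective on objects and $\Phi_{\Wr}\circ i_{(m,k)}^*$ is injective on objects (Theorem~\ref{ms1}); (ii) fullness and faithfulness on $\Hom$ spaces follow by matching the explicit description of $\Hom_{H^0\F_\tau(m,k)}$ from Proposition~\ref{inclusion} and the surrounding discussion with $H^0(D_\tau(m,k),\overline{\L})$ from the proposition computing which $\sigma_{\L,\overline{(j,a)}}$ extend; (iii) the product structure: for $[\overline{z}_{\overline{(j_1,b)}}],[\overline{x}_{\overline{(j_0,a)}}]$ one has $\iota_{(m,k)}(\mu^2(\cdots)) = \mu^2(\iota_{(m,k)}\cdots)$ by functoriality of $\iota_{(m,k)}$, which by Theorem~\ref{commute1} equals $\Phi_{\Wr}$ applied to the product of the corresponding sections over $\C^*\times E$; since $i_{(m,k)}^*$ is a ring map on global sections (the multiplication formulas \eqref{35woo}, \eqref{thetaid3} restrict correctly, the non-extending terms being killed), and $\iota_{(m,k)}$ is injective, the product in $H^0\F_\tau(m,k)$ matches the product in $\mC_\tau(m,k)$.

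The main obstacle I anticipate is not any single hard estimate but rather the bookkeeping needed to confirm that the restriction of the multiplication formula \eqref{thetaid3} to sections extending over $D_\tau(m,k)$ really lands among the extending generators with matching coefficients $C_{\overline{(j,a)}}$, $\theta_{\overline{(j,a)}}$ --- i.e.\ that the ``truncation to $mj+ka\ge0$'' is compatible on both sides of the mirror. Once Proposition~\ref{inclusion} is in hand this is essentially automatic, because it says the inclusion of Floer complexes $\iota_{0,1}$ is the mirror of the restriction map on sections, so the product on the partially wrapped side is literally the restriction of the product on the wrapped side; but I would want to state this compatibility explicitly rather than wave at it. Everything else is formal category theory applied to the already-established functor $\Phi_{\Wr}$ and the embeddings.
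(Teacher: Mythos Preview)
Your proposal is correct and follows essentially the same approach as the paper. The paper's own proof is a single sentence: ``By restricting the domain of the functor $\Phi_{\Wr}$ on objects, and using the fact that the localization functors $i_{(m,k)}^*$ and $\iota_{(m,k)}$ are both embeddings of categories, we obtain the following theorems,'' and then it states this theorem together with the commutativity theorem simultaneously. Your write-up simply unpacks that sentence.

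One minor point of presentation: you invoke Theorem~\ref{commute1} as an input to prove this theorem, but in the paper both are deduced in the same breath from the same ingredients (Theorem~\ref{ms1}, Proposition~\ref{embedc}, Proposition~\ref{embedf}, and the identification of Proposition~\ref{inclusion}). So rather than citing the commutativity theorem, you should cite Proposition~\ref{inclusion} directly --- as you in fact do later in your outline --- since that is what actually establishes $\iota_{(m,k)}\circ\Phi_{(m,k)}=\Phi_{\Wr}\circ i_{(m,k)}^*$ on morphisms and hence lets you transport the product structure. With that adjustment there is no circularity and your argument is exactly the paper's.
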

The functor above takes $\overline{\sigma}_{\L_0\inv \L_1, \overline{(j,a)}}$ to $[\overline{y}_{L_0,L_1,\overline{(j,a)}}]$.

\begin{thm}
The following diagram commutes:     
 \[\begin{tikzcd}
 \mathcal{C}_{\tau}\left(m, k\right) \arrow{r}{i_{(m,k)}^*}\arrow{d}{\Phi_{(m,k)}}& \mathcal{C}_{\tau} \arrow{d}{\Phi_{(\Wr)}} \\
H^0 \F_\tau\left(m, k\right)\arrow{r}{\iota_{(m,k)}} &H^0 \F_{\tau, \Wr}.
\end{tikzcd}\]
\end{thm}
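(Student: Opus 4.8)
The plan is to verify commutativity of the square by tracing a single object and a single morphism through both composites and observing that the identifications used in the proofs of the preceding three theorems are literally compatible. First I would recall that all four functors in the square have already been constructed: $\Phi_{(m,k)}$ and $\Phi_{\W}$ are the full and faithful functors of the two theorems just proved (realized on objects by $\L_\tau(m,k,d,\xi) \mapsto [L_{(m,k,d,\xi)}]$ and $\overline{\sigma}_{\L_0\inv\L_1,\overline{(j,a)}}\mapsto[\overline{y}_{L_0,L_1,\overline{(j,a)}}]$, respectively $\sigma_{\L_0\inv\L_1,\overline{(j,a)}}\mapsto[y_{L_0,L_1,\overline{(j,a)}}]$); $i_{(m,k)}^*\colon \mC_\tau(m,k)\to\mC_\tau$ is the restriction-of-sections embedding analysed in Proposition~\ref{embedc}; and $\iota_{(m,k)}\colon H^0\F_\tau(m,k)\to H^0\F_{\tau,\Wr}$ is the localization functor, which Proposition~\ref{embedf} (via Proposition~\ref{inclusion}) identifies with the inclusion of Floer complexes $CF_\tau^*(\psi_{H_0}(L_0),\psi_{H_1}(L_1))\hookrightarrow CF_\tau^*(\psi_{H_0'}(L_0),\psi_{H_1'}(L_1))$ given by inclusion of generators.

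On objects commutativity is immediate: both composites send $\overline{\L_\tau(m,k,d,\xi)}$ to $[L_{(m,k,d,\xi)}]$, since $i_{(m,k)}^*\overline{\L_\tau(m,k,d,\xi)}=\L_\tau(m,k,d,\xi)$ by definition of $\overline{\L_\tau(m,k,d,\xi)}$, while $\iota_{(m,k)}$ is the inclusion on objects by construction of the localization functor, and $\Phi_{(m,k)}$, $\Phi_{\W}$ agree on the relevant Lagrangian. For morphisms, fix $\L_0,\L_1\in\mC_\tau(m,k)$ and a basis section $\overline{\sigma}_{\L_0\inv\L_1,\overline{(j,a)}}$ with $mj+ka\geq 0$. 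Going down-then-right gives $\iota_{(m,k)}\bigl([\overline{y}_{L_0,L_1,\overline{(j,a)}}]\bigr)$, which under the identification of Proposition~\ref{inclusion} is precisely the class $[y_{L_0,L_1,\overline{(j,a)}}]$ in $H^0\F_{\tau,\W}$ obtained by including the generator. Going right-then-down gives $\Phi_{\W}\bigl(i_{(m,k)}^*\overline{\sigma}_{\L_0\inv\L_1,\overline{(j,a)}}\bigr)=\Phi_{\W}\bigl(\sigma_{\L_0\inv\L_1,\overline{(j,a)}}\bigr)=[y_{L_0,L_1,\overline{(j,a)}}]$, using that restriction of sections takes $\overline{\sigma}_{\L_0\inv\L_1,\overline{(j,a)}}$ to $\sigma_{\L_0\inv\L_1,\overline{(j,a)}}$ (this is exactly how $\overline{\sigma}$ was defined). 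So the two agree on basis elements; since all four functors are $\Lambda$- (or $\C$-) linear and continuous for the relevant completions (the coefficient conditions $\lim |c_{\overline{(j,a)}}|^{1/(mj+ka)}=0$ match on both sides), they agree on all of $\Hom_{\mC_\tau(m,k)}(\L_0,\L_1)$. Compatibility with composition follows because composition on every corner is computed by the same theta-function structure constants $C_{(\cdots)}$, $\theta_{(\cdots)}$ of Section~\ref{thetaidentities}, which are the ones already matched in the proofs of the two mirror theorems.

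The main obstacle — and the only point requiring genuine care — is ensuring that the choices of perturbing Hamiltonians, strip-like ends, and almost complex structures used to \emph{compute} $H^0\F_\tau(m,k)$ and $H^0\F_{\tau,\W}$ can be taken compatibly, so that the localization functor $\iota_{(m,k)}$ really is represented by the naive inclusion of Floer complexes rather than merely quasi-isomorphic to it. This is handled by invoking the construction of the localization functors in Section~\ref{fukcatsec}: one builds $\mathcal{O}_{\W}$ to contain $\mathcal{O}_{(m,k)}$ with compatible data and $Q_{(m,k)}\subset Q_{\W}$, and then Proposition~\ref{inclusion} (whose proof exhibits the explicit Hamiltonians $G,G'$ making the relevant square of continuation maps commute) shows the induced map on cohomology is the generator-inclusion. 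With that in hand one must also check that the particular Hamiltonians chosen in Section~\ref{hamform2} for the $\W$ computation restrict, at the non-wrapped end, to type-$(m,k)$ Hamiltonians of the form used in the $(m,k)$ computation — this is precisely condition (b) in the setup preceding Proposition~\ref{inclusion}, so it is already in place. I would therefore present the proof as: (i) commutativity on objects; (ii) identify both composites on basis sections using Propositions~\ref{inclusion}, \ref{embedc}, \ref{embedf} and the definitions of $\overline{\sigma}$ and $\overline{\L}$; (iii) extend by linearity and continuity; (iv) note compatibility with composition is inherited from the matching of structure constants already established. No new analytic input is needed beyond what the cited propositions provide.
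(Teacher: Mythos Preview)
Your proposal is correct and follows essentially the same approach as the paper. The paper's own treatment is extremely terse: it simply notes that since both horizontal arrows $i_{(m,k)}^*$ and $\iota_{(m,k)}$ are embeddings of categories (Propositions~\ref{embedc} and~\ref{embedf}), and since $\Phi_{(m,k)}$ is \emph{defined} by restricting $\Phi_{\Wr}$ through these embeddings, commutativity holds by construction. Your write-up spells out exactly what this means on objects and on basis morphisms, and your identification of Proposition~\ref{inclusion} as the place where the compatibility of Hamiltonian choices is handled is on target.
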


\begin{eg}\label{example1} Consider the line bundles $\L=\L(1,0,1,1)$ and  $\L'=\L(1,-1,1,1)$ on $\C^* \times E_{\tau}$; in less arcane notation, these are, respectively, the line bundles $\pi_{E_{\tau}}^*(\L_{\tau, 0})$ and $f^*\left(\pi_{E_{\tau}}^*(\L_{\tau,0})\right)$, where $f: \C^* \times E_{\tau} \to  \C^* \times E_{\tau}$ is given by $f(z,x) = (z, z x)$. (Note that this is the change of coordinate map by which  $\C^* \times E_{\tau}$ is glued to itself to produce the standard Hopf surface.) There does not exist an algebraic chart on $\C^* \times E_{\tau}$ for which $\L$ and $\L'$ are both algebraic line bundles.) Let $q=e^{2 \pi i \tau}$, and recall that the standard theta function $\vartheta[0,0](q,x)$ is a section of $\L_{\tau, 0}$. Thus
\[ H^0(\C \times E_{\tau},\L)= \left\{ p(z) \cdot \vartheta[0,0](q, x) \mid \,\, p:\C^* \to \C \text{ holomorphic}\right\}.\]
The Fr\'{e}chet basis for $H^0(\C \times E_{\tau}, \L)$ of Equation \eqref{sectionid} is given by  $\left\{\sigma_{\overline{(j,a)}}\right\}_{\overline{(j,a)} \in  \Z^2/(0,1)}$ where 
\begin{align*} \sigma_{\overline{(j,a)}} =  z^{j} \vartheta[a,0]\left(q,x\right)  
					      = \sum_{\ell \in \Z} q^{\left(\ell+a \right)^2/2}z^j x^{\ell+a}. \end{align*}
Pulling back through $f$ gives a basis  $\left\{\sigma'_{\overline{(j,a)}} \right\}_{\overline{(j,a)} \in \Z^2/(1,1)}$ for the global sections of $\L'$:
\begin{align*} \sigma_{\overline{(j,a)}}' &= z^{-a} f^*\left(\sigma_{\overline{(j,a)}}\right)= z^j \vartheta[a,0] \vartheta\left(q, z x\right)
=  \sum_{\ell \in \Z} q^{\left(\ell+a \right)^2/2} z^{\ell+j} x^{\ell+a}. \end{align*}

Let $\L''=\L \otimes \L' = \L(2,-1,1,1)$.  The basis $\left\{\sigma_{\overline{(j,a)}}'' \right\}_{\overline{(j,a)} \in \Z^2/(1,2)}$  for the sections of $\L''$ given by Equation \eqref{sectionid} is \[ \sigma_{\overline{(j,a)}}''=z^{\left(\frac{a}{2}+j\right)} \vartheta\left[a/2,0\right]\left(q^2, z x^2\right) = \sum_{\ell \in \Z} q^{\left(\ell+\frac{a}{2}\right)^2} z^{\ell+j} x^{2 \ell+a}. \]
Standard theta function identities (or the calculations of Subsection \ref{opsurf}) give
\begin{align}\label{productex} \sigma_{\overline{(0,0)}} \cdot \sigma_{\overline{(0,0)}}' & = \vartheta[0,0](q,x) \cdot \vartheta[0,0](q,zx); \\
& = \vartheta[0,0] (q^2,z) \vartheta[0,0] (q^2,z x^2)+\vartheta \left[1/2,0 \right] (q^2, z) \vartheta \left[1/2,0 \right]  (q^2, z x^2);\\
& = \sum_{n \in \Z} q^{\frac{n^2}{4}} \left(\sum_{\ell \in \Z} q^{(\ell+ n/2)^2} z^{k \ell +n} x^{2 \ell+n}\right);\\
& = \sum_{n \in \Z} q^{\frac{n^2}{4}} \sigma_{\overline{(n,n)}}'';
\end{align}
the third equality follows from expanding the power series in $z$.

We now view the multiplication of sections as the product 
$ \Ext^0(\O, \L') \otimes \Ext^0(\L\inv, \O) \to \Ext^0(\L\inv, \L') $
and calculate the product of the mirror elements on the symplectic side. The Lagrangians mirror to the line bundles $\L\inv$, $\O$, and $\L'$ are given by  $L(1,0,-1,1)$, $L(1,0,0,1)$, and $L(1,-1,1,1)$; denote these $L\inv, L_{\mathcal{O}}, L'$ respectively. Let $(L\inv)^{-\alpha}$ and $(L')^{\alpha'}$ denote the image of the Lagrangians under perturbation by the quadratic Hamiltonians $-\alpha t^2/2$ and $\alpha' t^2/2$, where $\alpha, \alpha' > 1$; so 
\begin{align*} (L\inv)^{-\alpha} & =\left\{ \left(Y, \begin{pmatrix} -\alpha & 0 \\
0 & -1 \end{pmatrix}Y \right) \mid Y \in \R^2\right\};  \\
(L')^{\alpha'} &= \left\{ \left(Y, \begin{pmatrix} \alpha' + 1 & 1 \\
1 & 1 \end{pmatrix}Y\right) \mid Y \in \R^2 \right\}. \end{align*}
By Proposition \ref{htc}, we can identify the relevant morphism spaces with the Floer complexes $HF^*\left( (L\inv)^{-\alpha} , L_{\O}\right)$, $HF^*\left(L_{\O},(L')^{\alpha'} \right),$ and  $HF^*\left((L\inv)^{-\alpha} ,(L')^{\alpha'} \right)$. Under the functor of Theorem \ref{ms1}, \begin{align*} \Phi_{\Wr}(\sigma_{\overline{(0,0)}})&=[\mathbf{y}_{\overline{(0,0)}}];\\
\Phi_{\Wr}(\sigma'_{\overline{(0,0)}})&=[\mathbf{y'}_{\overline{(0,0)}}], \end{align*}
where $\mathbf{y}_{\overline{(0,0)}}=e^{-2 \pi i \tau S\left(y_{\overline{(0,0)}}\right)} y_{\overline{(0,0)}}$ and $\mathbf{y'}_{\overline{(0,0)}}=e^{-2 \pi i \tau S\left(y'_{\overline{(0,0)}}\right)} y'_{\overline{(0,0)}}$ are the generators  at $(0,0,0,0) \in Y$ of the Floer complexes, weighted by action. The Floer product $\mu^2\left(\mathbf{y}_{(0,0)}, \mathbf{y'}_{(0,0)}\right)$ counts triangles homotopic in $\tilde{Y}=\R^4$ to the planar triangles shown in Figure \ref{triangle2}, where we have fixed lifts $\tilde{y}_{\overline{(0,0)}}$ and $\tilde{y}'_{\overline{(0,0)}}$ at $(0,0,0,0)$, as well as lifts of the Lagrangians. There is one such triangle $\Delta_n$ for each $n \in \Z$. We can solve for the coordinates of $\tilde{y}_{out,n}$ to obtain
\[ \tilde{y}_{out,n}= \left( n(1+2 (\alpha+\alpha'))\inv, n (\alpha+\alpha') (1+2 (\alpha+\alpha'))\inv, n, n\right); \]
recalling that the generators are indexed by their radial coordinates, we see that $y_{out,n}=y''_{\overline{(n,n)}}$. The action-corrected symplectic area of $\Delta_n$ is equal to 
\begin{align*} B(\Delta_n) &= \omega(\Delta_n)-S\left(y_{\overline{(0,0)}}\right)-S\left(y_{\overline{(0,0)}}\right)+S\left(y''_{\overline{(n,n)}}\right);\\
&= \frac{\alpha n^2 }{2(1+2 \alpha)}+\frac{n^2}{4(1+2 \alpha)}=\frac{n^2}{4}. \end{align*}
We conclude that 
\[ \left[\mathbf{y}_{\overline{(0,0)}}\right] \cdot \left[\mathbf{y'}_{\overline{(0,0)}}\right] = \sum_{n \in \Z} q^{\frac{n^2}{4}} \left[ \mathbf{y''}_{\overline{(n,n)}} \right], \]
and comparing this expression to Equation \eqref{productex}, we verify that the two products agree.
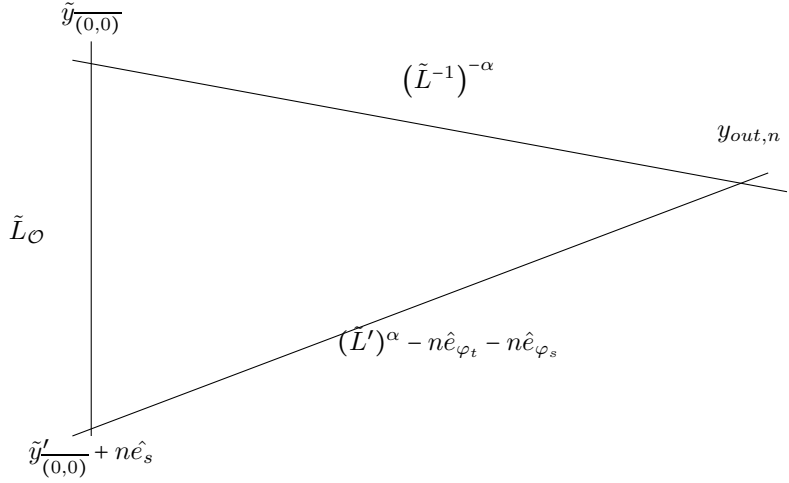
\begin{figure}
\centering
\begin{tikzpicture}
	
		\node (0) at (-2, 3) {};
		\node  (1) at (7.5, 1.25) {};
		\node  (2) at (-1.75, 3.25) {};
		\node  (3) at (-1.75, -2) {};
		\node  (4) at (-2, -2) {};
		\node  (5) at (7.25, 1.5) {};
		\node  (7) at (7, 2) {$y_{out, n}$};
		\node  (8) at (-2.6, .75) { $\tilde{L}_{\O}$};
		\node  (9) at (3, 2.75) {$\left(\tilde{L}\inv\right)^{-\alpha}$};
		\node  (11) at (-1.75, 3.55) {$\tilde{y}_{\overline{(0,0)}}$};
		\node  (12) at (3, -.75) {$(\tilde{L}')^{\alpha}-n \hat{e}_{\phi_t}- n \hat{e}_{\phi_s}$};
		\node  (14) at (-1.75, -2.3) {$\tilde{y}'_{\overline{(0,0)}}+n\hat{e_{s}}$};

		\draw (4.center) to (5.center);
		\draw (0.center) to (1.center);
		\draw (2.center) to (3.center);
	
\end{tikzpicture}\caption{A triangle contributing to  $\mu^2\left(\mathbf{y}_{(0,0)}, \mathbf{y'}_{(0,0)}\right)$ }\label{triangle2} 
\end{figure}

% $y_{\overline{j/alpha =(j/\alpha, a)$; we can index those by; similarly we index the intersection points of $L_{\O}$ and $(L')^{\alpha'}$ by . The intersection points of $(L\inv)^{-\alpha}$ and (\L')^{\alpha'}$ are given by $$. Correcting for action gives \mathbf{y}_{\overline{j/alpha

%We now calculate the Floer product $\mu^2(y_{\overline{0,0}}, y'_{\overline{0,0}})$; recall that by WHAT, this amounts to a count of certain planar triangles in $\R^4$. These are the triangles which satisfy .

%Fix a lift . Then each choice of lift of determines a unique triangle with endpoint; see figure WHAT. Denote the action-corrected area of this triangle by $\Delta_n$; we calculate that 
%\[ \Delta_n= .\]
%We conclude that 
%\[\]
%and verify that this agrees with the formula of Equation \ref{}.

\end{eg}

\section{Mirror symmetry for compact surfaces}\label{cmpct}
\noindent We now prove a mirror symmetry statement for the non-algebraic compact surfaces $S_\tau(\A)$. Let  \begin{align*}
\A&=\left(\left(m_0, k_0\right), \left(m_{\infty},k_{\infty}\right)\right);\\
&A = \begin{pmatrix} m_0 & - m_{\infty} \\ k_0 & k_{\infty} \end{pmatrix};\\
&n = \det A =m_0 k_{\infty}+m_{\infty} k_0. \end{align*}
Let $\left(f, g, e^{2 \pi i \tau \lambda}e^{2 \pi i \theta}\right) \in \Q^2 \times \C^*$ define a representation of $\pi_1\left(S_\tau(\A)\right)$ and thus a line bundle $\L\left(f, g, e^{2 \pi i(\lambda \tau+ \theta)}\right)$ on $S_{\tau}(\A)$ via the correspondence of Section \ref{csurfaces}. Define the following type-$\A$ Lagrangian section: 
\[ L_{\left(f, g, e^{2 \pi i(\lambda \tau+ \theta)}\right)} = \left( \left\{ \begin{pmatrix} t \\ s \end{pmatrix}, \begin{pmatrix} -\lambda \rho_{-k_0/m_0}^{k_{\infty}/m_{\infty}}\left(t\right) \\ - \lambda \end{pmatrix}  +\begin{pmatrix} f \\ g \end{pmatrix} \large\mid \begin{pmatrix} t \\ s \end{pmatrix} \in \R \times S^1 \right\}, d+ 2 \pi i \theta \mathrm{d} s  \right). \] 

The remainder of this section is dedicated to the proof of the following theorem:
\begin{thm}\label{firstthm} For all $S_{\tau}(\A)$ which are not algebraic surfaces, i.e., for such surfaces with $n \neq 0$, there is a full and faithful functor 
\[ \Phi_{\A}: \Pic \left(S_{\tau}(\A)\right) \to H^0 \F_{\tau} \left(\A\right). \]
realized on objects by 
\[ \Phi_{\A}\left(\L\left(f, g, e^{2 \pi i( \tau \lambda + \theta)}\right)\right) = \left[L_{\left(f, g, e^{2 \pi i( \tau  \lambda +\theta})\right)}\right]. \]
 \end{thm}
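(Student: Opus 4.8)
The plan is to deduce Theorem \ref{firstthm} from the mirror symmetry results already established for the open pieces, by realizing both sides as pullbacks (in the appropriate sense) of the diagrams relating the open surfaces and their mirrors. Concretely, I would first use the restriction functors $\Pic S_\tau(\A) \to \mathcal{C}_\tau(m_0,k_0)$ and $\Pic S_\tau(\A) \to \mathcal{C}_\tau(m_\infty,k_\infty)$ together with the localization functors $H^0\F_\tau(\A) \to H^0\F_\tau(m_0,k_0)$ and $H^0\F_\tau(\A) \to H^0\F_\tau(m_\infty,k_\infty)$, and verify that $\Phi_{(m_0,k_0)}$ and $\Phi_{(m_\infty,k_\infty)}$ (from the previous section) intertwine them. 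The image of $\L(f,g,e^{2\pi i(\tau\lambda+\theta)})$ under restriction to $\C^*\times E_\tau$ is, via Equation \eqref{cocwoo} and Corollary \ref{extension3}, a line bundle of the form $\L_\tau(m_0,k_0,0,\xi_0)$ on $D_\tau(m_0,k_0)$ (degree zero on the fiber since $n\neq 0$ forces $[E]\notin H_2$), and similarly on the other chart; so the Lagrangian $L_{(f,g,\cdots)}$ must restrict at each end of $Y$ to the linear Lagrangian sections $L_{(m_0,k_0,0,\xi_0)}$ and $L_{(m_\infty,k_\infty,0,\xi_\infty)}$ produced by the open-surface construction. The definition of $L_{(f,g,e^{2\pi i(\tau\lambda+\theta)})}$ in the theorem statement is precisely engineered so that this happens: the interpolating function $\rho_{-k_0/m_0}^{k_\infty/m_\infty}$ glues the two linear behaviors, and the boundary conditions \eqref{raycond1} are checked directly by computing $m_0\phi_t+k_0\phi_s$ and $-m_\infty\phi_t+k_\infty\phi_s$ on the two ends.

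\textbf{Key steps.} First I would establish that on objects the assignment is well-defined: given $(f,g)\in\Lambda^*$ and $e^{2\pi i(\tau\lambda+\theta)}\in\C^*$, check that $L_{(f,g,\cdots)}$ is an admissible type-$\A$ Lagrangian section (verify the planarity, the slope computation $[\gamma_L]=(1,0,0)$ using $n\neq 0$, and the two linearity conditions), and that the flat connection $d+2\pi i\theta\,\mathrm{d}s$ is unitary. Second, compute the morphism spaces $\Hom_{H^0\F_\tau(\A)}(L_0,L_1)$ using Proposition \ref{htc}: choose type-$\A$ perturbing Hamiltonians in correct position, and identify $\whHF^0_\tau(\psi_{H_0}(L_0),\psi_{H_1}(L_1))$. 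Here I expect the intersection points to be indexed exactly by the pairs $(n_1,n_2)$ of nonnegative integers solving Equations \eqref{cohoeq1}--\eqref{cohoeq3}, via a change of coordinates matching the $t\to-\infty$ linear region to the $D_\tau(m_0,k_0)$ picture and the $t\to+\infty$ region to the $D_\tau(m_\infty,k_\infty)$ picture. The key point is that a holomorphic function $z_1^{n_1}z_2^{n_2}$ on $\C^2$ equivariant under $\pi_1(S_\tau(\A))$ corresponds, under the two restriction maps of Equation \eqref{cocwoo}, to a pair of sections $\sigma_{\overline{(j_0,a_0)}}$ over $D_\tau(m_0,k_0)$ and $\sigma_{\overline{(j_\infty,a_\infty)}}$ over $D_\tau(m_\infty,k_\infty)$ that agree on $\C^*\times E_\tau$, and the Lagrangian intersection points are likewise pairs of the corresponding generators $\overline{y}$ that match up under the localization maps. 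Third, compare the structure constants: the product on sections $z_1^{n_1}z_2^{n_2}\cdot z_1^{n_1'}z_2^{n_2'}=z_1^{n_1+n_1'}z_2^{n_2+n_2'}$ must match $\mu^2$; since the relevant line bundles restrict to \emph{degree-zero} bundles on the open charts, Equation \eqref{35woo} (the simple case \eqref{mseq3woo}) applies at each end, so $\mu^2$ counts a single constant triangle and the products are forced to agree. Fullness and faithfulness then follow from the isomorphism on $\Hom$'s together with the cohomology computation \eqref{cohocalcs} and the explicit section descriptions.

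\textbf{Main obstacle.} The hard part will be the bookkeeping that matches the indexing set $\{(n_1,n_2)\in\Z_{\geq0}^2 : \text{Equations }\eqref{cohoeq1}\text{--}\eqref{cohoeq3}\}$ of global sections of $\L(f,g,\cdots)$ with the set of degree-zero intersection points of the perturbed type-$\A$ Lagrangians, and keeping track of the action corrections $S(y)$ and the monodromy so that the symplectic-side structure constants come out to be $e^{2\pi i\tau(\text{area})}e^{2\pi i(\text{monodromy})}$ exactly equal to the $q$-powers appearing in the theta-function expansions of products on the complex side. Unlike the open case, there is no single ``chart'' making everything simultaneously algebraic, so I would need to argue that the Floer complex is computed by gluing the two half-cylinder computations: intersection points in the $t<0$ region are governed by the $D_\tau(m_0,k_0)$ calculation of Section \ref{mscalcs} and those in the $t>0$ region by the $D_\tau(m_\infty,k_\infty)$ calculation, with the compact ``transition'' region contributing nothing (by a maximum principle / energy argument analogous to Lemma \ref{RIE1}, since in the $\A$ setting all holomorphic curves are confined to a compact set). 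Once this identification is in place, the commutativity of the localization diagram (Theorem \ref{diagram}, or rather the piece of it already available) forces the structure constants on both sides to agree, since they are determined by their restrictions to the two open charts where Theorems \ref{ms1} and its $(m,k)$-analogue apply; I would invoke Lemma \ref{triv} (and its $S_\tau(\A)$-analogue via Serre duality) to conclude that a morphism of the glued data vanishing on both restrictions vanishes, giving faithfulness, and a dimension count via \eqref{cohocalcs} gives fullness.
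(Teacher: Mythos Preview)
Your strategy is in the spirit of the paper's \emph{proposed} proof of the full Conjecture \ref{expected} (Section \ref{pproof}), but the paper's actual proof of Theorem \ref{firstthm} is much more direct and avoids the localization diagram entirely. The core of the paper's argument is Lemma \ref{bigclemma}: there is an \emph{explicit} bijection between monomials $z_1^{n_1}z_2^{n_2}$ satisfying Equations \eqref{cohoeq1}--\eqref{cohoeq3} and closed degree-zero generators $y_{j,N}$ of the Floer complex, given by the linear map $j\mapsto A\bigl((j,N)+(f,g)\bigr)=(n_1,n_2)$. The proof is elementary linear algebra, checking that the interval $I$ of valid $j$'s has exactly the right integer points. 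The product check is then a one-line application of Lemma \ref{onlyone}: since the Lagrangians are essentially non-transverse, $\mu^2$ counts a single triangle with vanishing action-corrected area and monodromy, so $\mu^2\bigl(\phi(z_1^{n_1}z_2^{n_2}),\phi(z_1^{\ell_1}z_2^{\ell_2})\bigr)=\phi(z_1^{n_1+\ell_1}z_2^{n_2+\ell_2})$ on the nose.

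Your indirect route has concrete problems. First, it is circular: you invoke Theorem \ref{diagram}, but in the paper that theorem (Theorem \ref{thediag2}) is proved \emph{after} Theorem \ref{firstthm} and uses the functor $\Phi_\A$ as input. Second, your geometric picture is backwards: you write that ``intersection points in the $t<0$ region are governed by the $D_\tau(m_0,k_0)$ calculation \ldots\ with the compact `transition' region contributing nothing'', but in the $\A$ setting the perturbed Lagrangians are \emph{disjoint} for $|t|\gg 0$ (this is the point of type-$\A$ Hamiltonians), so all intersection points live in the compact region and there is no half-cylinder splitting. Third, your faithfulness step via Lemma \ref{triv} is not quite right: that lemma classifies line bundles on $\C^*\times E_\tau$, not Floer morphisms, and you would separately need to prove that the localization map $H^0\F_\tau(\A)\to H^0\F_\tau(m_0,k_0)$ is injective on $\Hom$'s, which is itself nontrivial and not established before this point in the paper. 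The direct approach sidesteps all of this because for non-algebraic $S_\tau(\A)$ every line bundle has fiber degree zero, so the mirror Lagrangians all have slope zero and the Floer complexes are finite with trivial differential, making the combinatorial match with the monomial count entirely elementary.
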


As in the previous section, we define the map using specific choices of perturbing Hamiltonians. For each triple of Lagrangians $L_0, L_1, L_2$,  with \[L_i = L_{\gamma_i}; \gamma_i=\left(f_i, g_i, e^{2 \pi i \tau \lambda_i}e^{2 \pi i \theta_i}\right), \quad 0 \leq i \leq 2,\] define \[ L_i^{\Wr} = L_{(m_{\infty},-k_{\infty},0, e^{2 \pi i \tau (\lambda_i-g_i)}e^{2 \pi i \theta})}= L_{(m_{0},k_{0}, e^{2 \pi i \tau( \lambda_i-g_i)}e^{2 \pi i \theta})},\] and choose type-$\A$ Hamiltonians $H_i$, $0\leq i\leq2$, such that the following conditions are satisfied:
\begin{enumerate}[(a)]
 \item The set $(\psi_{H_0}(L_0), \psi_{H_1}(L_1),\psi_{H_2}(L_2))$ is in correct position.
 \item The function \[ \frac{\p H(t, s_0)}{ \p t}  -\lambda \rho_{-m_0/k_0}^{m_{\infty}/k_{\infty}}(t)\] controlling the $\phi_t$ coordinate is a monotonic function for all $s_0 \in S^1$.
 \item There exist type-$\W$ Hamiltonians $H_0', H_1', H_2'$ satisfying conditions (a)-(f) of Subsection \ref{hamform2} such that $(\psi_{H_i}(L_i)) \cap ([-1,1]) = \left(\psi_{H_i'}\left(L_i^\Wr\right)\right) \cap ([-1,1])$.
 \item When two lifts $\psi_{H_i}\left(\tilde{L}_i\right)$ and  $\psi_{H_j}\left(\tilde{L}_j \right)- j\hat{e}_{\phi_t}- a \hat{e}_{\phi_s}$ are essentially disjoint, so they are disjoint after perturbing by some compactly supported Hamiltonian, they are disjoint.
\end{enumerate}
These conditions ensure that for $y \in L_i \cap L_j$, \[S_{\R^4}(y)=Q_{\psi_{H_i}\left(L_i^{\Wr}\right), \psi_{H_j}\left(L_j^{\Wr}\right)}(y).\] We use the isomorphism
\[ HF^0\left(\psi_{H_i}\left(L_i\right), \psi_{H_j}\left(L_j\right)\right) \xrightarrow{\cong} \Hom_{H^0 \F_\tau(\A)} \left(L_0, L_1\right)\] 
to compute the cohomology category $ \Hom_{H^0 \F_\tau(\A)}\left(L_0, L_1\right)$.

%Note that if $-\lambda+g$ is not an integer, then any two lifts $\psi_{H_i}\left(\tilde{L}_0\right)$ and  $\psi_{H_j}\left(\tilde{L}\right)$ are essentially disjoint, so the Floer complexes vanish. 
%Otherwise, let \[-\lambda+g = N \in \Z;\] we split the Floer complex 
%\[CF^*\left(\psi_{H}\left(L_\O\right), L_{\left(f, g, e^{2 \pi i \tau \lambda}e^{2 \pi i \theta}\right)}\right) = \bigoplus_{m \in \Z \cap I} CF^*\left(\psi_{H}\left(L_\O\right), L_{\left(f, g, e^{2 \pi i \tau \lambda}e^{2 \pi i \theta}\right)}\right)_{(m,N)}. \]
%\[ CF^*\left(\psi_{H}\left(L_\O\right), L_{\left(f, g, e^{2 \pi i \tau \lambda}e^{2 \pi i \theta}\right)}\right) = \sum_{m \in \Z, m \in [I_0, I_1]}  CF^*\left(\psi_{H}\left(L_\O\right), L_{\left(f, g, e^{2 \pi i \tau \lambda}e^{2 \pi i \theta}\right)}\right)_{-n \hat{e}_{\phi_t}+} n.\]
%%Here $I \subset \R$ is equal to one of
%\[ \left[\frac{\lambda k_0}{m_0} -f - \epsilon\left(H\right), \frac{\lambda k_\infty}{ m_\infty} -f + \epsilon\left(H\right)\right]; \]
%\[ \left[\frac{-\lambda k_\infty}{m_\infty} -f - \epsilon\left(H\right), \frac{\lambda k_0}{ m_0} - f + \epsilon\left(H\right) \right] \]
%depending on the sign of $\lambda$. 

\begin{lemma}\label{bigclemma} Let $\gamma_0\inv \gamma_1=(f,g, e^{2 \pi i ( \tau \lambda+\theta)})$. There is a bijection between solutions $\left(n_1, n_2\right) \in \N^2$ to Equations \eqref{cohoeq1}, \eqref{cohoeq2} and \eqref{cohoeq3} applied to $(f,g, e^{2 \pi i ( \tau \lambda+\theta)})$, and closed generators of degree zero of $\psi_{H_0}\left(L_0\right)$ and $\psi_{H_1}\left(L_1\right)$, where $H$ is a Hamiltonian of the above form. \end{lemma}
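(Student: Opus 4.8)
The plan is to identify both sides of the claimed bijection with explicitly indexed sets and to match them coordinate-wise. On the holomorphic side, recall from Section~\ref{csurfaces} that the global sections of $\L(\gamma_0\inv\gamma_1)$ are the monomials $z_1^{n_1}z_2^{n_2}$ on $\C^2$ (equivalently on $\C^2\setminus\{0\}$, by Hartogs) with $n_1,n_2\in\N$ satisfying Equations~\eqref{cohoeq1}--\eqref{cohoeq3}; so it suffices to set up a bijection between such pairs $(n_1,n_2)$ and the closed degree-zero generators of $CF^*\!\left(\psi_{H_0}(L_0),\psi_{H_1}(L_1)\right)$, compatibly with the $\Z^3/\Gamma$-grading of Definition~\ref{gradingrem}.

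First I would parametrize the intersection points. Since $L_0$ and $L_1$ are Lagrangian sections with chosen lifts that are graphs of $dh_0,dh_1$ for explicit primitives $h_i\colon\R^2\to\R$ (of the shape given by Lemma~\ref{formofh} with $d=0$, the interpolating term built from $\rho_{-k_0/m_0}^{k_\infty/m_\infty}$), the points of $\psi_{H_0}(\tilde L_0)\cap(\psi_{H_1}(\tilde L_1)-\hn)$ are the critical points of $h_0-h_1+H$ with $H=H_1-H_0$, and their Floer degree is the Morse index there (\cite{SeGr} and the remark after Definition~\ref{gradingrem}). Condition~(b) on $H$ forces the $\p_t$-component of $\nabla(h_0-h_1+H)$ to be monotone in $t$, so it vanishes on a single slice $\{t=t(\hn)\}$; restricting to that slice reduces the problem to locating the minima in $s$ of a function on $S^1$ (or a cyclic cover). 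A degree-zero generator is a local minimum, and using condition~(c) together with the two linear $(m,k)$-type computations of Section~\ref{mscalcs} (for $(m_0,k_0)$ on $\{t\le -1\}$ and for $(m_\infty,k_\infty)$ on $\{t\ge 1\}$, glued across $[-1,1]$ where (b) prevents spurious critical points), I would show that within a fixed graded piece there is at most one such minimum, occurring exactly when the relevant angular phases match, and that it is automatically closed because the graded pieces carrying a degree-zero generator are concentrated in degree zero (so $\mu^1$ out of it vanishes), as in the argument of Proposition~\ref{kingprop}.

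Next I would make the change of coordinates explicit. Using $i_\A=f\inv\circ i_{m_0,k_0}$ and the formula \eqref{cocwoo} for the induced restriction $\Pic S_\tau(\A)\to\Pic_{\an}(\C^*\times E_\tau)$, the monomial $z_1^{n_1}z_2^{n_2}$ maps to (a scalar multiple of) $z^{(k_\infty n_1-k_0 n_2)/n}x^{(m_\infty n_1+m_0 n_2)/n}$, so $(n_1,n_2)$ determines the $z$-exponent $j=(k_\infty n_1-k_0 n_2)/n$ and the $x$-exponent $a=(m_\infty n_1+m_0 n_2)/n$; via the mirror dictionary of Theorem~\ref{ms1} and its $(m_0,k_0)$-analogue applied near each end (valid by condition~(c) and Proposition~\ref{htc}), these fix a candidate generator $y$ whose $t$-coordinate and $\overline{\hn}$-grading are prescribed. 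I would then read off that Equations~\eqref{cohoeq2} and \eqref{cohoeq3} are exactly the conditions that the $\phi_t$- and $\phi_s$-phases of $\psi_{H_0}(L_0)$ and $\psi_{H_1}(L_1)$ agree at $y$ (i.e.\ that $y$ is genuinely an intersection point), whereas Equation~\eqref{cohoeq1} records the $\tau$-weight/action $S(y)$ and is satisfied automatically once $(n_1,n_2)$ is fixed (it recovers the monodromy $\xi$ around the fiber class). Finally, the positivity constraints $n_1,n_2\ge 0$ are equivalent to $y$ lying in the bounded non-linear region with Morse index $0$: because $L_i$ has slope $-k_0/m_0$ for $t\ll 0$ and $k_\infty/m_\infty$ for $t\gg 0$ and interpolates monotonically, a point with these $j,a$ is a minimum precisely when $n_1\ge0$ and $n_2\ge0$, while a negative $n_1$ or $n_2$ pushes the would-be critical point past a pinched end (where by condition~(d) no intersection survives) and mixed signs raise the index. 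Inverting this dictionary shows every closed degree-zero generator arises from a unique such $(n_1,n_2)$.

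The main obstacle is the index bookkeeping in the middle step: unlike the purely linear Lagrangians of Section~\ref{mscalcs}, the type-$\A$ sections bend from one rational slope to another through the compact region, and the prescribed $H$ adds a further twist, so one must verify that $h_0-h_1+H$ has exactly the expected critical points with exactly the expected Morse indices. I expect this to follow from conditions~(a)--(d) on the $H_i$ — in particular from (c), which imports the already-established index computations for the $(m_0,k_0)$- and $(m_\infty,k_\infty)$-wrapped Lagrangians on the two halves, and from (b), which controls the patching across $[-1,1]$ — but making the patching precise and checking its consistency with the $\Z^3/\Gamma$-grading is the technical heart of the argument.
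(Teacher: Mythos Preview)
Your approach is the same linear change of coordinates as the paper's: the map $(n_1,n_2)\leftrightarrow (j,N)$ via the matrix $A$ is exactly what the paper writes down, and your interpretation of \eqref{cohoeq2}--\eqref{cohoeq3} as the phase-matching conditions for the $\phi_t,\phi_s$ coordinates is correct.

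There are two genuine gaps. First, your reading of \eqref{cohoeq1} is off: it does not ``record the action $S(y)$'' nor is it ``satisfied automatically.'' Together with \eqref{cohoeq3} it forces $(m_\infty n_1+m_0 n_2)/n=\lambda$ exactly (since $\operatorname{Im}\tau\ne 0$) and hence $\theta\in\Z$; on the Floer side these are precisely the conditions that the $\phi_s$-coordinates agree (so the Lagrangians intersect at all) and that the monodromies match (so the generator is closed, via Proposition~\ref{kingprop}). You should separate this cleanly rather than fold it into an action statement.

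Second, and more seriously, you assert that ``$n_1,n_2\ge 0$ is equivalent to $y$ having Morse index $0$,'' but this is not literally true and is where the real content lies. Condition~(b) on $H$ gives that the degree-zero generators are indexed by integers $j$ in an interval $I$ whose endpoints involve the perturbation $\epsilon(H_0)-\epsilon(H_1)$, whereas the positivity $n_1,n_2\ge 0$ cuts out a \emph{different} interval $I'$ (same endpoints with $\epsilon=0$). In general $I\ne I'$. The paper's key step is the arithmetic check that $I\cap\Z=I'\cap\Z$: the type-$\A$ boundary condition forces $\lambda k_0/m_0-f\in\tfrac{1}{m_0}\Z$ (and similarly at the other end), and the type-$\A$ constraint $0\le\epsilon<1/m_0$ on the Hamiltonian (Definition~\ref{pertHdef}) then guarantees no integer lies in the half-open gap $(\,\lambda k_0/m_0-f,\ \lambda k_0/m_0-f+\epsilon\,]$. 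Without invoking this, your ``positivity $\Leftrightarrow$ index $0$'' step fails, and conditions~(c)--(d) alone do not supply it. The rest of your outline is fine once this is inserted.
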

\begin{proof} Note that when $\lambda, \theta \notin \Z$, both sets are empty: if $\lambda \notin \Z$, $\psi_{H_0}\left(L_0\right) \cap \psi_{H_1}\left(L_1\right) = \emptyset$, and if  $\lambda \notin \Z$, $\theta \in \Z$, then there are no closed generators of degree zero by Proposition \ref{kingprop}.
 Otherwise, observe that we have chosen $H_0$ and $H_1$ such that when 
\[ 0> {-\lambda} \left( \frac{k_{\infty}}{m_{\infty}}+ \frac{k_0}{m_0} \right) = -\frac{\lambda n}{m_0 m_{\infty}}, \]
i.e. when $\lambda$ and $n$ have the same sign, there is a bijection 
\[ \{ \text{intersection points }  y_{j,N}  \text{ of degree zero}  \} \leftrightarrow \{ j \in \Z \cap I\}\]
where $I$ is the interval 
\[I=\left[-\frac{\lambda k_\infty}{ m_\infty} -f - \epsilon\left(H_0\right)+\epsilon(H_1), \frac{\lambda k_0}{ m_0} -f + \epsilon\left(H_0\right)-\epsilon(H_1)\right].\]
We claim that the map 
\begin{equation} \label{proposedbij} j \mapsto A \left(\begin{pmatrix} j \\ N \end{pmatrix}  +\begin{pmatrix} f \\ g \end{pmatrix}\right) \end{equation}
%\[ \begin{pmatrix} m \\ N \end{pmatrix} \to A \left( \begin{pmatrix} m \\ N \end{pmatrix} +  \begin{pmatrix} f \\ g \end{pmatrix} \right) \]
realizes the desired bijection. 

 Note that solutions in $\Z^2$ to Equations $\eqref{cohoeq1}$, $\eqref{cohoeq2}$ and \eqref{cohoeq3} are given by elements $\left(\ell_1, \ell_2\right) \in \Z^2$ such that  for some $j \in \Z$, 
\[ \frac{1}{n} \begin{pmatrix} k_{\infty} & -k_0 \\  m_{\infty} &   m_0 \end{pmatrix} \begin{pmatrix}\ell_1 \\ \ell_2 \end{pmatrix} - \begin{pmatrix} f \\ g \end{pmatrix}= - \begin{pmatrix} j \\ N\end{pmatrix}. \]
Denote this set of solutions $\Sigma$. We rewrite the above equation as
\[ A\inv \begin{pmatrix}\ell_1\\ \ell_2 \end{pmatrix} = \begin{pmatrix} f \\ g \end{pmatrix}- \begin{pmatrix} j \\ N \end{pmatrix}. \]
If we omit the requirement that $\ell_1, \ell_2,$ and $j$ are integers, then the set of solutions to this equation in $\R^2$ is given by a line in $\R^2$; the set of solutions $\left(\ell_1, \ell_2\right) \in \left(\R^+\right)^2$  is given by a (potentially empty) segment $S$ of this line.
Let
\[ I' = \left\{ x \in \R \mid A\inv  \begin{pmatrix} \ell_1 \\ \ell_2 \end{pmatrix}  =\begin{pmatrix} f \\ g \end{pmatrix}- \begin{pmatrix} x \\ N \end{pmatrix}, \,\, \begin{pmatrix} \ell_1 \\ \ell_2 \end{pmatrix} \in S.\right\}. \] 
$I'$ is non empty if and only if $\frac{\lambda n}{m_{\infty}} > 0$ and $\frac{\lambda n}{m_{0}}>0$, if and only if $\lambda$ and $n$ have the same sign. In this case the endpoints of $I$  correspond to the solutions $x$ to the equation above when $(\ell_1, \ell_2) =\left(\frac{\lambda n}{m_{\infty}},0\right)$ and $\left(0,\frac{\lambda n}{m_{0}} \right)$; thus 
\[ I'= \left[-\frac{\lambda k_\infty}{ m_\infty} -f , \frac{\lambda k_0}{ m_0} -f \right].\]
We now show that if $j$ is an integer, \[  A \left(\begin{pmatrix} j \\ N \end{pmatrix}  +\begin{pmatrix} f \\ g \end{pmatrix}\right) \in \Z^2. \]  Note that   
\[ A \left(\begin{pmatrix} j \\ N \end{pmatrix}  +\begin{pmatrix} f \\ g \end{pmatrix}\right)  = \begin{pmatrix} m_0 \left(j+f\right) + k_0 \left(g+N\right) \\ -m_\infty \left(j+f\right) + k_\infty \left(g+N\right)\end{pmatrix} .\]
Then the fact that the pair $\left(f,g\right)$ satisfies Equations \eqref{colattice1} and \eqref{colattice2} implies that $\left(\ell_1, \ell_2\right) \in \Z^2$. Thus the map of Equation \eqref{proposedbij} is a bijection from $I' \cap \Z$ to $\Sigma$.

It remains to show only  that $I' \cap \Z = I \cap \Z$. The $\left(m_0, k_0\right)$ boundary conditions ensure that  
\[ m_0 \left( \frac{\lambda k_0}{ m_0} -f\right) +  k N \in \N. \]
Since $N$ is an integer, 
$ \left( \frac{\lambda k_0}{ m_0} -f\right) \in \frac{1}{m_0} \Z.$
Since $\epsilon\left(H\right) < \frac{1}{m_0}$, there are no integers in the interval \[\left(\frac{\lambda k_0}{m_0} -f, \frac{\lambda k_0}{m_0} -f + \epsilon\left(H\right) \right];\] similarly, we deduce that there are no integers in the interval 
\[\left[ \frac{\lambda k_\infty}{ m_\infty} -f - \epsilon\left(H\right), \frac{\lambda k_\infty}{ m_\infty} -f \right).\] We conclude that $I' \cap \Z = I \cap \Z$.

\end{proof}

\begin{cor}\label{bigclemma2} Let  $\L\left(\gamma_1), \L(\gamma_0\right) \in \Pic S_{\tau}(\A)$. There is an isomorphism of $\C$-vector spaces
\[\phi: \Ext^0\left(\L\left(\gamma_0\right), \L(\gamma_1)\right)\cong HF_{\tau} ^0\left(\psi_{H_0}\left(L_{\gamma_0}\right), \psi_{H_1}\left(L_{\gamma_1} \right) \right)\] 
for $H_0, H_1$ as above. 
\end{cor}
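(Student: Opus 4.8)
The plan is to deduce the corollary from Lemma \ref{bigclemma} by a dimension count plus an identification of generators. First I would reduce to the case $\gamma_0 = \mathcal{O}$ by tensoring: since tensoring with $\L(\gamma_0)^{-1}$ is an equivalence of $\Pic S_\tau(\A)$ and, on the symplectic side, the $\L_{\gamma}$ are set up so that $L_{\gamma_0^{-1}\gamma_1}$ differs from $L_{\gamma_1}$ only by a shift in the $(f,g,\theta,\lambda)$ parameters (compare the analogous reductions in Section \ref{mscalcs}), it suffices to compare $\Ext^0(\mathcal{O}, \L(\gamma))$ with $HF^0_\tau(\psi_{H_0}(L_{\mathcal{O}}), \psi_{H_1}(L_\gamma))$ where $\gamma = \gamma_0^{-1}\gamma_1 = (f,g,e^{2\pi i(\tau\lambda+\theta)})$. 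By the cohomology calculation recorded in Equation \eqref{cohocalcs}, $\dim_{\C} \Ext^0(\mathcal{O},\L(\gamma)) = h^0(\L(\gamma)) = N_\gamma$, the number of pairs $(n_1,n_2)\in\N^2$ solving Equations \eqref{cohoeq1}--\eqref{cohoeq3}.

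Next I would compute the right-hand side. By construction of the perturbing Hamiltonians $H_0, H_1$ and Proposition \ref{htc}, $HF^0_\tau(\psi_{H_0}(L_{\mathcal{O}}),\psi_{H_1}(L_\gamma)) \cong \Hom_{H^0\F_\tau(\A)}(L_{\mathcal{O}}, L_\gamma)$, and since the $\A$-setting Floer complexes are finitely generated (Lemma \ref{RIE1} confines all holomorphic curves to a compact region, so the Floer complexes here are honest finite-dimensional complexes rather than completions), it is enough to count the degree-zero closed generators. Here I invoke the cases of Proposition \ref{kingprop}: the relevant $\Z^3/\Gamma$-graded pieces contribute either $\Lambda$ or $0$ depending on whether the monodromies of the flat connections around $[(1,0,0)]$ agree, which by the explicit form of $L_{(f,g,e^{2\pi i(\tau\lambda+\theta)})}$ (whose connection is $d + 2\pi i \theta\, ds$) happens exactly when $\theta \in \Z$; and the Lagrangians fail to intersect at all unless $\lambda \in \Z$. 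So the degree-zero closed generators are exactly the intersection points $y_{j,N}$ enumerated in Lemma \ref{bigclemma}, and that lemma furnishes an explicit bijection $j \mapsto A\left(\binom{j}{N} + \binom{f}{g}\right)$ between them and the solution set $\Sigma$ of Equations \eqref{cohoeq1}--\eqref{cohoeq3}. I must also check that all these generators genuinely survive to cohomology — i.e.\ that $\mu^1$ vanishes on $CF^0_\tau(\psi_{H_0}(L_{\mathcal{O}}),\psi_{H_1}(L_\gamma))$ — which follows from the Morse-index grading of the complexes (Remark after the definition of $CF^*$, via \cite{SeGr}) together with $\mu^1(c_H) = 0$ from Lemma \ref{decompofh}(a) and the rank computations of Proposition \ref{kingprop}, exactly as in Case 1 and Case 2 of the $H^0\F_{\tau,\W}$ calculation.

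Having matched dimensions via a bijection of bases, I would then \emph{define} $\phi$ on basis elements: the monomial section $z_1^{n_1} z_2^{n_2} \in \Ext^0(\mathcal{O}, \L(\gamma))$ maps to the action-corrected generator $\mathbf{y}_{j,N}$ corresponding to $(n_1,n_2) = A\left(\binom{j}{N}+\binom{f}{g}\right)$ under the bijection of Lemma \ref{bigclemma}, and extend $\C$-linearly. One should note this is literally a statement about finite-dimensional $\C$-vector spaces, so convergence is not an issue in the compact setting. Finally I would record that $\phi$ is well-defined and independent of the chosen Hamiltonians $H_0, H_1$ by the same continuation-element argument as Proposition \ref{actionvan}: different admissible choices are related by multiplication with continuation elements, and the action-correction exponents cancel because $S_{\R^4}(y) = Q_{\psi_{H_i}(L_i^{\Wr}),\psi_{H_j}(L_j^{\Wr})}(y)$ by our conditions (c) on the Hamiltonians together with Lemma \ref{boundondiff}. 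The main obstacle is bookkeeping rather than conceptual: verifying that the interval $I$ of intersection points in Lemma \ref{bigclemma} matches $I'$ exactly — that the $\epsilon(H_i)$-perturbations do not spuriously create or destroy integer points near the endpoints — and confirming that the monodromy/degree conditions in Proposition \ref{kingprop} precisely reproduce the arithmetic constraints \eqref{cohoeq2}--\eqref{cohoeq3} governing $N_\gamma$ on the complex side. This matching is the content of Lemma \ref{bigclemma}, so the corollary is essentially a repackaging once the reductions above are in place.
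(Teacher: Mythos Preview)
Your proposal is correct and follows essentially the same approach as the paper: the corollary is an immediate consequence of Lemma~\ref{bigclemma}, which furnishes a bijection between the monomial basis $\{z_1^{n_1}z_2^{n_2}\}$ of $\Ext^0(\L(\gamma_0),\L(\gamma_1)) = H^0(S_\tau(\A),\L(\gamma_0^{-1}\gamma_1))$ and the closed degree-zero generators of the Floer complex. The paper gives no separate proof for the corollary, treating it as a direct repackaging of the lemma; your write-up spells out more of the surrounding logic (reduction via tensoring, finite-dimensionality in the $\A$ setting, independence of Hamiltonians) than is strictly needed here, but none of it is wrong, and several of these points are indeed invoked later in the proof of Theorem~\ref{firstthm}.
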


\begin{proof}[Proof of Theorem \ref{firstthm}] Define the map on objects by  
\[ \Phi\left(\L(\gamma)\right)=[L_{\gamma}]. \]
Consider bundles $\L\left(\gamma_i\right) \in \Pic\left(S_{\tau}(\A)\right)$,  Lagrangians $L_{\gamma_i},$ and Hamiltonians $H_i$ for $0 \leq i \leq 2$ satisfying the conditions above; we use the isomorphism
\[HF^0(\psi_{H_i}( L_{\gamma_i}), \psi_{H_j}( L_{\gamma_j})) \xrightarrow{\cong}  \Hom_{H^0 \F_{\tau}(\A)} (L_{\gamma_i}, L_{\gamma_j}), i<j. \]

 Let  $\mathbf{s}_{n_1,n_2}^{H_0, H_1}=[e^{-2 \pi i \tau S_{\R^4}\left(\phi(z^{n_1}z^{n_2})\right)}\phi\left (z_1^{n_1} z_2^{n_2} \right)]$ where $\phi$ is the isomorphism defined above. Then define the functor $\Phi_{\A}$ on the generators by
\[ \Phi_{\A}\left(z_1^{n_1}z_2^{n_2} \right)=\mathbf{s}_{n_1,n_2}^{H_0, H_1}. \]
Note that by the assumptions on $H_0$ and $H_1$,  $S_{\R^4}(z_1^{n_1} z_2^{n_2})$ is independent of the choice of lifts of $s$, since it depends only on $t$; using the methods of Proposition \ref{actionvan}, one can also show that this class is independent of the choice of the Hamiltonians $H_0$ and $H_1$ satisfying our assumptions.

We now show that the map $\Phi$ respects the composition of morphisms. We have seen that the Yoneda product 
\[ H^0\left(S_{\tau}(\A),\L\left(\gamma_1\inv \gamma_2\right)\right) \otimes  H^0\left(S_{\tau}(\A), \L\left(\gamma_0\inv \gamma_1\right) \right) \to  H^0\left(S_{\tau}(\A), \L\left(\gamma_0\inv \gamma_2\right)\right), \]
is given by $z_1^{n_1} z_2^{n_2} \cdot z_1^{\ell_1} z_2^{\ell_2} = z_1^{n_1 + \ell_1} z_2^{n_2 + \ell_2}.$
 
 By Lemma \ref{onlyone}, the multiplication map $\mu^2(\phi(z_1^{n_1}z_2^{n_2}), \phi(z_1^{\ell_1}z_2^{\ell_2}))$ is equal to $e^{2 \pi i \Delta}y$, where $y$ is the generator in degree zero  in \[CF^*\left(\psi_{H_0}\left(L_{\gamma_0}\right), \psi_{H_2}\left(L_{\gamma_2}\right)\right)_{(A\inv((n_1+\ell_1, n_2+\ell_2)-(f_0, g_0) +(f_1,g_1)))},\]
 and $\Delta$ encodes the area and monodromy around a planar triangle. Note that this generator is equal to $\phi(z_1^{n_1 + \ell_1} z_2^{n_2 + \ell_2})$.   The action-corrected area and monodromy of the triangle $\Delta$ vanish, so 
 $e^{2 \pi i \Delta}y=\mathbf{s}_{n_1+\ell_1, n_2+\ell_2}^{H_0, H_2}$. Thus
 \begin{align*} [\Phi\left(z_1^{n_1} z_2^{n_2}\right)]\cdot[\Phi\left(z_1^{\ell_1} z_2^{\ell_2} \right)]& = [s_{n_1,n_2}]=\Phi_{\A} \left(z^{n_1}z^{n_2}z_1^{\ell_1}z_2^{\ell_2}\right). \\
 										\end{align*}

\end{proof}

\begin{remark}\label{gradedpiecewoo} Note that $\phi\left(z_1^{n_1} z_2^{n_2}\right)$ is the generator of
\[ CF_{\tau}^0\left(\psi_{H_0}\left(L_{\gamma_0}\right), \psi_{H_1}\left(L_{\gamma_1}\right)\right)_{\left(A\inv\left(\left(n_1, n_2\right)-\left(f, g\right)\right)\right)}=CF_{\tau}^0\left(\psi_{H_0}\left(L_{\gamma_0}\right), \psi_{H_1}\left(L_{\gamma_1}\right)\right)_{(j,N)} \]
for any choice of $H_0$, $H_1$. \end{remark}

\begin{remark} With slightly more work, the proof above can be extended to give an isomorphism on the cohomology categories $H^*\F_{\tau}(\A) \to \Pic S_{\tau}(\A)$, where the Hom spaces of the category $\Pic S_{\tau}(\A)$ are given by the full $\Ext$ groups. \end{remark}

\begin{eg}[Hopf surface]\label{HS}
In the case that $\mathbf{A}=((1,0), (1, 1))$, $S_{\tau}(\A)$ is the classical Hopf surface  $\C^2 \setminus \{0\} / \left(\mathbf{z} \sim e^{2\pi i \tau} \mathbf{z}\right)$; denote this surface by $S$. We have seen above that $\Pic_{\an} S \cong \Hom(\pi_1(S), \C) \cong \C^* $, with the isomorphism realized by $\xi \mapsto \L(\xi)$ where
\[\mathrm{Tot} \, \L(\xi) = \C^2 \setminus \{0\} \times \C/ \left( \left(\mathbf{z}, t\right) \sim \left(e^{2\pi i \tau} \mathbf{z}, \xi t\right) \right).  \]
 Each $\L(\xi)$ has vanishing cohomology unless $\xi = e^{2 \pi i \tau k}$ for $k \in \N$, in which case $\L({e^{2 \pi i \tau k}}) \cong \pi_{\A}^*(\mathcal{O}(k))$ and 
 \[ H^*(S, \L(\xi)) =  H^*(S^1; \C) \otimes H^*(\P^1, \O(k)).\]
 In this case the global sections of $\L(\xi)$ are given by the monomials $z^{n_1}z^{n_2}$ with $n_1 + n_2 = k$; note that these are the pullbacks under $\pi_{\A}$ of the monomials which give a basis for the global sections of $\O(k)$.

The mirror to the Hopf surface is the non-algebraic Landau-Ginzburg model $(Y, \A)$. The Lagrangian mirror to $\L(e^{2 \pi i \tau( k + i \theta)})$ is
\[ \L_{(e^{2 \pi i (\tau k + \theta)})} =  \left( \{ \left(t,s, -k \rho\left(t\right), -k\right) \mid \left(t,s\right) \in \R \times S^1 \}, \mathrm{d} + e^{2 \pi i \theta} \mathrm{d} s \right). \] 
where $\rho: \R \to [0,1]$ is a smooth increasing function interpolating between $0$ and $1$. Denote the mirror to the structure sheaf by $L_{\O}$.

We now calculate the morphism spaces $\Hom_{H^* \F_{\tau} (\A)}\left(L_{\O}, L_{(q^{k} e^{2 \pi i \theta})}\right)$, which we expect to be isomorphic to the cohomology spaces $H^*\left(S, \L\left(q^{k} e^{2 \pi i \theta}\right)\right)$. Recall that there is an isomorphism  
 \[\Hom_{H^* \F_{\tau} (\A)}\left(L_{\O}, L_{(q^{k} e^{2 \pi i \theta})}\right) \cong HF^* \left(\psi_{H}(L_{\O}), L_{(q^{k} e^{2 \pi i \theta})}\right)\] for a perturbing type-$\A$ Hamiltonian $H: Y \to \R$. When $k \notin \Z$, $H$ can be chosen such that the two Lagrangians, which are disjoint before perturbation, remain disjoint. Otherwise, we can choose $H$ such that the calculations of the Floer complexes and Floer product essentially reduce to calculations involving product Lagrangians in $T^*S^1 \times T^2$; see as pictured in Figure \ref{hopfex}. From this we see that the differential $\mu^1$ on the complexes is given by $\mu^1(\mathbf{y}_{i})=(e^{2 \pi i \theta}-1) \mathbf{z}_i$ for $0 \leq i \leq k$ and we conclude that 
\[ \Hom_{H^* \F_{\tau} (\A)}(L_{\O}, L_{\xi} ) = \begin{cases} H^*(S^1; \C) \otimes H^*(\P^1, \O(k)) & \xi = q^k, k \in \Z \\
													     0 & \text{otherwise}. \end{cases} \]
													     
When $k \geq 0$, the mirror functor $\Phi_{\A}$ sends $z^{n_1}z^{n_2}$ to the action-corrected generator $\mathbf{y}_{n_1}$. To check that the functor intertwines the Floer and Yoneda products, we note that computation of the Floer product reduces to a calculation in $T^* S^1$, and is given by 
\begin{align*} \mu^2\left(\Phi_{\A}\left(z^{n_1}z^{n_2}\right), \Phi_{\A}\left(z^{n_1'}z^{n_2'}\right)\right)&= \mu^2\left(\mathbf{y}_{n_1}, \mathbf{y}_{n_1'}\right)=\mathbf{y}_{n_1+n_1'}=\Phi_{\A}\left(z^{n_1+n_1'}z^{n_2+n_2'}\right).
\end{align*}

\begin{figure}
\includegraphics[width=300pt]{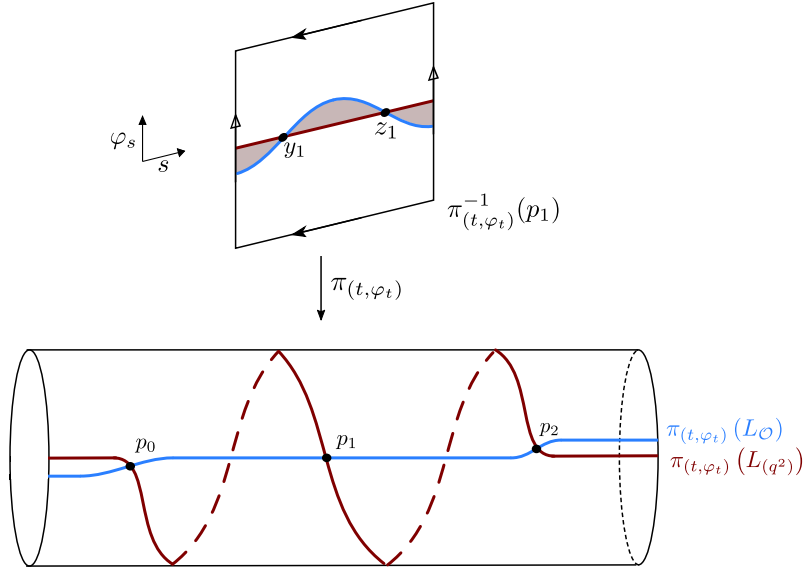}
\caption{We calculate the complex $CF^*(\psi_{H}(L_\O), L_{(\xi)})$ by choosing a perturbing Hamiltonian $H$ such that the two Lagrangians split as product Lagrangians in the region containing all intersection points and holomorphic curves contributing to the differential. The case $\xi=q^2$ is pictured; in this case there are six intersection points after perturbation. The differential counts two discs contained in the fiber of the projection map for each intersection point $y_i$ with $\ind y_i = \ind z_i - 1$. Here the two discs contributing to $\mu^1(y_1)$, which cancel, are shown.}
\label{hopfex}
\end{figure}

.

\end{eg}

\section{Diagrams of categories} \label{diagsec}
The following theorem shows that our mirror correspondences commute with localization. 

\begin{thm} \label{thediag2} For any pair $\left(\mathbf{A}\right)=\left(\left(m_0,k_0\right),\left(m_\infty,k_\infty\right)\right)$ such that $\left(m_0, k_0\right) \neq \left(m_\infty,-k_\infty\right)$, the following diagram commutes:

\begin{equation}
    \begin{tikzcd}
   \Pic S_{\tau}\left(\mathbf{A}\right)  \arrow[rr, near start, "1"] \arrow[dr, "9"] \arrow[dd,swap,"2"] &&
    \mathcal{C}_{\tau}\left(m_0, k_0\right) \arrow[dd, near start, "3" ] \arrow[dr, "11"] \\
    & H^0 \F_{\tau}\left(\mathbf{A}\right) \arrow[rr, near start, "5"] \arrow[dd, near start, "6"] &&
    H^0 \F_{\tau}\left(m_0, k_0\right) \arrow[dd,"7"] \\
    \mathcal{C}_\tau \left(m_\infty,k_\infty\right) \arrow[rr, near start, "4"] \arrow[dr, "10"] && \mathcal{C}_\tau \arrow[dr,"12"] \\
    & H^0 \F_{\tau}\left(m_\infty,k_\infty\right) \arrow[rr,"8"] && H^0 \F_{\tau, \W}
    \end{tikzcd}
\end{equation}
where: \begin{itemize}
\item the arrows 1, 2, 3, and 4 are restriction functors induced by the covering of $S(\A)$ by $D_{\tau}(m_0, k_0)$ and $D_{\tau}(m_{\infty}, k_{\infty})$;
\item the arrows 5 and 7 are the localization functors of Section \ref{inv}; 
\item the arrows 6 and 8 are the localization functors composed with the symplectomorphism $\Psi: Y\to Y$ given by $(s,t,\phi_s, \phi_t)\mapsto (s,-t,\phi_s, -\phi_t)$;
\item the arrows 9, 10, 11, and 12 are the mirror functors.
\end{itemize}.
\end{thm}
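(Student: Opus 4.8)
The strategy is to establish the commutativity of the twelve-arrow cube by reducing it to two facts: (i) the commutativity of the four ``square faces'' that are purely about restriction/localization, and (ii) the compatibility of the mirror functors with these restriction and localization maps, which is the content of the front and back faces. Since each mirror functor $\Phi_{\mathbf{A}}$, $\Phi_{(m_0,k_0)}$, $\Phi_{(m_\infty,k_\infty)}$, $\Phi_{\W}$ has already been constructed explicitly on objects and generators (Theorems \ref{ms1}, \ref{firstthm} and their partially wrapped analogues), and all arrows are determined by their action on a spanning set of morphisms, the whole diagram reduces to chasing a finite amount of explicit data: line bundles labelled by $(\xi,f,g)$ or $(m,k,d,\xi)$, their generating sections expressed via theta functions, and the corresponding action-corrected Floer generators $\mathbf{y}_{\overline{(j,a)}}$, $\mathbf{s}_{n_1,n_2}$.

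First I would verify the commutativity of the ``bottom'' and ``top'' faces — the squares $(1,3,4,\text{middle arrow})$ on the complex side and $(5,7,8,\text{middle arrow})$ on the symplectic side. On the complex side this is essentially the statement that restricting a line bundle on $S_\tau(\mathbf{A})$ first to $D_\tau(m_0,k_0)$ and then to $\mathbb{C}^*\times E_\tau$ agrees with restricting first to $D_\tau(m_\infty,k_\infty)$ and then to $\mathbb{C}^*\times E_\tau$; this follows directly from the explicit change-of-coordinate formulas \eqref{cocwoo}, \eqref{map2}, \eqref{map3} together with Lemma \ref{triv}, and the identification of the restriction of $\L(f,g,e^{2\pi i(\tau\lambda+\theta)})$ with $\L(m,k,0,e^{2\pi i(\tau\lambda+\theta-\tau g)})$ recorded in Section \ref{csurfaces}. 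On the symplectic side, commutativity of arrows $5$, $7$, $8$ is exactly the composability of the localization functors established in Section \ref{inv} and Remark \ref{inversion}, where the intertwining by $\Psi$ accounts for the interchange of the two ends of $Y$; I would simply cite the pullback diagram of Remark \ref{inversion}.

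Next I would handle the two ``side'' squares $(2,6,9,10)$ and $(3,7,11,12)$ and the front square $(4,8,10,12)$, which say that the mirror functors intertwine restriction with localization. The square $(3,7,11,12)$ and the front square are precisely Theorem \ref{commute1} (the commutativity of $\Phi_{(m,k)}$ and $\Phi_{\W}$ with $i_{(m,k)}^*$ and $\iota_{(m,k)}$), so these are already proved. The genuinely new content is that the top-to-front mirror square commutes: that $\iota_{\mathbf{A}}\circ\Phi_{\mathbf{A}}$ agrees with $\Phi_{(m_0,k_0)}\circ(\text{arrow }1)$, i.e. that restricting a line bundle $\L(f,g,\xi)$ on $S_\tau(\mathbf{A})$ and then applying the partially wrapped mirror functor produces the same object and morphisms as applying $\Phi_{\mathbf{A}}$ and then localizing. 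On objects this is a direct computation: the Lagrangian $L_{(f,g,e^{2\pi i(\tau\lambda+\theta)})}$ of Section \ref{cmpct}, which has $\phi_t$-coordinate interpolating via $\rho_{-k_0/m_0}^{k_\infty/m_\infty}$, is $(m_0,k_0)$-type at the negative end, and its image under $\iota_{\mathbf{A}}$ (inclusion on objects) is visibly the Lagrangian $L_{(m_0,k_0,0,e^{2\pi i(\tau(\lambda-g)+\theta)})}$ mirror to $\overline{\L(m_0,k_0,0,e^{2\pi i(\tau(\lambda-g)+\theta)})}$, which matches the restriction computed on the complex side. On morphisms I would use the auxiliary wrapped Hamiltonians $H_i'$ and the Lagrangians $L_i^{\Wr}$ introduced in Section \ref{cmpct} (conditions (a)--(d) there) to show that the generators $\mathbf{s}_{n_1,n_2}^{H_0,H_1}$ are carried by the inclusion of Floer complexes $\iota_{i,j}$ of Proposition \ref{inclusion} to the generators $\overline{y}_{\overline{(j,a)}}$ labelling $\Hom$ in $H^0\F_\tau(m_0,k_0)$ — the key point being Remark \ref{gradedpiecewoo}, which identifies the topological grading $(j,N)$ of $\phi(z_1^{n_1}z_2^{n_2})$ with the pair determined by $A^{-1}((n_1,n_2)-(f,g))$, matching the indexing $\overline{(j,a)}$ of the sections $\overline{\sigma}_{\L,\overline{(j,a)}}$ under the restriction map \eqref{cocwoo}.

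The main obstacle, I expect, is the bookkeeping in this last step: reconciling the three different coordinate systems and indexing conventions — the $(n_1,n_2)\in\mathbb{N}^2$ monomial exponents on $S_\tau(\mathbf{A})$, the $\overline{(j,a)}\in\Z^2/(kd,-md)$ labelling on $D_\tau(m,k)$, and the topological $\Z^3/\Gamma$ grading on the Floer side — and checking that the action corrections $S_{\R^4}$, $S$ agree across the inclusion $\iota_{i,j}$ so that no spurious powers of $T$ (equivalently, of $q=e^{2\pi i\tau}$) are introduced. Because all of the relevant triangles contributing to $\mu^2$ have been shown (via Stokes and Lemma \ref{boundondiff}) to have vanishing action-corrected area in these settings, and because the perturbing Hamiltonians in Section \ref{cmpct} are chosen (condition (c)) to coincide near $[-1,1]$ with the wrapped Hamiltonians used to define $\F_\W$, this reconciliation should go through; but it requires care to state cleanly. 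Once the object-level and morphism-level matchings are in place, commutativity of the remaining composite faces of the cube follows formally by pasting the squares already established, using that a diagram of functors commutes as soon as it commutes on objects and on a generating set of morphisms.
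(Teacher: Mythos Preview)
Your overall strategy matches the paper's: reduce the cube to its six faces, dispatch the back and front faces (1,2,3,4) and (5,6,7,8) as immediate, cite Theorem \ref{commute1} (and its $\Psi$-conjugate) for the right and bottom faces (3,7,11,12) and (4,8,10,12), and then do an explicit object-and-morphism check for the remaining faces involving arrow 9. The technical ingredients you identify --- the auxiliary wrapped Lagrangians $L_i^{\Wr}$ and Hamiltonians $H_i'$ from Section \ref{cmpct}, Remark \ref{gradedpiecewoo} for matching the gradings, and Proposition \ref{inclusion} for the inclusion of Floer complexes --- are exactly the ones the paper uses.

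There are two points worth noting. First, a small bookkeeping slip: you list $(2,6,9,10)$ among the faces covered by Theorem \ref{commute1}, but that face involves arrow 9 (the compact mirror functor $\Phi_{\mathbf A}$), so it is just as ``new'' as $(1,5,9,11)$. Of course the two are exchanged by $\Psi$, so proving one gives the other; but you should say this explicitly. Second, and more substantively, the paper streamlines the remaining check by a reduction you do not mention: since arrows 3, 4, 7, 8 are \emph{embeddings} of categories (Propositions \ref{embedc} and \ref{embedf}), it suffices to verify commutativity after composing into $H^0\F_{\tau,\Wr}$, i.e.\ to check the single square
\[
\begin{tikzcd}
\Pic S_\tau(\mathbf A) \arrow[r,"\Phi_{\mathbf A}"] \arrow[d,"i_{\mathbf A}^*"'] & H^0\F_\tau(\mathbf A) \arrow[d] \\
\mathcal C_\tau \arrow[r,"\Phi_{\Wr}"] & H^0\F_{\tau,\Wr}.
\end{tikzcd}
\]
This is cleaner than working directly at the level of $\F_\tau(m_0,k_0)$, because the isomorphism $[L_i]\cong[L_i^{\Wr}]$ is most naturally exhibited in $\F_{\Wr}$: the Hamiltonian $H_i'-H_i+G_i$ flowing one to the other is only a type-$\Wr$ perturbation, not type-$(m_0,k_0)$. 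Your plan to work in $\F_\tau(m_0,k_0)$ would still go through (the deformation only wraps at the $+\infty$ end, which is permitted there), but the argument is more transparent after the reduction.
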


\begin{proof} The commutativity of the $(1,2,3,4)$ and $(5,6,7,8)$ squares is immediate; the commutativity of the $(2,6,9,10)$ and $(3,7,11,12)$ squares follows from Theorem \ref{commute1} and Remark \ref{inversion}. Thus it remains only to check that the $(2,6,9,10)$ and $(1,5,9,11)$ squares commute. Note that the maps $3,4,7,$ and $8$ are embeddings of categories; thus to show that these remaining squares commute it suffices to show that the diagram
\[ \begin{tikzcd}  &\Pic S_{\tau}\left(\mathbf{A}\right)\arrow{r}{\Phi_{\A}}\arrow{d}{i_{\A}^*} & H^0 \F_\tau(\A)\arrow{d}{\iota_{(m_{0},k_0)}  \circ \iota_{\A}}\\
&\mC_{\tau} \arrow{r}{\Phi_{\Wr}}  &H^0 \F_{\tau, \W}. \end{tikzcd}\]
commutes.

Let $\L_i= \L(\gamma_0)=\L(f_i, g_i, \xi_i)$ $i=0,1$, be line bundles in $\Pic S_\tau(\A)$. Let $\L = \L_0\inv \L_1=\L(f,g,\xi)$.  Let $L_i=L_{(\gamma_i)}$ denote the corresponding type-$\A$ Lagrangians. Recall that $\iota^*_{\A}(\L_i)=\L(m_0, k_0, 0, \xi e^{-2 \pi i g})$; let \[L_i^{\Wr}=L_{\left(m_i, k_i, 0, \xi_i e^{-2 \pi i g_i}\right)}.\] Let $H_0$ and $H_1$ be type-$\A$ Hamiltonians which satisfy the constraints above, and $H_0'$ and $H_1'$ type-$\W$ Hamiltonians with  $(\psi_{H_i}(L_i)) \cap ([-1,1]) = \left(\psi_{H_i'}\left(L_i^\Wr\right)\right) \cap ([-1,1])$. Let $G_i: \R \times S^1$ be small functions with  support within $[-1,1]\times S^1$ such that the functions $H_i'+G_i$ and $H_i ' - H_i+G_i$  are smooth and such that $\psi_{H_i ' - H_i+G_i}\left(\psi_{H_i}\left(L_i\right)\right)$ and $\psi_{H_i}\left(L_i\right)$ intersect transversely; note that this implies that $ H_i'+G_i$ and $H_i ' - H_i+G_i$ are type-$\W$ perturbing Hamiltonians.  Then note that 
\[ \psi_{H_i ' - H_i+G_i}\left(\psi_{H_i}\left(L_i\right)\right)=\psi_{H_i'+G_i}\left(L_i^{\Wr}\right);\] thus $L_i$ and $L_i^{\Wr}$ represent the same object in $H^0 \F_{\Wr}$. 
We conclude that on objects, the diagram reads
\[ \begin{tikzcd} & \L_i \arrow[mapsto]{r} \arrow[mapsto]{d} & \left[L_i \right]\arrow[mapsto]{d}\\
& i_{\A}^*\L_i \arrow[mapsto]{r} & \left[L_{i}\right]. \end{tikzcd}\]

Now let $z_1^{n_1} z_2^{n_2} \in \Ext^{i}(\L_0, \L_1)\cong H^0(S_{\tau}(\A), \L)$. Then there exists $N \in \N$ such that $\xi e^{-2 \pi i g}=e^{2 \pi i \tau N}$. Using the change of coordinates formula of Equation \eqref{cocwoo}, we know that  \[ i^*_{\A}(z_1^{n_1} z_2^{n_2})=z^{(k_{\infty} n_1 - k_0 n_2)} x^N=\sigma_{i^*_A(\L),\overline{(j,N)}}.\]

 Let $[s_{n_1, n_2}] \in \Hom_{H^0 \F_\tau(\A)}$ be represented by $\mathbf{s}_{n_1, n_2}^{H_0, H_1} \in CF_{\tau}^0\left(\psi_{H_0}\left(L_0\right), \psi_{H_1}\left(L_1\right)\right)$. By Remark \ref{gradedpiecewoo}, we know that \[\mathbf{s}_{n_1, n_2}^{H_0, H_1} \in CF_{\tau}^0\left(\psi_{H_0}\left(L_0\right), \psi_{H_1}\left(L_1\right)\right)_{\overline{(j, N)}}.\] Using the same methods as in the proof of Lemma \ref{inclusion}, we can show that 
\[ \mu^2\left(c_{H_i \to H_i ' - H_i+G_i},\mathbf{s}_{n_1, n_2}^{H_0, H_1}\right) = \mathbf{y}_{\overline{(j,N)}}^{H_0, H_1}, \]
 as both are the action-corrected generators of degree zero in the $\overline{(j,N)}$ graded piece of the Floer complex. Thus on morphisms, the diagram reads
\[ \begin{tikzcd} & z_1^{n_1} z_2^{n_2} \arrow[mapsto]{d}\arrow[mapsto]{r} & \left[s_{n_1, n_2}\right]\arrow[mapsto]{d}\\
&\sigma_{\L, \overline{(j,N)}} \arrow[mapsto]{r} & \left[y_{L_0, L_1, \overline{(j,N)}}\right]. \end{tikzcd}\]
				
We conclude that the diagram in the statement of the theorem commutes.

\end{proof}

\appendix
\section{Lagrangians mirror to torsion sheaves}\label{appendix}
The heuristics that guide our mirror construction can also be used to find Lagrangians mirror to sheaves which are supported on finitely many elliptic fibers on the Hopf surfaces we consider. These novel Lagrangians are objects of an enlargement of the Fukaya category constructed in the main section of this paper.

Recall that the mirror space $Y$ is the product of the SYZ mirrors to $\C^*$ and the elliptic curve, and decompose $Y$ as \[ Y= (\R \times S^1, d t \wedge d \phi_t) \times  (S^1 \times S^1, d s \wedge d \phi_s).\] Consider a Fukaya category consisting of Lagrangians equipped with higher rank bundles with flat connections, like those in the work of Polischuk and Zaslow on mirror symmetry for the elliptic curve  (\hspace{1sp}\cite{PZ}). We expect that the product decomposition of $Y$ commutes with the mirror functor in the following sense: Given a pair  $(L_1, V_1, \nabla_1)$ and $(L_2, V_2, \nabla_2)$ of Lagrangians equipped with vector bundles with flat connections, such that $L_1$ is a Lagrangian submanifold $T^* S^1$ and $L_2$ is a Lagrangian submanifold of $T^2$, we can form a product Lagrangian $(L_1 \times L_2, V_1 \boxtimes V_2, \nabla_1 + \nabla_2)$ in $Y$. We expect that given two coherent sheaves $\F$ and $\G$ on $\C^*$ and $E$ respectively, with mirror Lagrangians $L_{\F}$ and $L_{\G}$, the sheaf $\F \boxtimes \G$ on $\C^* \times E$ is mirror to the Lagrangian $L_{\F} \times L_{\G}$.  This is made precise in the remainder of this section.

\begin{remark}[Notation] Let $E^{\tau}$ denote the torus with complex K\"ahler parameter $\tau$.
Let $CF^*_{E^{\tau}}(L, L')$ denote the Floer complexes over $\C$ associated to an elliptic curve with complex K\"{a}hler parameter $\tau$ that appear in \cite{PZ}. We use ``Lagrangian" to mean the data of a Lagrangian equipped with a vector bundle and a flat connection. Let $\F(E^{\tau})$ denote the Fukaya category of such Lagrangians, as in \cite{PZ}. Let $\Phi_{E_{\tau}}$ denote the mirror functor of  \cite{PZ} that interchanges coherent sheaves on $E_{\tau}$ with Lagrangians on $E^{\tau}$, with the sign conventions reversed (see Footnote \ref{foot}). \end{remark} 

%On both sides we have Kunneth formulae: given sheaves $\F \boxtimes \G$  $\F' \boxtimes \G' $ and mirror Lagrangians $L_{\F}$,  $L_{\G}$
%\[ \Hom 
\subsubsection{Sheaves supported on interior fibers}
Let $z= \exp(R_0+2 \pi i \theta_0) \in \C^*$. The SYZ mirror to the skyscraper sheaf $\O_z$ is the Lagrangian \[ L_z = \left(\{ (R_0, \phi_t) \mid \phi_t \in S^1\}, V_0, \mathrm{d}+ 2 \pi i \theta_0 \mathrm{d} \phi_t \right)\] consisting of the radial circle with radius $R_0$ and the flat connection with monodromy $e^{2 \pi i \theta_0}$ on the trivial rank-one vector bundle $V_0$. Let $\F$ be a coherent sheaf on $E_{\tau}$ and let $\Phi_{E_{\tau}}(\F)=\L_F$. Let $i_z: E_{\tau} \to \C^* \times E_{\tau}$ denote the inclusion map which sends $E_{\tau}$ to $\{z\} \times E_{\tau}$. Then 
\[ i_{z*}(\F) = \O_{z} \boxtimes \F \]
so we expect that $i_{z*} \F $ is mirror to $L_z \times L_{\F}$. Let $L_z \times L_{\F} = I_z(L_{\F})$.

%This would imply that for another mirror pair $(L_G, \G)$ consisting of a Lagrangian and a mirror coherent sheaf,  $\Ext^0(\F, \G) = HF^0 (L_{\F}, L_{\G})$. We now verify this prediction for the line bundles considered in the body of this paper.
 We provide partial verification of this expectation with the following proposition:
 \begin{prop}\label{resiso} Let $\G$ be a line bundle on on a surface $S_{\tau}(\A)$ mirror to a Lagrangian $L_{\G}$. Then for all such $z \in \C^*$ and $\F \in \Coh (E_{\tau})$, there is an isomorphism
 \[ \Ext^*(i_{z*} \F, \G) \cong HF_{\tau}^*( I_z(L_{\F}), L_{\G}). \] 
  We allow the case where $S_{\tau}( \A)$ is algebraic. \end{prop}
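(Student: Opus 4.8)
The plan is to exhibit matching K\"unneth decompositions of the two sides. On the coherent side, start from the observation that $i_{z*}\F$ is supported on the smooth fiber $F_z=\{z\}\times E_\tau$, which is compact and contained in $\C^*\times E_\tau = S_\tau(\A)\setminus D_\A$; hence $R\mathcal{H}om(i_{z*}\F,\G)$ is supported on $F_z$ and the global $\Ext$ groups may equally be computed on the open surface, where $\G$ restricts to the line bundles studied in Sections \ref{opsurf} and \ref{csurfaces}. The closed embedding $\iota\colon E_\tau\xrightarrow{\ \sim\ }F_z\hookrightarrow S_\tau(\A)$ has codimension one with trivial normal bundle ($F_z$ is a smooth fiber of the elliptic fibration $\pi$, so $N_{F_z/S}\cong \pi^*(T_z\P^1)\otimes\O_{F_z}$ is trivial), so Grothendieck duality for the embedding gives $\iota^{!}\G\cong \iota^*\G[-1]$ and therefore a natural isomorphism $\Ext^{k}_{S_\tau(\A)}(i_{z*}\F,\G)\cong \Ext^{k-1}_{E_\tau}(\F,\G|_{F_z})$. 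The restriction $\G|_{F_z}$ is then identified with an explicit line bundle $\mathcal{L}_E$ on $E_\tau$ using the restriction maps computed in Section \ref{csurfaces} together with Equation \eqref{fiber-wise}; in the non-algebraic case $\mathcal{L}_E$ has degree zero and is independent of $z$, and the same formulas cover the algebraic case as well.

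On the Floer side, the key point is that both Lagrangians are product Lagrangians for the splitting $Y=\big(\R_t\times S^1_{\phi_t},\,dt\wedge d\phi_t\big)\times\big(S^1_s\times S^1_{\phi_s},\,ds\wedge d\phi_s\big)$. By construction $I_z(L_\F)=L_z\times L_\F$, with $L_z\subset \R_t\times S^1_{\phi_t}$ the radial circle carrying monodromy $e^{2\pi i\theta_0}$ and $L_\F=\Phi_{E_\tau}(\F)$ in the $(s,\phi_s)$--torus; and, inspecting the formula for $L_{(f,g,e^{2\pi i(\lambda\tau+\theta)})}$ in Section \ref{cmpct}, the mirror of a line bundle on $S_\tau(\A)$ also splits as $L_\G=L_\G^{(1)}\times L_\G^{(2)}$, where $L_\G^{(1)}=\big(\{t\mapsto -\lambda\rho(t)+f\},\,d\big)$ is a section over $\R_t$ and $L_\G^{(2)}=\big(\{s\mapsto -\lambda+g\},\,d+2\pi i\theta\,ds\big)$ lies in the torus factor. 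Since $L_z$ is compactly supported in the $t$--direction, $I_z(L_\F)$ satisfies the type-$\A$ boundary conditions trivially; choosing an almost complex structure respecting the product decomposition, so that the relevant holomorphic strips and triangles split as products, a K\"unneth argument yields
\[ HF^*_\tau\big(I_z(L_\F),\,L_\G\big)\;\cong\;HF^*_\tau\big(L_z,\,L_\G^{(1)}\big)\otimes HF^*_{E^\tau}\big(L_\F,\,L_\G^{(2)}\big), \]
with the grading additive.

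It remains to identify the two tensor factors. The factor $HF^*_{E^\tau}(L_\F,L_\G^{(2)})$ is computed by the Polishchuk--Zaslow theorem \cite{PZ}: it is $\Ext^*_{E_\tau}(\F,\mathcal{L}_E)$ once one checks that $\Phi_{E_\tau}^{-1}(L_\G^{(2)})=\mathcal{L}_E=\G|_{F_z}$, which follows from the compatibility of the normalizations of Section \ref{mscalcs} with the conventions of \cite{PZ} (the Lagrangian $L_\G^{(2)}$ is precisely the Polishchuk--Zaslow mirror of $\L_\tau(m,k,0,\cdot)|_{E_\tau}$, up to the reversed sign convention of Footnote~\ref{foot}). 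The factor $HF^*_\tau(L_z,L_\G^{(1)})$ is one-dimensional: $L_z$ (a fiber of the projection $\R_t\times S^1_{\phi_t}\to\R_t$) and $L_\G^{(1)}$ (a section of that projection) meet in a single transverse point, and this point lies in degree $1$ --- either by direct inspection of the gradings determined by the complex volume form, or, more cheaply, by Floer Poincar\'e duality $HF^*_\tau(L_z,L_\G^{(1)})\cong HF^{1-*}_\tau(L_\G^{(1)},L_z)^{\vee}$ combined with the elementary computation $HF^*_\tau(L_\G^{(1)},L_z)=\C$ in degree $0$. This matches $\Ext^*_{\C^*}(\O_z,\O)=\C[-1]$, so the K\"unneth shift reproduces the Grothendieck--duality shift, and combining the two sides gives $HF^k_\tau(I_z(L_\F),L_\G)\cong \Ext^{k-1}_{E_\tau}(\F,\mathcal{L}_E)\cong \Ext^k_{S_\tau(\A)}(i_{z*}\F,\G)$.

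The main obstacle is making the K\"unneth isomorphism for the Floer complexes rigorous inside the localized, completed framework of this paper rather than in the standard exact setting: one must verify that the product almost complex structure forces the curves counted in $\mu^1$ and $\mu^2$ to split as products, and that the completions of the two factor complexes are compatible with the algebraic tensor product, so that convergence on the product follows from convergence on the factors. A secondary point requiring care is the precise normalization matching $\G|_{F_z}$ with the Polishchuk--Zaslow mirror of $L_\G^{(2)}$, together with the reduction of a general $\F\in\Coh(E_\tau)$ to the indecomposable line-bundle and torsion cases treated in \cite{PZ}, which is harmless since $i_{z*}$, the K\"unneth isomorphism, and $\Ext$ are all additive.
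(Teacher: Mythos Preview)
Your K\"unneth strategy is genuinely different from the paper's argument, and in the non-algebraic case it is correct and in fact somewhat cleaner: the type-$\A$ Lagrangians $L_{(f,g,e^{2\pi i(\lambda\tau+\theta)})}$ of Section~\ref{cmpct} do split as products in the $(t,\phi_t)\times(s,\phi_s)$ decomposition of $Y$, so your Grothendieck-duality reduction on the coherent side matches a product-of-Floer-complexes reduction on the symplectic side, with the degree shift coming from the $(L_z,L_\G^{(1)})$ factor. The paper instead projects via $f\colon Y\to\R\times S^1$, $y\mapsto(t(y),\,m_0\phi_t(y)+k_0\phi_s(y))$, identifies the single fiber $Y_p$ with the torus $E^{m_0\tau}$ rather than $E^\tau$, and invokes Lemma~\ref{PZadjunction} (Polishchuk--Zaslow compatibility with the isogeny $\pi_{m_0}$ and the cover $\Pi_{m_0}$) before appealing to the $(\pi_{m_0*},\pi_{m_0}^*)$ adjunction.

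There is, however, a genuine gap in your proposal for the algebraic case, which the proposition explicitly includes. When $S_\tau(\A)$ is algebraic (so $\A=((m,k),(m,-k))$), a line bundle $\G$ may have nonzero fiber degree $m_0 d$; its mirror is then the Lagrangian $L_{(m_0,k_0,d,\xi)}$ of Section~\ref{mscalcs}, for which
\[
\phi_t=-\tfrac{dk_0^2}{m_0}\,t+dk_0\,s+\text{const},\qquad \phi_s=dk_0\,t-dm_0\,s+\text{const}.
\]
For $d\neq 0$ this is \emph{not} a product in the $(t,\phi_t)\times(s,\phi_s)$ splitting (the $\phi_s$-coordinate depends on $t$ and the $\phi_t$-coordinate on $s$), so your K\"unneth step fails. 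This is precisely why the paper uses the twisted projection $f=(t,\,m_0\phi_t+k_0\phi_s)$: one checks that $m_0\phi_t+k_0\phi_s$ is constant along $L_{(m_0,k_0,d,\xi)}$, so $f(L_\G)$ is still a section-like curve meeting $f(I_z(L_\F))=\{R_0\}\times S^1$ in a single point, and the fiberwise computation then lands on $E^{m_0\tau}$ where $\G_0=i_0^*\G$ is a degree-$d$ bundle. The isogeny lemma (Lemma~\ref{PZadjunction}) and the $(\pi_{m_0*},\pi_{m_0}^*)$ adjunction are exactly what convert this back to the $E_\tau$ statement you want. Your argument could be repaired by replacing the naive product splitting with this twisted fibration, at which point it becomes essentially the paper's proof.
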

 
 We need the following lemma:
 \begin{lemma}[\hspace{1pt}\cite{PZ}]\label{PZadjunction}\begin{enumerate}[(a)]
 \item
 Let $\pi_m: E_{\tau} \to E_{m \tau}$ denote the isogeny of elliptic curves, and let $\Pi_{m}: E^{m \tau} \to E^{\tau}$ denote the $m$-fold covering of tori which sends $(x, \phi_x)$ to $(x, m \phi_x)$. The induced functor $\Pi_{m}^*: \F(E^{\tau}) \to \F(E^{m \tau})$ that sends a Lagrangian to its preimage satisfies  
 \[ \Phi_{m \tau} \circ \pi_{m*} = \Pi_m^* \circ \Phi_{\tau}. \]
 % \[ \Phi_{m \tau} \circ \pi_{m^*} = \Pi_{m*} \circ \Phi_{\tau} \] 
 %satisfies \[ \Phi_{E_{m \tau}} \circ \pi_{m}^* = \Pi_m^* \circ \Phi_{E_{\tau}}.\] 
 \item  Write $E^{\tau}$ as $(S^1)^2$ with coordinates $(x, \phi_x)$.  Let $T^B_{e^{2 \pi i (\tau \eta + \nu )}}$ denote the translation functor on the Fukaya category of $E^{\tau}$: \[(L, V, \nabla) \mapsto (L+ \eta \hat{e}_x, V, \nabla + 2 \pi i \nu \Id_V d \phi_x).\] Let $T^A_{\xi}$ denote the translation functor on  $E_{\tau}$ given by $z \mapsto \xi z$. Then \[T^B_{\xi} \circ \Phi_{E_{\tau}} = \Phi_{E_{\tau}} \circ T^{A*}_{\xi}.\] \end{enumerate} 
 \end{lemma}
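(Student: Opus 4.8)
The statement to prove is Lemma \ref{PZadjunction}, an adjunction-type compatibility between the mirror functor $\Phi_{E_\tau}$ of \cite{PZ} and two natural operations: pulling back along isogenies/coverings in part (a), and translation in part (b). Since this is a statement purely about mirror symmetry for the elliptic curve, the plan is to cite and reassemble the explicit descriptions of $\Phi_{E_\tau}$ from \cite{PZ} rather than to reprove anything from scratch; the content is in matching up how each side transforms under the given functors.

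For part (a), I would first recall that $\Phi_{E_\tau}$ sends a line bundle $\mathcal{L}$ of degree $d$ with a translation/holonomy parameter to an explicit Lagrangian $L$ in $E^\tau = (S^1)^2$ of slope $d$ (a straight line hitting a fiber in $d$ points) equipped with a flat connection determined by the parameter. The isogeny $\pi_m: E_\tau \to E_{m\tau}$ acts on line bundles by pushforward: a degree-$d$ bundle on $E_\tau$ pushes to a degree-$d$ bundle on $E_{m\tau}$, but because $\pi_m$ multiplies the modular parameter by $m$, the ``slope'' as measured on the target torus $E^{m\tau}$ is rescaled. On the symplectic side, $\Pi_m: E^{m\tau} \to E^\tau$ is the $m$-fold cover in the $\phi_x$ direction, and $\Pi_m^*$ sends a line of slope $d$ on $E^\tau$ to a curve on $E^{m\tau}$; one checks on the level of the underlying $\mathbb{Z}$-module of homology classes and the holonomy data that these two operations produce the same Lagrangian-with-connection. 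The cleanest route is to verify the identity on the generating line bundles $\mathcal{L}_{m\tau,0}$ and translates thereof, where both $\pi_{m*}$ and $\Pi_m^*$ have completely explicit formulas (in terms of the theta-function presentations of sections in \cite{PZ} on the complex side, and straight lines on the symplectic side), and then invoke that both functors are additive/exact and that $\Pic$ is generated by such objects, so agreement on objects plus compatibility with morphism spaces (which follows from the $A_\infty$-functoriality of $\Phi_{E_\tau}$ already established in \cite{PZ}) gives the natural isomorphism of functors.

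For part (b), the verification is more elementary: $T^A_\xi$ on $E_\tau$ is translation by $\xi$, which on degree-$0$ bundles corresponds to moving the point of $\mathrm{Pic}^0$, and on higher-degree bundles corresponds to translating the divisor. Under the Polishchuk--Zaslow dictionary, translating by $\xi = e^{2\pi i(\tau\eta+\nu)}$ in the complex coordinate corresponds precisely to translating the mirror Lagrangian by $\eta\hat{e}_x$ and twisting its flat connection by $2\pi i \nu\, d\phi_x$ — this is essentially built into the labelling conventions of \cite{PZ}, so the identity $T^B_\xi \circ \Phi_{E_\tau} = \Phi_{E_\tau}\circ T^{A*}_\xi$ is a direct unwinding of definitions (with the sign conventions of Footnote \ref{foot} tracked throughout). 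I would state this by comparing the effect of $T^{A*}_\xi$ on the labels $(m,k,d,\xi')$ of sections with the effect of $T^B_\xi$ on the corresponding Lagrangian data, using Equation \eqref{fiber-wise} and the section identifications \eqref{sectionid}.

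The main obstacle I anticipate is bookkeeping rather than conceptual: getting every normalization right — the factor of $m$ in $E^{m\tau}$ versus $E_{m\tau}$, the $\pi i\tau$ shifts appearing in $\mathcal{L}_{m\tau,0}$, the reversed sign convention of Footnote \ref{foot}, and ensuring that ``translation functor'' on the two sides refers to compatible conventions — so that the composite functors agree on the nose (as $A_\infty$-functors up to the specified natural isomorphism) and not merely up to an unspecified autoequivalence. I would handle this by fixing all conventions at the outset by pinning down the images of $\mathcal{L}_{m\tau,0}$, $T^*_{\xi^{-1}}\mathcal{L}_{m\tau,0}$, and $\mathcal{O}$ explicitly, checking the two identities on these, and then extending by the functoriality results of \cite{PZ}.
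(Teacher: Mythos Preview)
Your approach is correct but overworked relative to the paper. The paper does not prove this lemma at all: it simply records that part (a) is adapted from Proposition~4 of \cite{PZ} (with $\pi_m$ the dual isogeny to the one appearing there), and that part (b) is evident from the explicit mirror correspondence in Section~5.1 of \cite{PZ}. Your sketch --- verify on generating line bundles via the explicit theta-function and slope descriptions, then extend by functoriality --- is essentially a summary of what happens inside \cite{PZ}, so it is not wrong, but here the intended ``proof'' is a one-line citation rather than a re-derivation.
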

Part (a) is adapted from Proposition 4 of \cite{PZ} (with $\pi_m$ denoting the dual isogeny to that which appears there); part (b) is evident from the mirror correspondence in Section 5.1 of the same paper.
 
 \begin{proof}[Proof of Proposition \ref{resiso}] By Corollary \ref{extension3}, we can write $i_{\A}^*\G= \L_{\tau}(m_0,k_0,d,\xi)$ for some $d \in \Z$, $\xi \in \C$. Let $i_z^*\G= \G_z \in \Pic^{dm_0}_{E_{\tau}}$, and note  \[ \G_z = i_{z}^* \L_{\tau}(m_0,k_0,d,\xi) =  T_{\xi\inv z^{k d}}^*\left( \L_{\tau,0} \right)\cdot \L_{\tau,0}^{m_0 d-1} .\] Let $\G_0=i_0^*\G \in \Pic^{d}_{E_{m_0 \tau}}$, and note that $\pi_{m_0}^*(T^{B*}_{z^{k_0/m_0}} \G_0)=\G_{z}$.
 
 Consider the map $f: Y \to \R \times S^1$ given by $y \mapsto (t(y), m_0 \phi_t(y) + k_0 \phi_s(y))$; note that 
 \begin{align*} f \left( L_{\G}\right)&= \gamma; \\
 f \left( I_z(L_{\F})\right) &= \{R_0\} \times S^1,
\end{align*}
where $\gamma \subset \R \times S^1$ is a curve which intersects the circle $f \left( I_z(L_{\F})\right)$ at one point $p$. Let $Y_p = f\inv(p)$. The map  $\wt{Y_{p}} \to (\R/\Z)\times (\R/m_0 \Z)$  defined by $y \mapsto (s(y), \phi_s(y))$ descends to a symplectomorphism \[(Y_{p}, \omega|_{Y_p}) \to ((\R/\Z)\times (\R/m_0\Z), d x \wedge d \phi_x).\] 
%Let $(a,b) \in \Z^2$ be such that $-k_0 a + m_0 b =1$; then there is a symplectomorphism \[F: \left(Y_{(R, 0)}, \omega|_{Y_{(R, 0)}}\right) \to ((S^1)^2, m_0 d x \wedge d \theta)\] defined by 
%\[ F(y)= (s(y), a \phi_t(y)+ b \phi_s(y)).\] %
Then one can verify the following identities for the intersections of the Lagrangians with $Y_p$: 
\begin{align*} I_z(L_{\F}) \cap Y_{p} &=  T_{e^{-2 \pi k i \theta_0}} \circ \Pi_{m_0}^* (\Phi_{E_{\tau}}(\F));\\
L_{\G} \cap Y_p &= T_{e^{-2 \pi k i \theta_0}} \circ \Phi_{E_{m_0 \tau}}(T^{B*}_{z^{k_0/m_0}} \G_0).
   \end{align*}
%There exists a type-$\A$ perturbing Hamiltonian such that \[ f\left(\psi_{H}\left(L_{\G} \cap \{ t \in [R-1, R+1] \} \right)\right) = [R-1, R+1] \times \{0\}.\] Let $\L_{G}'=\psi_{H}\left(L_{\G} \right).$Observe that $\L_{G}'$ and $ I_z(L_{\F})$ intersect only in the fiber $f\inv(0)$. 
By choosing an almost-complex structure $\J$ for which $f$ is holomorphic, we can guarantee that all holomorphic curves contributing to the differential on the Floer complex are contained in the fiber $Y_{p}$. Then there is a canonical isomorphism
\[ CF^*_{\tau}(I_z(L_{\F}), \L_{G}) \cong CF^*_{E^{m_0 \tau}}(I_z(L_{\F}) \cap Y_p,  \L_{G} \cap Y_p). \]
Then the Hom space is given by 
\begin{align*}  HF^*_{E^{m_0 \tau}}(I_z(L_{\F}) \cap Y_p,  \L_{G} \cap Y_p) &\cong HF^*_{E^{m_0 \tau}} \left(  T_{e^{-2 \pi k_0 i \theta_0}} \circ \Pi_{m_0}^* (\Phi_{E_{\tau}}(\F)),  T_{e^{-2 \pi k_0 i \theta_0}} \circ \Phi_{E_{m_0 \tau}}( T^{B*}_{z^{k_0/m_0}} \G_0)\right)\\
 & \cong HF^*_{ E^{m_0 \tau}} \left( \Pi_{m_0}^* (\Phi_{E_{\tau}}(\F)), \Phi_{E_{m_0 \tau}} \left(T^{B*}_{z^{k_0/m_0}} \G_0\right) \right) \\
& \cong \Ext^*(\pi_{m_0*} \F, T^{B*}_{z^{k_0/m_0}} \G_0 ) \\
& \cong \Ext^*(\F, \G_z) \\
& \cong \Ext^*(i_{z*} \F, \G). \end{align*}
The second isomorphism is the mirror functor and Lemma \ref{PZadjunction}; the third follows from the $(\pi_{m_0^*}, \pi_{m_0}^*)$ adjunction.
\end{proof}

\subsubsection{Sheaves supported on boundary fibers}We now consider sheaves supported on the boundary fiber $\pi_{\A} \inv(0)$. Recall that $\pi_{\A} \inv(0)$ is the genus-one curve $E_{m_{0} \tau}$ isogenous to the interior fibers $E_{\tau}$. Let $\F$ be a coherent sheaf on $E_{m_{0} \tau}$, and consider the sheaf $i_{0*} \F$. We claim that $i_{0*} \F$ is mirror to a ``U-shaped" Lagrangian constructed in the following way: Let $f: Y \to \R \times S^1$ be the map given above. Choose a U-shaped arc $\gamma \in \R \times S^1$ which extends to infinity in the negative direction as pictured in Figure \ref{ushaped}, with the ends of the arc asymptotic to the line $m_0 \phi_t + k_0 \phi_s=0$. For a point $p \in \gamma$, parametrize $f\inv(p)=Y_p$ using coordinates adapted to the $(m_0,k_0)$ boundary conditions: map $\wt{Y}_p \to (\R/\Z) \times (\R/m_0 \Z)$ by \[y \mapsto (s - (k_0/m_0) t, \phi_s).\] Let $L_\F = \Phi_{E_{m_0 \tau}}(\F)$ be mirror to $\F$, and construct a Lagrangian $I_0(L_{\F})$ in $Y$ which fibers over $\gamma$ and has the property that $I_0(L_{\F}) \cap Y_{p}= L_{\F}$ by symplectic parallel transport. In the chosen coordinates, the parallel transport is the identity map. 

\begin{remark} The Lagrangians of the form  $I_{0}(L_\F)$, and their cousins of the form $I_{\infty}(L_\F)$, are objects of a Fukaya-Seidel category consisting of Lagrangians that, at each end, fiber over curves  that asymptotically approach the lines $m_0 \phi_t + k_0 \phi_s=0$ and $m_{\infty} \phi_t - k_{\infty} \phi_s=0$. The morphisms in this proposed category are computed via the same procedure as in the main text, and in particular the Floer complex $CF^*(\psi_{H}(L), L')$ for $H: Y \to \R$ a type-$\A$ Hamiltonian should be quasi-isomorphic to the Hom space between two Lagrangians $L$ and $L'$; this motivates the following proposition. \end{remark}
 
\begin{prop}\label{resiso} Let $\G$ be a line bundle on on a surface $S_{\tau}(\A)$ mirror to a Lagrangian $L_{\G}$. Then for all $\F \in \Coh \, E_{m_0 \tau}$, there is an isomorphism
 \[ \Ext^*(i_{0*} \F, \G) \cong HF_{\tau} ^*(\psi_{H}(I_0(L_{\F})), L_{\G}) \] 
 for $H$ a type-$\A$ Hamiltonian such that  $\psi_{H}(I_0(L_{\F}))$ and $L_{\G}$ intersect transversely.  \end{prop}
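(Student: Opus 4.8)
The plan is to run the same fiberwise reduction as in the proof of Proposition~\ref{resiso} for interior fibers, but now using the \emph{partially wrapped} boundary conditions to match the non-compact end of $I_0(L_\F)$ with the fact that $i_{0*}\F$ lives on the reduced central fiber, which has divisor class $\tfrac{1}{m_0}[F]$. First I would set up the projection $f\colon Y\to \R\times S^1$, $y\mapsto(t(y),\,m_0\phi_t(y)+k_0\phi_s(y))$, and choose the perturbing type-$\A$ Hamiltonian $H$ and the U-shaped arc $\gamma=f(\psi_H(I_0(L_\F)))$ so that $\gamma$ meets $f(L_\G)=\gamma_{L_\G}$ transversely in a finite set of points, and so that an almost complex structure $\J$ can be chosen for which $f$ is $\J$-holomorphic; by the maximum principle (as in Lemma~\ref{RIE1}) every holomorphic disc contributing to $\mu^1$ or $\mu^2$ on $CF^*_\tau(\psi_H(I_0(L_\F)),L_\G)$ is then confined to a single fiber $Y_p=f^{-1}(p)$, $p\in \gamma\cap\gamma_{L_\G}$. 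Since $\gamma$ is a U-shaped arc that goes off to $t=-\infty$ at both ends along the stop line $m_0\phi_t+k_0\phi_s=0$, I expect (after a further small perturbation of $\gamma_{L_\G}$ inside its isotopy class, or equivalently a further wrapping by a compactly supported Hamiltonian as in Lemma~\ref{comph}) that there is exactly one such intersection point $p$, so that we get a canonical isomorphism of complexes
\[
CF^*_\tau\bigl(\psi_H(I_0(L_\F)),L_\G\bigr)\;\cong\;CF^*_{E^{m_0\tau}}\bigl(\psi_H(I_0(L_\F))\cap Y_p,\ L_\G\cap Y_p\bigr),
\]
where $Y_p$ is identified symplectically with $\bigl((\R/\Z)\times(\R/m_0\Z),\,dw\wedge d\phi_s\bigr)$ via $y\mapsto(s-(k_0/m_0)t,\phi_s)$, i.e.\ with the torus $E^{m_0\tau}$.

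Next I would identify the two Lagrangian slices in $E^{m_0\tau}$. By construction of $I_0(L_\F)$ via symplectic parallel transport (which is the identity in the adapted coordinates), $\psi_H(I_0(L_\F))\cap Y_p = L_\F = \Phi_{E_{m_0\tau}}(\F)$, possibly shifted by a translation by an element of $\tfrac1{m_0}\Z$ in the $w$-coordinate coming from the precise choice of $p$. For the other slice, I would use the explicit form of $L_\G$ for $\G\in\Pic\,S_\tau(\A)$ given in Section~\ref{cmpct} together with the restriction computation: writing $i_\A^*\G=\L_\tau(m_0,k_0,0,\xi e^{-2\pi i g})$ (from Corollary~\ref{extension3} and the change of coordinates \eqref{cocwoo}), the central-fiber restriction $i_0^*\G=\G_0\in\Pic^{0}E_{m_0\tau}$ is of the form $T^{B*}_\chi\G_0$ for an appropriate $\chi$, and the slice $L_\G\cap Y_p$ equals $\Phi_{E_{m_0\tau}}(T^{B*}_\chi\G_0)$ up to the same translation. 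The translations on the two sides cancel just as in the interior-fiber case, using Lemma~\ref{PZadjunction}(b), so that the fiberwise Floer cohomology computes $HF^*_{E^{m_0\tau}}(\Phi_{E_{m_0\tau}}(\F),\Phi_{E_{m_0\tau}}(\G_0))$. Applying the Polishchuk--Zaslow mirror equivalence $\Phi_{E_{m_0\tau}}$ (\cite{PZ}) gives $\Ext^*_{E_{m_0\tau}}(\F,\G_0)$, and the projection formula / the isomorphism $\Ext^*_{E_{m_0\tau}}(\F,i_0^*\G)\cong\Ext^*_{S_\tau(\A)}(i_{0*}\F,\G)$ (which holds because $i_0$ is a closed immersion of a smooth divisor and $\G$ is locally free, so $Li_0^*\G=i_0^*\G$) yields the desired $\Ext^*(i_{0*}\F,\G)$.

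The main obstacle I anticipate is the control of the wrapping at the stopped end: unlike the interior-fiber case where $L_z$ is a compact circle and $f(L_z)$ is a closed curve meeting $\gamma_{L_\G}$ transversely in a controlled way, here $\gamma=f(\psi_H(I_0(L_\F)))$ is a non-compact U-shaped arc asymptotic to the stop line, and I must verify (i) that after the type-$\A$ perturbation $H$ the intersection $\gamma\cap\gamma_{L_\G}$ is indeed finite and, more importantly, that the count is \emph{stable} and reduces to a single geometric fiber — this requires checking that no extra intersection points are created near $t=-\infty$ by the perturbation, which should follow from the disjointness statements for type-$\A$ Lagrangians at the ends (the analogue of the proposition that $\psi_{H_0}(L_0)$ and $\psi_{H_1}(L_1)$ are disjoint outside a compact set once $\epsilon_\pm$ are chosen in the allowed range); and (ii) that the almost complex structure making $f$ holomorphic is admissible (eventually constant, regular) so that the fiberwise confinement argument of Lemma~\ref{RIE1} applies verbatim. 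A secondary, more bookkeeping-level point is to pin down the exact translation factors $\chi$ and the $\tfrac1{m_0}\Z$-shift so that they genuinely cancel; this is the same kind of computation as in the proof of Theorem~\ref{firstthm} and Remark~\ref{gradedpiecewoo}, and I expect it to go through by the same methods, reversing the sign conventions relative to \cite{PZ} as in Footnote~\ref{foot}.
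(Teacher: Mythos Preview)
Your proposal is correct and follows essentially the same route as the paper: project via $f(y)=(t(y),\,m_0\phi_t(y)+k_0\phi_s(y))$, arrange that $\psi_H(\gamma)$ and $f(L_\G)$ meet in a single point $p$, confine all discs to the fiber $Y_p\cong E^{m_0\tau}$, identify the slices as $L_\F$ and $\Phi_{E_{m_0\tau}}(\G_0)$, apply Polishchuk--Zaslow, and finish with the $(i_{0*},i_0^*)$ adjunction. The paper's proof is much terser than yours: it simply asserts that after a compactly supported Hamiltonian isotopy the intersection is a single point and the slices are $L_\F$ and $\Phi_{E_{m_0\tau}}(\G_0)$ on the nose, without the translation bookkeeping you carry over from the interior-fiber case (there is no $z^{k_0/m_0}$ twist here since the central fiber sits at $z=0$), and it does not spell out the end-control or regularity checks you flag as obstacles.
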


\begin{proof}After possibly composing with a compactly supported Hamiltonian isotopy, we can ensure that there is a unique $p \in \psi_{H} (\gamma)  \cap f(L_{\G})$, and 
\begin{align*} \psi_{H}( I_0(L_{\F})) \cap Y_p &= L_{\F} ;\\
		 L_{\G} \cap Y_p &= \Phi_{E_{m_0 \tau}} (\G_0). \end{align*}
Thus again there is a sequence of isomorphisms:
\[ CF^*_{\tau}(\psi_{H}(I_0(L_{\F})), L_{\G}) \cong  CF^*_{E^{m_0 \tau}}(\Phi_{E_{m_0 \tau}} (\F), \Phi_{E_{m_0 \tau}} (\G_0)) \cong \Ext^*(\F, \G_0) \cong \Ext^*(i_{0*} \F, \G). \]
The situation is pictured in Figure \ref{ushaped}. 
\end{proof}
 
\begin{figure}
\includegraphics[height=180pt]{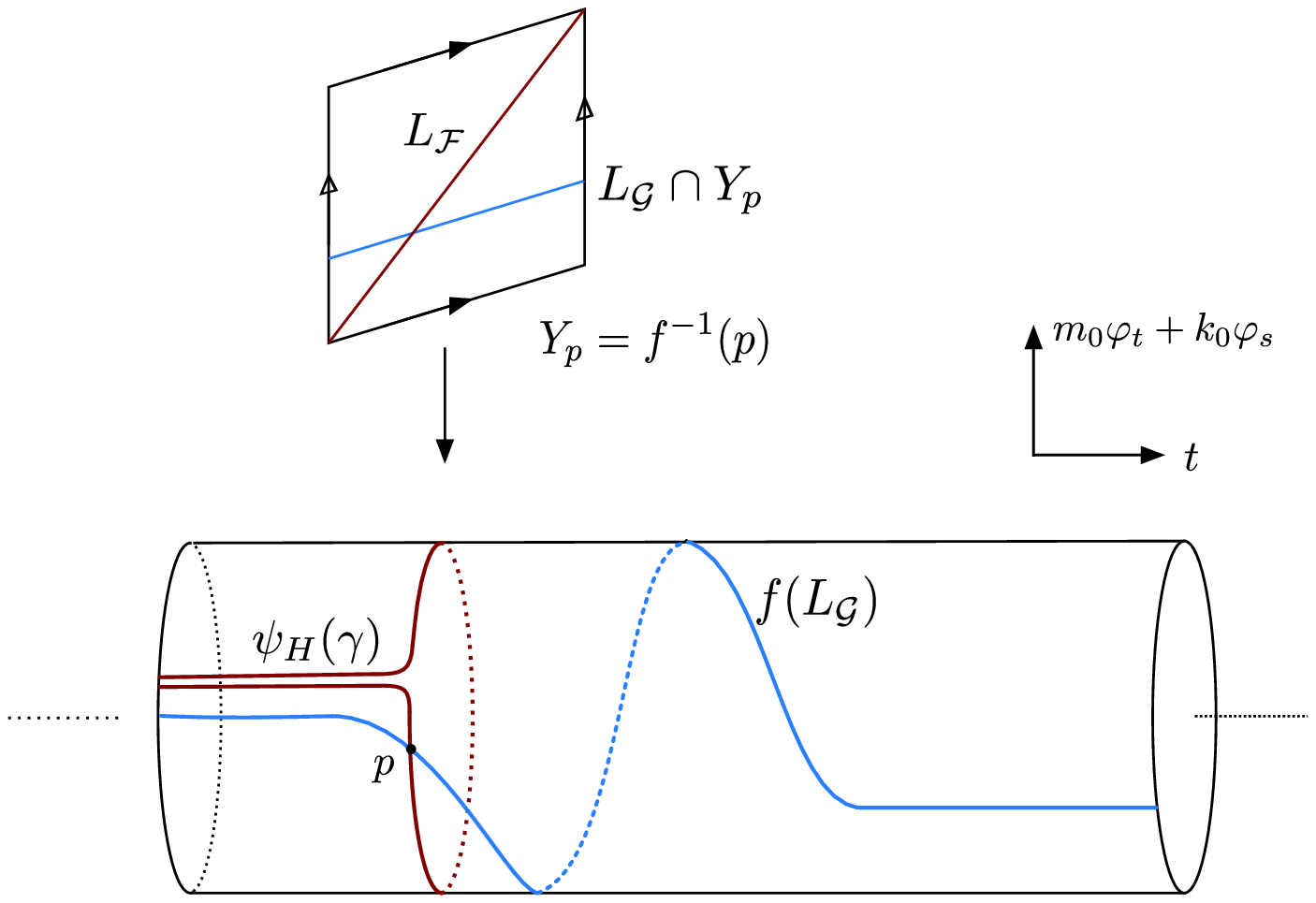}
\caption{}
\label{ushaped}
\end{figure}
 
\begin{remark} The functor $I_0: \F(E^{m_0 \tau}) \to \F (\A)$ is the Orlov functor of Abouzaid and Ganatra  (\hspace{1sp}\cite{AbGa}, \cite{Sylvanfunctor}), which in general produces Lagrangians mirror to Lagrangians supported on the fiber of a Landau-Ginzburg model  (\hspace{1sp}\cite{Jeffs}). In fact this is a very simple instance of this phenomenon. Consider the case where $(m_0, k_0)=(1,0)$, so $D_{\tau} (m_0,k_0)=\C \times E_{\tau}$. The mirror to $\C$ is the manifold $T^* S^1$ stopped at one point in the boundary at infinity, and the linking arc of the stop is mirror to the structure sheaf of the point $\{0\}$  (\hspace{1sp}\cite{GPS2}). Denote the arc linking the stop by $L_{0}$. Then note that for a Lagrangian $L_{\F}$ mirror to $\F \in \Coh(E_{\tau})$, the conjectured mirror to $i_{0*} \F$ is the product 
\[ I_{0}(L_\F) = L_{0} \times L_{\F}  \]
as expected.

All $U$-shaped Lagrangians mirror to sheaves supported on the fiber over $0$ take this form after a different choice of symplectic splitting of (a finite cover of) $Y$, which corresponds to a choice of algebraic coordinates on the surface $D(m_0, k_0)$.
 \end{remark}

\subsubsection{Further work} With some further work, one could prove that the inclusion functors constructed above intertwine the Floer and Yoneda products, and so the domain of the  mirror functors that appear in this work could be expanded to a subcategory of $\Coh_{\an} S_{\tau}(\A) $ which includes torsion sheaves. However, for non-algebraic $S_{\tau} (\A)$ the subcategory of $D^b \Coh_{\an} \, S_{\tau}(\A)$ generated by torsion sheaves and line bundles is a strict subcategory: for example, the rank-two vector bundles on the Hopf surface constructed in \cite{Mo} are not generated by these objects. (One can show that the objects of the subcategory in question restrict to the same object of $D^b \Coh\, E_{\tau}$ on all but finitely many fibers on the open subset $\C^* \times E_{\tau} \subset S_{\tau}$ fibering over $\C^*$, a property not enjoyed by these rank-two bundles.) We therefore expect that any proof of Conjecture \ref{expected}, i.e., of an equivalence of triangulated categories, would take the form sketched in Subsection \ref{pproof} and proceed via Theorem \ref{thediag2}.

\bibliographystyle{amsplain}
\bibliography{citations}

\end{document}